\theoremstyle{plain}
\newtheorem{thm}{Theorem}[section]
\newtheorem{lemma}[thm]{Lemma}
\newtheorem{ques}[thm]{Question}
\newtheorem{corollary}[thm]{Corollary}
\newtheorem{prop}[thm]{Proposition}
\newtoks\prt
\newtheorem{proclaim}[thm]{\the\prt}
\theoremstyle{definition}
\newtheorem{definition}[thm]{Definition}
\def\eqn#1$$#2$${\begin{equation}\label#1#2\end{equation}}
\numberwithin{equation}{section}
\def\C{\mathcal{C}}
\def\T{\mathcal T}
\def\diam{\operatorname{diam}}
\def\dist{\operatorname{dist}}
\def\epsilon{\varepsilon}
\def\en{\mathbb N}
\def\er{\mathbb R}
\def\G{\mathcal{G}}
\def\id{\operatorname{id}}
\def\K{\mathcal{K}}
\def\loc{\operatorname{loc}}
\def\mE{\mathcal{E}}
\def\mira{\mathcal{L}^4}
\def\mir4{\mathcal{L}^4}
\def\Q{\tilde{Q}}
\def\R{\mathcal{R}}
\def\r2{\er^2}
\def\sgn{\operatorname{sgn}}
\def\rn{\mathbb R^n}
\def\sgn{\operatorname{sgn}}
\def\u{\tilde{u}}
\def\w{\boldsymbol w}
\def\z{\tilde{z}}
\newcommand{\labeltext}[2]{%
	\@bsphack
	\def\@currentlabel{#1}{\label{#2}}%
	\@esphack
}
\def\step#1#2#3{\par \noindent{{\vskip 5pt \bf Step~\labeltext{#1}{#3}#1. }{\bf #2. }}}
\def\ve{\boldsymbol v}
\def\u{\boldsymbol u}
\def\V{\mathbb V}
\newtoks\by
\newtoks\paper
\newtoks\book
\newtoks\jour
\newtoks\yr
\newtoks\pages
\newtoks\vol
\newtoks\publ
\def\ota{{\hbox\vol{???}}}
\def\cLear{\by=\ota\paper=\ota\book=\ota\jour=\ota\yr=\ota
\pages=\ota\vol=\ota\publ=\ota}
\def\endpaper{\the\by, {\the\paper},
\textit{\the\jour} \textbf{\the\vol} (\the\yr), \the\pages.\cLear}
\def\endbook{\the\by, \textit{\the\book}, \the\publ.\cLear}
\def\endprep{\the\by, \textit{\the\paper}, \the\jour.\cLear}
\def\endyearprep{\the\by, \textit{\the\paper}, \the\jour, (\the\yr).\cLear}
\def\name#1#2{#2 #1}
\definecolor{qqzzqq}{rgb}{0.,0.6,0.}
\definecolor{qqwuqq}{rgb}{0.,0.39215686274509803,0.}
\definecolor{qqffqq}{rgb}{0.,1.,0.}
\definecolor{ffqqqq}{rgb}{1.,0.,0.}
\definecolor{qqqqff}{rgb}{0.,0.,1.}
\definecolor{ubqqys}{rgb}{0.29411764705882354,0.,0.5098039215686274}
\definecolor{ffffff}{rgb}{1.,1.,1.}
\title[A sense preserving Sobolev homeomorphism with negative Jacobian]
{A sense preserving Sobolev homeomorphism with negative Jacobian almost everywhere}
\author{Daniel Campbell, Luigi D'Onofrio and Stanislav Hencl }
\thanks{The first author was supported by the grant GA\v{C}R 20-19018Y. Part of the research was conducted at Universit\`a Parthenope under the Young Investigator Training Program 2018 supported by ACRI (Associazione di Fondazioni e di Casse di Risparmio Spa) and Università degli Studi di Urbino Carlo Bo. The third author was supported by the grant GA\v{C}R P201/18-07996S}
\address{Department of Mathematics, University of Hradec Kr\'alov\'e, Rokitansk\'eho 62, 500 03 Hradec Kr\'alov\'e, Czech Republic}
\email{campbell@uhk.cz}
\address{Dipartimento di Scienze e Tecnologie \\ Universit\`a \lq\lq Parthenope \rq\rq
Centro Direzionale Isola C4, 80100 Napoli, Italy}
\email{donofrio@uniparthenope.it}
\address{Department of Mathematical Analysis, Charles University, So\-ko\-lovsk\'a 83, 186~00 Prague 8, Czech Republic}
\email{\tt hencl@karlin.mff.cuni.cz}
\begin{document}

\begin{abstract}
For every $1\leq p<\frac{3}{2}$ we construct a Sobolev homeomorphism $f\in W^{1,p}([-1,1]^4,[-1,1]^4)$ such that $f(x)=x$ for every $x\in \partial[-1,1]^4$ but $J_f<0$ a.e.
\end{abstract}

\maketitle

\section{Introduction}


In 2001 Haj{\l}asz posed a series of questions about the Jacobians of homeomorphisms which have some kind of derivative (weak or approximative).  These questions appeared in several lecture notes and were recently reprinted in \cite{GH2}. The essence of the questions can be summarized as follows:
\begin{ques}\label{quesie}
	Let $Q\subset \rn$ be the open unit cube.
	\begin{enumerate}
		\item[a)] Does there exist an $f\in W^{1,p}(Q,\rn)$ with $J_f = \det Df$ positive on a set of positive measure in $Q$ and negative on a set of positive measure in $Q$?
		\item[b)] Does there exist an $f\in W^{1,p}(Q,\rn)$ with $J_f = \det Df$ negative almost everywhere on $Q$ but $f =\id$ on $\partial Q$?
	\end{enumerate}
\end{ques}
In fact this formulation is somewhat stronger than the original questions posed by Haj{\l}asz since every $f\in W^{1,p}$ has an approximatively differentiable representative but the opposite is far from true. Despite the fact that the questions were well publicized and important to a range of areas, they remained widely open for a long time. Given a homeomorphism $f$ (strongly) differentiable at a point $x$ with $J_f(x)\neq 0$, the sign of the Jacobian at $x$ determines the degree of $f$. We call a homeomorphism with topological degree 1 sense-preserving and a homeomorphism with degree -1 sense-reversing. A naive intuition would suggest that the same might hold when we replace with the weak or approximative derivative, if not at one point then at least almost everywhere. Surprisingly the answer turns out to be more complex than one might expect and the notion that Sobolev homeomorphisms behave like diffeomorphisms turns out to be false. Before we expound the results let us first explore a little more the relevance of the question.

The study of the Jacobian of a homeomorphism is a natural question in the context of the study of Nonlinear Elasticity, Quasiconformal mappings, Mappings of finite distortion. In models of Nonlinear Elasticity (see for instance the pioneering work by Ball \cite{B} or the monograph of Ciarlet \cite{Ci}), one is led to study existence and properties of minimizers of energy functionals of the form
\begin{equation}\label{energyI}
I(f)=\int_{\Omega} W(Df)\,dx\,,
\end{equation}
where $f:\rn\supseteq\Omega\to\Delta\subseteq\rn$ models the deformation of a homogeneous elastic material with respect to a reference configuration $\Omega$ and prescribed boundary values, while $W:\er^{n\times n}\to\er$ is the stored-energy functional. In order for the model to be physically relevant, as pointed out by Ball in \cite{B}, \cite{B2}, one has to require that $u$ is a homeomorphism or at least one-to-one a.e.- this corresponds to the non-impenetrability of the material - and that
\begin{align}\label{E_WA}
W(A)\to+\infty\quad\text{as $\det A\to 0$}\,, && W(A)=+\infty\quad\text{if $\det A\leq0$}\,.
\end{align}
The first condition in \eqref{E_WA} prevents too high compressions of the elastic body, while the latter guarantees that the orientation is preserved (at least in the analytical sense).
In particular, if $u$ is an admissible deformation with finite energy, then one has that
\begin{equation}\label{E_det>0}
J_f:=\det Df>0\quad\text{ a.e. in $\Omega$}
\end{equation}
and hence we restrict our attention to mappings which do not change orientation.

A key question of interest is to prove the regularity of the solutions of this problem. A regular solution - a diffeomorphism - must of necessity have Jacobian that does not change sign. Critically the Jacobian of any topologically sense-preserving diffeomorphism must be positive. On the other hand working only with diffeomorphisms would be too restrictive to the tools of the Calculus of Variations. Thus one is naturally led to Sobolev homeomorphisms and to questions about their Jacobian. Specifically we want to marry the concept of an almost everywhere positive $\det Df$ with the topological concept of sense-preservation (i.e. that matter is not inverted). This is the essence of Question~\ref{quesie}

The first significant answer to the questions of Haj{\l}asz was published in \cite{HM}. Precisely

\prt{Theorem}
	\begin{proclaim}\label{henclmaly}
	Let  $n=2,3$ and $p\geq 1$ or $n\geq 4$ and $p>[n/2]$.
 Let $\Omega\subset \mathbb R^n$ be a domain and $f\in W^{1,p}_{loc}(\Omega, \mathbb R^n)$ be homeomorphism.
 Then either $J_f \geq 0$ a.e. in $\Omega$ or $J_f\leq 0$ a.e. in $\Omega$. 
	\end{proclaim}
Here $[x]$ stands for the greatest integer less or equal to $x$. This result has since been pushed to the limiting case $p=[n/2]$ under additional assumptions (including on the inverse), see  \cite{GH1}. On the other hand the surprising construction of \cite{HV}, showed that if $n\geq 4$ then there exists a homeomorphism in $W^{1,1}$ whose Jacobian is negative on a set of positive measure and positive on a set of positive measure. Later this construction was improved in \cite{CHT} giving  the following result:
\begin{thm}\label{cahenten}
 Let $\Omega\subset \mathbb R^n$, $n\geq 4$ and $1\leq p < [n/2]$, then there is a homeomorphism $f \in W^{1,p}((-1,1)^n, \mathbb R^n)$ such that $J_f>0$ on a set of positive measure and $J_f<0$  on a set of positive measure. Moreover $f$ satisfies the Lusin $(N)$ condition.
\end{thm}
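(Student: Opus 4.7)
My plan is to use the standard ``brick-and-iteration'' paradigm for constructing pathological Sobolev homeomorphisms. The first and crucial step is to build a single \emph{brick}: a homeomorphism $\Phi \colon [-1,1]^n \to [-1,1]^n$ that equals the identity on $\partial [-1,1]^n$, lies in $W^{1,p}$ for $1 \leq p < [n/2]$, satisfies the Lusin $(N)$ condition, and whose Jacobian takes both signs on subsets of positive measure. Such a $\Phi$ is impossible for $p \geq [n/2]$ (or for $n \leq 3$) by Theorem~\ref{henclmaly}, so the construction must genuinely exploit the dimensional freedom $n \geq 4$ and the low Sobolev regularity. In particular $\Phi$ cannot be smooth, since a smooth homeomorphism with identity boundary values has topological degree $1$ and hence $J \geq 0$ a.e. The natural idea is to introduce a codimension-$2$ collapse set $\Sigma \subset [-1,1]^n$ along which $\Phi$ degenerates, so that on one side of $\Sigma$ the map is piecewise affine with positive Jacobian, while on the other side it contains a piecewise affine orientation-reversing piece with $J_\Phi < 0$. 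The topological obstruction to gluing these pieces through a diffeomorphism disappears once we allow the map to degenerate along $\Sigma$, and a direct computation of the singular part of $|\nabla \Phi|$ near $\Sigma$ shows $\|\nabla \Phi\|_{L^p}<\infty$ precisely in the range $p < [n/2]$.

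Given the brick, I would pick a countable family of pairwise disjoint cubes $\{Q_k\} \subset (-1,1)^n$ of side lengths $r_k$ with $\sum_k r_k^n < 1$, and define $f$ to be a rescaled and translated copy $\Phi_k$ of $\Phi$ on each $Q_k$ and the identity on the complement. Since $\Phi_k = \id$ on $\partial Q_k$, the map $f$ is globally continuous and injective, hence a homeomorphism. Rescaling gives $\|\nabla \Phi_k\|_{L^p(Q_k)}^p = r_k^{n-p} \|\nabla \Phi\|_{L^p}^p$, which is summable since $p < n$, so $f \in W^{1,p}$. On the identity region one has $J_f = 1 > 0$, which has positive measure because $\sum_k r_k^n < 1$, and on each $Q_k$ the set $\Phi_k^{-1}(\{J_\Phi<0\})$ contributes positive measure to $\{J_f<0\}$. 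The Lusin $(N)$ property of $f$ reduces to $(N)$ for each $\Phi_k$, which in turn follows from $(N)$ for $\Phi$ by affine scaling, together with the trivial fact that $\id$ satisfies $(N)$.

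Essentially all the difficulty is concentrated in the brick construction; the iteration, scaling of norms, summability, and verification of Lusin $(N)$ are routine once the brick is in hand. The hard part will therefore be producing an explicit model for $\Phi$ as, for instance, a piecewise polyhedral gadget with a carefully chosen codimension-$2$ collapse locus, verifying that it is actually a homeomorphism, and carrying out sharp $L^p$ estimates near the collapse set to pin down the threshold $p < [n/2]$. I would expect the explicit brick to be built on a simplicial decomposition of $[-1,1]^n$ in which two families of simplices are mapped by orientation-preserving resp.\ orientation-reversing affine maps and the transition is arranged along an $(n-2)$-skeleton where the derivative has a power-type blow-up tuned to the sharp integrability range.
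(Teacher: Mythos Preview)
Your iteration and scaling argument is sound but superfluous: a single brick already yields both signs of the Jacobian on sets of positive measure, so the theorem follows directly from the brick. The real content is entirely in the brick, and here your proposal has a genuine gap. You sketch a map with orientation-preserving and orientation-reversing affine pieces glued along a codimension-$2$ skeleton where the derivative blows up. But a homeomorphism that is piecewise affine is Lipschitz, hence in every $W^{1,p}$, and by Theorem~\ref{henclmaly} cannot have Jacobian of both signs; so your pieces cannot be affine near the collapse set, and you have not said what they actually are. More seriously, a codimension-$2$ singular locus does not by itself break the topological obstruction: the complement of $\Sigma$ is connected, so if the map were differentiable there with nonvanishing Jacobian the sign could not change, and if the Jacobian is allowed to vanish you are back to needing an explicit mechanism that produces a genuine reflection on a set of positive measure. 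Nothing in your sketch supplies one. Nor does your heuristic explain the threshold $[n/2]$: a power blow-up near a codimension-$2$ set yields integrability for $p<2/\alpha$, with no reason for the integer part of $n/2$ to appear.

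The construction actually used (from \cite{CHT}, summarized in Section~1.1 here) is completely different and Cantor-set based. One fixes a product Cantor set $\mathcal{C}_A\subset(-1,1)^n$ of positive measure and another $\mathcal{C}_B$ of zero measure, together with their ``line skeletons'' $\mathcal{K}_A,\mathcal{K}_B$. The crux, which is where $n\geq 4$ enters, is a \emph{bi-Lipschitz sense-preserving} homeomorphism $F$ of the cube satisfying $F(x)=(x_1,\dots,x_{n-1},-x_n)$ on $\mathcal{K}_B$; this exists because a suitable oblique projection is injective on $\mathcal{K}_B$. One then sets $f=S_t\circ F\circ S_q$, where $S_q$ squeezes $\mathcal{K}_A$ onto $\mathcal{K}_B$ and $S_t$ stretches back. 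On $\mathcal{C}_A^{\times 4}$ the composition equals the reflection, giving $J_f=-1$ on a set of positive measure, while $F$ is sense-preserving so $J_f>0$ elsewhere. Membership in $W^{1,p}$ is verified via the ACL condition on lines, and the threshold $p<[n/2]$ emerges from balancing the squeeze/stretch rates of the two Cantor constructions against the way $F$ maps segments near $\mathcal{K}_B$ to segments near $\mathcal{K}_B$ --- not from any codimension count.
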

The combination of Theorem~\ref{henclmaly} and Theorem~\ref{cahenten} answers, up to the critical case $p=[n/2]$, Question~\ref{quesie} $a)$. Let us note that constructions of almost everywhere approximately differentiable homeomorphisms with everywhere negative (approximate) Jacobian are to be found in Goldstein and Haj{\l}asz \cite{GH} and \cite{GH2}. These maps lack the Sobolev regularity but have other striking properties e.g. measure preservation or H\"older continuity of the map and its inverse.

The construction in \cite{CHT} opens the question of point $b)$ from Question~\ref{quesie}, can the construction be pushed to give negative Jacobian almost everywhere? It turns out that this question is of even greater relevance to the problem of non-linear elasticity than the previous (point $a)$), especially in connection with the Ball-Evan's approximation problem. This problem can be simply formulated as ``Is it possible to approximate a Sobolev homeomorphism with diffeomorphisms in the Sobolev space?''. The motivation of this question comes from wanting to understand the regularity of the minimizers. We point out that finding diffeomorphisms near a given homeomorphism is not an easy task, as the usual approximation techniques like mollification or Lipschitz extension using the maximal operator, destroy, in general, the injectivity. An overview of the history of diffeomorphic approximation can be found, for example, in \cite{HP}. As yet there are positive planar results (see for example \cite{IKO, HP, Ca, PR}) but in higher dimensions only a negative result derived in \cite{CHT}: Assume for contradiction that $f$ of Theorem \ref{cahenten} can be approximated by diffeomorphisms (or piecewise affine homeomorphisms) $\{f_k\}_{k=1}^{\infty}$ then the pointwise limit of a subsequence (which we denote in the same way) satisfies:
$$
Df_k(x) \rightarrow Df(x)\,\,\,\textrm{and}\,\,\, J_{f_k}(x) \rightarrow J_f(x)\,\,a.e. \, x\in (-1,1)^n
$$
As $f_k$ are locally Lipschitz we know that $J_{f_k}\geq 0$ a.e. in $(-1,1)^n$ or $J_{f_k}\leq 0$ a.e. in $(-1,1)^n$. So the pointwise limit of nonnegative (or nonpositive) $J_{f_k}$ cannot change sign which gives a contradiction. 


On the other hand, as noted by Buttazzo in 2016 in Naples, such a homeomorphism by \eqref{E_WA} would of necessity have infinite energy and so not be of great relevance to the minimization process. Conversely an answer to Question~\ref{quesie}~$b)$, would supply a homeomorphism $f$, $f=\id$ on $\partial Q$ with $J_f<0$ a.e. Therefore $\tilde{f}(x_1, x_2,\dots x_n) = f(-x_1, x_2,\dots,x_n)$ would have $J_f >0$ a.e.. On the other hand $\deg(g,Q) =-1$ and any approximating diffeomorphism would have to have infinite energy. We supply precisely such a mapping for $n=4$

	\prt{Theorem}
	\begin{proclaim}\label{main}
		For every $1\leq p <\frac{3}{2} $ there exists a Sobolev homeomorphism $f\in W^{1,p}([-1,1]^4,[-1,1]^4)$ such that $f(x)=x$ for every $x\in \partial [-1,1]^4$ but $J_f(x)<0$ for a.e. $x\in (-1,1)^4$. Further $f$ satisfies the Lusin $(N)$ and $(N^{-1})$ conditions.
	\end{proclaim}
	(For the definition of the $(N)$ and $(N^{-1})$ condition see Definition~\ref{Losing}). This result yields the following;
	\prt{Corollary}
	\begin{proclaim}
		Set $\tilde{f}(x_1,x_2,x_3,x_4)=f(-x_1,x_2,x_3,x_4)$ where $f$ is from Theorem \ref{main}.
		Then $J_{\tilde{f}}(x)>0$ a.e. but there are no diffeomorphisms (or piecewise affine homeomorphisms) $f_k$ such that
		$f_k\to \tilde{f}$ in $W^{1,p}$ for $1\leq p < \frac 3 2.$
	\end{proclaim}

\subsection{A brief overview of the construction and the main ideas}

 Just for the convenience of the reader we recall some ingredients of the construction of Theorem \ref{cahenten} in \cite{CHT}. Firstly they fix a Cantor type set $\mathcal C_A \subset (-1,1)$ of positive measure and they set
$$
\begin{array}{ll}
  \mathcal K_A =& (\mathcal C_A \times \mathcal C_A \times \mathcal C_A  \times [-1,1])\cup (\mathcal C_A \times \mathcal C_A  \times [-1,1] \times \mathcal C_A ) \\
   & (\mathcal C_A \times [-1,1]  \times \mathcal C_A \times \mathcal C_A )\cup ( [-1,1]\times \mathcal C_A \times \mathcal C_A  \times \C_A)
\end{array}
$$
They also fix a Cantor type set $\mathcal C_B \subset (-1,1)$ of zero measure and consider the corresponding $\mathcal K_B$. The first mapping $S_q: \mathbb R^n \longrightarrow \mathbb R^n$ squeezes $\mathcal K_A$ onto $\mathcal K_B$ homeomorphically in a natural way. The key ingredient is the construction of a bi-Lipschitz sense-preserving homeomorphism $F$ such that
\eqn{uuu}
$$
F(x_1,x_2,x_3,x_4)=(x_1,x_2,x_3,-x_4) \textrm{ for every } x\in \mathcal K_B.
$$
At last they find a mapping $S_t: \mathbb R^n \longrightarrow \mathbb R^n $ which stretches  $\mathcal C_B\times \mathcal C_B\times \mathcal C_B \times \mathcal C_B$ back to  $\mathcal C_A\times \mathcal C_A\times \mathcal C_A \times \mathcal C_A$ such that lines in $\mathcal K_B$ are not prolonged too much. Since we control the behaviour of 
$f=S_t\circ F\circ S_q$ on lines parallel to coordinate axes it is possible to check that 
$f$ satisfies the ACL property and by delicate computations that even $f\in W^{1,p}$.  
By \eqref{uuu} and $S_t=S_q^{-1}$ on $\mathcal C_B\times \mathcal C_B\times \mathcal C_B \times \mathcal C_B$ we obtain that $f$ behaves like $[x_1,x_2,x_3,-x_4]$ on $\mathcal C_A\times \mathcal C_A\times \mathcal C_A \times \mathcal C_A$ and hence $J_f<0$ on this set of positive measure. One of the key properties of $F$ is that line segments parallel to coordinate axes and close to $\mathcal H_B$ but far away from $\mathcal C_B\times \mathcal C_B\times \mathcal C_B \times \mathcal C_B$ are mapped to segments parallel to coordinate axes close to $\mathcal H_B$ which allows the estimates of the derivatives of $f$.  

The first idea to construct our mapping is to iterate the procedure done by \cite{CHT}, i.e. we construct $f_1$ with $J_{f_1} <0$ on a closed set of positive measure $\mE_1$ and $f_1=id$ on $\partial Q$. We cover $Q\setminus \mE_1$ by small cubes such that $f_1$ is close to a linear mapping on each of these cubes. We define 
$f_2=f_1$ on $\mE_1$ and $f_2$ is a composition of $f_1$ and scaled and translated copy of $f_1$ on cubes covering the rest of $Q$. Then $J_{f_2}<0$ on a new set $\mE_2\subset Q\setminus \mE_1$, we cover $Q\setminus(\mE_1\cup \mE_2)$ with small cubes and continue. By induction we construct a sequence of maps $\{f_m\}_{m=2}^{\infty}$ and we want them to converge in $W^{1,p}$ to some $f$ which satisfy our thesis. 
The derivative of $f$ is a derivative of composition but the huge problem with this approach is that the integral of $|Df|^p$ is too big and we cannot control it.  

Hence we need to substantially modify the construction of $f_1$. Our aim is to construct a map $f_1$ such that the integral of $|D_jf_1|^p$, $j=1,2,3$, is much smaller than $|D_4 f_1|^p$ in average (see $v)$ in Theorem \ref{TheBigLebowski} below). 
Then we need a mapping $f_2$ that is a clever rotation of $f_1$ in the well-chosen small cubes covering $Q\setminus \mE_1$. The main aim of this rotation is that the big derivative from $f_1$ (i.e. $D_4 f_1$) doesn't multiply with the big derivative of the next mapping $f_2$ in the matrix multiplication. The cubes are rotated to send $x_1$ in the direction of $D_4 f_1$, and the derivative of $f_2$ in direction $x_1$ is small. Then the integral of the derivative of the composition is small and we can make everything work.

As in \cite{CHT} we construct our Cantor sets. 
We choose a big parameter $K>0$ so that the Cantor set $C_{A,K}\subset[-1,1]$ has measure almost $2$. 
We denote by 
$\mathcal C=\mathcal C_{A,K}\times \mathcal C_{A,K}\times \mathcal C_{A,K} \times \mathcal C_{A,K}$ the corresponding Cantor set in dimension $4$. For $m\in\en$ we refer to the set
$$
\mathcal{C}_{A,K,m}:=\bigcup_{n_1,n_2,n_3=-m}^m (\mathcal C+[2n_1,2n_2,2n_3,0])
$$
as a $4$-dimensional \lq $m$-Cantor plate\rq so that in each of $(2m+1)^3$ cubes $Q$ we have a copy of $\mathcal C$. 
This choice of $m$-Cantor plate allows us to make the integral outside of Cantor set of $|D_j f_1|^p$, $j=1,2,3$, rather small. Since $f_1(x)=x$ on 
$\partial ([-2m-1,2m+1]^3\times [-1,1])$ the derivative close to the boundary is big but the integral of the derivative between the copies of $\mathcal C$ inside (in $x_1,\ x_2,\ x_3$ directions) is quite small as the measure of the complement of $\mathcal C$ is small. 
In our proof we fix $K$ so that the measure of complement of $C_{A,K}$ is small and we choose $m$ so big that in average only the derivative between the copies of $\mathcal C$ is important and thus the integral of $|D_j f_1|^p$ is small. Analogously we define a Cantor $m$-plate of zero measure $\mathcal{C}_{B,m}$. 

In Section 3 we construct a special homeomorphism $\tilde{G}_{K,N,m,\eta}$ that squeezes the Cantor set, i.e. it maps $\mathcal C_{A,K,m}$  onto $\mathcal C_{B,m}$. We cannot use an analogy of mapping $S_q$ from \cite{CHT} because for our iteration procedure we need a mapping which is locally bilipschitz outside of $\mathcal C_{A,K,m}$. 
In Section 3 we also construct a specific homeomorphism $G_{K,N,m,\eta}$ that stretches $\mathcal C_{B,m}$ onto $\mathcal C_{A,K,m}$. We follow the construction in \cite[Lemma 3.2]{CHT} with a difference that our set $\mathcal C_{A,K}$ is bigger and occupies almost all of $[-1,1]$.
The key middle map $F_{\beta}$ that satisfies the crucial property \eqref{uuu} on 
$\mathcal{C}_{A,K,m}$ is constructed in Section 4.


\section{Preliminaries}
A point $x \in \mathbb R^n$ in coordinates is denoted as $(x_1,\ldots,x_n)$. We denote by $|x|:=\sqrt{\sum_{i=1}^{n} x^2_i}$ the Euclidean norm of a point $x\in \mathbb R^n$, and $||x||:=\sup_{i}|x_i|$ denotes the supremum norm of $x$. For $c\in\rn$ and $r>0$ we denote the cube as $Q(c,r)=(c_1-r,c_1+r)\times\hdots\times (c_n-r,c_n+r)$. 

Let $\Omega\subset \mathbb R^n$ be an open set. We say that $f:\Omega\rightarrow \mathbb R^n$ belongs to the Sobolev space $W^{1,p}(\Omega, \mathbb R^n)$, $1\leq p < \infty$, if $f$ is $p$-integrable and if the coordinate functions of $f$ have $p$-integrable distributional derivatives. We say that $f$ belongs to the space $W^{1,p}_{loc}(\Omega, \mathbb R^n)$ if $f\in W^{1,p}(\Omega', \mathbb R^n)$ for every subdomain $\Omega'\subset \Omega$. \par

By $D_if$ we denote the derivative of mapping $f$ with respect to first coordinate $x_i$, i.e. the matrix $Df$ consists of lines $D_1f,\ D_2f,\ D_3 f$ and $D_4 f$.

\subsection{ACL condition}
It is a well-known fact (see e.g. \cite[Section 3.11]{A}) that a mapping
$u\in L^1_{\loc}(\Omega,\er^m)$ is in  $W^{1,1}_{\loc}(\Omega,\er^m)$
if and only if there is a representative which is an absolutely continuous function
on almost all lines parallel to coordinate axes and the derivative on these lines is integrable.
More precisely, let $i\in\{1,2,\hdots,n\}$
and denote by $\pi_i$ the projection on to the hyperplane perpendicular to the $x_i$-axis.
Suppose that $Q(c,r):=(c_1-r,c_1+r)\times\hdots\times(c_n-r,c_n+r)\subset\Omega$ for some
$c\in\rn$, $r>0$ and set $Q^i(c,r)=\pi_i(Q(c,r))$.
Let  $y\in Q^i(c,r)$ and denote
$$u_{i,y}(t)=u(y_1,\hdots,y_{i-1},t,y_{i+1},\hdots,y_n)\quad\text{ for }\quad t\in(c_i-r,c_i+r).$$

\prt{Theorem}
\begin{proclaim}\label{ACL}
Let $\Omega\subset\rn$ be open and let $u\in L^1_{\loc}(\Omega,\er^m)$.
Then $u\in W^{1,1}_{\loc}(\Omega,\er^m)$ if and only if the following happens. For every cube $Q(c,r)\subset\subset \Omega$ and for every $i\in\{1,\hdots,n\}$
there is a representative $\u$ of $u$ such that the function $\u_{i,y}(t)$ is absolutely continuous on $(c_i-r,c_i+r)$ (i.e. each coordinate function is absolutely continuous) for $\mathcal{L}^{n-1}$ almost every $y\in Q^i(c,r)$ and moreover
$$
\int_{Q^i(c,r)}\int_{c_i-r}^{c_i+r}|\nabla\u_{i,y}(t)|\; dt\; dy<\infty .
$$
\end{proclaim}

\subsection{Topological degree}
Given a smooth map $f$ from $\Omega\subset\rn$ into $\rn$ we can define the topological degree as
$$\deg(f,\Omega,y_0)=\sum_{\{x\in\Omega: f(x)=y_0\}} \sgn(J_f(x))$$
if $J_f(x)\neq 0$ for each $x\in f^{-1}(y_0)$.
This definition can be extended to arbitrary continuous mappings and each point, see e.g.\ \cite{FG}.

A continuous mapping
$f:\Omega\to\rn$ is called \emph{sense-preserving} if
$$\deg(f,\Omega',y_0)>0$$ for all domains $\Omega'\subset\subset\Omega$ and
all $y_0\in f(\Omega')\setminus f(\partial\Omega')$. Similarly we call $f$ \emph{sense-reversing} if
$\deg(f,\Omega',y_0)<0$ for all $\Omega'$ and $y_0$.
Let us recall that each homeomorphism on a domain is either sense-preserving or sense-reversing see
\cite[II.2.4., Theorem 3]{RR}.

\subsection{Composition and integration}
	
	\begin{definition}\label{Losing}
		Let $f:\rn\supset G\to\rn$, we say that $f$ satisfies the Lusin $(N)$ condition on $G$ if $\mathcal{L}^n(f(E)) = 0$ for every $E\subset G$ such that $\mathcal{L}^n(E) = 0$. We say that $f$ satisfies the Lusin $(N^{-1})$ condition on $G$ if $\mathcal{L}^n(f^{-1}(E)\cap G) = 0$ for every $E\subset \rn$ such that $\mathcal{L}^n(E) = 0$.
	\end{definition}
	Obviously a mapping locally bi-Lipschitz of $G$ satisfies both of these conditions on $G$.
	
	For the following Theorem see \cite[Theorem 3.16 and Corollary 3.19]{A}:
	\begin{thm}\label{AFPtheo}
		Let $\Omega, \Delta \subset \rn$ be open. Let $u\in W^{1,1}_{\loc}(\Omega,\er^d)$ and suppose that $F:\Delta \to \Omega = F(\Delta)$ is a bi-Lipschitz homeomorphism then $u\circ F\in W^{1,1}_{\loc}(\Delta,\er^d)$ and
			$$
				Du\circ F(x) = Du(F(x))DF(x) \quad \text{ for almost all } x \in \Delta.
			$$
	\end{thm}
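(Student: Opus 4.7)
The plan is to prove this chain-rule/composition statement by combining the change-of-variables formula for bi-Lipschitz homeomorphisms with a smooth approximation of $u$. Without loss of generality I take $d=1$, since weak differentiability and the chain rule both act componentwise, and I fix a cube $Q\subset\subset\Delta$, whose image $F(Q)$ is relatively compact in $\Omega$; local statements on such cubes exhaust $\Delta$.

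First I would collect the bi-Lipschitz toolkit: by Rademacher, both $F$ and $F^{-1}$ are differentiable almost everywhere with $|DF|$ and $|DF^{-1}|$ essentially bounded; $F$ satisfies the Lusin $(N)$ and $(N^{-1})$ conditions (noted in the paragraph after Definition~\ref{Losing}); and the change-of-variables inequality
$$\int_{E} g(F(x))\,dx\leq (\operatorname{Lip} F^{-1})^n \int_{F(E)} g(y)\,dy$$
holds for every measurable $g\geq 0$ and every measurable $E\subset\Delta$. Applying this with $g=|u|$ and $E=Q$ immediately gives $u\circ F\in L^1(Q)$. Next, choose an intermediate open set $\Omega'$ with $F(Q)\subset\subset\Omega'\subset\subset\Omega$ and pick $u_k\in C^\infty(\Omega')$ with $u_k\to u$ in $W^{1,1}(\Omega')$. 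For each smooth $u_k$ the classical chain rule, applied at every point where $F$ is differentiable, gives $D(u_k\circ F)(x)=Du_k(F(x))\,DF(x)$ for a.e.\ $x\in Q$. Two applications of the change-of-variables bound then yield
$$\|u_k\circ F-u\circ F\|_{L^1(Q)}+\|(Du_k-Du)(F(\cdot))\,DF\|_{L^1(Q)}\leq C\|u_k-u\|_{W^{1,1}(\Omega')}\to 0,$$
with $C$ depending only on $\operatorname{Lip} F$ and $\operatorname{Lip} F^{-1}$. Since the weak-gradient operator is closed in $L^1$, this convergence forces $u\circ F\in W^{1,1}(Q)$ with distributional derivative equal to $Du(F(\cdot))\,DF(\cdot)$, and exhausting $\Delta$ by such cubes upgrades this to $W^{1,1}_{\loc}(\Delta)$.

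The main subtlety to keep honest is that $Du$ is defined only up to a null set in $\Omega$, so the expression $Du(F(x))$ only makes sense once one knows that the preimage under $F$ of a null set is null; this is exactly the $(N^{-1})$ property for $F$, automatic here by bi-Lipschitz continuity. Similarly, applying the classical chain rule to the smooth $u_k$ requires excising from $Q$ the null set on which $F$ fails to be differentiable, which is harmless for the same reason. Beyond these bookkeeping items the argument is a routine approximation, with all constants controlled by $\operatorname{Lip} F$ and $\operatorname{Lip} F^{-1}$, so I do not expect any hard analytic step; the whole weight of the theorem sits on the Lusin conditions and the boundedness of $DF,\ DF^{-1}$.
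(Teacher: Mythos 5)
The paper does not actually prove Theorem~\ref{AFPtheo}: it is imported directly from \cite[Theorem 3.16 and Corollary 3.19]{A}, so there is no in-paper argument to compare yours against. Your self-contained proof is correct and is essentially the standard route one would find in the literature: the reduction to a single component and a cube $Q\subset\subset\Delta$ is harmless; the change-of-variables bound $\int_E g\circ F\leq(\operatorname{Lip}F^{-1})^n\int_{F(E)}g$ for $g\geq 0$ is exactly what the bi-Lipschitz hypothesis gives (via $|F^{-1}(A)|\leq(\operatorname{Lip}F^{-1})^n|A|$ and simple functions), and it correctly handles both $u_k\circ F\to u\circ F$ and $Du_k(F(\cdot))\to Du(F(\cdot))$ in $L^1(Q)$; the closedness of the weak gradient in $L^1$ then finishes the argument. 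The one step you should make explicit rather than leave implicit is why, for smooth $u_k$ and merely Lipschitz $F$, the pointwise identity $D(u_k\circ F)(x)=Du_k(F(x))DF(x)$ (valid at every differentiability point of $F$, hence a.e.\ by Rademacher) is also the \emph{distributional} derivative of $u_k\circ F$: this follows because $u_k\circ F$ is Lipschitz and the a.e.\ classical derivative of a Lipschitz function coincides with its weak derivative. You also correctly identify the only genuinely delicate bookkeeping point, namely that $Du(F(x))$ is well defined a.e.\ only thanks to the $(N^{-1})$ property of $F$, which here is automatic. With these remarks your argument is complete, with all constants controlled by $\operatorname{Lip}F$ and $\operatorname{Lip}F^{-1}$, so it serves as a legitimate elementary replacement for the citation to \cite{A}.
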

	\begin{lemma}\label{stupid}
		Let $p\in [1,\infty)$ and let $a,b \geq 0$ then
		$$
			|a^p - b^p| \leq p (a+b)^{p-1}|a-b|\leq p 2^{p-1}(a^{p-1}+b^{p-1})|a-b|.
		$$
	\end{lemma}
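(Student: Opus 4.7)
The plan is to reduce to the case $a \geq b$ by symmetry (the inequality is symmetric in $a$ and $b$, so we may assume $|a-b| = a-b$ and $|a^p - b^p| = a^p - b^p$), and handle the trivial case $a = b = 0$ separately. In the nontrivial case, I would write
\[
a^p - b^p = \int_b^a p t^{p-1}\, dt.
\]
Since $p \geq 1$, the exponent $p-1$ is nonnegative, so $t \mapsto t^{p-1}$ is nondecreasing on $[0,\infty)$. For $t \in [b,a]$ we have $t \leq a \leq a+b$, and hence $t^{p-1} \leq (a+b)^{p-1}$. Integrating yields $a^p - b^p \leq p(a+b)^{p-1}(a-b)$, which is the first inequality.

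For the second inequality, I would bound $(a+b)^{p-1}$ using the trivial estimate $a+b \leq 2\max\{a,b\}$. Raising to the power $p-1 \geq 0$ preserves the inequality, giving $(a+b)^{p-1} \leq 2^{p-1} (\max\{a,b\})^{p-1} \leq 2^{p-1}(a^{p-1}+b^{p-1})$, where in the last step I use that $\max\{a,b\}^{p-1}$ is one of the two summands (or both are $0$ with the convention $0^0 = 1$, which is harmless as this case only matters when $a-b = 0$ and both sides vanish). Combining with the first inequality completes the proof.

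The only mild subtlety is the boundary behavior at $0$ when $p = 1$ (where $t^{p-1} = 1$ and the argument is trivial) and the edge case $a = b = 0$; neither causes any real difficulty. There is no main obstacle here — this is a routine calculus estimate included as a convenient reference for later computations in the paper.
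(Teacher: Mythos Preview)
Your proof is correct and takes essentially the same approach as the paper: the paper also writes $|a^p-b^p|=\int_{\min\{a,b\}}^{\max\{a,b\}} pt^{p-1}\,dt$ and bounds the integrand by $p(a+b)^{p-1}$. In fact you supply more detail than the paper does, since the paper omits any justification of the second inequality.
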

	\begin{proof}
		We have
		$$
			|a^p - b^p| = \int_{\min\{a,b\}}^{\max\{a,b\}}pt^{p-1} \leq p (a+b)^{p-1}|a-b|.
		$$
	\end{proof}
	
Let $f:Q(0,1)\to\er$. 	
For each $c\in \rn$ and $r>0$ we denote the translated and scaled function as
$$
f_{c,r}(x) = r f(\tfrac{x-c}{r})+c\ .
$$	
A simple linear change of variables shows that for $f \in W^{1,p}(Q(0,1))$ we have 
		\begin{equation}\label{2207}
		f_{c,r} \in W^{1,p}(Q(c,r))\text{ and } \int_{Q(c,r)} |Df_ {c,r}|^p \leq C_{\ref{2207}} r^n\int_{Q(0,1)} |Df|^p.
		\end{equation}
	
	The following lemma is the key for the estimate of the derivative of the composition.
	At the point $a=F^{-1}(c)$ of differentiability of $F$ we know that $F$ is really close to $F(a)+DF(a)(x-a)$ and we can estimate the derivative of $f_{c,r}\circ F$ by the linearization (see \eqref{TheSpiritIsMoving}). This form is crucial for us as later we will perform a linear change of variables on the right-hand side of \eqref{TheSpiritIsMoving} and its Jacobian is a constant and can be put out of the integral.
	
	\begin{lemma}\label{Patchwork}
		 Assume that $f \in W^{1,p}(Q(0,1))$. 
		Let $G\subset \er^n$ be an open set and let $F:G\to \rn$ be a mapping locally bi-Lipschitz on $G$. Let $Q(c,r)\subset\subset F(G)$ and denote $a=F^{-1}(c)$.
		Then for every $\rho > 0$ and
  almost every $c \in G$ there exists an $r_c$ such that for all $0<r<r_c$ we have
		\begin{equation}\label{TheSpiritIsMoving}
			\begin{aligned}
				&\int_{F^{-1}(Q(c,r))} |Df_{c,r}(F(x))DF(x)|^p \; dx \\
				&\qquad \leq  \int_{[DF(a)]^{-1}(Q(0,r))} \bigl|Df_{c,r}(c + DF(a)(x-a))DF(a)\bigr|^p\; dx+\rho r^n.
			\end{aligned}
		\end{equation}
		Similarly, for any measurable $X\subset Q(0,1)$ and $X_{c,r} = rX+c$
		\begin{equation}\label{WeAllLive}
			\begin{aligned}
			&\int_{F^{-1}(X_{c,r})} |Df_{c,r}(F(x))DF(x)|^p\; dx  \\
			&\qquad \leq \int_{[DF(a)]^{-1}(rX)} \bigl|Df_{c,r}(c + DF(a)(x-a))DF(a)\bigr|^p\; dx+\rho r^n.
			\end{aligned}
		\end{equation}
	\end{lemma}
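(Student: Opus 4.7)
The plan is to rescale both sides by natural changes of variables so that both integrals live on $Q(0,1)$ weighted against $|Df|^p$, and then to exploit that for almost every $c$ with $a:=F^{-1}(c)$ the matrix $DF(F^{-1}(c+rz))$ approaches $L:=DF(a)$ in an $L^1(Q(0,1))$ sense as $r\to 0$.

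First I would select the good points. Since $F$ is locally bi-Lipschitz, Rademacher's theorem together with $DF^{-1}(F(\cdot))=[DF(\cdot)]^{-1}$ implies that at a.e.\ $a\in G$ the derivative $DF(a)$ is invertible with $|DF(a)|\leq\operatorname{Lip}(F)$ and $|\det DF(a)|\geq\operatorname{Lip}(F^{-1})^{-n}$, and that $a$ may additionally be assumed to be a Lebesgue point of the matrix-valued map $DF$. By bi-Lipschitz invariance of null sets this corresponds to almost every $c$ in the range of $F$; set $L:=DF(a)$. Theorem~\ref{AFPtheo} applied with the substitution $y=F(x)$ followed by the scaling $z=(y-c)/r$ transforms the left-hand side of \eqref{TheSpiritIsMoving} into
$$
r^n\int_{Q(0,1)}|Df(z)\,DF(F^{-1}(c+rz))|^p\,|\det DF(F^{-1}(c+rz))|^{-1}\,dz,
$$
while the linear substitution $w=L(x-a)/r$ turns the first term on the right into
$$
r^n|\det L|^{-1}\int_{Q(0,1)}|Df(w)\,L|^p\,dw.
$$
After dividing by $r^n$ it suffices to show that the first integral tends to the second as $r\to 0$.

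The key step is the convergence $DF(F^{-1}(c+rz))\to L$ in $L^1(Q(0,1))$ with respect to $z$. By a bi-Lipschitz change of variables,
$$
\int_{Q(0,1)}|DF(F^{-1}(c+rz))-L|\,dz\leq \operatorname{Lip}(F)^n\, r^{-n}\int_{F^{-1}(Q(c,r))}|DF(x)-L|\,dx,
$$
and the inclusion $F^{-1}(Q(c,r))\subset Q(a,Cr)$ with $C=C(\operatorname{Lip}(F^{-1}))$ combined with the Lebesgue-point property at $a$ drives the right-hand side to $0$; the same argument gives $|\det DF(F^{-1}(c+rz))|\to|\det L|$ in $L^1(Q(0,1))$. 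Given any sequence $r_k\to 0$ I would pass to a subsequence along which both convergences hold pointwise a.e.\ in $z$. The integrand of $r^{-n}\cdot\text{LHS}$ is then pointwise convergent to $|Df(z)L|^p/|\det L|$ and is bounded above by a constant multiple of $|Df(z)|^p\in L^1(Q(0,1))$, the uniform dominations $|DF|\leq\operatorname{Lip}(F)$ and $|\det DF|\geq\operatorname{Lip}(F^{-1})^{-n}$ being essential here. Dominated convergence delivers the limit, and independence of the subsequence shows $r^{-n}\cdot\text{LHS}\to r^{-n}\cdot\text{RHS}$, hence for every $r<r_c$ (with $r_c$ depending on $c$, $\rho$, $f$, $F$) the defect is at most $\rho$, giving \eqref{TheSpiritIsMoving}.

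For the variant \eqref{WeAllLive} the two substitutions replace the domain of integration by $X$ throughout and the $L^1$- and dominated-convergence arguments apply verbatim on $X$. The main obstacle I expect is precisely the middle step above: upgrading the Lebesgue-point property of $DF$ at $a$ to $L^1(Q(0,1))$-convergence of the map $z\mapsto DF(F^{-1}(c+rz))$. This step uses the bi-Lipschitz hypothesis on $F$ twice — once to localize $F^{-1}(Q(c,r))$ in a cube of size comparable to $r$ around $a$, and once to pin $|\det DF|$ away from zero so that $|Df(z)|^p$ dominates the scaled integrand and dominated convergence is applicable.
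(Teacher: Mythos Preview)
Your argument is correct and is a genuinely different route from the paper's proof. After your two changes of variables both integrals become $r^n$ times an integral over $Q(0,1)$ with integrand $|Df(z)\,M_r(z)|^p/|\det M_r(z)|$, where $M_r(z)=DF(F^{-1}(c+rz))$ on the left and $M_r(z)\equiv L=DF(a)$ on the right; the whole problem reduces to showing that the variable-matrix integral converges to the constant-matrix one, which you handle by Lebesgue points plus dominated convergence. The paper instead works in the $x$-variable: it intersects $F^{-1}(Q(c,r))$ with the linearized preimage $a+[DF(a)]^{-1}(Q(0,r))$, first replaces $DF(x)$ by $DF(a)$ on this intersection using approximate continuity of $DF$ (Lemma~\ref{stupid} plus absolute continuity of $\int|Df_{c,r}|^p$), and then---the step your approach bypasses entirely---approximates $f$ by a smooth $f^\eta$ so that the Lipschitz constant of $Df^\eta$ lets one swap the argument $F(x)$ for its linearization $c+DF(a)(x-a)$. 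Your change of variables makes that second step unnecessary because the argument of $Df$ is already the same variable $z$ on both sides. Both approaches use the bi-Lipschitz hypothesis in the same two places (localizing $F^{-1}(Q(c,r))$ near $a$ and keeping $|\det DF|$ bounded below), and both in fact yield the two-sided estimate $|\mathrm{LHS}-\mathrm{RHS}|\leq\rho r^n$ rather than just the stated inequality; your argument is shorter, while the paper's decomposition makes the dependence on the various small parameters more explicit.
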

	
	\begin{proof}
		We start with rough sketch of the proof and then expound the individual steps in detail. We call
		$$
		A_{c,r} = F^{-1}(Q(c,r))\cap \bigl(a +  [DF(a)]^{-1}(Q(0,r))\bigr).
		$$
		Given that $r$ is very small and $a=F^{-1}(c)$ is a differentiation point for $F$ we can restrict just to $A_{c,r}$ because the complement of $A_{c,r}$ becomes very small and the integral over it disappears by absolute continuity of the integral. We can approximate $f$ by a smooth mapping $f^{\eta}$ as well as we like. For $f^{\eta}_{c,r} \in \mathcal{C}^2$ it becomes obvious that the difference between the linearization and the non-linear integral is very small as soon as $r$ is very small concluding the proof.
		
		From \eqref{2207} we have $Df_{c,r} \in L^p$ and Theorem~\ref{AFPtheo} shows that $f\circ F \in W^{1,p}$. For every $\epsilon>0$ there exists a $\delta >0$ such that whenever $\mathcal{L}^n(E)<\delta$ we have $\int_{E}|Df|^p<\epsilon$. Therefore, it follows from a simple change of variables, that when 
\eqn{star}
$$
\mathcal{L}^n(E)<\delta r^n\text{ we have }\int_{E}|Df_{c,r}|<\epsilon r^n.
$$ 
Almost every point $c$ is such that $a = F^{-1}(c)$ is a point of differentiability of $F$ and a point of approximate continuity of $DF$. Take any such $c$,  fix $r_0$ such that $F$ is $L$-bi-Lipschitz on $F^{-1}(Q(c,r_0))$, fix $0<r_1\leq r_0$ such that when $0<r<r_1$ the set
$$
E_{c,r} = \Bigl\{x\in F^{-1}(Q(c,r)):\ |DF(x) - DF(a)|>\tfrac{\rho}{ C_{\ref{2207}} 10 p(2L)^{p-1}L^n\int_{Q(0,1)}|Df|^p}\Bigr\} 
$$
is so small that by \eqref{star}
\eqn{doublestar}
$$
\int_{F(E_{c,r})} |Df_{c,r}|^p < r^n\tfrac{\rho}{10 p(2L)^{p}L^n}.
$$
Calculate using Lemma~\ref{stupid}, the $L$-Lipschitz quality of $F$, the definition of $E_{c,r}$, $J_F^{-1}<L^n$ in the change of variables formula, \eqref{doublestar} and \eqref{2207}
		\begin{equation}\label{Linda}
			\begin{aligned}
				&\int_{A_{c,r}}\Big||Df_{c,r}(F(x))DF(x)|^p - |Df_{c,r}(F(x))DF(a)|^p\Big|\; dx\\
				&\qquad \leq p \int_{A_{c,r}}\Bigl(|Df_{c,r}(F(x))DF(x)| + |Df_{c,r}(F(x))DF(a)|\Bigr)^{p-1}\\
				&\qquad \qquad \qquad \qquad \bigl|Df_{c,r}(F(x))DF(x) - Df_{c,r}(F(x))DF(a)\bigr|\; dx \\
				&\qquad \leq p(2L)^{p-1}  \int_{A_{c,r}}|Df_{c,r}(F(x))|^{p}|DF(x) -DF(a)| \; dx\\
				& \qquad \leq \frac{\rho}{ C_{\ref{2207}} 10L^n\int_{Q(0,1)}|Df|^p}\int_{A_{c,r}\setminus E_{c,r}}|Df_{c,r}(F(x))|^{p}\; dx
				+ p(2L)^{p-1}\int_{E_{c,r}}|Df_{c,r}(F(x))|^p 2L\; dx\\
				& \qquad \leq \frac{\rho}{ C_{\ref{2207}} 10 L^n\int_{Q(0,1)}|Df|^p}\int_{F^{-1}(Q(c,r))}|Df_{c,r}(F(x))|^p\; dx+p(2L)^p L^n
				\int_{F(E_{c,r})}|Df_{c,r}|^p\\
				& \qquad \leq \frac{\rho}{ C_{\ref{2207}} 10\int_{Q(0,1)}|Df|^p}\int_{Q(c,r)}|Df_{c,r}|^p+\rho\frac{r^n}{10}\leq \frac{\rho}{5} r^n.
			\end{aligned}
		\end{equation}
		
Let us fix $\eta>0$. We can fix $f^{\eta}$ a smooth approximation of $f$ with
$$
\int_{Q(0,1)} |Df - Df^{\eta}|^p < \eta \int_{Q(0,1)} |Df|^p
$$
and calling $f_{c,r}^{\eta}(x) = rf^{\eta}(\tfrac{x-c}{r})+c$ we have that
\eqn{approx}
$$
\int_{Q(c,r)} |Df_{c,r} - Df_{c,r}^{\eta}|^p < \eta \int_{Q(c,r)} |Df_{c,r}|^p.
$$
Clearly
$$
\begin{aligned}
|Df_{c,r}(F)&-  Df_{c,r}(c + DF)|^p \leq 3^p|Df_{c,r}(F)- Df^{\eta}_{c,r}(F)|^p+\\
&+3^p|Df^{\eta}_{c,r}(F)- Df^{\eta}_{c,r}(c + DF)|^p+3^p|Df^{\eta}_{c,r}(c + DF)- Df_{c,r}(c + DF)|^p\\
\end{aligned}
$$
	and therefore we can estimate with the help of Lemma~\ref{stupid}, $|DF(a)|\leq L$ and notation $a=F^{-1}(c)$
		\begin{equation}\label{IntersectionalityIsRubbish}
			\begin{aligned}
				&\int_{A_{c,r}}\Big||Df_{c,r}(F(x))DF(a)|^p - |Df_{c,r}(c + DF(a)(x-a))DF(a)|^p\Big|\\
				&\quad \leq 3^p pL^p\int_{A_{c,r}}2^{p-1}\bigl(|Df_{c,r}(F(x))|^{p-1}+ |Df^{\eta}_{c,r}(F(x))|^{p-1}\bigr)\\
				&\qquad  \qquad   \qquad \qquad \qquad \bigl|Df_{c,r}(F(x)) - Df_{c,r}^{\eta}(F(x))\bigr| \\
&\qquad  + 3^p pL^p\int_{A_{c,r}}2^{p-1}\bigl(|Df_{c,r}^{\eta}(F(x))|^{p-1}+ |Df^{\eta}_{c,r}(c + DF(a)(x-a)\bigr)|^{p-1})\\
				&\qquad\qquad\qquad\qquad\qquad
				\bigl|Df_{c,r}^{\eta}(c + DF(a)(x-a)) - Df_{c,r}^{\eta}(F(x))\bigr| \\
				&\qquad  + 3^p pL^p\int_{A_{c,r}}2^{p-1}\bigl(|Df^{\eta}_{c,r}(c + DF(a)(x-a))|^{p-1}
				+ |Df_{c,r}(c + DF(a)(x-a))|^{p-1}\bigr)\\
				&\qquad\qquad\qquad\qquad\qquad
				 \bigl|Df^{\eta}_{c,r}(c + DF(a)(x-a)) - Df_{c,r}(c + DF(a)(x-a))\bigr|.
			\end{aligned}
		\end{equation}				
		Expanding the parentheses we get 6 terms. We estimate the first term using the H\"{o}lder inequality and \eqref{approx}
		\eqn{square}
		$$
			\begin{aligned}
				& \int_{A_{c,r}}|Df_{c,r}(F(x))|^{p-1}|Df_{c,r}(F(x)) - Df_{c,r}^{\eta}(F(x))| \\
				&\quad \leq\Big(\int_{A_{c,r}}|Df_{c,r}(F(x))|^{p}\Big)^{\tfrac{p-1}{p}} \Big(\int_{A_{c,r}}|Df_{c,r}(F(x)) - Df_{c,r}^{\eta}(F(x))|^p\Big)^{\tfrac{1}{p}}\\
				&\quad \leq L^n\Big(\int_{Q_{c,r}}|Df_{c,r}(y)|^{p}\; dy\Big)^{\tfrac{p-1}{p}}\Big(\int_{Q_{c,r}}|Df_{c,r}(y) - Df_{c,r}^{\eta}(y)|^p\; dy\Big)^{\tfrac{1}{p}}\\
				& \quad \leq L^n\eta^{\tfrac{1}{p}} r^n\int_{Q(0,1)}|Df|^p.
			\end{aligned}
		$$
		Since $\|Df_{c,r}^{\eta}\|_p\leq \|Df_{c,r}\|_p$ we have the same estimate for the second term. The fifth and sixth terms are almost identical and yield the same estimate (up to slightly changing the multiplicative constant). From \eqref{square} we see that by choosing $\eta$ sufficiently small we have
		\begin{equation}\label{ImWorthIt}
			\begin{aligned}
				3^ppL^p2^{p-1}\int_{A_{c,r}}|Df_{c,r}(F(x))|^{p-1}|Df_{c,r}(F(x)) - Df_{c,r}^{\eta}(F(x))|
				& \leq \frac{\rho}{10}r^n
			\end{aligned}
		\end{equation}
		and the same estimate holds for the second, fifth and sixth terms.
		
		It remains to estimate the third and fourth terms on the righthand side of \eqref{IntersectionalityIsRubbish}. Let us call $M_{\eta}$ the Lipschitz constant of $|Df_{c,r}^{\eta}|$. Our $\eta$ is fixed so $M_{\eta}$ is an absolute constant.
		Since $a = F^{-1}(c)$ is a point of differentiability of $F$ we choose $0<r_2<r_1$ so small that for all $0<r<r_2$ we have
		\eqn{est}
		$$
		\|F(x) - c - DF(a)(x-a)\|_{L^{\infty}(Q(c,r))} < \frac{\rho}{C_{\ref{est}}(p) M_{\eta}\int_{Q(0,1)}|Df|^{p-1}}
		$$
		for a fixed constant $C_{\ref{est}}(p)$. 
	Now by \eqref{est} we have for a well-chosen value of $C_{\ref{est}}(p)$ 
		\begin{equation}\label{ThirdFourth}
			\begin{aligned}
				&  \int_{A_{c,r}}\bigl(|Df_{c,r}^{\eta}(F(x))|^{p-1}+ |Df^{\eta}_{c,r}(c + DF(a)(x-a))|^{p-1}\bigr)\\
				& \qquad \qquad\qquad\qquad\qquad \bigl|Df_{c,r}^{\eta}(F(x)) - Df_{c,r}^{\eta}(a + DF(a)(x-a))\bigr|\\
				& \qquad \leq \int_{A_{c,r}}\bigl(|Df_{c,r}^{\eta}(F(x))|^{p-1}+ |Df^{\eta}_{c,r}(c + DF(c)(x-c))|^{p-1}\bigr)\\
				& \qquad \qquad\qquad\qquad\qquad M_{\eta}\bigl\|F(x) - c - DF(a)(x-a)\bigr\|_{L^{\infty}(Q(c,r))} \\
				& \qquad \leq \frac{\rho}{C_{\ref{est}}(p) \int_{Q(0,1)}|Df|^{p-1}}  \int_{A_{c,r}}|Df_{c,r}|^{p-1}\\
				& \qquad \leq \frac{\rho r^n}{10\ 3^ppL^p2^{p-1}}  .
			\end{aligned}
		\end{equation}
		
		Now we apply the estimate \eqref{Linda} and in \eqref{IntersectionalityIsRubbish} we estimate the first, second, fifth and sixth term by \eqref{ImWorthIt}, the third and fourth terms by \eqref{ThirdFourth} to get
		\begin{equation}\label{BeatTheIntersectionality}
				\int_{A_{c,r}}\Big||Df_{c,r}(F(x))DF(x)|^p -| Df_{c,r}(c + DF(a)(x-a))DF(a)|^p\Big|
				 \leq \frac{4}{5} \rho r^n.\\
		\end{equation}
		We consider the remaining part. 
		We choose $\delta$ so that 
		\eqn{acint}
		$$
		\int_{E} |Df|^p < \tfrac{\rho}{10 L^{n}}\text{ as soon as }\mathcal{L}^n(E)\leq \delta.
		$$
		Since $a=F^{-1}(c)$ is a point of differentiability of $F$ we can estimate that
		$$
			\mathcal{L}^n\big([F^{-1}(Q(c,r))\cup (a +  [DF(a)]^{-1}(Q(0,r)))] \setminus A_{c,r}\big)\leq \delta r^nL^{-n}
		$$
		as soon as $r\leq r_3$ (without loss of generality assume that $r_3\leq r_2$) and hence by \eqref{acint}
		$$
		\int_{E} |Df_{c,r}|^p < \tfrac{\rho r^n}{10 L^{n}}\text{ as soon as }\mathcal{L}^n(E)\leq \delta r^n.
		$$
		Therefore, calling
		$$
		S_{c,r} = F^{-1}(Q(c,r)) \setminus A_{c,r}\text{ and }Z_{c,r} = \big(a +  [DF(a)]^{-1}(Q(0,r))\big) \setminus A_{c,r},
		$$
		we have $\mathcal{L}^n(S_{c,r})\leq \delta r^n L^{-n}$, $\mathcal{L}^n(Z_{c,r}) \leq \delta r^n L^{-n}$ and so
		$$
			\int_{S_{c,r}}|Df_{c,r}(F(x))|^p \leq \frac{\rho r^n}{10 L^{n}}\; dx \text{ and } \int_{Z_{c,r}}|Df_{c,r}(F(x))|^p\; dx \leq \frac{\rho r^n}{10 L^{n}}.
		$$
		Adding this to \eqref{BeatTheIntersectionality} we get
		\begin{equation}\label{TheFrench}
		\begin{aligned}
				&\Big|\int_{F^{-1}(Q(c,r))} |Df_{c,r}(F(x))DF(x)|^p\; dx\\
	 &\qquad - \int_{[DF(a)]^{-1}(Q(0,r))} |Df_{c,r}(c + DF(a)(x-a))DF(a)|^p\Big|\; dx
 \leq \rho r^n.
		\end{aligned}
		\end{equation}
		which proves the claim of \eqref{TheSpiritIsMoving}. The equation~\eqref{WeAllLive} is proved by applying the above estimates to the set $F^{-1}(X_{c,r})$ (respectively $a + [DF(a)]^{-1}(rX))$.
	\end{proof}

\subsection{Covering lemmata}

In our main proof we construct mapping and some closed set where the Jacobian is negative. We do not alter our mapping on this closed set and we cover the remaining open set $\Omega$ by (small enough) disjoint cubes where we compose our mapping with a translated and rotated copy of the previous construction.
	
	\begin{lemma}\label{StickThatUpYourPipeAndSmokeItTillTheCowsComeHome}
		Let $\Omega \subset \er^n$ be an open set and let $\mathcal {N}\subset \Omega$ satisfy $\mathcal{L}^n(\mathcal {N})=0$.
Assume that for every $c\in\Omega\setminus \mathcal {N}$ we have a number $r_c >0$. Further let $S:\Omega\setminus \mathcal {N} \to \er^4$ be a mapping such that $|S(c)| = 1$. By $O_c$ denote a sense-preserving unitary map such that $O_cS(c)= e_1$. By $\mathcal{Q}_{c,r}$ denote the set $c+ O_c^{-1}(Q(0,r))$. Then, we can find a countable system of rotated cubes $\mathcal{Q}_{c_i, r_i}\subset\Omega$ with pairwise disjoint interiors such that $c_i\in\Omega\setminus \mathcal {N}$, $r_i<r_{c_i}$ for every $i\in \en$ and $\mathcal{L}^n(\Omega \setminus \bigcup_i \mathcal{Q}_{c_i, r_i} ) = 0$.
	\end{lemma}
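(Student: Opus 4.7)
The plan is to apply a Vitali-type covering theorem to the family of all admissible rotated cubes. The essential observation is that rotations are isometries, so each $\mathcal{Q}_{c,r}$ is geometrically a cube of side $2r$; in particular $B(c,r)\subset \mathcal{Q}_{c,r}\subset B(c,r\sqrt{n})$, independently of the rotation $O_c$. This uniform shape control is precisely what drives the Vitali argument.

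First, for each $c\in \Omega\setminus \mathcal {N}$ I would replace $r_c$ by
\[
r'_c:=\min\bigl\{r_c,\ \tfrac{1}{2\sqrt{n}}\dist(c,\partial\Omega)\bigr\}>0,
\]
so that every cube $\mathcal{Q}_{c,r}$ with $0<r<r'_c$ is contained in $\Omega$. Next, set
\[
\mathcal{F}:=\bigl\{\mathcal{Q}_{c,r}\ :\ c\in \Omega\setminus \mathcal {N},\ 0<r<r'_c\bigr\}.
\]
For every $c\in\Omega\setminus \mathcal {N}$ the family $\mathcal{F}$ contains cubes $\mathcal{Q}_{c,r}$ with $c\in\mathcal{Q}_{c,r}$ and arbitrarily small diameter, so $\mathcal{F}$ is a Vitali cover of $\Omega\setminus \mathcal {N}$. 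Moreover $\mathcal{L}^n(\mathcal{Q}_{c,r})=(2r)^n$ while $\mathcal{L}^n(B(c,r\sqrt{n}))$ equals a purely dimensional constant times $r^n$, so the members of $\mathcal{F}$ have eccentricity bounded by a constant depending only on $n$ (and independent of $c$ and of the rotation $O_c$).

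The classical Vitali covering theorem, in its standard form for families of sets of uniformly bounded eccentricity (cf.\ Federer, \emph{Geometric Measure Theory}, 2.8.18), then yields a countable pairwise disjoint subfamily $\{\mathcal{Q}_{c_i,r_i}\}_{i\in\en}\subset\mathcal{F}$ with
\[
\mathcal{L}^n\Bigl((\Omega\setminus \mathcal {N})\setminus\bigcup_{i}\mathcal{Q}_{c_i,r_i}\Bigr)=0.
\]
Since $\mathcal{L}^n(\mathcal {N})=0$ by assumption, the same estimate holds with $\Omega$ in place of $\Omega\setminus \mathcal {N}$, which is the desired conclusion. The only delicate point is noting that Vitali's theorem does not require balls or axis-aligned cubes --- just a uniform eccentricity bound on the covering sets --- so the point-dependent rotations $O_c$ cause no real obstacle; this is the one step where the statement needs to be articulated with some care in the write-up.
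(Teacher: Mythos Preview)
Your proof is correct and takes a genuinely different route from the paper. The paper proceeds by an elementary iterative argument: it first shows that for any open set one can find a finite pairwise disjoint subfamily of admissible rotated cubes covering a fixed fraction $\alpha=\alpha(n)>0$ of its measure, and then exhausts $\Omega$ by repeating this on the remaining open set. The $\alpha$-step is done by hand: take a Whitney decomposition, pick finitely many Whitney cubes carrying at least half the measure, find (by a measure/pigeonhole argument) in each such cube many sub-cubes whose inner halves meet $\{c:r_c>r_0\}$, and fit a small rotated cube $z+O_z^{-1}(Q(0,\tfrac{1}{2k\sqrt n}))$ inside each. Your approach bypasses all of this by invoking the Vitali covering theorem for families of closed sets with uniformly bounded eccentricity; since $B(c,r)\subset\mathcal Q_{c,r}\subset B(c,r\sqrt n)$ independently of $O_c$, the hypothesis is met and the conclusion is immediate. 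Your argument is shorter and more conceptual, at the cost of citing a slightly less standard form of Vitali (for regular sets rather than balls); the paper's argument is entirely self-contained. One small cosmetic point: apply the Vitali theorem to the closed cubes $\overline{\mathcal Q_{c,r}}$ so that the output is a genuinely disjoint family, which in particular has pairwise disjoint interiors as required.
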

	\begin{proof}
		To prove our claim it suffices to prove that there exists an $\alpha >0$ such that for any open $\Omega$ we can find a finite number of $\mathcal{Q}_{c_i,r_i} \subset \Omega$, $i=1,2,\dots, I_{\Omega}$ with disjoint interiors so that
		$$
			\mathcal{L}^n\Big(\bigcup_{i=1}^{I_{\Omega}}\mathcal{Q}_{c_i,r_i}\Big) \geq \alpha \mathcal{L}^n(\Omega).
		$$
		By iterating the above process (applied to the new open set $\Omega\setminus \bigcup_{i=1}^{I_{\Omega}}\overline{\mathcal{Q}_{c_i,r_i}}$) we get precisely the claim. Therefore we prove the above claim.
		
		We segregate $\Omega$ by a Whitney decomposition and select a finite number of cubes $Q_1, Q_2, \dots, Q_m$ of that covering so that $\mathcal{L}^n(\bigcup_{i=1}^mQ_i)\geq \tfrac{1}{2}\mathcal{L}^n(\Omega)$. From here on we work inside a single cube $Q_1$ and therefore, without loss of generality, we may assume that $Q_1 = Q(0,1)$.
		
		We can find an $r_0$ small enough such that
		\begin{equation}\label{ElbowRoom}
			\mathcal{L}^n\bigl(\{c\in Q(0,1): r_c>r_0\}\bigr) > \mathcal{L}^n(Q(0,1)) -\tfrac{1}{2}.
		\end{equation}
Now we choose $k\in \en$ so that $\frac{1}{k}<r_0$. We separate $Q(0,1)$ into $k^n$ identical cubes of type $Q(x,\tfrac{1}{k})$. Clearly for any unitary map $O : \rn \to \rn$ we have that $O\big(Q(0,\tfrac{r}{\sqrt{n}})\big)\subset B(0,r) \subset Q(0,r)$. Therefore given a cube of type $Q(x,\tfrac{1}{k})$ and any $y\in Q(0,\tfrac{1}{2k})$ we have that
		\begin{equation}\label{BreathingSpace}
			Q(x,\tfrac{1}{k}) \supset Q(x+y, \tfrac{1}{2k}) \supset B(x+y, \tfrac{1}{2k}) \supset x+y+O(Q(0,\tfrac{1}{\sqrt{n}2k})).
		\end{equation}
		The center of each rotated cube in \eqref{BreathingSpace} is of the form $x+y$ where $x$ is the center of the large cube of radius $\tfrac{1}{k}$ and $y \in  Q(0,\tfrac{1}{2k})$. The number of such cubes in $Q(0,1)$ is $k^n$. Therefore the measure of the set of possible centers of the rotated cubes is $k^n (\tfrac{1}{2k})^n 2^n = 1$. Therefore by \eqref{ElbowRoom} there must be at least $\tfrac{k^n}{2}$ cubes of type-$Q(x,\tfrac{1}{2k})$ which intersect the set
		$\{c\in Q(0,1): r_c<r_0\}$ from \eqref{ElbowRoom}. That is to say we can find at least $k^n/2$ points $z=x+y$, each in a separate cube of type-$Q(x,\tfrac{1}{2k})$ such that $z+ O_z(Q(0,\tfrac{1}{\sqrt{n}2k}))$ are all pairwise disjoint by \eqref{BreathingSpace} where $O_z$ is a sense-preserving unitary map with $O_z(S(z)) = e_1$. The measure of the set covered by this collection of rotated cubes is at least
		$$
			\frac{k^n}{2}\frac{2^n}{n^{\frac{n}{2}}2^nk^n} = \frac{1}{2n^{n/2}} = 2\alpha.
		$$
		We repeat this in all of the chosen cubes $Q_1, \dots Q_m$ which together make at least half of the measure of $\Omega$ and so our rotated cubes (of which we have a finite number) cover a set of measure at least $\alpha \mathcal{L}^n(\Omega)$. Iterating this technique we arrive at a covering of all of $\Omega$ up to a closed null set by rotated cubes with pairwise disjoint interiors.
	\end{proof}

	\begin{corollary}\label{NoCakeForLosers}
	Let $\Omega \subset \er^n$ be an open set and let $\mathcal {N}\subset \Omega$ with $\mathcal{L}^n(\mathcal {N})=0$.
Assume that for every $c\in\Omega\setminus \mathcal {N}$ we have $r_c>0$. Then, we can find a countable system of cubes $Q(c_i, r_i)\subset\Omega$ with pairwise disjoint interiors such that $c_i\in\Omega\setminus N$, $r_i<r_{c_i}$ for every $i\in \en$ and $\mathcal{L}^n(\Omega \setminus \bigcup_i Q(c_i, r_i) ) = 0$.
	\end{corollary}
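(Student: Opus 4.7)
The plan is to deduce this as an immediate specialization of Lemma \ref{StickThatUpYourPipeAndSmokeItTillTheCowsComeHome}. The lemma allows an arbitrary rotation field $S:\Omega\setminus\mathcal{N}\to\er^4$ with $|S(c)|=1$, and covers $\Omega$ (up to a null set) by the rotated cubes $\mathcal{Q}_{c_i,r_i}=c_i+O_{c_i}^{-1}(Q(0,r_i))$, where $O_c$ is a sense-preserving unitary map with $O_c S(c)=e_1$. The corollary is just the axis-aligned case.

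Concretely, I would set $S(c):=e_1$ for every $c\in\Omega\setminus\mathcal{N}$ and choose $O_c:=\id$, which is a sense-preserving unitary map satisfying $O_c S(c)=e_1$. With this choice,
$$
\mathcal{Q}_{c,r}=c+\id^{-1}(Q(0,r))=Q(c,r),
$$
so the rotated cubes of the lemma reduce to standard axis-aligned cubes centered at $c$ with half-side $r$. The hypothesis of the lemma on the assignment $c\mapsto r_c$ is exactly the hypothesis of the corollary, so applying the lemma with these data yields a countable disjointed family $\{Q(c_i,r_i)\}_i\subset\Omega$ with $c_i\in\Omega\setminus\mathcal{N}$, $r_i<r_{c_i}$, and $\mathcal{L}^n\big(\Omega\setminus\bigcup_i Q(c_i,r_i)\big)=0$, which is exactly the assertion.

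There is essentially no obstacle here: the only thing to verify is that the identity is a sense-preserving unitary map sending $e_1$ to $e_1$, which is immediate. The reason the corollary is stated separately is presumably for convenience of citation in later arguments where only axis-aligned decompositions are needed (and where one does not wish to drag along the extra rotation data). If one preferred a direct proof instead of invoking the lemma, the same Whitney-decomposition-plus-grid-counting argument used in the proof of the lemma goes through with the rotation steps deleted, or alternatively one could simply apply the Vitali covering theorem to the family $\{Q(c,r):c\in\Omega\setminus\mathcal{N},\,0<r<r_c,\,Q(c,r)\subset\Omega\}$, which forms a Vitali cover of $\Omega\setminus\mathcal{N}$ and hence of $\Omega$ up to a null set.
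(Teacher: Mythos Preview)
Your proof is correct and matches the paper's own proof essentially verbatim: the paper also just applies Lemma~\ref{StickThatUpYourPipeAndSmokeItTillTheCowsComeHome} with $S(c)=e_1$ and $O_c=\id$.
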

	\begin{proof}
		It suffices to apply Lemma~\ref{StickThatUpYourPipeAndSmokeItTillTheCowsComeHome} with $S(c) =e_1$ for all $c \in \Omega\setminus \mathcal {N}$ and $O_c = \id$.
	\end{proof}

\section{Construction of Cantor sets and mappings between them}\label{Cantor}

\subsection{A map that has negative Jacobian on a set of positive measure and equals to identity on the boundary}
	Sections~\ref{Cantor}, \ref{CarCrash}, \ref{Germany}, \ref{PardubiceJeJenVesniceUHradce} and \ref{TheBigDeal} are dedicated to proving Theorem~\ref{TheBigLebowski}. Section~\ref{Cantor} introduces the concept of Cantor sets and defines some mappings between them. Section~\ref{CarCrash} ensures identity on the boundary of our map. Sections~\ref{Germany} and~\ref{PardubiceJeJenVesniceUHradce} list the necessary estimates of the derivatives needed to prove Theorem~\ref{TheBigLebowski} and we combine all the results in the proof in Section~\ref{TheBigDeal}.
	
	\begin{thm}\label{TheBigLebowski}Let $n=4$ and $1\leq p< \frac{3}{2}$.
	For every $\epsilon>0$ there exists a closed set $E \subset Q(0,1)$ and a map $f_1 \in W^{1,p}(Q(0,1), \er^4)$ such that,
	\begin{enumerate}
		\item[$i)$] $f_1(x) = x$ for $x\in \partial Q(0,1)$,
		\item[$ii)$] $f_1$ is locally bi-Lipschitz on $Q(0,1) \setminus E$,
		\item[$iii)$] $J_{f_1}<0$ on $E$,
		\item[$iv)$] $\mathcal{L}^4(Q(0,1)\setminus E) < \epsilon$,
		\item[$v)$] for $j=1,2,3$ we have
		$$
			\int_{Q(0,1)\setminus E} |D_j f_1|^p \int_{Q(0,1)\setminus E} |D f_1|^p \leq \frac{1}{12}.
		$$
	\end{enumerate}
	\end{thm}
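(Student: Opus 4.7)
I realize $f_1$ as a rescaled version of a composition $\tilde f := G_{K,N,m,\eta} \circ F_\beta \circ \tilde G_{K,N,m,\eta}$ built on the elongated reference box $R_m := [-(2m+1),2m+1]^3 \times [-1,1]$, as announced in the introduction. First I fix the one-dimensional Cantor sets: choose $K$ so large that $\mathcal{L}^1(\mathcal{C}_{A,K}) \geq 2-\delta_1$ for a small $\delta_1 = \delta_1(\epsilon)$, and let $\mathcal{C}_B$ be its zero-measure partner. Setting $\mathcal C := \mathcal{C}_{A,K}^4$, form the $m$-Cantor plates $\mathcal{C}_{A,K,m}$ and $\mathcal{C}_{B,m}$ inside $R_m$; then $\mathcal{L}^4(R_m) = 2(2m+1)^3$, while $\mathcal{L}^4(R_m \setminus \mathcal{C}_{A,K,m}) \leq C(m^3\delta_1 + m^2)$, the second term accounting for a boundary collar of thickness $O(1)$.

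Next I invoke the three ingredients whose construction is announced in Sections 3 and 4: the squeeze $\tilde G$, which maps $\mathcal{C}_{A,K,m}$ homeomorphically onto $\mathcal{C}_{B,m}$, is locally bi-Lipschitz off $\mathcal{C}_{A,K,m}$, and is the identity on $\partial R_m$; the middle map $F_\beta$, bi-Lipschitz and sense-preserving with $F_\beta(x_1,x_2,x_3,x_4) = (x_1,x_2,x_3,-x_4)$ on $\mathcal{C}_{B,m}$ by \eqref{uuu} and equal to identity outside a neighbourhood of $\mathcal{C}_{B,m}$; and the stretch $G$, which sends $\mathcal{C}_{B,m}$ back to $\mathcal{C}_{A,K,m}$, is locally bi-Lipschitz off $\mathcal{C}_{B,m}$, and is the identity on $\partial R_m$. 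The composition $\tilde f$ is then a homeomorphism of $R_m$, identity on $\partial R_m$, locally bi-Lipschitz off the closed set $E_0 := \mathcal{C}_{A,K,m}$, and at each density point of $E_0$ it acts as the coordinate reflection, whence $J_{\tilde f} = -1$ there. Conjugating by the bi-Lipschitz dilation $\sigma(x) = \bigl(\tfrac{x_1}{2m+1}, \tfrac{x_2}{2m+1}, \tfrac{x_3}{2m+1}, x_4\bigr)$, define $f_1 := \sigma \circ \tilde f \circ \sigma^{-1}$ and $E := \sigma(E_0)$. Properties $(i)$--$(iii)$ transfer directly, and $(iv)$ reduces to
\[
\mathcal{L}^4(Q(0,1)\setminus E) = (2m+1)^{-3}\, \mathcal{L}^4(R_m \setminus E_0) \leq C(\delta_1 + 1/m),
\]
which we force below $\epsilon$ by choosing $\delta_1$ small and then $m$ large.

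The main obstacle is condition $(v)$, and this is precisely where the $m$-plate geometry is essential. The key observation is that none of $\tilde G$, $F_\beta$, $G$ acts nontrivially in the first three coordinate directions except on slabs of total relative measure $\lesssim \delta_1$ (the complements of $\mathcal{C}_{A,K}$ in each transverse factor) or on a boundary collar of bounded thickness. Combining the chain rule (Theorem~\ref{AFPtheo}) with the quantitative $W^{1,p}$ estimates for $\tilde G$, $F_\beta$ and $G$ developed in Sections 3 and 4---where the assumption $p < 3/2$ enters as the integrability threshold of the inverse of a three-dimensional Cantor squeeze---one obtains
\[
\int_{R_m \setminus E_0}|D_j \tilde f|^p \leq C(m^3\delta_1 + m^2) \quad (j=1,2,3), \qquad \int_{R_m \setminus E_0}|D\tilde f|^p \leq C(2m+1)^3.
\]
Under the change $\sigma$ the volume factor $(2m+1)^{-3}$ cancels the growing factor $(2m+1)^3$ in the total integral, while the affected derivative entries rescale by absolute constants, so that
\[
\int_{Q(0,1)\setminus E}|D_j f_1|^p \leq C(\delta_1 + 1/m), \qquad \int_{Q(0,1)\setminus E}|Df_1|^p \leq C_0.
\]
Choosing $\delta_1$ small and then $m$ large enough that $C C_0(\delta_1 + 1/m) \leq 1/12$ completes $(v)$. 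The harder analytic work sits inside the $W^{1,p}$ bounds of the squeeze and stretch maps, which is why the bulk of the paper (Sections 3--6) is devoted to those estimates rather than to the assembly performed here.
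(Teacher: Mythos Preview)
Your outline captures the right architecture (squeeze--reflect--stretch on a Cantor plate), but there are two genuine gaps that prevent the argument from closing.

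\textbf{The anisotropic conjugation is incorrect.} You pass from $R_m$ to $Q(0,1)$ by the anisotropic dilation $\sigma(x)=\bigl(\tfrac{x_1}{2m+1},\tfrac{x_2}{2m+1},\tfrac{x_3}{2m+1},x_4\bigr)$ and assert that ``the affected derivative entries rescale by absolute constants''. They do not. Writing $f_1=\sigma\circ\tilde f\circ\sigma^{-1}$ one has $\partial_j f_1^i=\tfrac{a_i}{a_j}\,\partial_j\tilde f^i\circ\sigma^{-1}$, so for $j\in\{1,2,3\}$
\[
|D_jf_1|^2=\sum_{i=1}^3|\partial_j\tilde f^i|^2+(2m{+}1)^2|\partial_j\tilde f^4|^2.
\]
The cross term $\partial_j\tilde f^4$ is amplified by $2m+1$, and nothing in Sections~3--6 provides the separate componentwise bound you would need to absorb this. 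The paper avoids this by \emph{stacking} $J\approx m$ isotropically rescaled copies of $f_{K,N,m,\eta}$ in the $x_4$ direction (the plates $P_i$ in Section~\ref{TheBigDeal}), so that the global rescaling is by $\tfrac{1}{2m+5}$ in all four coordinates and no off-diagonal amplification occurs.

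\textbf{The $K$--dependence is not tracked, and this is where $p<\tfrac32$ actually enters.} Your claimed inequalities $\int_{R_m\setminus E_0}|D_j\tilde f|^p\le C(m^3\delta_1+m^2)$ and $\int_{R_m\setminus E_0}|D\tilde f|^p\le C(2m+1)^3$ with absolute $C,C_0$ are not what the construction delivers. Proposition~\ref{TheLittleLebowski} gives, with $\eta=K^{-\alpha-1}$,
\[
\int|D_jf_{K,N,m,\eta}|^p\lesssim K^{(\alpha+1)(p-1)+1}m^2+K^{-(\alpha+1)(2-p)+2}m^3,\qquad
\int|D_4f_{K,N,m,\eta}|^p\lesssim \dots+m^3K^{\alpha(p-1)},
\]
and after the paper's isotropic rescaling the product in $v)$ contains the term $CK^{-\alpha(3-2p)+p}$. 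One must choose $\alpha=\tfrac{2+p}{3-2p}$ so that this exponent equals $-2$; this choice is possible precisely when $p<\tfrac32$, and only then can one take first $K$ and afterwards $m$ large. Your account (``$p<3/2$ enters as the integrability threshold of the inverse of a three-dimensional Cantor squeeze'') misidentifies the mechanism, and the scheme ``choose $\delta_1$ small, then $m$ large'' does not control the $K$--powers that actually appear.
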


\subsection{Notation}\label{Notation}
The construction depends on a large parameter $m\in\en$ whose value is chosen later.
	We record some notation here, which we use throughout Sections~\ref{Cantor}, \ref{CarCrash}, \ref{Germany}, \ref{PardubiceJeJenVesniceUHradce} and \ref{TheBigDeal}. We define
	\begin{equation}\label{defm}
	R_{m,t} = [-2m-5,2m+5]^3\times[-1-t,1+t]
\end{equation}
Later we work in detail with $R_{m,2}$, $R_{m,13}$ and especially $R_{m,\eta}$ for $\eta >0 $ small. Eventually we choose $\eta  = K^{-\alpha}$, where $K$ and $\alpha$ are parameters that define our Cantor type set of positive measure (see \eqref{defK} below). 

	We define
	$$
	Z_m = [-2m-1,2m+1]^3\times [-1, 1].
	$$

\subsection{Construction of Cantor type sets}\label{first}
	Let $n=3$ or $n=4$. Given a sequence of numbers $\{s_k\}_{k=0}^\infty$, such that $2^ks_k$ is decreasing with $s_0=1$, we define the Cantor-type set in $\rn$ corresponding to the sequence $\{s_k\}$ as is described below. Call $\V_n$ the set of $2^n$ vertices of the cube $[-1,1]^n$.  We set $z_0=\z_0=0$ and $[-1,1]^{n}=Q(z_0,\tilde{s}_0)$ and further we proceed by induction.
	For $\ve=[v_1,\hdots, v_k]\in \V^{k}_n$ we denote $\w=[v_1,\hdots,v_{k-1}]$ and we define
	\begin{equation}\label{GiveZ}
		z_{\ve}=z_{\w}+\frac{1}{2} {s}_{k-1} v_k=z_0+\frac{1}{2}\sum_{j=1}^k {s}_{j-1}v_j.
	\end{equation}
	Then we define
	\begin{equation}\label{SueMe}
		{Q}'_{\ve}=Q(z_{\ve},\tfrac{{{s}}_{k-1}}{2})\ \text{ and }\ {Q}_{\ve}=Q(z_{\ve},{s}_k).
	\end{equation}
	
\vskip -10pt	
\unitlength=0.7mm
\begin{figure}[h!]
\begin{picture}(110,60)(0,-5)
\put(10,10){\framebox(40,40){}}
\put(30,10){\line(0,1){40}}
\put(10,30){\line(1,0){40}}
\put(12,12){\framebox(16,16){}}

\put(12,32){\framebox(16,16){}}
\put(32,12){\framebox(16,16){}}
\put(32,32){\framebox(16,16){}}
\put(60,10){\framebox(40,40){}}
\put(80,10){\line(0,1){40}}
\put(60,30){\line(1,0){40}}
\put(62,12){\framebox(16,16){}}
\put(70,12){\line(0,1){16}}
\put(62,20){\line(1,0){16}}
\put(63,13){\framebox(6,6){}}
\put(71,13){\framebox(6,6){}}
\put(63,21){\framebox(6,6){}}
\put(71,21){\framebox(6,6){}}
\put(62,32){\framebox(16,16){}}
\put(70,32){\line(0,1){16}}
\put(62,40){\line(1,0){16}}
\put(63,33){\framebox(6,6){}}
\put(71,33){\framebox(6,6){}}
\put(63,41){\framebox(6,6){}}
\put(71,41){\framebox(6,6){}}
\put(82,12){\framebox(16,16){}}
\put(90,12){\line(0,1){16}}
\put(82,20){\line(1,0){16}}
\put(83,13){\framebox(6,6){}}
\put(91,13){\framebox(6,6){}}
\put(83,21){\framebox(6,6){}}
\put(91,21){\framebox(6,6){}}
\put(82,32){\framebox(16,16){}}
\put(90,32){\line(0,1){16}}
\put(82,40){\line(1,0){16}}
\put(83,33){\framebox(6,6){}}
\put(91,33){\framebox(6,6){}}
\put(83,41){\framebox(6,6){}}
\put(91,41){\framebox(6,6){}}
\end{picture}
\vskip -30pt
\caption{Squares
$Q_{\ve}$ and ${Q'}_{\ve}$ for $k=1,2$ and $n=2$
}
\end{figure}
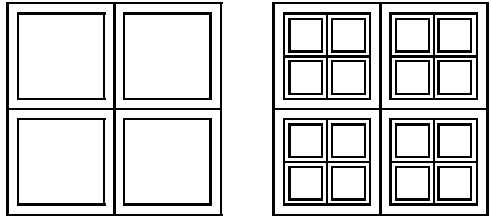

	We refer to the set
	$$
	\C = \bigcap_{k=1}^{\infty} \bigcup_{\ve\in\V^k_n} {Q}_{\ve}
	$$
	as the Cantor set corresponding to the sequence $s_k$.

	The number of the cubes $\{Q_{\ve}:\ \ve\in\V^k_n\}$ is $2^{nk}$ and the measure of each cube of generation $k$ is $2^ns_k^n$. Therefore 
	$$
		\mathcal{L}^n(\C) = \mathcal{L}^n\Big(\bigcap_{k=1}^{\infty} \bigcup_{\ve\in\V^k_n} {Q}_{\ve}\Big)  =\lim_{k\to\infty} 2^{nk}2^ns_k^n.
	$$
	Notice that the projection of the sets $\bigcup_{\ve\in\V^k_n} {Q}_{\ve}$ onto $\er^d$, $d<n$ is the same as the repeating the original construction with $s_k$ in $\er^d$. 
It follows that $\C(n)$ is a Cartesian product of corresponding one dimensional Cantor sets $\C(1)$.

	If $\C(4)$ is the Cantor set constructed in dimension $4$ corresponding to $s_k$, we refer to the set
	$$
		\C_m:=\bigcup_{n_1,n_2,n_3=-m}^m \bigl(\C+[2n_1,2n_2,2n_3,0]\bigr)
	$$
	as a $4$-dimensional `$m$-Cantor plate' corresponding to the sequence $s_k$ i.e. in each of the $(2m+1)^3$ cubes $Q((2n_1,2n_2,2n_3,0),1)$ we have a copy of $\C$, for $-m\leq n_1,n_2,n_3 \leq m$.

	We use two specific sets $\C_{A,K},\ \C_{B}\subset [-1,1]^n$. For $n=4$ we use the notation $\C_{A, K,m}$ and $\C_{B,m}$ for the corresponding $4$-dimensional $m$-Cantor plates in $Z_m$. 	
	We fix the parameters $\alpha>0$ and $K \in \en$ whose exact value is chosen later. 
	In fact at the end we choose $\alpha=\frac{2+p}{3-2p}$ (recall that $1\leq p<3/2$) and $K$ is so big that the measure of $\C_{A,K}$ is really close to $\mira([-1,1]^n)$. 
	We define
	\begin{equation}\label{defK}
		\tilde{r}_k(K) = \frac{2^{-k}}{1+ (K+1)^{-\alpha}}(1+ (K+k+1)^{-\alpha}).
	\end{equation}
	Note that $2^k\tilde{r}_k(K)$ is decreasing and $\tilde{r}_0(K)=1$. 
	By $\C_{A,K}$ we denote the Cantor set constructed using the sequence $\tilde{r}_k(K)$. 
	We use the construction predominantly in the 4 dimensional case but also sometimes in 3 dimensions. When necessary we denote the set constructed in $\rn$ by $\C_{A,K}(n)$, resp $\C_{B}(n)$. 
	We call the points calculated by the equation \eqref{GiveZ} for this sequence $\z_{K,\ve}$. Given a $\ve \in \V^k_n$ we call
	$$\tilde{Q}_{K, \ve} = Q(\z_{K, \ve},\tilde{r}_k(K) )\text{ and we call }
	\tilde{U}_{K,k}^n = \bigcup_{\ve\in\V^k_n} \tilde{Q}_{K, \ve}.
	$$
	The set $\tilde{U}_{K,k}^n$ equals to the Cartesian product $[\tilde{U}_{K,k}^1]^n$. 
	Clearly 
	\begin{equation}\label{ComparingSizes}
		\begin{aligned}
		\mathcal{L}^n(C_{A,K})&=\lim_{k\to\infty}\mathcal{L}^n(\tilde {U}_{K,k}) =
			\lim_{k\to\infty}2^{nk}(2\tilde{r}_k(K))^n \\
			&=\lim_{k\to\infty} 2^{n}\frac{\bigl(1+ \frac{1}{(K+k+1)^\alpha}\bigl)^n}{\bigl(1+ \frac{1}{(K+1)^\alpha}\bigr)^n} = \frac{2^n}{\bigl(1+ (K+1)^{-\alpha}\bigr)^n}
		\end{aligned}
	\end{equation}
	and this tends to $2^n = \mathcal{L}^n(Q(0,1))$ as $K \to \infty$. More precisely
	\eqn{rozdil}
	$$
	\mathcal{L}^n([-1,1]^n\setminus C_{A,K})=2^n-\frac{2^n}{\bigl(1+ (K+1)^{-\alpha}\bigr)^n}
	\leq C(n,\alpha)(K+1)^{-\alpha}. 
	$$
	
	If by $\tilde{I}$ we denote one of the intervals of $\tilde{U}_{K,k}^1\setminus \tilde{U}_{K,k+1}^1$, then we have
	\begin{equation}\label{DistancesEstimate1}
	\begin{aligned}
		\diam \tilde{I} &= \tfrac{1}{2}\tilde{r}_k(K) - \tilde{r}_{k+1}(K) \\
		&= \frac{2^{-k-1}}{1+ (K+1)^{-\alpha}}((K+k+1)^{-\alpha} - (K+k+2)^{-\alpha})\\
		&\approx C2^{-k}(K+k)^{-\alpha -1}
	\end{aligned}
	\end{equation}
	and the constant $C$ does not depend on $K$.

  We fix a parameter $\beta>2$ whose exact value we specify later. In fact this is an absolute constant and we fix it big enough to that there is enough room around $\C_B$ to perform our construction. 
	For $k\in \en_0$ we write
\begin{equation}\label{defrKk}
{r}_k = \frac{1}{2^{k}}2^{-\beta k }
\end{equation}
and we call the Cantor set constructed using the sequence $r_k$ as $\C_{B}$. 
	We denote the points calculated by the equation \eqref{GiveZ} for this sequence $z_{\ve}$. Given a $\ve \in \V^k_n$ we call
	$$
	{Q}_{\ve} = Q(z_{\ve},r_k )\text{ and we call }{U}_{k}^n = \bigcup_{\ve\in\V^k_n} {Q}_{\ve}.
	$$
	Again ${U}_{k}^n = [{U}_{k}^1]^n$. If ${I}$ is an interval of ${U}_{k}^1\setminus {U}_{k+1}^1$ then
	\begin{equation}\label{DistancesEstimate2}
		\begin{aligned}
			\diam I &= \tfrac{1}{2}r_k - r_{k+1} \\
			&= 2^{-k(\beta+1)}(\tfrac{1}{2} - 2^{-\beta})\\
			&\approx C2^{-k(\beta+1)}
		\end{aligned}
	\end{equation}
	because we consider $\beta > 2$.


\subsection{A standard homeomorphism $J_K^n$ that maps $\C_{A,K}$ onto $\C_{B}$.}\label{defHJ}
	The construction of the standard `frame-to-frame'  map can be found in \cite[Chapter 4.3]{HK}. An $\infty$-norm radial map maps cubes onto cubes. It is standard that we can map $\tilde{Q}_{K,\ve}'\setminus \tilde{Q}_{K,\ve}$ onto $Q_{\ve}'\setminus {Q}_{\ve}$ for each $\ve \in \V^k_n$ for each $k$ by a ($\infty$-norm) radial map centered at $\z_{K,\ve}$ and $z_{\ve}$. This radial map is injective and since the cubes $Q_{ \ve}'$ are pairwise disjoint (up to their possible common boundary) the map is injective from each $\tilde{U}_{K,k}^n \setminus \tilde{U}_{K,k}^n$. We conduct this on $\tilde{U}_{K,k}^n \setminus \tilde{U}_{K,k}^n$ for each $k$. Take a $\ve\in \V^{\en}_n$, and call $\ve(k)\in\V^k_n$ the element that is equal to the first $k$ components of $\ve$. The map we are constructing sends $\Q_{K,\ve(k)}$ onto $Q_{ \ve(k)}$ for each $k$. Therefore it becomes immediately obvious that each $\z_{K, \ve}\in \C_{A,K}$ is mapped onto $z_{\ve}$ and that the mapping is continuous on $\C_{A,K}$. It is standard that this map is continuous on each $Q(0,1)\setminus \tilde{U}_{K,k}^n$ for each $k$ and extends as a homeomorphism onto $Q(0,1)$ sending $\C_{A,K}$ onto $\C_{B}$. We denote this map by $J_K^n$. The same construction can be conducted in the opposite direction, mapping $\C_{B}$ onto $\C_{A,K}$ and we call it $H_K^n$. Especially we denote $J_K^1 = q_K$ and $H_K^1 = t_K$; to be explicit
	\begin{equation}\label{linear}
	\begin{aligned}
	&q_K\text{ is the continuous extension of the map that is linear on each interval }\\
	&\text{in }\tilde{U}_{K,k-1}^1\setminus\tilde{U}_{K,k}^1\text{ sending it onto the corresponding interval in }{U}_{k-1}^1\setminus {U}_{k}^1\\
	\end{aligned}
	\end{equation}
	(for $k\in \en$) and the constructed function $t_K$ is its inverse. 
	
	Further, for $l\in\en$ we define a continuous map $[J_K^n]_{l}$ such that 
	$\lim_{l\to\infty}[J_K^n]_{l}=J_K^n$. We define 
	$$
	[J_K^n]_{l}=J_K^n\text{ on }Q(0,1)\setminus \tilde{U}_{K,l}
	$$
	and it maps each $\tilde{Q}_{K,\ve} = Q(\z_{K,\ve},\tilde{r}_l(K) )$, $\ve \in \V^l_n$, linearly onto $Q_{\ve}$, i.e. 
\eqn{basic}
	$$
	[J_K^n]_{l}(Q(\z_{K,\ve},\tilde{r}_l(K) )) = Q(z_{\ve},r_l ) = Q_{\ve}\text{ for every }\ve \in \V^l_n.
	$$
 The same construction applied in reverse gives $\big([J_K^n]_{l}\big)^{-1} = [H_K^n]_{l}$. 
 That is $[H_K^n]_{l}=H_K^n$ on $Q(0,1)\setminus {U}_{l}$ and it maps $Q_{\ve}$ linearly on $\tilde{Q}_{K,\ve}$ for each 
 $\ve \in \V^l_n$.

\subsection{A specific homeomorphism that maps $\C_{B}(4)$ onto $\C_{A,K}(4)$ dependent on a parameter $N$.}\label{secondmap}

	In this section we define a map $G_{K,N,m,\eta}$ of course depending on $\alpha$ and $\beta$ that maps $\C_{B}(4)$ onto $\C_{A,K}(4)$. In order to do so we use mappings of type $[H_K^3]_{l}$ on $3$-dimensional hyperplanes perpendicular to $e_i$, the $i$-th canonical vector. We define
	\begin{equation}\label{defT1}
	T_i(x_1,x_2,x_3, x_4):= (x_{j_1},x_{j_2}, x_{j_3})\text{ where }j_1<j_2<j_3\text{ and }j_l\neq i\text{ for all }l.
	\end{equation}
	Futher we define
	\begin{equation}\label{defT2}
	T^i(x_{j_1},x_{j_2}, x_{j_3}):= (x_1,\dots 0,\dots, x_4)\text{ where the }0\text{ is on the }i\text{-th place}.
	\end{equation}
	Then
	$$
	H^{3,i}_K(x) =T^i\bigl(H^3_K(T_i(x))\bigr)
	\text{ and }[H_K^{3,i}]_{l}(x) =T^i\bigl([H_K^{3}]_{l}(T_i(x))\bigr).
	$$
	In this case the map $[H^{3,i}_0]_k$ corresponds to the definition of $H^{3,i}_k$ from \cite[Section 3]{CHT}.

We divide ${Q}_{\ve}'\setminus {Q}_{\ve}$, $\ve\in \V_4^k$, into parts where we are farthest from $z_{\ve}$ in the $i$-th direction, i.e. 
$$
S_{\ve, i}:=\bigl\{x\in {Q}_{\ve}'\setminus {Q}_{\ve}:\ \|x-z_{\ve}\|_{\infty}=|x_i-(z_{\ve})_i|\bigr\}. 
$$
	It was proved in \cite[Section 3]{CHT} that the map defined in each frame ${Q}_{\ve}'\setminus {Q}_{\ve}$ (of the 4-dimensional cubes) for $\ve\in \V_4^k$ and $x\in S_{\ve, i}$ by the convex combination of $[H^{3,i}_0]_{3k} + t(x_i)e_i$ and $[H^{3,i}_0]_{3k+3}+ t(x_i)e_i $ is a homeomorphism which maps each frame ${Q}_{\ve}'\setminus {Q}_{\ve}$ onto $\tilde{Q}_{0,\ve}'\setminus \tilde{Q}_{0,\ve}$. In fact the difference $3k$ and $3k+3$ used there is immaterial. We define our map analogously, we find a continuous function ${\zeta}_{K,k}$ so that 
	$$
		{\zeta}_{K,k}(s) =\begin{cases}
			1  & s\in {U}_{k+1}^1\\
			0  & s\in \er \setminus {U}_{k}^1\\
			\text{linear on intervals in} & {U}_{k}^1\setminus {U}_{k+1}^1.\\
		\end{cases}
	$$

	Fix $l_1,l_2\in\en$ with $l_2>l_1 \geq k$. We define a mapping ${G}_{K,l_1,l_2,k}$ on $U_{k}^4\setminus U_{k+1}^4$, which for each $\w \in \V^{k+1}_4$ and $x\in S_{\w, i}\subset {Q}_{\w}' \setminus {Q}_{\w}$ 
	is defined as 
	\begin{equation}\label{SpeciallyForStanda}
		{G}_{K,l_1,l_2,k}(x) := \zeta_{K,k}(x_i)[H^{3,i}_K]_{l_1}(x) +[1-\zeta_{K,k}(x_i)][H^{3,i}_K]_{l_2}(x) + t_K(x_i)e_i.
	\end{equation}
	We claim that for any $l_2>l_1 \geq k$, the mapping ${G}_{K,l_1,l_2,k}$ is a homeomorphism. 
		
	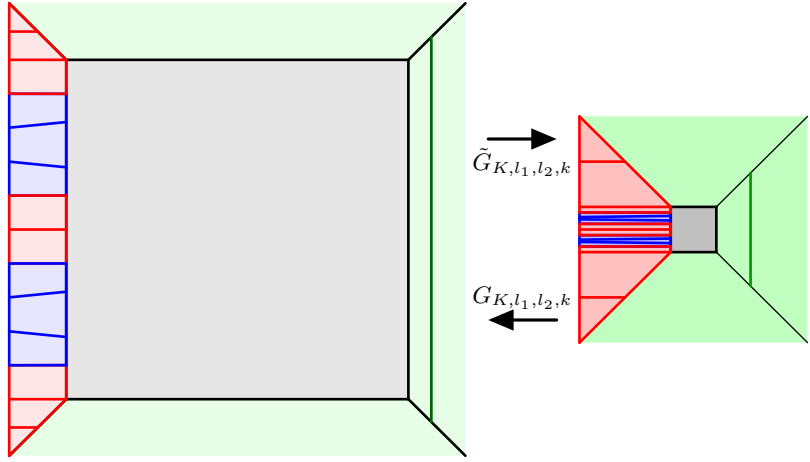
\begin{figure}
		\centering
		\begin{tikzpicture}[line cap=round,line join=round,>=triangle 45,x=1.5cm,y=1.5cm]
			\clip(-0.2,-0.2) rectangle (7.2,4.2);
			\fill[line width=1.pt,fill=black,fill opacity=0.10000000149011612] (0.5,3.5) -- (0.5,0.5) -- (3.5,0.5) -- (3.5,3.5) -- cycle;
			\fill[line width=1.pt,color=qqqqff,fill=qqqqff,fill opacity=0.10000000149011612] (0.,3.2) -- (0.5,3.1995062528319886) -- (0.5,2.3) -- (0.,2.3) -- cycle;
			\fill[line width=1.pt,color=ffqqqq,fill=ffqqqq,fill opacity=0.10000000149011612] (0.,3.2) -- (0.,4.) -- (0.5,3.5) -- (0.5,3.1995062528319886) -- cycle;
			\fill[line width=1.pt,color=ffqqqq,fill=ffqqqq,fill opacity=0.10000000149011612] (0.,2.3) -- (0.,1.7) -- (0.5,1.7) -- (0.5,2.3) -- cycle;
			\fill[line width=1.pt,color=ffqqqq,fill=ffqqqq,fill opacity=0.10000000149011612] (0.,0.8) -- (0.,0.) -- (0.5,0.5) -- (0.5,0.8008218716001494) -- cycle;
			\fill[line width=1.pt,color=qqqqff,fill=qqqqff,fill opacity=0.10000000149011612] (0.,1.7) -- (0.,0.8) -- (0.5,0.8008218716001494) -- (0.5,1.7) -- cycle;
			\fill[line width=1.pt,color=qqffqq,fill=qqffqq,fill opacity=0.10000000149011612] (0.,4.) -- (0.5,3.5) -- (3.5,3.5) -- (4.,4.) -- cycle;
			\fill[line width=1.pt,color=qqffqq,fill=qqffqq,fill opacity=0.10000000149011612] (3.5,0.5) -- (4.,0.) -- (0.,0.) -- (0.5,0.5) -- cycle;
			\fill[line width=1.pt,color=qqffqq,fill=qqffqq,fill opacity=0.10000000149011612] (4.,4.) -- (4.,0.) -- (3.5,0.5) -- (3.5,3.5) -- cycle;
			\fill[line width=1.pt,fill=black,fill opacity=0.25] (5.8,2.2) -- (5.8,1.8) -- (6.2,1.8) -- (6.2,2.2) -- cycle;
			\fill[line width=1.pt,color=qqffqq,fill=qqffqq,fill opacity=0.25] (5.,3.) -- (5.8,2.2) -- (6.2,2.2) -- (7.,3.) -- cycle;
			\fill[line width=1.pt,color=qqffqq,fill=qqffqq,fill opacity=0.25] (5.,1.) -- (5.8,1.8) -- (6.2,1.8) -- (7.,1.) -- cycle;
			\fill[line width=1.pt,color=qqffqq,fill=qqffqq,fill opacity=0.25] (7.,3.) -- (7.,1.) -- (6.2,1.8) -- (6.2,2.2) -- cycle;
			\fill[line width=1.pt,color=qqqqff,fill=qqqqff,fill opacity=0.25] (5.8,2.15) -- (5.,2.15) -- (5.,2.05) -- (5.8,2.05) -- cycle;
			\fill[line width=1.pt,color=qqqqff,fill=qqqqff,fill opacity=0.25] (5.8,1.85) -- (5.8,1.95) -- (5.,1.95) -- (5.,1.85) -- cycle;
			\fill[line width=1.pt,color=ffqqqq,fill=ffqqqq,fill opacity=0.25] (5.,2.05) -- (5.,1.95) -- (5.8,1.95) -- (5.8,2.05) -- cycle;
			\fill[line width=1.pt,color=ffqqqq,fill=ffqqqq,fill opacity=0.25] (5.,3.) -- (5.8,2.2) -- (5.8,2.15) -- (5.,2.15) -- cycle;
			\fill[line width=1.pt,color=ffqqqq,fill=ffqqqq,fill opacity=0.25] (5.,1.85) -- (5.,1.) -- (5.8,1.8) -- (5.8,1.85) -- cycle;
			\draw [line width=1.pt] (0.5,3.5)-- (0.5,0.5);
			\draw [line width=1.pt] (0.5,0.5)-- (3.5,0.5);
			\draw [line width=1.pt] (3.5,0.5)-- (3.5,3.5);
			\draw [line width=1.pt] (3.5,3.5)-- (0.5,3.5);
			\draw [line width=1.pt] (0.,4.)-- (0.5,3.5);
			\draw [line width=1.pt] (0.,0.)-- (0.5,0.5);
			\draw [line width=1.pt] (3.5,3.5)-- (4.,4.);
			\draw [line width=1.pt] (3.5,0.5)-- (4.,0.);
			
			\draw [line width=1.pt,color=qqqqff] (0., 2.9)-- (0.5,2.95);
			\draw [line width=1.pt,color=qqqqff] (0., 2.6)-- (0.5,2.55);
			\draw [line width=1.pt,color=qqqqff] (0., 1.4)-- (0.5,1.45);
			\draw [line width=1.pt,color=qqqqff] (0., 1.1)-- (0.5,1.05);
			
			\draw [line width=1.pt,color=qqqqff] (5., 2.09)-- (5.8,2.08);
			\draw [line width=1.pt,color=qqqqff] (5., 2.11)-- (5.8,2.12);
			\draw [line width=1.pt,color=qqqqff] (5., 1.89)-- (5.8,1.88);
			\draw [line width=1.pt,color=qqqqff] (5., 1.91)-- (5.8,1.92);

			\draw [line width=1.pt,color=ffqqqq] (0., 3.75)-- (0.25,3.75);
			\draw [line width=1.pt,color=ffqqqq] (0., 3.5)-- (0.5,3.5);
			\draw [line width=1.pt,color=ffqqqq] (0., 2)-- (0.5,2);
			\draw [line width=1.pt,color=ffqqqq] (0., 0.5)-- (0.5,0.5);
			\draw [line width=1.pt,color=ffqqqq] (0., 0.25)-- (0.25,0.25);
			
			\draw [line width=0.5pt] (6.2, 2.2)-- (7,3);
			\draw [line width=0.5pt] (6.2, 1.8)-- (7,1);
			
			\draw [line width=1.pt,color=ffqqqq] (5., 2.6)-- (5.4,2.6);
			\draw [line width=1.pt,color=ffqqqq] (5., 2.2)-- (5.8,2.2);
			\draw [line width=1.pt,color=ffqqqq] (5., 2)-- (5.8,2);
			\draw [line width=1.pt,color=ffqqqq] (5., 1.8)-- (5.8,1.8);
			\draw [line width=1.pt,color=ffqqqq] (5., 1.4)-- (5.4,1.4);
			
			\draw [line width=1.pt,color=qqqqff] (0.,3.2)-- (0.5,3.1995062528319886);
			\draw [line width=1.pt,color=qqqqff] (0.5,3.1995062528319886)-- (0.5,2.3);
			\draw [line width=1.pt,color=qqqqff] (0.5,2.3)-- (0.,2.3);
			\draw [line width=1.pt,color=qqqqff] (0.,2.3)-- (0.,3.2);
			\draw [line width=1.pt,color=ffqqqq] (0.,3.2)-- (0.,4.);
			\draw [line width=1.pt,color=ffqqqq] (0.,4.)-- (0.5,3.5);
			\draw [line width=1.pt,color=ffqqqq] (0.5,3.5)-- (0.5,3.1995062528319886);
			\draw [line width=1.pt,color=ffqqqq] (0.5,3.1995062528319886)-- (0.,3.2);
			\draw [line width=1.pt,color=ffqqqq] (0.,2.3)-- (0.,1.7);
			\draw [line width=1.pt,color=ffqqqq] (0.,1.7)-- (0.5,1.7);
			\draw [line width=1.pt,color=ffqqqq] (0.5,1.7)-- (0.5,2.3);
			\draw [line width=1.pt,color=ffqqqq] (0.5,2.3)-- (0.,2.3);
			\draw [line width=1.pt,color=ffqqqq] (0.,0.8)-- (0.,0.);
			\draw [line width=1.pt,color=ffqqqq] (0.,0.)-- (0.5,0.5);
			\draw [line width=1.pt,color=ffqqqq] (0.5,0.5)-- (0.5,0.8008218716001494);
			\draw [line width=1.pt,color=ffqqqq] (0.5,0.8008218716001494)-- (0.,0.8);
			\draw [line width=1.pt,color=qqqqff] (0.,1.7)-- (0.,0.8);
			\draw [line width=1.pt,color=qqqqff] (0.,0.8)-- (0.5,0.8008218716001494);
			\draw [line width=1.pt,color=qqqqff] (0.5,0.8008218716001494)-- (0.5,1.7);
			\draw [line width=1.pt,color=qqqqff] (0.5,1.7)-- (0.,1.7);
			\draw [line width=1.pt] (5.8,2.2)-- (5.8,1.8);
			\draw [line width=1.pt] (5.8,1.8)-- (6.2,1.8);
			\draw [line width=1.pt] (6.2,1.8)-- (6.2,2.2);
			\draw [line width=1.pt] (6.2,2.2)-- (5.8,2.2);
			\draw [line width=1.pt,color=qqqqff] (5.8,2.15)-- (5.,2.15);
			\draw [line width=1.pt,color=qqqqff] (5.,2.15)-- (5.,2.05);
			\draw [line width=1.pt,color=qqqqff] (5.,2.05)-- (5.8,2.05);
			\draw [line width=1.pt,color=qqqqff] (5.8,2.05)-- (5.8,2.15);
			\draw [line width=1.pt,color=qqqqff] (5.8,1.85)-- (5.8,1.95);
			\draw [line width=1.pt,color=qqqqff] (5.8,1.95)-- (5.,1.95);
			\draw [line width=1.pt,color=qqqqff] (5.,1.95)-- (5.,1.85);
			\draw [line width=1.pt,color=qqqqff] (5.,1.85)-- (5.8,1.85);
			\draw [line width=1.pt,color=ffqqqq] (5.,2.05)-- (5.,1.95);
			\draw [line width=1.pt,color=ffqqqq] (5.,1.95)-- (5.8,1.95);
			\draw [line width=1.pt,color=ffqqqq] (5.8,1.95)-- (5.8,2.05);
			\draw [line width=1.pt,color=ffqqqq] (5.8,2.05)-- (5.,2.05);
			\draw [line width=1.pt,color=ffqqqq] (5.,3.)-- (5.8,2.2);
			\draw [line width=1.pt,color=ffqqqq] (5.8,2.2)-- (5.8,2.15);
			\draw [line width=1.pt,color=ffqqqq] (5.8,2.15)-- (5.,2.15);
			\draw [line width=1.pt,color=ffqqqq] (5.,2.15)-- (5.,3.);
			\draw [line width=1.pt,color=ffqqqq] (5.,1.85)-- (5.,1.);
			\draw [line width=1.pt,color=ffqqqq] (5.,1.)-- (5.8,1.8);
			\draw [line width=1.pt,color=ffqqqq] (5.8,1.8)-- (5.8,1.85);
			\draw [line width=1.pt,color=ffqqqq] (5.8,1.85)-- (5.,1.85);
			\draw [line width=1.pt,color=qqwuqq] (3.7012410443889086,3.7012410443889086)-- (3.7012410443889094,0.2987589556110911);
			\draw [line width=1.pt,color=qqzzqq] (6.5,1.5)-- (6.5,2.5);
			\draw [->,line width=1.2pt] (4.2,2.8) -- (4.8,2.8);
			\draw [->,line width=1.2pt] (4.8,1.2) -- (4.2,1.2);
			\begin{scriptsize}
			\draw[color=black] (4.5,2.6) node {$\tilde{G}_{K,l_1,l_2,k}$};
			\draw[color=black] (4.5,1.4) node {${G}_{K,l_1,l_2,k}$};
			\end{scriptsize}
		\end{tikzpicture}
		\caption{A scheme of the maps $\tilde{G}_{K,l_1,l_2,k}$ and ${G}_{K,l_1,l_2,k}$. Hyperplane parts get mapped to hyperplane parts as in the dark green line. Injectivity on the red parts mapped to the red parts is thanks to the injectivity of the frame-to-frame maps. Injectivity on the blue parts can be seen easily from the convexity of the cube. The fact that the green parts have image disjoint from the blue/red part was proved in \cite[Section 3]{CHT}.  Continuity on the hyperplane parts is thanks to continuity of the frame-to-frame mappings. Continuity of the maps at the boundary of the green and red/blue parts was proved in \cite[Section 3]{CHT}. Red lines parallel to $e_1$ are mapped to red lines parallel to $e_1$.}\label{Fig:Places}
	\end{figure}

	The injectivity of ${G}_{K,l_1,l_2,k}$ is easily proved. In fact it suffices to check injectivity on a single frame and injectivity follows on all other frames by self similarity of the construction. The situation on a given frame is illustrated in Figure~\ref{Fig:Places}. We have 3 different considerations, which are represented in Figure~\ref{Fig:Places} by the blue, the red and the green parts. In $S_{\ve, i}$, the part of the frame farthest from $z_{\ve}$ in the $e_i$ direction, we send hyperplane parts perpendicular to $e_i$ onto hyperplane parts perpendicular to $e_i$. Distinct hyperplane parts are mapped onto distinct hyperplane parts because $t_K$ is an increasing function.
	On the part of the hyperplane inside $U^3_{l_1}$ (blue in Figure~\ref{Fig:Places}), injectivity on each hyperplane is easy to see from the convexity of the cube. Injectivity on the hyperplane outside $U^3_{l_1}$ (red in Figure~\ref{Fig:Places}) follows immediately from the fact that in this part
	$$
		H^{3,i}_K(x)+t_K(x_i)e_i = [H^{3,i}_K]_{l_1}(x)+t_K(x_i)e_i =[H^{3,i}_K]_{l_2}(x)+t_K(x_i)e_i.
	$$
	
	In the case considered in \cite{CHT}, the fact was proved that $S_{i,\w}\subset Q_{\w}' \setminus Q_{\w}$ has disjoint image from $S_{j,\w}$ for $j\neq i$. Continuity of the map at the common boundary of the two sets was also proved. The entire argument applies here with the only difference being slightly changed indices, i.e. now we have general $K\in\en$ and not only $K=0$ and we have $l_2>l_1\geq k$ instead of $3k+k>3k\geq k$. Arguments are however the same 
	and we refer the curious reader to \cite[Lemma~3.2]{CHT} especially Step~1 of the proof.
	
	Now we fix an important parameter $N\in\en$. In fact at the end we put $N=2K$ but we prefer to use a different notation for it so it is easy to track. Its role is that we squeeze more in some directions and hence in some key estimates we obtain the additional factor $2^{-2\beta N}$ (see e.g. \eqref{SQ1B} and \eqref{SQ3D1} below). 
Its geometric role is that in Theorem	\ref{TheBigLebowski} $v)$ we  need that $\int_{Q(0,1)\setminus E} |D_1f_1|^p$ is really small, so especially for $p=1$ the length of $f_1(L\cap (Q(0,1)\setminus E))$ is small for every line segment $L$ parallel to $e_1$ that intersects $E$. To achieve that we construct $G_{K,N,m,\eta}$ so that $G_{K,N,m,\eta}(L)$ is close to being a segment parallel to $e_1$ axis, especially it is not prolonged too much. This is achieved exactly by introducing $N$ in the next paragraph. At the end we build our $f_1$ from $G_{K,N,m,\eta}\circ \hat{F}_{\beta, m}\circ \tilde{G}_{K,N,m,\eta}$ and we need also that $\hat{F}_{\beta, m}$ and $\tilde{G}_{K,N,m,\eta}$ map things close to segment parallel to $e_1$ to something close to a segment parallel to $e_1$. Therefore we use $N$ also in the next subsection to define $\tilde{G}_{K,N,m,\eta}$. 
	
	We set $l_1=3k+N$ and $l_2=3k+3+N$. We have that  $G_{K,3k+N,3k+3+N,k}$ is a homeomorphism  on the whole frame $Q_{\w}' \setminus Q_{\w}$, $\w \in \V^{k+1}_4$. The union of these frames is the set ${U}_{k}^4\setminus {U}_{k+1}^4$.

	\begin{figure}
		\centering
			\definecolor{ubqqys}{rgb}{0.29411764705882354,0.,0.5098039215686274}
			\definecolor{qqffqq}{rgb}{0.,1.,0.}
			\definecolor{qqqqff}{rgb}{0.,0.,1.}
			\begin{tikzpicture}[line cap=round,line join=round,>=triangle 45,x=0.5cm,y=0.5cm]
			\clip(-4.731744203146705,-10.58514825572656) rectangle (21.040947818145312,5.602201716916772);
			\fill[line width=0.7pt,color=qqqqff,fill=qqqqff,fill opacity=0.30000001192092896] (-4.,-0.47) -- (0.,-0.47) -- (0.,-0.53) -- (-4.,-0.53) -- cycle;
			\fill[line width=0.7pt,color=qqffqq,fill=qqffqq,fill opacity=0.10000000149011612] (-4.,0.47) -- (-4.,-0.47) -- (0.,-0.47) -- (0.,0.47) -- cycle;
			\fill[line width=0.7pt,color=qqffqq,fill=qqffqq,fill opacity=0.10000000149011612] (0.,1.) -- (-4.,1.) -- (-4.,0.53) -- (0.,0.53) -- cycle;
			\fill[line width=0.7pt,color=qqffqq,fill=qqffqq,fill opacity=0.10000000149011612] (0.,-1.) -- (-4.,-1.) -- (-4.,-0.53) -- (0.,-0.53) -- cycle;
			\fill[line width=0.7pt,color=ubqqys,fill=ubqqys,fill opacity=0.10000000149011612] (-4.,-1.) -- (20.,-1.) -- (20.,-5.) -- (-4.,-5.) -- cycle;
			\fill[line width=0.7pt,color=qqqqff,fill=qqqqff,fill opacity=0.30000001192092896] (0.47,5.) -- (0.47,1.) -- (0.53,1.) -- (0.53,5.) -- cycle;
			\fill[line width=0.7pt,color=qqqqff,fill=qqqqff,fill opacity=0.30000001192092896] (1.47,5.) -- (1.47,1.) -- (1.53,1.) -- (1.53,5.) -- cycle;
			\fill[line width=0.7pt,color=qqffqq,fill=qqffqq,fill opacity=0.10000000149011612] (1.47,5.) -- (0.53,5.) -- (0.53,1.) -- (1.47,1.) -- cycle;
			\fill[line width=0.7pt,color=ubqqys,fill=ubqqys,fill opacity=0.30000001192092896] (-4.,-10.) -- (20.,-10.) -- (20.,-10.2) -- (-4.,-10.2) -- cycle;
			\fill[line width=0.7pt,color=qqqqff,fill=qqqqff,fill opacity=0.30000001192092896] (-4.,-8.47) -- (-1.,-8.47) -- (0.,-8.1) -- (0.,-8.9) -- (-1.,-8.53) -- (-4.,-8.53) -- cycle;
			\fill[line width=0.7pt,color=qqqqff,fill=qqqqff,fill opacity=0.30000001192092896] (0.,-9.1) -- (-1.,-9.47) -- (-4.,-9.47) -- (-4.,-9.53) -- (-1.,-9.53) -- (0.,-9.9) -- cycle;
			\fill[line width=0.7pt,color=qqffqq,fill=qqffqq,fill opacity=0.10000000149011612] (0.,-8.9) -- (-1.,-8.53) -- (-4.,-8.53) -- (-4.,-9.47) -- (-1.,-9.47) -- (0.,-9.1) -- cycle;
			\fill[line width=0.7pt,color=qqffqq,fill=qqffqq,fill opacity=0.10000000149011612] (0.,-9.9) -- (0.,-10.) -- (-4.,-10.) -- (-4.,-9.53) -- (-1.,-9.53) -- cycle;
			\fill[line width=0.7pt,color=qqffqq,fill=qqffqq,fill opacity=0.10000000149011612] (0.,-8.) -- (-4.,-8.) -- (-4.,-8.47) -- (-1.,-8.47) -- (0.,-8.1) -- cycle;
			\fill[line width=0.7pt,color=qqffqq,fill=qqffqq,fill opacity=0.10000000149011612] (0.,-7.8) -- (0.,-8.) -- (0.1,-8.) -- (0.1,-7.99) -- (0.47,-7.98) -- (0.47,-7.8) -- cycle;
			\fill[line width=0.7pt,color=qqqqff,fill=qqqqff,fill opacity=0.30000001192092896] (0.47,-7.8) -- (0.53,-7.8) -- (0.53,-7.98) -- (0.9,-7.99) -- (0.9,-8.) -- (0.1,-8.) -- (0.1,-7.99) -- (0.47,-7.98) -- cycle;
			\fill[line width=0.7pt,color=qqqqff,fill=qqqqff,fill opacity=0.30000001192092896] (1.47,-7.8) -- (1.53,-7.8) -- (1.53,-7.98) -- (1.9,-7.99) -- (1.9,-8.) -- (1.1,-8.) -- (1.1,-7.99) -- (1.47,-7.98) -- cycle;
			\fill[line width=0.7pt,color=qqffqq,fill=qqffqq,fill opacity=0.10000000149011612] (0.53,-7.8) -- (1.47,-7.8) -- (1.47,-7.98) -- (1.1,-7.99) -- (1.1,-8.) -- (0.9,-8.) -- (0.9,-7.99) -- (0.53,-7.98) -- cycle;
			\fill[line width=0.7pt,color=qqffqq,fill=qqffqq,fill opacity=0.10000000149011612] (1.53,-7.8) -- (1.53,-7.98) -- (1.9,-7.99) -- (1.9,-8.) -- (2.,-8.) -- (2.,-7.8) -- cycle;
			\fill[line width=0.7pt,color=qqffqq,fill=qqffqq,fill opacity=0.10000000149011612] (0.,1.) -- (0.,5.) -- (0.47,5.) -- (0.47,1.) -- cycle;
			\fill[line width=0.7pt,color=qqffqq,fill=qqffqq,fill opacity=0.10000000149011612] (2.,1.) -- (2.,5.) -- (1.53,5.) -- (1.53,1.) -- cycle;
			\draw [line width=0.7pt] (-4.,-7.8)-- (-4.,-10.2);
			\draw [line width=0.7pt] (-4.,-10.2)-- (20.,-10.2);
			\draw [line width=0.7pt] (20.,-10.2)-- (20.,-7.8);
			\draw [line width=0.7pt] (20.,-7.8)-- (-4.,-7.8);
			\draw [line width=0.7pt] (0.,1.)-- (0.,-1.);
			\draw [line width=0.7pt] (0.,-1.)-- (16.,-1.);
			\draw [line width=0.7pt] (16.,-1.)-- (16.,1.);
			\draw [line width=0.7pt] (16.,1.)-- (0.,1.);
			\draw [line width=0.7pt] (2.,1.)-- (2.,-1.);
			\draw [line width=0.7pt] (4.,1.)-- (4.,-1.);
			\draw [line width=0.7pt] (6.,1.)-- (6.,-1.);
			\draw [line width=0.7pt] (8.,1.)-- (8.,-1.);
			\draw [line width=0.7pt] (10.,1.)-- (10.,-1.);
			\draw [line width=0.7pt] (12.,1.)-- (12.,-1.);
			\draw [line width=0.7pt] (14.,1.)-- (14.,-1.);
			\draw [line width=0.7pt] (-4.,5.)-- (20.,5.);
			\draw [line width=0.7pt] (20.,5.)-- (20.,-5.);
			\draw [line width=0.7pt] (20.,-5.)-- (-4.,-5.);
			\draw [line width=0.7pt] (-4.,-5.)-- (-4.,5.);
			\draw [line width=0.7pt] (0.,-8.)-- (0.,-10.);
			\draw [line width=0.7pt] (0.,-10.)-- (16.,-10.);
			\draw [line width=0.7pt] (16.,-10.)-- (16.,-8.);
			\draw [line width=0.7pt] (16.,-8.)-- (0.,-8.);
			\draw [line width=0.7pt] (2.,-8.)-- (2.,-10.001252097531708);
			\draw [line width=0.7pt] (4.,-8.)-- (4.,-10.);
			\draw [line width=0.7pt] (6.,-8.)-- (6.,-10.);
			\draw [line width=0.7pt] (8.,-8.)-- (8.,-10.);
			\draw [line width=0.7pt] (10.,-8.)-- (10.,-10.);
			\draw [line width=0.7pt] (12.,-8.)-- (12.,-10.);
			\draw [line width=0.7pt] (14.,-8.)-- (14.,-10.00058127117801);
			\draw [line width=0.7pt,color=qqqqff] (-4.,0.53)-- (0.,0.53);
			\draw [line width=0.7pt,color=qqqqff] (0.,0.47)-- (-4.,0.47);
			\draw [line width=0.7pt,color=qqqqff] (-4.,-0.47)-- (0.,-0.47);
			\draw [line width=0.7pt,color=qqqqff] (0.,-0.53)-- (-4.,-0.53);
			\draw [line width=0.7pt,color=qqqqff] (0.47,5.)-- (0.47,1.);
			\draw [line width=0.7pt,color=qqqqff] (0.53,1.)-- (0.53,5.);
			\draw [line width=0.7pt,color=qqqqff] (1.47,5.)-- (1.47,1.);
			\draw [line width=0.7pt,color=qqqqff] (1.53,1.)-- (1.53,5.);
			\draw [line width=0.7pt,color=qqqqff] (-4.,-8.47)-- (-1.,-8.47);
			\draw [line width=0.7pt,color=qqqqff] (-1.,-8.47)-- (0.,-8.1);
			\draw [line width=0.7pt,color=qqqqff] (0.,-8.9)-- (-1.,-8.53);
			\draw [line width=0.7pt,color=qqqqff] (-1.,-8.53)-- (-4.,-8.53);
			\draw [line width=0.7pt,color=qqqqff] (0.,-9.1)-- (-1.,-9.47);
			\draw [line width=0.7pt,color=qqqqff] (-1.,-9.47)-- (-4.,-9.47);
			\draw [line width=0.7pt,color=qqqqff] (-4.,-9.53)-- (-1.,-9.53);
			\draw [line width=0.7pt,color=qqqqff] (-1.,-9.53)-- (0.,-9.9);
			\draw [line width=0.7pt,color=qqqqff] (0.53,-7.8)-- (0.53,-7.98);
			\draw [line width=0.7pt,color=qqqqff] (0.53,-7.98)-- (0.9,-7.99);
			\draw [line width=0.7pt,color=qqqqff] (0.9,-7.99)-- (0.9,-8.);
			\draw [line width=0.7pt,color=qqqqff] (0.1,-8.)-- (0.1,-7.99);
			\draw [line width=0.7pt,color=qqqqff] (0.1,-7.99)-- (0.47,-7.98);
			\draw [line width=0.7pt,color=qqqqff] (0.47,-7.98)-- (0.47,-7.8);
			\draw [line width=0.7pt,color=qqqqff] (1.53,-7.8)-- (1.53,-7.98);
			\draw [line width=0.7pt,color=qqqqff] (1.53,-7.98)-- (1.9,-7.99);
			\draw [line width=0.7pt,color=qqqqff] (1.1,-7.99)-- (1.47,-7.98);
			\draw [line width=0.7pt,color=qqqqff] (1.47,-7.98)-- (1.47,-7.8);
			\draw [line width=0.7pt,color=qqqqff] (1.1,-8.)-- (1.1,-7.99);
			\draw [line width=0.7pt,color=qqqqff] (1.9,-8.)-- (1.9,-7.99);
			\draw [->,line width=0.7pt] (8.,-5.5) -- (8.,-7.5);
			\begin{scriptsize}
			\draw[color=black] (9.5,-6.5) node {$G_{K,N,m,\eta}$};
			\end{scriptsize}
			\end{tikzpicture}

			\begin{tikzpicture}[line cap=round,line join=round,>=triangle 45,x=6cm,y=6cm]
			\clip(0,-8.05) rectangle (2,-7.6);
			\fill[line width=0.7pt,color=qqqqff,fill=qqqqff,fill opacity=0.30000001192092896] (-4.,-0.47) -- (0.,-0.47) -- (0.,-0.53) -- (-4.,-0.53) -- cycle;
			\fill[line width=0.7pt,color=qqffqq,fill=qqffqq,fill opacity=0.10000000149011612] (-4.,0.47) -- (-4.,-0.47) -- (0.,-0.47) -- (0.,0.47) -- cycle;
			\fill[line width=0.7pt,color=qqffqq,fill=qqffqq,fill opacity=0.10000000149011612] (0.,1.) -- (-4.,1.) -- (-4.,0.53) -- (0.,0.53) -- cycle;
			\fill[line width=0.7pt,color=qqffqq,fill=qqffqq,fill opacity=0.10000000149011612] (0.,-1.) -- (-4.,-1.) -- (-4.,-0.53) -- (0.,-0.53) -- cycle;
			\fill[line width=0.7pt,color=ubqqys,fill=ubqqys,fill opacity=0.10000000149011612] (-4.,-1.) -- (20.,-1.) -- (20.,-5.) -- (-4.,-5.) -- cycle;
			\fill[line width=0.7pt,color=qqqqff,fill=qqqqff,fill opacity=0.30000001192092896] (0.47,5.) -- (0.47,1.) -- (0.53,1.) -- (0.53,5.) -- cycle;
			\fill[line width=0.7pt,color=qqqqff,fill=qqqqff,fill opacity=0.30000001192092896] (1.47,5.) -- (1.47,1.) -- (1.53,1.) -- (1.53,5.) -- cycle;
			\fill[line width=0.7pt,color=qqffqq,fill=qqffqq,fill opacity=0.10000000149011612] (1.47,5.) -- (0.53,5.) -- (0.53,1.) -- (1.47,1.) -- cycle;
			\fill[line width=0.7pt,color=ubqqys,fill=ubqqys,fill opacity=0.30000001192092896] (-4.,-10.) -- (20.,-10.) -- (20.,-10.2) -- (-4.,-10.2) -- cycle;
			\fill[line width=0.7pt,color=qqqqff,fill=qqqqff,fill opacity=0.30000001192092896] (-4.,-8.47) -- (-1.,-8.47) -- (0.,-8.1) -- (0.,-8.9) -- (-1.,-8.53) -- (-4.,-8.53) -- cycle;
			\fill[line width=0.7pt,color=qqqqff,fill=qqqqff,fill opacity=0.30000001192092896] (0.,-9.1) -- (-1.,-9.47) -- (-4.,-9.47) -- (-4.,-9.53) -- (-1.,-9.53) -- (0.,-9.9) -- cycle;
			\fill[line width=0.7pt,color=qqffqq,fill=qqffqq,fill opacity=0.10000000149011612] (0.,-8.9) -- (-1.,-8.53) -- (-4.,-8.53) -- (-4.,-9.47) -- (-1.,-9.47) -- (0.,-9.1) -- cycle;
			\fill[line width=0.7pt,color=qqffqq,fill=qqffqq,fill opacity=0.10000000149011612] (0.,-9.9) -- (0.,-10.) -- (-4.,-10.) -- (-4.,-9.53) -- (-1.,-9.53) -- cycle;
			\fill[line width=0.7pt,color=qqffqq,fill=qqffqq,fill opacity=0.10000000149011612] (0.,-8.) -- (-4.,-8.) -- (-4.,-8.47) -- (-1.,-8.47) -- (0.,-8.1) -- cycle;
			\fill[line width=0.7pt,color=qqffqq,fill=qqffqq,fill opacity=0.10000000149011612] (0.,-7.8) -- (0.,-8.) -- (0.1,-8.) -- (0.1,-7.99) -- (0.47,-7.98) -- (0.47,-7.8) -- cycle;
			\fill[line width=0.7pt,color=qqqqff,fill=qqqqff,fill opacity=0.30000001192092896] (0.47,-7.8) -- (0.53,-7.8) -- (0.53,-7.98) -- (0.9,-7.99) -- (0.9,-8.) -- (0.1,-8.) -- (0.1,-7.99) -- (0.47,-7.98) -- cycle;
			\fill[line width=0.7pt,color=qqqqff,fill=qqqqff,fill opacity=0.30000001192092896] (1.47,-7.8) -- (1.53,-7.8) -- (1.53,-7.98) -- (1.9,-7.99) -- (1.9,-8.) -- (1.1,-8.) -- (1.1,-7.99) -- (1.47,-7.98) -- cycle;
			\fill[line width=0.7pt,color=qqffqq,fill=qqffqq,fill opacity=0.10000000149011612] (0.53,-7.8) -- (1.47,-7.8) -- (1.47,-7.98) -- (1.1,-7.99) -- (1.1,-8.) -- (0.9,-8.) -- (0.9,-7.99) -- (0.53,-7.98) -- cycle;
			\fill[line width=0.7pt,color=qqffqq,fill=qqffqq,fill opacity=0.10000000149011612] (1.53,-7.8) -- (1.53,-7.98) -- (1.9,-7.99) -- (1.9,-8.) -- (2.,-8.) -- (2.,-7.8) -- cycle;
			\fill[line width=0.7pt,color=qqffqq,fill=qqffqq,fill opacity=0.10000000149011612] (0.,1.) -- (0.,5.) -- (0.47,5.) -- (0.47,1.) -- cycle;
			\fill[line width=0.7pt,color=qqffqq,fill=qqffqq,fill opacity=0.10000000149011612] (2.,1.) -- (2.,5.) -- (1.53,5.) -- (1.53,1.) -- cycle;
			\draw [line width=0.7pt] (-4.,-7.8)-- (-4.,-10.2);
			\draw [line width=0.7pt] (-4.,-10.2)-- (20.,-10.2);
			\draw [line width=0.7pt] (20.,-10.2)-- (20.,-7.8);
			\draw [line width=0.7pt] (20.,-7.8)-- (-4.,-7.8);
			\draw [line width=0.7pt] (0.,1.)-- (0.,-1.);
			\draw [line width=0.7pt] (0.,-1.)-- (16.,-1.);
			\draw [line width=0.7pt] (16.,-1.)-- (16.,1.);
			\draw [line width=0.7pt] (16.,1.)-- (0.,1.);
			\draw [line width=0.7pt] (2.,1.)-- (2.,-1.);
			\draw [line width=0.7pt] (4.,1.)-- (4.,-1.);
			\draw [line width=0.7pt] (6.,1.)-- (6.,-1.);
			\draw [line width=0.7pt] (8.,1.)-- (8.,-1.);
			\draw [line width=0.7pt] (10.,1.)-- (10.,-1.);
			\draw [line width=0.7pt] (12.,1.)-- (12.,-1.);
			\draw [line width=0.7pt] (14.,1.)-- (14.,-1.);
			\draw [line width=0.7pt] (-4.,5.)-- (20.,5.);
			\draw [line width=0.7pt] (20.,5.)-- (20.,-5.);
			\draw [line width=0.7pt] (20.,-5.)-- (-4.,-5.);
			\draw [line width=0.7pt] (-4.,-5.)-- (-4.,5.);
			\draw [line width=0.7pt] (0.,-8.)-- (0.,-10.);
			\draw [line width=0.7pt] (0.,-10.)-- (16.,-10.);
			\draw [line width=0.7pt] (16.,-10.)-- (16.,-8.);
			\draw [line width=0.7pt] (16.,-8.)-- (0.,-8.);
			\draw [line width=0.7pt] (2.,-8.)-- (2.,-10.001252097531708);
			\draw [line width=0.7pt] (4.,-8.)-- (4.,-10.);
			\draw [line width=0.7pt] (6.,-8.)-- (6.,-10.);
			\draw [line width=0.7pt] (8.,-8.)-- (8.,-10.);
			\draw [line width=0.7pt] (10.,-8.)-- (10.,-10.);
			\draw [line width=0.7pt] (12.,-8.)-- (12.,-10.);
			\draw [line width=0.7pt] (14.,-8.)-- (14.,-10.00058127117801);
			\draw [line width=0.7pt,color=qqqqff] (-4.,0.53)-- (0.,0.53);
			\draw [line width=0.7pt,color=qqqqff] (0.,0.47)-- (-4.,0.47);
			\draw [line width=0.7pt,color=qqqqff] (-4.,-0.47)-- (0.,-0.47);
			\draw [line width=0.7pt,color=qqqqff] (0.,-0.53)-- (-4.,-0.53);
			\draw [line width=0.7pt,color=qqqqff] (0.47,5.)-- (0.47,1.);
			\draw [line width=0.7pt,color=qqqqff] (0.53,1.)-- (0.53,5.);
			\draw [line width=0.7pt,color=qqqqff] (1.47,5.)-- (1.47,1.);
			\draw [line width=0.7pt,color=qqqqff] (1.53,1.)-- (1.53,5.);
			\draw [line width=0.7pt,color=qqqqff] (-4.,-8.47)-- (-1.,-8.47);
			\draw [line width=0.7pt,color=qqqqff] (-1.,-8.47)-- (0.,-8.1);
			\draw [line width=0.7pt,color=qqqqff] (0.,-8.9)-- (-1.,-8.53);
			\draw [line width=0.7pt,color=qqqqff] (-1.,-8.53)-- (-4.,-8.53);
			\draw [line width=0.7pt,color=qqqqff] (0.,-9.1)-- (-1.,-9.47);
			\draw [line width=0.7pt,color=qqqqff] (-1.,-9.47)-- (-4.,-9.47);
			\draw [line width=0.7pt,color=qqqqff] (-4.,-9.53)-- (-1.,-9.53);
			\draw [line width=0.7pt,color=qqqqff] (-1.,-9.53)-- (0.,-9.9);
			\draw [line width=0.7pt,color=qqqqff] (0.53,-7.8)-- (0.53,-7.98);
			\draw [line width=0.7pt,color=qqqqff] (0.53,-7.98)-- (0.9,-7.99);
			\draw [line width=0.7pt,color=qqqqff] (0.9,-7.99)-- (0.9,-8.);
			\draw [line width=0.7pt,color=qqqqff] (0.1,-8.)-- (0.1,-7.99);
			\draw [line width=0.7pt,color=qqqqff] (1.1,-8.)-- (1.1,-7.99);
			\draw [line width=0.7pt,color=qqqqff] (1.9,-8.)-- (1.9,-7.99);
			\draw [line width=0.7pt,color=qqqqff] (0.1,-7.99)-- (0.47,-7.98);
			\draw [line width=0.7pt,color=qqqqff] (0.47,-7.98)-- (0.47,-7.8);
			\draw [line width=0.7pt,color=qqqqff] (1.53,-7.8)-- (1.53,-7.98);
			\draw [line width=0.7pt,color=qqqqff] (1.53,-7.98)-- (1.9,-7.99);
			\draw [line width=0.7pt,color=qqqqff] (1.1,-7.99)-- (1.47,-7.98);
			\draw [line width=0.7pt,color=qqqqff] (1.47,-7.98)-- (1.47,-7.8);
			\draw [->,line width=0.7pt] (8.,-5.5) -- (8.,-7.5);
			\begin{scriptsize}
			\draw[color=black] (9.5,-6.5) node {$G_{K,N,m,\eta}$};
			\end{scriptsize}
			\end{tikzpicture}
			
			A detail of the image near the boundary at the top of $R_{m,\eta}$.
		\caption{A scheme of the definition of $G_{K,N,m,\eta}$. The grey shaded area is where we use the mapping $G_{K,3k+N, 3k+3+N, k}$. The top and bottom (purple) parts are squeezed from size $13$ to size $\eta$. In the preimage the blue area represents $[-2m-5,-2m-1]\times {U}_{N}^3$, and $({U}_{N}^3+n)\times [1,14]$ respectively.}\label{Fig:Stretching}
	\end{figure}

	Now we define the map that this subsection is dedicated to; its scheme is in Figure~\ref{Fig:Stretching}. Let $\eta >0$ and call $n = (n_1, n_2, n_3, n_4)$, where $n_1,n_2,n_3$ are even numbers between $-2m$ and $+2m$ and $n_4 = 0$. We define $G_{K,N,m,\eta}$ on $Z_m$ as follows; when $y = x+n$ for $x\in Q(0,1)$,
	\eqn{ASeason0}
	$$
			G_{K,N,m,\eta}(y) =G_{K,N,m,\eta}(x + n)
			= \begin{cases}
				G_{K,3k+N,3k+3+N,k}(x) + n \ & x \in {U}_{k}^4 \setminus {U}_{k+1}^4,\\
				(t_K(x_1),t_K(x_2),t_K(x_3),t_K(x_4)) + n \ &  x\in \C_{B}.\\
		\end{cases}
	$$
	
	On $R_{m,13}\setminus Z_m =\bigl([-2m-5,2m+5]^3\times[-14,14]\bigr)\setminus \bigl([-2m-1,2m+1]^3\times[-1,1]\bigr)$ we define our $G_{K,N,m,\eta}$ as an interpolation between values on $\partial [-2m-1,2m+1]^3\times[-1,1]$ (which is some form of $[H^{3,i}_K]_N$) and between identity on outer boundary. Moreover, we need to get that $G$ is stretching (and thus $DG$ is big) only on $\bigl([-2m-2,2m+2]^3\times[-2,2]\bigr)\setminus \bigl([-2m-1,2m+1]^3\times[-1,1]\bigr)$ and it is Lipschitz outside of this region (see Fig. \ref{Fig:Stretching}). 
	
	When $y = x+n$, $x_1,x_2,x_3\in [-1, 1]$, and $x_4 \in [-14,-1]\cup[1,14]$ we put
	\begin{equation}\label{StretchingBitAroundTop}
		\begin{aligned}
			G_{K,N,m,\eta}(y) &=G_{K,N,m,\eta}(x+n)\\
			& = \begin{cases}
				[H^{3,4}_K]_{N}(x) +n + \frac{\eta}{13}(x_4-1)e_4 +e_4 \ & x_4 \in [1,\tfrac{3}{2}],\\
				2(2-x_4)[H^{3,4}_K]_{N}(x)+ 2(x_4-\frac{3}{2})(x_1,x_2,x_3,0)+\ & \\
				\quad +n + \frac{\eta}{13}(x_4-1)e_4 +e_4 \ & x_4 \in [\tfrac{3}{2},2],\\
				[H^{3,4}_K]_{N}(x)+ n + \frac{\eta}{13}(x_4+1)e_4 -e_4 \ & x_4 \in [-\tfrac{3}{2},-1],\\
				2(2+x_4)[H^{3,4}_K]_{N}(x)+2(-x_4-\frac{3}{2})(x_1,x_2,x_3,0)+ \ & \\
				\quad+ n + \frac{\eta}{13}(x_4+1)e_4 -e_4 \ & x_4 \in [-2,-\tfrac{3}{2}],\\
				(x_1,x_2,x_3,0)+n + \frac{\eta}{13}(x_4-1)e_4 +e_4 \ & x_4 \in [2,14],\\
				(x_1,x_2,x_3,0)+n + \frac{\eta}{13}(x_4+1)e_4 -e_4 \ & x_4 \in [-14,-2],\\
			\end{cases}
		\end{aligned}
	\end{equation}
	It is easy to see from \eqref{ASeason0}, \eqref{SpeciallyForStanda} and \eqref{StretchingBitAroundTop} that those mapping agree on the boundary $[-2m-1,2m+1]^3\times\{-1,1\}$, i.e. they are both equal to the shifted $[H^{3,4}_K]_{N}(x)$. 
	For $x_4\in [-2,-1]\cup[1,2]$ we  just interpolate between identity and $[H^{3,4}_K]_{N}(x)$ in the first three coordinates and in the last coordinate we squeeze the slab of thickness $1$ to slab of thickness $\frac{\eta}{13}$. 
	Finally for $x_4\in [-14,-2]\cup[2,14]$ we  have identity in the first three coordinates and in the last coordinate we squeeze the slab of thickness $12$ to slab of thickness $\frac{12}{13}\eta$.

On other parts of the boundary we do a similar interpolation between values on $\partial [-2m-1,2m+1]^3\times[-1,1]$ and the identity on the outer boundary:
	Assume that there is exactly one index $i=1,2,3$ such that $y_i\in [-2m-5,-2m-1]\cup[2m+1, 2m+5]$ and $y_j\in [-20-1,2m+1]$ for other $j\in\{1,2,3\}\setminus\{i\}$. 
Then we put $x_i = y_i$ and $n_i = 0$, for $j\in\{1,2,3\}\setminus\{i\}$ we find $x_j\in[-1,1]$ so that $y_j = x_j+n_j$ and $x_4 \in [-1, 1]$. We define 
	\begin{equation}\label{StretchingBitAroundTop2}
		\begin{aligned}
			G_{K,N,m,\eta}(y) & =G_{K,N,m,\eta}(x+n)\\
			& = \begin{cases}
				x + n  \ & x_i \in [2m+5, 2m+2],\\
				n +x_ie_i 
				+ (x_i-2m-1)(x-x_ie_i)\ &\\
				\quad + (2m+2 - x_i)[H^{3,i}_K]_{N}(x) \ & x_i \in [2m+1, 2m+2],\\
			  n +x_ie_i 
				+ (-x_i-2m-1)(x-x_ie_i)\ &\\
				\quad + (2m+2+x_i)[H^{3,i}_K]_{N}(x) \ & x_i \in  [-2m-2,-2m-1],\\
				x+n  \ & x_i \in [-2m-5, -2m-2].\\
			\end{cases}
		\end{aligned}
	\end{equation}
	
In the remaining part around the edges of our block $[-2m-5,2m+5]^3\times [-14,14]$ we just do a simple linear squeezing. That is for $y\in [-2m-5,2m+5]^3\times [-14,14]$ so that $y\notin [-2m-1,2m+1]^3\times[-14,14]$ and $y\notin [2m-5,2m+5]\times[-2m-1,2m+1]^2\times [-1,1]$
we define	
	\begin{equation}\label{StretchingBitAroundSides}
		G_{K,N,m,\eta}^j(y) =y_j \text{ for } j=1,2,3
	\end{equation}
	and in the last coordinate we linearly stretch
	\begin{equation}\label{StretchingBitAroundSides2}
		G_{K,N,m,\eta}^4(y) = \begin{cases}
			y_4& y_4 \in [-1,1]\\
			1 + \frac{\eta}{13}(y_4-1) \ & y_4 \in [1,14],\\
			-1 + \frac{\eta}{13}(y_4+1) \ & y_4 \in [-14,-1].\\	
		\end{cases}
	\end{equation}
Similarly we define it for the permutation of the first three coordinates, i.e. $y\notin [-2m-1,2m+1]^3\times[-14,14]$ and ($y\notin [-2m-1,2m+1]\times[2m-5,2m+5]\times[-2m-1,2m+1]\times [-1,1]$ or $y\notin [-2m-1,2m+1]^2\times [2m-5,2m+5]\times [-1,1]$). 
	

\subsection{A specific homeomorphism that maps $\C_{A,K}(4)$ onto $\C_{B}(4)$ dependent on a parameter $N$.}\label{firstmap}

	Now we define a map that squeezes the Cantor set; its scheme is in Figure~\ref{Fig:Squeeze}. Let $\eta >0$. The map this section is dedicated to is called $\tilde{G}_{K,N,m,\eta}$ (of course it depends on $\alpha$ and $\beta$) that maps $\C_{A,K}$ onto $\C_{B}$. In order to do so we use mappings of type $[J_K^3]_{m}$ on $3$-dimensional hyperplanes perpendicular to $e_i$, the $i$-th canonical vector. Again we use the maps $T_i$ and $T^i$ defined in Section~\ref{secondmap}. We call
	\begin{equation}
	J^{3,i}_K(x) =T^i(J^3_K(T_i(x))).
	\end{equation}
	Of course we also define
\begin{equation}\label{defT3}
	[J_K^{3,i}]_{m}(x) =T^i([J_K^{3}]_{m}(T_i(x))).
	\end{equation}
	In order to define $\tilde{G}_{K,N,m,\eta}$ we define the continuous function
	$$
		\tilde{\zeta}_{K,k}(s) =\begin{cases}
			1  & s\in \tilde{U}_{K,k+1}^1\\
			0  & s\in \er \setminus \tilde{U}_{K,k}^1\\
			\text{linear on intervals in} & \tilde{U}_{K,k}^1\setminus \tilde{U}_{K,k+1}^1.\\
		\end{cases}
	$$
	\begin{figure}
		\centering
		\begin{tikzpicture}[line cap=round,line join=round,>=triangle 45,x=0.5cm,y=0.5cm]
			\clip(-4.5,-14.5) rectangle (20.5,1.7);
			\fill[line width=0.pt,fill=black,fill opacity=0.3] (0.,1.) -- (0.,-1.) -- (16.,-1.) -- (16.,1.) -- cycle;
			\fill[line width=0.pt,fill=black,fill opacity=0.3] (0.,-8.) -- (0.,-10.) -- (16.,-10.) -- (16.,-8.) -- cycle;
			\fill[line width=0.pt,color=ffqqqq,fill=ffqqqq,fill opacity=0.5] (-4.,-1.) -- (-4.,-1.1) -- (20.,-1.1) -- (20.,-1.) -- cycle;
			\fill[line width=0.pt,color=qqqqff,fill=qqqqff,fill opacity=0.5] (-4.,0.95) -- (0.,0.95) -- (0.,0.55) -- (-4.,0.55) -- cycle;
			\fill[line width=0.pt,color=qqqqff,fill=qqqqff,fill opacity=0.5] (-4.,0.45) -- (-4.,0.05) -- (0.,0.05) -- (0.,0.45) -- cycle;
			\fill[line width=0.pt,color=qqqqff,fill=qqqqff,fill opacity=0.5] (-4.,-0.05) -- (-4.,-0.45) -- (0.,-0.45) -- (0.,-0.05) -- cycle;
			\fill[line width=0.pt,color=qqqqff,fill=qqqqff,fill opacity=0.5] (-4.,-0.55) -- (-4.,-0.95) -- (0.,-0.95) -- (0.,-0.55) -- cycle;
			\fill[line width=0.pt,color=qqffqq,fill=qqffqq,fill opacity=0.5] (-4.,0.55) -- (0.,0.55) -- (0.,0.45) -- (-4.,0.45) -- cycle;
			\fill[line width=0.pt,color=qqffqq,fill=qqffqq,fill opacity=0.5] (-4.,0.05) -- (-4.,-0.05) -- (0.,-0.05) -- (0.,0.05) -- cycle;
			\fill[line width=0.pt,color=qqffqq,fill=qqffqq,fill opacity=0.5] (-4.,-0.45) -- (-4.,-0.55) -- (0.,-0.55) -- (0.,-0.45) -- cycle;
			\fill[line width=0.pt,color=qqffqq,fill=qqffqq,fill opacity=0.5] (-4.,-0.95) -- (-4.,-1.) -- (0.,-1.) -- (0.,-0.95) -- cycle;
			\fill[line width=0.pt,color=ffqqqq,fill=ffqqqq,fill opacity=0.5] (-4.,-10.) -- (20.,-10.) -- (20.,-13.9) -- (-4.,-13.9) -- cycle;
			\fill[line width=0.pt,color=qqffqq,fill=qqffqq,fill opacity=0.5] (-4.,1.) -- (0.,1.) -- (0.,0.95) -- (-4.,0.95) -- cycle;
			\fill[line width=0.pt,color=qqqqff,fill=qqqqff,fill opacity=0.5] (-4.,-8.05) -- (-3.,-8.45) -- (0.,-8.45) -- (0.,-8.55) -- (-3.,-8.55) -- (-4.,-8.95) -- cycle;
			\fill[line width=0.pt,color=qqqqff,fill=qqqqff,fill opacity=0.5] (-4.,-9.05) -- (-3.,-9.45) -- (0.,-9.45) -- (0.,-9.55) -- (-3.,-9.55) -- (-4.,-9.95) -- cycle;
			\fill[line width=0.pt,color=ffqqqq,fill=ffqqqq,fill opacity=0.5] (-4.,-4.1) -- (-4.,-8.) -- (20.,-8.) -- (20.,-4.1) -- cycle;
			\fill[line width=0.pt,color=ffqqqq,fill=ffqqqq,fill opacity=0.5] (-4.,1.1) -- (-4.,1.) -- (20.,1.) -- (20.,1.1) -- cycle;
			\fill[line width=0.pt,color=qqffqq,fill=qqffqq,fill opacity=0.5] (-4.,-8.) -- (-4.,-8.05) -- (-3.,-8.45) -- (0.,-8.45) -- (0.,-8.) -- cycle;
			\fill[line width=0.pt,color=qqffqq,fill=qqffqq,fill opacity=0.5] (-3.,-8.55) -- (-4.,-8.95) -- (-4.,-9.05) -- (-3.,-9.45) -- (0.,-9.45) -- (0.,-8.55) -- cycle;
			\fill[line width=0.pt,color=qqffqq,fill=qqffqq,fill opacity=0.5] (-4.,-9.95) -- (-3.,-9.55) -- (0.,-9.55) -- (0.,-10.) -- (-4.,-10.) -- cycle;
			\fill[line width=0.pt,color=ffffff,fill=ffffff,fill opacity=1.0] (0.,1.2) -- (0.,1.) -- (2.,1.) -- (2.,1.2) -- cycle;
			\fill[line width=0.pt,color=qqffqq,fill=qqffqq,fill opacity=0.5] (0.,1.2) -- (0.,1.) -- (0.05,1.) -- (0.05,1.2) -- cycle;
			\fill[line width=0.pt,color=qqffqq,fill=qqffqq,fill opacity=0.5] (0.95,1.2) -- (0.95,1.) -- (1.05,1.) -- (1.05,1.2) -- cycle;
			\fill[line width=0.pt,color=qqffqq,fill=qqffqq,fill opacity=0.5] (1.95,1.2) -- (1.95,1.) -- (2.,1.) -- (2.,1.2) -- cycle;
			\fill[line width=0.pt,color=qqqqff,fill=qqqqff,fill opacity=0.5] (0.05,1.2) -- (0.95,1.2) -- (0.95,1.) -- (0.05,1.) -- cycle;
			\fill[line width=0.pt,color=qqqqff,fill=qqqqff,fill opacity=0.5] (1.05,1.2) -- (1.05,1.) -- (1.95,1.) -- (1.95,1.2) -- cycle;
			\fill[line width=0.pt,color=ffffff,fill=ffffff,fill opacity=1.0] (0.,-4.1) -- (0.,-8.) -- (2.,-8.) -- (2.,-4.1) -- cycle;
			\fill[line width=0.pt,color=qqffqq,fill=qqffqq,fill opacity=0.5] (0.,-4.) -- (0.,-8.) -- (0.47,-8.) -- (0.47,-4.1) -- (0.05,-4.) -- cycle;
			\fill[line width=0.pt,color=qqffqq,fill=qqffqq,fill opacity=0.5] (0.95,-4.) -- (0.53,-4.1) -- (0.53,-8.) -- (1.47,-8.) -- (1.47,-4.1) -- (1.05,-4.) -- cycle;
			\fill[line width=0.pt,color=qqffqq,fill=qqffqq,fill opacity=0.5] (1.53,-4.1) -- (1.53,-8.) -- (2.,-8.) -- (2.,-4.) -- (1.95,-4.) -- cycle;
			\fill[line width=0.pt,color=qqqqff,fill=qqqqff,fill opacity=0.5] (0.05,-4.) -- (0.95,-4.) -- (0.53,-4.1) -- (0.53,-8.) -- (0.47,-8.) -- (0.47,-4.1) -- cycle;
			\fill[line width=0.pt,color=qqqqff,fill=qqqqff,fill opacity=0.5] (1.05,-4.) -- (1.95,-4.) -- (1.53,-4.1) -- (1.53,-8.) -- (1.47,-8.) -- (1.47,-4.1) -- cycle;
			\draw [line width=0.7pt] (-4.,-4.)-- (-4.,-14.);
			\draw [line width=0.7pt] (-4.,-14.)-- (20.,-14.);
			\draw [line width=0.7pt] (20.,-14.)-- (20.,-4.);
			\draw [line width=0.7pt] (20.,-4.)-- (-4.,-4.);
			\draw [line width=0.7pt] (-4.,1.2)-- (20.,1.2);
			\draw [line width=0.7pt] (20.,1.2)-- (20.,-1.2);
			\draw [line width=0.7pt] (20.,-1.2)-- (-4.,-1.2);
			\draw [line width=0.7pt] (-4.,-1.2)-- (-4.,1.2);
			\draw [line width=0.7pt] (0.,1.)-- (0.,-1.);
			\draw [line width=0.7pt] (0.,-1.)-- (16.,-1.);
			\draw [line width=0.7pt] (16.,-1.)-- (16.,1.);
			\draw [line width=0.7pt] (16.,1.)-- (0.,1.);
			\draw [line width=0.7pt] (2.,1.)-- (2.,-1.);
			\draw [line width=0.7pt] (4.,1.)-- (4.,-1.);
			\draw [line width=0.7pt] (6.,1.)-- (6.,-1.);
			\draw [line width=0.7pt] (8.,1.)-- (8.,-1.);
			\draw [line width=0.7pt] (10.,1.)-- (10.,-1.);
			\draw [line width=0.7pt] (12.,1.)-- (12.,-1.);
			\draw [line width=0.7pt] (14.,1.)-- (14.,-1.);
			\draw [line width=0.7pt] (0.,-8.)-- (0.,-10.);
			\draw [line width=0.7pt] (0.,-10.)-- (16.,-10.);
			\draw [line width=0.7pt] (16.,-10.)-- (16.,-8.);
			\draw [line width=0.7pt] (16.,-8.)-- (0.,-8.);
			\draw [line width=0.7pt] (2.,-8.)-- (2.,-10.001252097531708);
			\draw [line width=0.7pt] (4.,-8.)-- (4.,-10.);
			\draw [line width=0.7pt] (6.,-8.)-- (6.,-10.);
			\draw [line width=0.7pt] (8.,-8.)-- (8.,-10.);
			\draw [line width=0.7pt] (10.,-8.)-- (10.,-10.);
			\draw [line width=0.7pt] (12.,-8.)-- (12.,-10.);
			\draw [line width=0.7pt] (14.,-8.)-- (14.,-10.00058127117801);
			\draw [->,line width=1.pt] (8.,-1.5) -- (8.,-3.5);
			\begin{scriptsize}
			\draw[color=black] (9.5,-2.26) node {$\tilde{G}_{K,N,m,\eta}$};
			\end{scriptsize}
		\end{tikzpicture}
		
		\begin{tikzpicture}[line cap=round,line join=round,>=triangle 45,x=3.0cm,y=3.0cm]
		\clip(-0.2,-4.3) rectangle (2.2,-3.9);
			\fill[line width=0.pt,color=qqqqff,fill=qqqqff,fill opacity=0.5] (-4.,-8.05) -- (-3.,-8.45) -- (0.,-8.45) -- (0.,-8.55) -- (-3.,-8.55) -- (-4.,-8.95) -- cycle;
			\fill[line width=0.pt,color=qqqqff,fill=qqqqff,fill opacity=0.5] (-4.,-9.05) -- (-3.,-9.45) -- (0.,-9.45) -- (0.,-9.55) -- (-3.,-9.55) -- (-4.,-9.95) -- cycle;
			\fill[line width=0.pt,color=ffqqqq,fill=ffqqqq,fill opacity=0.5] (-4.,-4.1) -- (-4.,-8.) -- (20.,-8.) -- (20.,-4.1) -- cycle;
			\fill[line width=0.pt,color=ffqqqq,fill=ffqqqq,fill opacity=0.5] (-4.,1.1) -- (-4.,1.) -- (20.,1.) -- (20.,1.1) -- cycle;
			\fill[line width=0.pt,color=qqffqq,fill=qqffqq,fill opacity=0.5] (-4.,-8.) -- (-4.,-8.05) -- (-3.,-8.45) -- (0.,-8.45) -- (0.,-8.) -- cycle;
			\fill[line width=0.pt,color=qqffqq,fill=qqffqq,fill opacity=0.5] (-3.,-8.55) -- (-4.,-8.95) -- (-4.,-9.05) -- (-3.,-9.45) -- (0.,-9.45) -- (0.,-8.55) -- cycle;
			\fill[line width=0.pt,color=qqffqq,fill=qqffqq,fill opacity=0.5] (-4.,-9.95) -- (-3.,-9.55) -- (0.,-9.55) -- (0.,-10.) -- (-4.,-10.) -- cycle;
			\fill[line width=0.pt,color=ffffff,fill=ffffff,fill opacity=1.0] (0.,1.2) -- (0.,1.) -- (2.,1.) -- (2.,1.2) -- cycle;
			\fill[line width=0.pt,color=qqffqq,fill=qqffqq,fill opacity=0.5] (0.,1.2) -- (0.,1.) -- (0.05,1.) -- (0.05,1.2) -- cycle;
			\fill[line width=0.pt,color=qqffqq,fill=qqffqq,fill opacity=0.5] (0.95,1.2) -- (0.95,1.) -- (1.05,1.) -- (1.05,1.2) -- cycle;
			\fill[line width=0.pt,color=qqffqq,fill=qqffqq,fill opacity=0.5] (1.95,1.2) -- (1.95,1.) -- (2.,1.) -- (2.,1.2) -- cycle;
			\fill[line width=0.pt,color=qqqqff,fill=qqqqff,fill opacity=0.5] (0.05,1.2) -- (0.95,1.2) -- (0.95,1.) -- (0.05,1.) -- cycle;
			\fill[line width=0.pt,color=qqqqff,fill=qqqqff,fill opacity=0.5] (1.05,1.2) -- (1.05,1.) -- (1.95,1.) -- (1.95,1.2) -- cycle;
			\fill[line width=0.pt,color=ffffff,fill=ffffff,fill opacity=1.0] (0.,-4.1) -- (0.,-8.) -- (2.,-8.) -- (2.,-4.1) -- cycle;
			\fill[line width=0.pt,color=qqffqq,fill=qqffqq,fill opacity=0.5] (0.,-4.) -- (0.,-8.) -- (0.47,-8.) -- (0.47,-4.1) -- (0.05,-4.) -- cycle;
			\fill[line width=0.pt,color=qqffqq,fill=qqffqq,fill opacity=0.5] (0.95,-4.) -- (0.53,-4.1) -- (0.53,-8.) -- (1.47,-8.) -- (1.47,-4.1) -- (1.05,-4.) -- cycle;
			\fill[line width=0.pt,color=qqffqq,fill=qqffqq,fill opacity=0.5] (1.53,-4.1) -- (1.53,-8.) -- (2.,-8.) -- (2.,-4.) -- (1.95,-4.) -- cycle;
			\fill[line width=0.pt,color=qqqqff,fill=qqqqff,fill opacity=0.5] (0.05,-4.) -- (0.95,-4.) -- (0.53,-4.1) -- (0.53,-8.) -- (0.47,-8.) -- (0.47,-4.1) -- cycle;
			\fill[line width=0.pt,color=qqqqff,fill=qqqqff,fill opacity=0.5] (1.05,-4.) -- (1.95,-4.) -- 	(1.53,-4.1) -- (1.53,-8.) -- (1.47,-8.) -- (1.47,-4.1) -- cycle;
			\draw [line width=0.2pt] (-4.,-4.)-- (-4.,-14.);
			\draw [line width=0.2pt] (-4.,-14.)-- (20.,-14.);
			\draw [line width=0.2pt] (20.,-14.)-- (20.,-4.);
			\draw [line width=0.2pt] (20.,-4.)-- (-4.,-4.);
			\draw [line width=0.2pt] (-4.,1.2)-- (20.,1.2);
			\draw [line width=0.2pt] (20.,1.2)-- (20.,-1.2);	
			\draw [line width=0.2pt] (20.,-1.2)-- (-4.,-1.2);
			\draw [line width=0.2pt] (-4.,-1.2)-- (-4.,1.2);
			\draw [line width=0.2pt] (0.,1.)-- (0.,-1.);
			\draw [line width=0.2pt] (0.,-1.)-- (16.,-1.);
			\draw [line width=0.2pt] (16.,-1.)-- (16.,1.);
			\draw [line width=0.2pt] (16.,1.)-- (0.,1.);
			\draw [line width=0.2pt] (2.,1.)-- (2.,-1.);
			\draw [line width=0.2pt] (4.,1.)-- (4.,-1.);
			\draw [line width=0.2pt] (6.,1.)-- (6.,-1.);
			\draw [line width=0.2pt] (8.,1.)-- (8.,-1.);
			\draw [line width=0.2pt] (10.,1.)-- (10.,-1.);
			\draw [line width=0.2pt] (12.,1.)-- (12.,-1.);
			\draw [line width=0.2pt] (14.,1.)-- (14.,-1.);
			\draw [line width=0.2pt] (0.,-8.)-- (0.,-10.);
			\draw [line width=0.2pt] (0.,-10.)-- (16.,-10.);
			\draw [line width=0.2pt] (16.,-10.)-- (16.,-8.);
			\draw [line width=0.2pt] (16.,-8.)-- (0.,-8.);
			\draw [line width=0.2pt] (2.,-8.)-- (2.,-10.001252097531708);
			\draw [line width=0.2pt] (4.,-8.)-- (4.,-10.);
			\draw [line width=0.2pt] (6.,-8.)-- (6.,-10.);
			\draw [line width=0.2pt] (8.,-8.)-- (8.,-10.);
			\draw [line width=0.2pt] (10.,-8.)-- (10.,-10.);
			\draw [line width=0.2pt] (12.,-8.)-- (12.,-10.);
		\end{tikzpicture}	
		
		A detail of the image near the boundary at the top of $R_{m,13}$.
		\caption{A scheme of the definition of $\tilde{G}_{K,N,m,\eta}$. The grey shaded area is where we use the mapping $\tilde{G}_{K,4k+2N, 4k+4+2N, k}$. In the preimage the blue area is $[-2m-5,-2m-1]\times \tilde{U}_{K,2N}^3$ resp $(\tilde{U}_{K,2N}^3+n)\times [1,1+\eta]$. The top and bottom (red) parts are stretched from size $\eta/2$ to size $13-\eta/2$.}\label{Fig:Squeeze}
	\end{figure}
	
	We define a mapping $\tilde{G}_{K,l_1,l_2,k}$, which for each $\w \in \V^{k+1}$ is defined on the part of $\tilde{Q}_{K,\w}' \setminus \tilde{Q}_{K,\w}$ farthest from $\z_{K,\w}$ in the direction $e_i$ by the expression
	\begin{equation}\label{TurnTurnTurn}
		\tilde{G}_{K,l_1,l_2,k}(x) := \tilde{\zeta}_{K,k}(x_i)[J^{3,i}_K]_{l_1}(x) + [1-\tilde{\zeta}_{K,k}(x_i)] [J^{3,i}_K]_{l_2}(x) + q_K(x_i)e_i.
	\end{equation}
	We claim that for any $l_2>l_1 \geq k$, the mapping $\tilde{G}_{K,l_1,l_2,k}$ is a homeomorphism sending $\tilde{U}_{K,k}^4\setminus \tilde{U}_{K,k+1}^4$ onto ${U}_{k}^4\setminus {U}_{k+1}^4$. This is clear from the fact that $\tilde{G}_{K,l_1,l_2,k} = {G}_{K,l_1,l_2,k}^{-1}$ and this is a homeomorphism by Section~\ref{secondmap}.

	From the reasoning of previous paragraph we have that
\begin{equation}\label{defN}
\tilde{G}_{K,4k+2N,4k+4+2N,k}\text{ is a homeomorphism}.
\end{equation}
Recall that the parameter $N$ was fixed in the previous subsection. 
	
	Call $\eta > 0$ and $n = (n_1,n_2,n_3,0)$, where $n_1,n_2,n_3$ are even numbers between $-2m$ and $2m$. We define $\tilde{G}_{K,N,m,\eta}$ on $[-2m-1, 2m+1]^3\times[-1,1]$ as follows;
	when $y = x+n$ $x\in Q(0,1)$,
	\begin{equation}\label{ASeason}
		\begin{aligned}
			\tilde{G}_{K,N,m,\eta}(y) &=\tilde{G}_{K,N,m,\eta}(x + n)\\
			& = \begin{cases}
				\tilde{G}_{K,4k+2N,4k+4+2N,k}(x) + n \ & x \in \tilde{U}_{K,k}^4 \setminus \tilde{U}_{K,k+1}^4,\\
	 			(q_K(x_1),q_K(x_2),q_K(x_3),q_K(x_4)) + n \ &  x\in \C_{A,K}.\\
			\end{cases}
		\end{aligned}
	\end{equation}
	Note that the last line together with \eqref{ASeason0} and $q_K=(t_K)^{-1}$ imply that 
	$$
	G_{K,N,m,\eta}\circ \tilde{G}_{K,N,m,\eta}(x) = x\text{ on }\C_{A,K,m}.
	$$ 
	When $y = x+n$, $x_1,x_2,x_3\in [-1, 1]$, and $x_4 \in [-1-\tfrac{\eta}{2},-1]\cup[1,1+\tfrac{\eta}{2}]$ we put
	\begin{equation}\label{OhDear1A}
		\begin{aligned}
			\tilde{G}_{K,N,m,\eta}(y) &=\tilde{G}_{K,N,m,\eta}(x+n)\\
			& = \begin{cases}
				[J^{3,4}_K]_{2N}(x) + n + \frac{26-{\eta}}{\eta}(x_4-1)e_4 +e_4 \ & x_4 \in [1,1+\tfrac{\eta}{2}],\\
				[J^{3,4}_K]_{2N}(x) + n + \frac{26-{\eta}}{\eta}(x_4+1)e_4 -e_4 \ & x_4 \in [-1-\tfrac{\eta}{2},-1].\\
			\end{cases}
		\end{aligned}
	\end{equation}
	It is easy to see from \eqref{ASeason} and \eqref{OhDear1A} that those mapping agree on the boundary $[-2m-1,2m+1]^3\times\{-1,1\}$, i.e. they are both equal to the shifted $[J^{3,4}_K]_{2N}(x)$. Moreover, the definition \eqref{OhDear1A} gives us that we stretch a slab of thickness $\frac{\eta}{2}$ onto slab of thickness $13-\frac{\eta}{2}$ in the $x_4$-coordinate (see Figure \ref{Fig:Squeeze})\textbf{}.
	Further when $y = x+n$, $x_1,x_2,x_3\in [-1, 1]$, and $x_4 \in [-1-{\eta},-1 - \tfrac{\eta}{2}]\cup[1+\tfrac{\eta}{2},1+{\eta}]$ we define
	\begin{equation}\label{OhDear1B}
		\begin{aligned}
			\tilde{G}_{K,N,m,\eta}(y) &=\tilde{G}_{K,N,m,\eta}(x+n) \\
			& = \begin{cases}
				 n+(x_4+13-\eta) e_4 \ & \\
				\quad +\tfrac{2}{\eta}(x_4-1-\tfrac{\eta}{2})(x_1,x_2,x_3,0)\ & \\
				\quad+\tfrac{2}{\eta}(1+{\eta}-x_4)[J^{3,4}_K]_{2N}(x) \ & x_4 \in [1+\tfrac{\eta}{2}, 1+\eta],\\
				n+(x_4-13+\eta) e_4\ & \\
				\quad + \tfrac{2}{\eta}(-1-\tfrac{\eta}{2}-x_4)(x_1,x_2,x_3,0) \ & \\ \quad+\tfrac{2}{\eta}(1+{\eta}+x_4)[J^{3,4}_K]_{2N}(x)  \ & x_4 \in [-1-\eta,-1-\tfrac{\eta}{2}].\\
			\end{cases}
		\end{aligned}
	\end{equation}
	If fact the definition \eqref{OhDear1B} is just a linear interpolation between $[J^{3,4}_K]_{2N}(x)$ on $[-2m-1,2m+1]^3\times\{1+\frac{\eta}{2}\}$ and identity on $[-2m-1,2m+1]^3\times\{1+\eta\}$ (see blue parts in Figure \ref{Fig:Squeeze}).
	
	On other parts of the boundary we do a similar interpolation between values on $\partial [-2m-1,2m+1]^3\times[-1,1]$ and the identity on the outer boundary:
	When there is exactly one index $i=1,2,3$ such that $y_i\in [-2m-5,-2m-1]\cup[2m+1, 2m+5]$ (then put $x_i = y_i$ and $n_i = 0$) but for $j\neq i$ $y_j = x_j+n_j$ and $x_4 \in [-1, 1]$ we put
	\begin{equation}\label{OhDear2}
		\begin{aligned}
			\tilde{G}_{K,N,m,\eta}(y) & =\tilde{G}_{K,N,m,\eta}(x+n)\\
			& = \begin{cases}
				[J^{3,i}_K]_{2N}(x) + n +x_ie_i \ & x_i \in [2m+1, 2m+4],\\
				n +x_ie_i 
				+ (x_i-2m-4)(x-x_ie_i)\ &\\
				\quad + (2m+5 - x_i)[J^{3,i}_K]_{2N}(x) \ & x_i \in [2m+4, 2m+5],\\
				[J^{3,i}_K]_{2N}(x) + n +x_ie_i \ & x_i \in  [-2m-4,-2m-1],\\
				n +x_ie_i + (-x_i-2m-4)(x-x_ie_i) \ & \\
				\quad + (2m+5 + x_i)[J^{3,i}_K]_{2N}(x) \ & x_i \in [-2m-5, -2m-4].\\
			\end{cases}
		\end{aligned}
	\end{equation}
	When there are at least two indices $i=1,2,3$ such that $y_i\in [-2m-1-\eta,-2m-1]\cup[2m+1, 2m+1+\eta]$ (or at least one index $i=1,2,3$ such that $y_i\in [-2m-1-\eta,-2m-1]\cup[2m+1, 2m+1+\eta]$ and $y_4 \in [-1-\eta,-1]\cup[1,1+\eta]$) we define the $j$-th coordinate function of $\tilde{G}_{N,K,m,\eta}$ as follows
	\begin{equation}\label{DeathToGo1}
		\tilde{G}_{K,N,m,\eta}^j(y) =y_j \text{ for } j=1,2,3
	\end{equation}
	and we just linearly stretch the corners as
	\begin{equation}\label{DeathToGo2}
		\tilde{G}_{K,N,m,\eta}^4(y) = \begin{cases}
			y_4& y_4 \in [-1,1]\\
			1 + \frac{26-\eta}{\eta}(y_4-1) \ & y_4 \in [1,1+\tfrac{\eta}{2}],\\
			y_4 + 13 -\eta \ & y_4 \in [1+\tfrac{\eta}{2}, 1+\eta],\\
			-1 + \frac{26-\eta}{\eta}(y_4-1) \ & y_4 \in [-1-\frac{\eta}{2},-1],\\
			y_4 - 13 +\eta \ & y_4 \in [-1-\eta, -1-\tfrac{\eta}{2}].\\	
		\end{cases}
	\end{equation}

\section{A mapping equaling a reflection on $\C_{B}$ with identity on the boundary} \label{CarCrash}



Recall that $R_{m,\eta}=[-2m-5,2m+5]^{3}\times[-1-\eta,1+\eta]$. 
In Section~\ref{Cantor} we define maps $\tilde{G}_{K,N,m,\eta}$ and $G_{K,N,m,\eta}$ such that 
$$
G_{K,N,m,\eta}\circ \tilde{G}_{K,N,m,\eta}(x) = x\text{ on }\partial R_{m,\eta}\text{ and on }\C_{A,K,m}, 
$$
$$
\tilde{G}_{K,N,m,\eta}(R_{m,\eta}) = R_{m,13}\text{ and }
G_{K,N,m,\eta}(R_{m,13}) = R_{m,\eta}.
$$ 
The main aim of this section is the construction of a mapping $\hat{F}_{\beta, m}$ from $R_{m,13}$ onto $R_{m,13}$ with $\hat{F}_{\beta, m}(x) = x$ on $\partial R_{m,13}$ so that $\hat{F}_{\beta, m}$ behaves like a reflection $x\to[x_1,x_2,x_3,-x_4]$ on $\C_{B,m} = \bigcup_{n_1,n_2,n_3=-m}^m\C_{B}+(2n_1,2n_2,2n_3,0)$.  
Then we have
$$
G_{K,N,m,\eta} \circ \hat{F}_{\beta, m} \circ \tilde{G}_{K,N,m,\eta}(x) = x \text{ on } \partial R_{m,\eta}
$$
and
$$
J_{G_{K,N,m,\eta} \circ \hat{F}_{\beta, m} \circ \tilde{G}_{K,N,m,\eta}}=-1<0 \text{ on } \C_{A,K,m}.
$$

The basic building block for the construction of $\hat{F}_{\beta, m}$ is the mapping $F_{\beta}$ defined in \cite[Theorem 5.1]{CHT}, which is the subject of the following theorem. We define the set
$$
A_{i,k,l}(\beta) :=\bigl\{x \in\er^4: x_i\in [-1,1]\setminus {U}^1_{k} ,\ x_o\in {U}_{l}^1 \text{ for all } o\in\{1,2,3,4\}\setminus\{i\} \bigr\}.
$$
Also, by $\K_{B}$ we denote the set of lines intersecting $\C_{B}$ parallel to coordinate axes, i.e.
$$  
\begin{array}{ll}
\mathcal K_B =& (\mathcal C_B^1 \times \mathcal C_B^1 \times \mathcal C_B^1  \times [-1,1])\cup (\mathcal C_B^1 \times \mathcal C_B^1  \times [-1,1] \times \mathcal C_B^1 ) \\
& (\mathcal C_B^1 \times [-1,1]  \times \mathcal C_B^1 \times \mathcal C_B^1 )\cup ( [-1,1]\times \mathcal C_B^1 \times \mathcal C_B^1  \times \C_B^1).
\end{array}
$$
\prt{Theorem}
\begin{proclaim}\label{BloodSweatAndTears}\cite[Theorem 5.1]{CHT}
	There is $\beta_0>0$ such that for all $\beta>\beta_0$ there exists a mapping ${ F_{\beta}} : (-1,1)^{4} \to (-1,1)^{4}$, which is a sense-preserving bi-Lipschitz extension of the map
	\begin{equation}\label{OldOld}
	F_{\color{black} \beta }(x_1,x_2,x_3,x_4) = (x_1,x_2,x_3, -x_4) \quad x\in \mathcal{K}_{B}.		
	\end{equation}
	Moreover there exists a constant $M_0 \in \mathbb{N}$ such that for each $k,l\in\en$ satisfying $M_0<k\leq l$ and for every line parallel to $e_i$, $L$, we have that $F(L\cap A_{i,k-M_0-1, l+M_0}{ (\beta ) })$ is a line segment parallel to $e_i$ which lies in the set $A_{i, k-1, l}{ (\beta ) }$. Moreover, the derivative along $L$ satisfies
	\begin{equation}\label{StraightStraight}
	D_{i}F_{{\beta} }(x) =
	\begin{cases}
	e_i & \textrm{if $i=1,2,3$}\\
	-e_{i} & \textrm{if $i=4$}.
	\end{cases}
	\end{equation}
	for every $x\in L\cap A_{i,k-M_0-1, l+M_0}{ (\beta ) }$.
\end{proclaim}	

To construct $\hat{F}_{\beta, m}$ we extend $F_{\beta}$ periodically and then tweak it to get identity on the boundary $\partial R_{m,13}$. Moreover, we slightly extend its behaviour from $Z_m$ to its neighborhood so that we get the key identity \eqref{Oasis} not only on $\mathcal{K}_{B}$ but also on lines through the Cantor set in $[-2m-3,2m+3]^3\times[-3,3]$. 
We call
\eqn{defkbm}
$$
\K_{B,m} = \bigcup_{n_1,n_2,n_3=-m}^m \Bigl(\C_{B} +(2n_1, 2n_2,2n_3,0)\Bigr)+\bigcup_{i=1}^4 \er e_i.
$$

\begin{thm}\label{reflect}
	Let $F_{\beta}$ be the map from Theorem~\ref{BloodSweatAndTears}. There exists a bi-Lipschitz map $\hat{F}_{\beta, m}: R_{m,13}\to R_{m,13}$, such that
	\begin{equation}\label{PeriodicExtension}
	\hat{F}_{\beta, m}(x+n) = F_{\beta}(x)+n \text{ for }x\in Q(0,1)\text{ and }
	\end{equation}
	for $n=(2n_1,2n_2,2n_3,0)$, $n_1,n_2,n_3\in\{-m,\hdots,m\}$. Moreover, 
	$\hat{F}_{\beta, m}(x) = x$ on $\partial R_{m,13}$. Further 
	\begin{equation}\label{Oasis}
		\hat{F}_{\beta, m}(x_1,x_2,x_3,x_4)=(x_1,x_2,x_3,-x_4)
	\end{equation}
	on $\K_{B,m}\cap [-2m-3,2m+3]^3\times[-3,3]$. Also
	\begin{equation}\label{NotGay1}
		D_1\hat{F}_{\beta, m}(x+(0,2n_2, 2n_3, 0)) = e_1 \text{ for } x\in ([-2m-3,-2m-1]\cup [2m+1,2m+3] )\times U_1^3,
	\end{equation}
	\begin{equation}\label{NotGay2}
	D_2\hat{F}_{\beta, m}(x+(2n_1,0, 2n_3, 0)) = e_2 \text{ for } x\in U_1^1\times ([-2m-3,-2m-1]\cup [2m+1,2m+3] )\times U_1^2,
	\end{equation}
	\begin{equation}\label{NotGay3}
	D_3\hat{F}_{\beta, m}(x+(2n_1,2n_2, 0, 0)) = e_3 \text{ for } x\in U_1^2\times ([-2m-3,-2m-1]\cup [2m+1,2m+3] )\times U_1^1
	\end{equation}
	and
	\begin{equation}\label{NotGay4}
	D_4\hat{F}_{\beta, m}(x+(2n_1,2n_2, 2n_3, 0)) = -e_4 \text{ for } x\in U_2^3\times([-3,3]\setminus[-1,1]).
	\end{equation}
	
\end{thm}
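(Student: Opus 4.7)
My plan is to construct $\hat F_{\beta, m}$ in three nested layers, working from the inner block $Z_m$ outward. In \textbf{Layer 1}, I define $\hat F_{\beta, m}$ on $Z_m$ directly by the formula \eqref{PeriodicExtension}: set $\hat F_{\beta, m}(x+n) := F_\beta(x) + n$ for each $x \in Q(0,1)$ and each shift $n = (2n_1, 2n_2, 2n_3, 0)$ with $|n_j| \leq m$. The consistency of this extension across the shared faces $\{x_i = 2k+1\}$ of adjacent translated cubes ($i \in \{1,2,3\}$) follows from the straightening property \eqref{StraightStraight}: since $D_i F_\beta = e_i$ on the annular regions $A_{i, k-M_0-1, l+M_0}(\beta)$ covering the Cantor-line neighborhoods near $\partial Q(0,1)$, $F_\beta$ is a pure translation in the $e_i$-direction there, so the images from adjacent cubes automatically agree on the common face. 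Off these neighborhoods, $F_\beta$ is compatible with the $2e_i$-translation symmetry of $\K_B \cap \partial Q(0,1)$ by the reflection identity \eqref{OldOld}, and the construction of $F_\beta$ in \cite{CHT} can be arranged to match this $2e_i$-translation on the rest of $\partial Q(0,1)$. The reflection \eqref{Oasis} on $\K_B \cap Z_m$ is inherited directly from \eqref{OldOld}, and bi-Lipschitz regularity of $\hat F_{\beta, m}$ on $Z_m$ follows from that of $F_\beta$ together with the compatible translation structure across cube faces.

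In \textbf{Layer 2}, I extend $\hat F_{\beta, m}$ from $Z_m$ to the slightly larger block $B_m := [-2m-3, 2m+3]^3 \times [-3, 3]$, enforcing \eqref{Oasis} on $\K_{B, m} \cap B_m$. On each Cantor line parallel to $e_i$ passing through $Z_m$ and extending into $B_m \setminus Z_m$, I continue the reflection/translation form already present on the $Z_m$-part: on an $e_4$-parallel Cantor line I extend the reflection $x_4 \mapsto -x_4$ to $x_4 \in [-3,-1] \cup [1,3]$; on an $e_i$-parallel Cantor line for $i \in \{1,2,3\}$ exiting $Z_m$ through a face, I extend as the identity into the slab $x_i \in [-2m-3,-2m-1] \cup [2m+1, 2m+3]$. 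Off the Cantor lines I fill in bi-Lipschitz by interpolating linearly along the normal direction between the prescribed Cantor-line values and the identity map at $\partial B_m$. This immediately yields \eqref{NotGay1}-\eqref{NotGay4}: on each of the sets listed there the extension is exactly a translation (giving $D_i = e_i$ for $i \in \{1,2,3\}$) or a reflection (giving $D_4 = -e_4$).

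In \textbf{Layer 3}, I extend from $B_m$ to all of $R_{m, 13}$ so that $\hat F_{\beta, m} = \id$ on $\partial R_{m, 13}$. In each of the six slabs of $R_{m, 13} \setminus B_m$ (for instance the top slab $[-2m-3, 2m+3]^3 \times [3, 14]$) I use an affine interpolation in the normal coordinate between the inner boundary value from Layer 2 and the identity on the outer boundary. The edge and corner regions of $R_{m, 13} \setminus B_m$, where two or more slabs meet, are handled by a cone-type interpolation centered at the appropriate vertex. Since both endpoints of each interpolation are bi-Lipschitz with controlled constants and the interpolation is along a single direction over a fixed length, the resulting map is bi-Lipschitz.

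The \textbf{main obstacle} lies in Layer 2: simultaneously enforcing the reflection identity along the uncountable family of axis-parallel Cantor lines in $\K_{B, m} \cap B_m$, matching the already-constructed values on $\partial Z_m$, and maintaining bi-Lipschitz regularity in a single explicit formula. This is made tractable by the straightening property \eqref{StraightStraight}, which forces $\hat F_{\beta, m}$ into the translation or reflection form on narrow neighborhoods of the Cantor lines right up to $\partial Z_m$; consequently the remaining off-Cantor-line region can be filled by a soft linear interpolation without conflict at the matching boundary, and Layer 3 starts from a clean bi-Lipschitz boundary value on $\partial B_m$.
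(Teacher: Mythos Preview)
Your three-layer outline has the right shape but leaves the two hardest steps unjustified, and the paper's route is substantially different.

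\textbf{Layer 1 (periodic gluing).} The straightening property \eqref{StraightStraight} from Theorem~\ref{BloodSweatAndTears} controls $F_\beta$ only on the thin sets $A_{i,k-M_0-1,l+M_0}(\beta)$, which are tubular neighborhoods of Cantor lines; it says nothing about the behaviour of $F_\beta$ on the rest of the face $\{x_i=\pm 1\}$. Your sentence ``the construction of $F_\beta$ in \cite{CHT} can be arranged to match this $2e_i$-translation on the rest of $\partial Q(0,1)$'' is exactly the nontrivial point, and it cannot be obtained from the statement of Theorem~\ref{BloodSweatAndTears} alone. The paper does not glue translated copies of $F_\beta$ at all: it goes back to the representation $F_\beta = F_{g,u}\circ F_{g,v}$ from \cite{CHT}, where $g$ is a Lipschitz function on $\er^3\times\{0\}$, verifies the compatibility condition $g(x)=g(x\pm 2e_i)$ on $\partial_3 Q_3(0,1)\times\{0\}$ (equation \eqref{Butterfly}), and then extends $g$ periodically. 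The periodic $g$ is plugged into the global spaghetti formula, and \eqref{PeriodicExtension} and the derivative identities \eqref{NotGay1}--\eqref{NotGay4} then drop out from the fact that $g$ is constant along the relevant lines in the projection.

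\textbf{Layers 2--3 (interpolation to identity).} Your phrase ``interpolating linearly along the normal direction between the prescribed Cantor-line values and the identity map at $\partial B_m$'' does not define a map: Cantor lines are one-dimensional and do not specify a boundary datum for a normal interpolation. More seriously, a convex combination of two bi-Lipschitz maps is in general not injective, so even a well-posed linear interpolation would require a separate argument. The paper avoids this entirely: it sets $g\equiv 0$ outside a box slightly larger than the projection of the working region (equation \eqref{gjenula}), which makes $F_{g,v}$ and $F_{g,u}$ equal to the identity near the side faces of $R_{m,13}$; for the top and bottom faces it replaces the spaghetti maps $F_{g,v}$ by ``rubber band'' maps $R_{g,v}(x)=x+r(x)g(P_v(x))v$, where $r$ is a cutoff in $x_4$ with slope at most $1/7$ so that monotonicity in $x_4$ (hence injectivity) is preserved automatically. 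Bi-Lipschitzness then follows from the Lipschitz bound on $g$ together with the explicit spaghetti/rubber-band structure, with no ad hoc interpolation needed.
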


Let us first prove the following estimate, which is a simple result of the bi-Lipschitz nature of $F_{\beta}$ and \eqref{StraightStraight}.

\prt{Proposition}
\begin{proclaim}\label{EmotionallyDistant}There is $M\in\en$, $M\geq M_0$, such that 
	the mapping $F_{\beta}$ from Theorem~\ref{BloodSweatAndTears} maps $[-1,1] \times [{U}^3_{l}\setminus {U}^3_{l+1}]$ into $[-1-C(\beta)r_l,1+C(\beta)r_l]\times[{U}^3_{l-M}\setminus {U}^3_{l+1+M}]$ for every $l\in\en$ and similarly for all permutations of the coordinates. Likewise for every $k,l\in\en$ our $F_{\beta}$ maps $(U_{k}\setminus U_{k+1}) \times [{U}^3_{l}\setminus {U}^3_{l+1}]$ into $(U_{k-M}\setminus U_{k+M+1}) \times [{U}^3_{l-M}\setminus {U}^3_{l+M+1}]$ and maps ${U}^4_{l}\setminus {U}^4_{l+1}$ into ${U}^4_{l-M}\setminus {U}^4_{l+M+1}$.
\end{proclaim}
\begin{proof}
Denote by $\C_{B}(3)$ the Cantor set constructed in $\er^3$.  
	From \eqref{OldOld} we have $F_{\beta}(x) = (x_1,x_2,x_3, -x_4)$ on $[-1,1]\times \C_{B}(3)$ and $F_{\beta}$ is bi-Lipschitz. Hence the neighbourhoods of $[-1,1]\times \C_{B}(3)$ are mapped onto neighbourhoods of $[-1,1]\times \C_{B,K}(3)$ (and similarly for all permutations of the coordinates). Now ${U}^3_{l}$ is a neighbourhood of $\C_{B}(3)$, further there exists a $C_1$ such that for each $l$ we have that
	$$
	\begin{aligned}[]
	[-1,1] \times [\C_{B}(3)+B_3(0,C_1^{-1}2^{-l(\beta+1)})] &\subset [-1,1] \times [{U}^3_{l}]  \text{ and }\\
	[-1,1] \times [{U}^3_{l}] &\subset[-1,1]\times [\C_{B}(3)+B_3(0,C_12^{-l(\beta+1)})].
	\end{aligned}
	$$
	Then call $C$ the bi-Lipschitz constant of $F_{\beta}$ and choose $M$ so that $CC_1^2<2^{M(\beta+1)}$. Further call $\delta_l = CC_12^{-l(\beta+1)}$.  We have 
	$$	
	\begin{aligned}[]
	[-1+\delta_l,1-\delta_l] \times [{U}^3_{l+M}] &\subset [-1+\delta_l,1-\delta_l]\times [\C_{B}(3)+B_3(0,C_12^{-(l-M)(\beta+1)})]\\
	&\subset [-1+\delta_l,1-\delta_l]\times [\C_{B}(3)+B_3(0,C^{-1}C_1^{-1}2^{-l(\beta+1)})]\\
	&\subset F_{\beta}([-1,1] \times {U}^3_{l})\\
	&\subset [-1-\delta_l,1+\delta_l]\times [\C_{B}(3)+B_3(0,CC_12^{-l(\beta+1)})]\\
	& \subset[-1-\delta_l,1+\delta_l]\times [\C_{B}(3)+B_3(0,C_1^{-1}2^{-(l-M)(\beta+1)})]\\
	&\subset [-1-\delta_l,1+\delta_l] \times [{U}^3_{l-M}].
	\end{aligned}
	$$
	This fact yields the claim immediately since clearly the choice of $M$ does not depend on $l$.
	
	The argument that $F_{\beta}$ maps ${U}^4_{l}\setminus {U}^4_{l+1}$ into ${U}^4_{l-M}\setminus {U}^4_{l+M+1}$ is similar. The map $F_{\beta}$ sends $\C_{B}$ onto $\C_{B}$ and ${U}^4_{l}$ are neighbourhoods of $\C_{B}$. The rest of the argument remains analogous to the above.
\end{proof}	

Let us recall some useful notation and results from \cite[Section 4]{CHT}. Let $t\in(0,1]$ and let  $v=(\tfrac{t}{4},\tfrac{t}{8},\tfrac{t}{16},1)$ be vector. We define a projection $P_v:\er^4\to \er^{3}\times \{0\}$  in the direction of $-v$  as follows
$$
P_v(x) = x- \frac{x_4}{v_4}  v.
$$
It was shown in \cite[Lemma 4.2]{CHT} that for $t=1$ the mapping $P_v(x)$ is one to one on the Cantor set $\mathcal{C}_{B}$ and thus the whole construction of $F_{\beta}$ from Theorem \ref{BloodSweatAndTears} is possible.
It follows from the proof in \cite[Theorem 5.1]{CHT} that there are many admissible choices of $v$ and specifically any of the vectors $(\tfrac{t}{4},\tfrac{t}{8},\tfrac{t}{16},1)$ for $t\in(0,1]$ has the same properties once we choose $\beta$ sufficiently large.
Thus we may assume that $\max\{|v_1|,|v_2|,|v_3|\}$ is as small as we like. During the proof of Theorem~\ref{reflect}, we fix one single vector $v = v_t$, for $t\leq 1$ and work with that vector.

\prt{Definition}
\begin{proclaim}\label{SpagMap}
	Given a Lipschitz function $g:\er^{3}\times \{0\}\to[-3,3]$ we define the spaghetti strand map $F_{g,v}:\er^4\to\er^4$ by
	\begin{equation}\label{spaghetti}
	F_{g,v}(x) = x + g(P_v(x)) v.
	\end{equation}
\end{proclaim}

The equality $D_iF_{\beta}(x) = \pm e_i$ from \eqref{StraightStraight} was attained as follows. A Lipschitz function $g$ was defined on the hyperplane $\er^3\times\{0\}$. This function was constant on lines parallel to $e_i$, $i=1,2,3$ and linear on lines parallel to $P_v(\er e_4)$ (see the proof of Lemma~5.7, specifically Steps 3, 4, 5 equations (5.27), (5.30) pages 789-796 of \cite{CHT}). The function allows a Lipschitz periodic extension which means we are able to prove the translation equation~\eqref{PeriodicExtension} below.

\begin{proof}[Proof of Theorem~\ref{reflect}]
	Let us recall that $\beta$  and $r_k$ are from the construction of $\mathcal{C}_{B}$ \eqref{defrKk}.
	We assume that $\beta> \max\{6,\beta_0\}$ where $\beta_0$ comes from Theorem \ref{BloodSweatAndTears}. 
	\step{1}{Fixing a vector $\ve$}{RefS0}
	
	Now we prove that we can choose $t$ so that for $v=(\tfrac{t}{4},\tfrac{t}{8},\tfrac{t}{16},1)$ and for each $\ve \in \V_4^1$ we have
	\begin{equation}\label{GetInTheHole}
	P_{v}\bigl(Q(z_{\ve}, r_1)+ [-3,3]e_4\bigr) \subset Q_3(0,\tfrac{3}{4})\times\{0\}.
	\end{equation}
	Since we have $P_{v}(z_{\ve}) \to (\pm\tfrac{1}{2},\pm\tfrac{1}{2},\pm\tfrac{1}{2},0)$ as $t\to 0^+$, $r_1 = 2^{-1-\beta} < 2^{-7}$ (because $\beta > 6$) and $\diam(P_{v}([-3,3]e_4))\to 0$ as $t\to 0^+$ this is clearly possible. We choose and fix this $v$ with $t\leq 1$ so that $\max\{|v_1|,|v_2|,|v_3|\} \leq \tfrac{1}{14}$ and \eqref{GetInTheHole} holds. 
	
	\step{2}{The periodic extension of $F_{\beta}$ on $[-2m-1,2m+1]^3\times[-3,3]$}{RefS1}
	
	It was shown in \cite[Section 4 and Lemma 5.7]{CHT} that for $\beta\gg 1$ large enough we have that
	\begin{equation*}
	P_v\text{ is one-to-one on } (\C_{B}+ \bigcup_{i=1}^4\er e_i).
	\end{equation*}
	It follows rather quickly from the fact that $P_v (\C_{B}) \subset [-1,1]^3\times\{0\}$ that $P_v$ is one-to-one on $ \C_{B,m}$. By symmetry therefore $P_v$ is one-to-one on $\K_{B,m}$. Thus in the following we assume that $\beta$ is fixed and that $P_v$ is one-to-one on $\K_{B,m}$. Thus $v = v(t)$ and $\beta$ are absolutely fixed geometrical constants which do not change at all during the calculations. In doing so we have fixed the bi-Lipschitz constant of $F_{\beta}$ and the constant $M$ from Proposition~\ref{EmotionallyDistant}. None of these constants depend on $m$.
	
	From \cite[Section 4]{CHT} we know that
	$$
	g\text{ is Lipschitz }\Rightarrow F_{g,v}\text{ is bilipschitz}.
	$$
	Moreover, let $u = [-v_1,-v_2, -v_{3}, v_4]$ and let $g:\er^{3}\times\{0\}\to[-3,3]$ be a Lipschitz function satisfying
	\begin{equation}\label{defg}
	g(P_v(x)) = -x_4\text{ for every }x\in  \K_{B,m}\cap \bigl([-2m-3,2m+3]^3\times[-3,3]\bigr).
	\end{equation}
	
	This $g$ was constructed in \cite[Lemma 5.7]{CHT} for a single $\C_{B} + \bigcup_{i=1}^4\er e_i$. 
	Also by a simple cut-off argument we may assume that
	\begin{equation}\label{Bounded}
	-3\leq \min_{x\in \er^3\times\{0\}}g(x) \leq \max_{x\in \er^3\times\{0\}}g(x) \leq 3.
	\end{equation}
	In order to extend $g$ periodically by $g(x+n) = g(x)$ for all $n=(2n_1,2n_2,2n_3)$ with $-m\leq n_1,n_2,n_3\leq m$ we need that
	\begin{equation}\label{Butterfly}
	g(x) = g(x\pm2e_i) \text{ for all } x \text{ such that } x,x+2e_i\in \partial_3Q_3(0,1)\times \{0\}
	\end{equation}
	for each $i=1,2,3$. This was not proved explicitly in the original theorem, however $g$ was defined so that
	\begin{equation}\label{IronLionZion}
	g(P_v(x)) =- x_4 \text{ on }\C_{B} + \bigcup_{i=1}^4\er e_i.
	\end{equation}
	Therefore $g$ is constant on lines parallel to the $x_1, x_2$ and $ x_3$ axis intersecting $\C_{B}$ and so \eqref{Butterfly} holds at the intersection of these lines with $\partial_3 Q_3(0,1) \times\{0\}$ (in fact $g$ was defined so that \eqref{Butterfly} holds not only on these lines but also on their neighborhoods). The question of the value of $g$ on the projection of lines in direction $e_4$ is not an issue of the values of $g$ on $\partial_3Q_3(0,1)$ because by \eqref{GetInTheHole} we have
	$$
	P_v\big([\C_{B} + \er e_4] \cap ([-1,1]^3\times[-3,3])\big) \subset Q_3(0,\tfrac{3}{4})
	$$
	and therefore the projection of $(\C_{B} + \er e_4 + n) \cap (n+[-1,1]^3\times[-3,3])$ is a subset of $Q_3((2n_1,2n_2,2n_3),\tfrac{3}{4})$ and so far away from $\partial_3Q_3((2n_1,2n_2,2n_3),1)$. Since the exact values of $g$ are important only close to $\K_{B,m}$ (and that only while the values are in $[-3,3]$, see \eqref{Bounded}) and away from this set we define $g$ by an arbitrary Lipschitz extension, we may assume that $g$ has been extended to satisfy \eqref{Butterfly}.
	
	Therefore we define $g(x+n) = g(x)$ for $x\in [-1,1]^3\times\{0\}$ and $n=(2n_1,2n_2,2n_3,0)$ where 
	$n_1,n_2,n_3\in\{-m,\hdots,m\}$ and we have that $g$ is Lipschitz and defined on a subset of $[-2m-2, 2m+2]^3\times \{0\}$.
	
	This last fact together with \eqref{IronLionZion} means we have $g(P_v(x)) = -x_4$ for all $x \in [-2m-1,2m+1]\times \C_{B,m}(3)$, all $x \in \C_{B,m}(1)\times [-2m-1,2m+1]\times \C_{B,m}(2)$ and all $x \in \C_{B,m}(2)\times [-2m-1,2m+1]\times \C_{B,m}(1)$. By extending $g$ constant on lines a little further we easily get 
	\begin{equation}\label{NoelAndLiam}
	\begin{aligned}
	g(P_v(x)) = -x_4 \ \ \text{ for all } \ \ 
	&x \in \bigl([-2m-3,2m+3]\times \C_{B,m}(3) \bigr)\cup\\
	& \bigl(\C_{B,m}(1)\times [-2m-3,2m+3]\times \C_{B,m}(2)\bigr)\cup\\
	& \bigl(\C_{B,m}(2)\times [-2m-3,2m+3]\times \C_{B,m}(1)\bigr)
	\end{aligned}
	\end{equation}
	and (because $\max\{|v_1|,|v_2|,|v_3|\} \leq \tfrac{1}{14}$) the projection of that set is a subset of $[-2m-3-\tfrac{1}{14}, 2m+3+\tfrac{1}{14}]^3\times\{0\}$.
	
We extend our $g$ in a way that it is constant not only on lines through the Cantor set but also on their neighbourhoods, i.e. lines parallel to $e_1$ lying inside the set $([-2m-3,-2m-1]\cup[2m+1,2m+3])\times U_1^3$.  
It follows that $g(P_v(x)) = g(P_v(x+te_1))$ there. Note that this leads to a correct definition since it is easy to prove (see \cite[Proposition 5.3]{CHT}) that for each distinct pair of cubes $Q_{\ve}$ and $Q_{\w}$ in $U_1^3$ we have that
	$$
		\begin{aligned}
		P_v\big(([-2m-3,-2m-1]&\cup[2m+1,2m+3])\times Q_{\ve}\big)\cap \\
		& P_v\big(([-2m-3,-2m-1]\cup[2m+1,2m+3])\times Q_{\w}\big) = \emptyset.
		\end{aligned}
	$$
	Because $g$ is defined constant on these lines parallel to $e_1$ (especially $g(P_v(x)) = g(P_v(x+te_1))$) we have 
	$$
		F_{g,v}(x+te_1)=x+t e_1+v g(P_v(x+t e_1)) = x+t e_1+v g(P_v(x))=F_{g,v}(x)+te_1 
	$$
	and further, because the similar identity holds for  $F_{g,u}$, we have
	$$
		\begin{aligned}
		\hat{F}_{\beta, m}(x+(0,2n_2, 2n_3, 0)+te_1) &= F_{g,u}(F_{g,v}(x+(0,2n_2, 2n_3, 0)+te_1))\\
		&= F_{g,u}(F_{g,v}(x+(0,2n_2, 2n_3, 0)))+te_1,
		\end{aligned}
	$$
	when we put (same as in \cite[proof of Theorem 5.1]{CHT})
	\begin{equation}\label{FDeath}
	\hat{F}_{\beta, m}(x) = F_{g,u}\circ F_{g,v}: \er^4\to\er^4.
	\end{equation}
	Therefore we get
	$$
		D_1\hat{F}_{\beta, m}(x+(0,2n_2, 2n_3, 0)) = e_1
	$$
	which is precisely \eqref{NotGay1}. We get the equations \eqref{NotGay2} and \eqref{NotGay3} the same way; the only difference is a permutation of the coordinates.
	
	Recall that the condition $g(P_v(x)) = -x_4$ from \eqref{NoelAndLiam} is precisely the condition that guarantees 
	(see \cite[Lemma 4.5]{CHT}) 
	\begin{equation}\label{Digsy}
		\hat{F}_{\beta,m}(x_1,x_2,x_3,x_4) = (x_1,x_2,x_3, -x_4)
	\end{equation}
	on the set in \eqref{NoelAndLiam}.
	
	In \cite[Step 4 in the proof of Lemma 5.7]{CHT} the function $g$ was defined linear on lines parallel to $P_v(e_4)$ lying inside $P_v(U_{k+2}^3\times([-1,1\setminus U_k]))$. Since the set $P_v(U_2^3\times[-3,3])\subset Q_3(0,\tfrac{3}{4})$ we can define $g$ linear on lines parallel to $P_v(e_4)$ in $P_v(U_{2}^3\times([-3,3]\setminus [-1,1]))$. This means that for all $x\in U_2^3\times([-3,3]\setminus[-1,1])$ we have
	$$
	D_4\hat{F}_{\beta, m}(x+(2n_1,2n_2, 2n_3, 0)) = -e_4,
	$$
	which is \eqref{NotGay4}. Especially since $g(P_v(x)) = -x_4$ for all $x \in \C_B^3\times[-3,3]$ we can combine with \eqref{Digsy} to get \eqref{Oasis}. Therefore in the following we may assume that $g$ is Lipschitz and defined on $[-2m-3-\tfrac{1}{14}, 2m+3+\tfrac{1}{14}]^3\times\{0\}$.

	Then the map $\hat{F}_{\beta, m}$ satisfies (see \cite[Lemma 4.5]{CHT})
	\begin{equation}\label{NewNew}
	\hat{F}_{\beta, m}(x_1,x_2,x_3,x_4) = (x_1,x_2,x_3,-x_4) \text{ for } 
	x\in \K_{B,m}\cap [-2m-3,2m+3]^3\times[-3,3].
	\end{equation}
	Thus we see that $\hat{F}_{\beta, m}(x+n) =\hat{F}_{\beta, m}(x)+n$, which is a bi-Lipschitz map because $g$ is a Lipschitz function.
	
	\step{3}{Obtaining identity on the boundary}{RefS2}
	We have defined $\hat{F}_{\beta, m}$ on $[-2m-3,2m+3]^3\times[-3,3]$. Now we want to define $\hat{F}_{\beta, m}$ on $R_{m,13} \setminus [-2m-3,2m+3]^3\times[-3,3]$ so that $\hat{F}_{\beta, m}(x) = x$ on $\partial R_{m,13}$. Since by Step~\ref{RefS0} we have that $\max\{|v_1|,|v_2|,|v_3|\} \leq \tfrac{1}{14}$ it is easy to check that
	$$
	\begin{aligned}
	P_v([-2m-3,2m+3]^3\times[-3, 3]&\subset [2m-3-\tfrac{3}{14},2m+3+\tfrac{3}{14}]^3\times\{0\} \\
	\text{ and hence } F_{g,v}(x)&\in [-2m-3-\tfrac{3}{14},2m+3+\tfrac{3}{14}]^3\times[-6, 6]\text{ and }\\
	P_u([-2m-3-\tfrac{3}{14},2m+3+\tfrac{3}{14}]^3\times[-6, 6])
	&\subset [2m-3-\tfrac{9}{14},2m+3+\tfrac{9}{14}]^3\times\{0\}\\
	\end{aligned}
	$$
	Thus we can define
	\eqn{gjenula}
	$$
	g \equiv 0 \text{ on } \bigl( \er^3\setminus{[-2m-3-\tfrac{12}{14},2m+3+\tfrac{12}{14}]} \bigr) \times \{0\}
	$$
	without altering the behaviour of $\hat{F}_{\beta, m}$ on $[-2m-3,2m+3]^3\times[-3,3]$.
	Clearly there exists a Lipschitz extension of $g$ satisfying the above and \eqref{defg}. Now, since $g = 0$ on $([-2m-5,2m+5]^3\setminus [-2m-4,2m+4])\times\{0\}$ and $-14\leq x_4 \leq 14$ and $\tfrac{1}{14} \geq \max\{|v_1|, |v_2|, |v_3|\}$ we have that the image in $P_v$ of the hyperplane parts of $\partial R_{m,13}$ which correspond to $x_i=\pm( 2m+ 5)$, $i=1,2,3$ belong in $(\partial_3[-2m-5,2m+5]^3 + B_3(0,1))\times\{0\}$ and on this set we have $g = 0$. The same holds for $P_u$. But by the definition of $\hat{F}_{\beta, m}$, the fact that $g= 0$ on $P_v(x)$ and $g =0$ on $P_u(x)$ means that (see \eqref{SpagMap}) on those parts of $\partial R_{m,13}$ 
	$$\hat{F}_{\beta, m}(x)=F_{g,u}(F_{g,v}(x))=F_{g,u}(x - 0v) = x -0v -0u = x.$$
	
	Now, to get identity on the remaining parts of $\partial R_{m,13}$ where $x_4=\pm 14$  it suffices to alter the map $\hat{F}_{\beta, m}$ on $[-2m-5, 2m+5]^3\times([-14,-3]\cup[3,14])$. Spaghetti strand mappings defined in \eqref{spaghetti} take a line through
	$y\in \er^{3}\times \{0\}$ in the direction $v$ and move it in the direction of $v$ by the amount $g(y)$. Instead we define a ``rubber band'' mapping (see Figure \ref{Fig:Rubber})
	\begin{equation}\label{rubber}
	R_{g,v}(x) = x + r(x)g(P_v(x)) v,
	\end{equation}
	where $r$ is a fixed Lipschitz function with Lipschitz constant $\tfrac{1}{7}$ such that
	$$
	r(x)=\begin{cases}
	0 \qquad & x_4\geq 13,\\
	\tfrac{13-x_4}{7}\qquad & x_4\in [6,13],\\
	1 \qquad & x_4\in [-6,6],\\
	\tfrac{13+x_4}{7}\qquad & x_4\in [-13,-6],\\
	0\qquad & x_4\leq -13.\\
	\end{cases}
	$$
	Then it is easy to check that the fourth coordinate of $R_{g,v}(x)$, i.e.
	\eqn{increase}
	$$
	t\to t+r(t) g(y)\text{ is an increasing function for any }g(y)\in [-3,3].
	$$
	Now we define $\hat{F}_{\beta, m}:=R_{g,u}\circ R_{g,v}$. Since $R_{g,v}(x)=x$ whenever $|x_4|\geq 13$ it is easy to check that $\hat{F}_{\beta, m}(x)=x$ whenever $13\leq |x_4|\leq 14$.
	Moreover, as $R_{g,v}(x)=F_{g,v}(x)$ whenever $|x_4|\leq 3$ it is not difficult to check that the new definition of $\hat{F}_{\beta, m}$ using rubber band maps is equal to the definition of the previous definition of $\hat{F}_{\beta, m}$ using spaghetti strand maps on $[-2m-3,2m+3]^3\times[-3,3]$.
	
	
	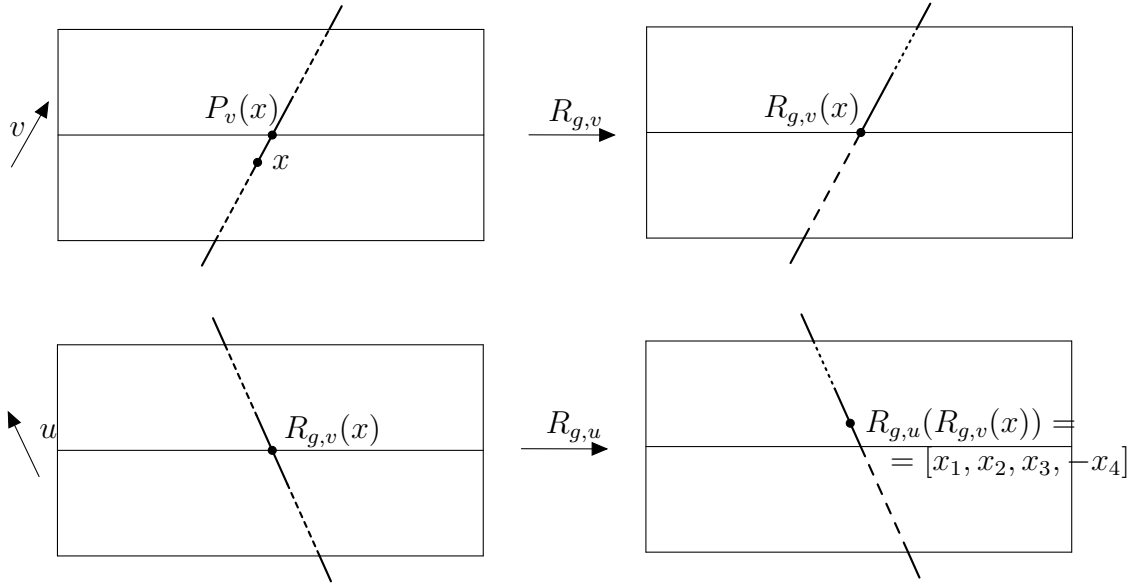
\begin{figure}[h!]
		\centering
		\begin{tikzpicture}[line cap=round,line join=round,>=triangle 45,x=0.7cm,y=0.7cm]
		\clip(-1.1858961803640047,-8.820130268357834) rectangle (21.770171126118576,4.287663299307549);
		\draw [line width=0.4pt] (0.,3.)-- (8.,3.);
		\draw [line width=0.4pt] (0.,-1.)-- (8.,-1.);
		\draw [line width=0.4pt] (0.,3.)-- (0.,-1.);
		\draw [line width=0.4pt] (8.,3.)-- (8.,-1.);
		\draw [line width=0.4pt] (0.,1.)-- (8.,1.);
		\draw [->,line width=0.4pt] (-0.8755867258606931,0.39487561787930525) -- (-0.1725709528934669,1.6446814364877058);
		\draw (-1.1169519612364,1.4464208946962798) node[anchor=north west] {$v$};
		\draw [line width=0.8pt] (2.7036062327779917,-1.4735462509222261)-- (2.9566653404136787,-1.);
		\draw [line width=0.8pt] (5.094231477979819,3.)-- (5.33004633181297,3.441277300737282);
		\draw [line width=0.8pt] (3.6754978760824972,0.34514207169709343)-- (4.365370288170033,1.6360914368906005);
		\draw [line width=0.8pt,dash pattern=on 2pt off 2pt] (2.9566653404136787,-1.)-- (3.6754978760824972,0.34514207169709343);
		\draw [line width=0.8pt,dash pattern=on 2pt off 2pt] (4.365370288170033,1.6360914368906005)-- (5.094231477979819,3.);
		\draw (3.8146907753277044,0.8141419244356437) node[anchor=north west] {$x$};
		\draw (2.540994122687917,1.9807501651796832) node[anchor=north west] {$P_v(x)$};
		\draw [line width=0.4pt] (11.058164097157666,3.04801839630791)-- (19.058164097157665,3.04801839630791);
		\draw [line width=0.4pt] (11.058164097157666,-0.9519816036920908)-- (19.058164097157665,-0.9519816036920908);
		\draw [line width=0.4pt] (11.058164097157666,3.04801839630791)-- (11.058164097157666,-0.9519816036920908);
		\draw [line width=0.4pt] (19.058164097157665,3.04801839630791)-- (19.058164097157665,-0.9519816036920908);
		\draw [line width=0.4pt] (11.058164097157666,1.0480183963079097)-- (19.058164097157665,1.0480183963079097);
		\draw [line width=0.8pt] (13.761770329935661,-1.4255278546143124)-- (14.014829437571343,-0.9519816036920906);
		\draw [line width=0.8pt] (16.152395575137483,3.04801839630791)-- (16.388210428970638,3.4892956970451907);
		\draw [line width=0.8pt] (14.98535698046653,0.8641540953889966)-- (15.66531129112931,2.136543845045086);
		\draw [line width=0.8pt,dash pattern=on 4pt off 4pt] (14.014829437571343,-0.9519816036920906)-- (14.98535698046653,0.8641540953889966);
		\draw [line width=0.8pt,dotted] (15.66531129112931,2.136543845045086)-- (16.152395575137483,3.04801839630791);
		\draw (13.04691266789846,1.997914800421385) node[anchor=north west] {$R_{g, v}(x)$};
		\draw [line width=0.4pt] (-0.011082000095416156,-2.9735417482944424)-- (7.988917999904585,-2.9735417482944424);
		\draw [line width=0.4pt] (-0.011082000095416156,-6.973541748294442)-- (7.988917999904585,-6.973541748294442);
		\draw [line width=0.4pt] (-0.011082000095416156,-2.9735417482944424)-- (-0.011082000095416156,-6.973541748294442);
		\draw [line width=0.4pt] (7.988917999904585,-2.9735417482944424)-- (7.988917999904585,-6.973541748294442);
		\draw [line width=0.4pt] (-0.011082000095416156,-4.973541748294442)-- (7.988917999904585,-4.973541748294442);
		\draw (4.034374466405897,-4.077481596905404) node[anchor=north west] {$R_{g,v}(x)$};
		\draw [->,line width=0.4pt] (-0.35602956873166075,-5.470464737595762) -- (-0.9458406990685659,-4.222259787347895);
		\draw (-0.5522109432621935,-4.280000652741894) node[anchor=north west] {$u$};
		\draw [->,line width=0.4pt] (8.847767372107022,1.0037411581953766) -- (10.48004,1.);
		\draw (9.004263996446008,1.8832692210161722) node[anchor=north west] {$R_{g,v}$};
		\draw [line width=0.8pt] (2.9112742747038753,-2.469872785036305)-- (3.135222484943014,-2.973541748294443);
		\draw [line width=0.8pt] (5.127942686960046,-7.455253467007065)-- (4.913757408282032,-6.973541748294443);
		\draw [line width=0.8pt,dash pattern=on 2pt off 2pt] (4.913757408282032,-6.973541748294443)-- (4.309173515372972,-5.613807092594763);
		\draw [line width=0.8pt,dash pattern=on 2pt off 2pt] (3.135222484943014,-2.973541748294443)-- (3.7314453471259554,-4.31447209351048);
		\draw [line width=0.8pt] (3.7314453471259554,-4.31447209351048)-- (4.309173515372972,-5.613807092594763);
		\draw [->,line width=0.4pt] (8.70577288259091,-4.942339580819469) -- (10.338045510483889,-4.946080739014845);
		\draw (8.95780820239388,-4.02836017500192) node[anchor=north west] {$R_{g,u}$};
		\draw [line width=0.4pt] (11.045135209174815,-2.902001204858222)-- (19.045135209174813,-2.902001204858222);
		\draw [line width=0.4pt] (11.045135209174815,-6.902001204858223)-- (19.045135209174813,-6.902001204858223);
		\draw [line width=0.4pt] (11.045135209174815,-2.902001204858222)-- (11.045135209174815,-6.902001204858223);
		\draw [line width=0.4pt] (19.045135209174813,-2.902001204858222)-- (19.045135209174813,-6.902001204858223);
		\draw [line width=0.4pt] (11.045135209174815,-4.902001204858222)-- (19.045135209174813,-4.902001204858222);
		\draw (14.95997670269463,-3.962836017500192) node[anchor=north west] {$R_{g,u}(R_{g,v}(x))=$};
		\draw [line width=0.8pt] (13.967491483974106,-2.3983322416000847)-- (14.191439694213242,-2.9020012048582227);
		\draw [line width=0.8pt] (16.184159896230277,-7.383712923570846)-- (15.96997461755226,-6.902001204858222);
		\draw [line width=0.8pt,dash pattern=on 4pt off 4pt] (15.96997461755226,-6.902001204858222)-- (15.162637185874408,-5.086265295299223);
		\draw [line width=0.8pt,dotted] (14.191439694213242,-2.9020012048582227)-- (14.593084330566324,-3.8053169195680203);
		\draw [line width=0.8pt] (14.593084330566324,-3.8053169195680203)-- (15.162637185874408,-5.086265295299223);
		\draw (15.428635243661441,-4.69007687608785) node[anchor=north west] {$=[x_1,x_2,x_3,-x_4]$};
		\begin{scriptsize}
		\draw [fill=black] (3.748646494873849,0.48202414052447407) circle (1.5pt);
		\draw [fill=black] (4.025448409196748,1.) circle (1.5pt);
		\draw [fill=black] (15.083612506354413,1.0480183963079097) circle (1.5pt);
		\draw [fill=black] (4.0244899466125235,-4.973541748294443) circle (1.5pt);
		\draw [fill=black] (14.884176024948353,-4.459994408465533) circle (1.5pt);
		\end{scriptsize}
		\end{tikzpicture}
		\caption{Two-dimensional projection of the `rubber band' mappings.}\label{Fig:Rubber}
	\end{figure}
	
	Since $g$ is a Lipschitz map and since $r$ is Lipschitz with coefficient $\tfrac{1}{7}$ (thus giving us \eqref{increase}) we see that $R_{g,v}$ and $R_{g,u}$ are bi-Lipschitz maps as were $F_{g,v}$ and $F_{g,u}$. Therefore $\hat{F}_{\beta, m}$ is a bi-Lipschitz map, whose constant depends on the Lipschitz constant of the function $g$, which is fixed by the choice of $v$ and $\beta$.
\end{proof}

We include the following lemma about the image of certain points in $\hat{F}_{\beta, m}$. Later, in the course of proving Theorem~\ref{TheBigLebowski}, it is necessary to apply derivative estimates of the composition $G_{K,N,m,\eta}\circ \hat{F}_{\beta, m}\circ \tilde{G}_{K,N,m,\eta}$. For certain points $x$ we will need to know that $|DG_{K,N,m,\eta}(\hat{F}_{\beta, m}\circ \tilde{G}_{K,N,m,\eta}(x))|$ is not too large. We do this by proving that these points are not mapped by $\hat{F}_{\beta, m}\circ \tilde{G}_{K,N,m,\eta}$ onto the set where the derivative of $G_{K,N,m,\eta}$ is large. This is the content of the following Lemma.

\begin{lemma}\label{ThatTimeOfTheMonth}
	Let $x\in \G$ where
	$$
	\begin{aligned}
	\G = R_{m,13} \setminus\Big[
	\bigcup_{n_1,n_2,n_3=-m}^m &\Big([-2m-3,2m+3]\times U^3_{1} +(0,2n_2,2n_3,0) \\
	&\cup U^1_{1}\times [-2m-3,2m+3]\times U^2_{1} +(2n_1,0,2n_3,0)\\
	&  \cup U^2_{1}\times [-2m-3,2m+3]\times U^1_{1} +(2n_1,2n_2,0,0)\\
	& \cup U^3_{1}\times[-3,3]+ (2n_1,2n_2,2n_3,0)  \Big)\Big]
	\end{aligned}
	$$
	then
	$$
	\hat{F}_{\beta, m}(x) \in R_{m,13} \setminus \mathcal{T}
	$$
	where
	$$
	\begin{aligned}
	\T = \bigcup_{n_1,n_2,n_3=-m}^m &\Big([-2m-2,2m+2]\times U^3_{M+1} +(0,2n_2,2n_3,0) \\
	&\cup U^1_{M+1}\times [-2m-2,2m+2]\times U^2_{M+1} +(2n_1,0,2n_3,0)\\
	&  \cup U^2_{M+1}\times [-2m-2,2m+2]\times U^1_{M+1} +(2n_1,2n_2,0,0)\\
	& \cup U^3_{M+1}\times[-2,2]+ (2n_1,2n_2,2n_3,0)  \Big).
	\end{aligned}
	$$
\end{lemma}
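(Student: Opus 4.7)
The plan is to prove the contrapositive: assume $y = \hat{F}_{\beta, m}(x) \in \T$ and deduce that $x$ belongs to one of the four sets excluded from $\G$. Since $\T$ consists of four families of tubes that are symmetric under permutations of the first three coordinates, it suffices to analyze two representative cases, a tube parallel to $e_1$ and a tube parallel to $e_4$. In both cases the structure of $\T$ forces $|y_j|\leq 2m+2$ for $j=1,2,3$ and $|y_4|\leq 2$, so $y$ lies inside the inner block $[-2m-3,2m+3]^3\times[-3,3]$, where $\hat{F}_{\beta, m}$ is the spaghetti composition and, on each unit cube $Q(0,1)+n$, coincides with the periodic extension $F_\beta(\cdot - n)+n$ from \eqref{PeriodicExtension}.

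\textbf{Tubes parallel to $e_1$.} Assume $y\in[-2m-2,2m+2]\times U^3_{M+1}+(0,2n_2,2n_3,0)$. Since $y_4\in U^1_{M+1}\subset(-1,1)$, the point $y$ lies in a unique unit cube $Q(0,1)+n'$ with $n'=(2n_1',2n_2',2n_3',0)$; matching the second and third coordinates forces $n_2'=n_2$ and $n_3'=n_3$. By \eqref{PeriodicExtension}, $x=F_\beta^{-1}(y-n')+n'$. Since $(y-n')_{2,3,4}\in U^3_{M+1}$, Proposition~\ref{EmotionallyDistant} applied to the bi-Lipschitz inverse $F_\beta^{-1}$ (which has the same bi-Lipschitz constant and preserves $\C_B$, so the same index shift $M$ is available) gives $(x-n')_{2,3,4}\in U^3_1$. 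Combined with the trivial bound $x_1\in[-2m-1,2m+1]$, this places $x$ in the first set excluded from $\G$.

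\textbf{Tubes parallel to $e_4$.} Assume $y\in U^3_{M+1}\times[-2,2]+n$ with $n=(2n_1,2n_2,2n_3,0)$. If $y_4\in[-1,1]$, the argument above with $e_1$ and $e_4$ interchanged gives $x\in U^3_1\times[-1,1]+n$, which is contained in the fourth set excluded from $\G$. If $y_4\in[1,2]$ (the case $[-2,-1]$ is symmetric), $y$ lies in the extension slab above the unit cube; by \eqref{NotGay4}, on $U^3_2\times[-3,-1]+n$ one has $D_4 \hat{F}_{\beta, m}=-e_4$, so integration along segments parallel to $e_4$ shows that $\hat{F}_{\beta, m}(x_1,x_2,x_3,x_4)=(h(x_1,x_2,x_3),\, c(x_1,x_2,x_3)-x_4)$ on this slab. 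Continuity across $x_4=-1$ identifies $h$ and $c$ with the corresponding restrictions of $F_\beta(\cdot,-1)+n$, so that $y_{1,2,3}-(2n_1,2n_2,2n_3)\in U^3_{M+1}$ together with Proposition~\ref{EmotionallyDistant} applied to $F_\beta^{-1}$ yields $(x_1,x_2,x_3)-(2n_1,2n_2,2n_3)\in U^3_1$. Moreover, the reflection identity \eqref{Oasis} at the Cantor reference points combined with the Lipschitz regularity of $F_\beta$ bounds $|c|$ by a constant multiple of $r_1$, which by the choice of $\beta$ is much less than $1$; hence $x_4=c-y_4\in[-3,-1]\subset[-3,3]$. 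Therefore $x\in U^3_1\times[-3,3]+n$, the fourth set excluded from $\G$.

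The main obstacle is this last subcase: the extension slab $[-2m-3,2m+3]^3\times([-3,-1]\cup[1,3])$ lies outside every unit cube, so Proposition~\ref{EmotionallyDistant} does not apply verbatim there. The remedy is to combine \eqref{NotGay4} (which reduces $\hat{F}_{\beta, m}$ on the slab to pure reflection in the $x_4$-direction) with \eqref{Oasis} (which provides a reference identity on $\C_B$) to transport the Cantor-set tracking from the face $x_4=\pm 1$, where Proposition~\ref{EmotionallyDistant} does apply, throughout the entire slab.
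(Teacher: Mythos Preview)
Your contrapositive strategy and the appeal to Proposition~\ref{EmotionallyDistant} for the inverse map are exactly the route the paper takes, but two boundary cases are not covered by your argument.

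\textbf{Tubes parallel to $e_1$: the strip $|y_1|\in(2m+1,2m+2]$.} You assert that any $y$ with $y_1\in[-2m-2,2m+2]$ and $(y_2,y_3,y_4)\in U^3_{M+1}+(2n_2,2n_3,0)$ lies in some unit cube $Q(0,1)+n'$, and then invoke \eqref{PeriodicExtension}. This is false when $|y_1|\in(2m+1,2m+2]$: there is no admissible $n_1'\in\{-m,\dots,m\}$, so neither \eqref{PeriodicExtension} nor your ``trivial bound $x_1\in[-2m-1,2m+1]$'' is available. The paper handles this strip by noting that, thanks to the extension \eqref{NoelAndLiam}, the identity $\hat{F}_{\beta,m}(x_1,x_2,x_3,x_4)=(x_1,x_2,x_3,-x_4)$ holds on all of $[-2m-2,2m+2]\times\C_{B,m}(3)$; the bi-Lipschitz neighbourhood argument of Proposition~\ref{EmotionallyDistant} then applies verbatim on the enlarged interval and yields $x\in[-2m-3,2m+3]\times U^3_1+(0,2n_2,2n_3,0)$.

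\textbf{Tubes parallel to $e_4$: the slab $|y_4|\in(1,2]$.} Your use of \eqref{NotGay4} is circular. That identity is stated on $U^3_2\times([-3,3]\setminus[-1,1])+n$, so the representation $\hat{F}_{\beta,m}(x)=(h(x_1,x_2,x_3),c-x_4)$ is only valid once you already know $(x_1,x_2,x_3)\in U^3_2+n$; but this is (a stronger form of) what you are trying to conclude. The continuity-across-$x_4=-1$ step does not escape the circularity, because it still only identifies $\hat{F}_{\beta,m}$ on that same slab. The paper's fix is more direct and avoids \eqref{NotGay4} altogether: by \eqref{Oasis} one has $\hat{F}_{\beta,m}(x_1,x_2,x_3,x_4)=(x_1,x_2,x_3,-x_4)$ on $\C_{B,m}(3)\times[-3,3]$, so $\hat{F}_{\beta,m}^{-1}$ sends this set to itself; since $\hat{F}_{\beta,m}^{-1}$ is bi-Lipschitz, it maps the $Cr_{M+1}$-neighbourhood $U^3_{M+1}\times[-2,2]+n$ into the $C^2r_{M+1}$-neighbourhood of $\C_{B,m}(3)\times[-2,2]+n$, which for $M$ large enough (this is where the freedom to enlarge $M$ is used) is contained in $U^3_1\times[-3,3]+n$. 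No prior localisation of $x$ is needed.
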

\begin{proof}
	We may assume that $M$ is so large that $C(\beta)r_M=C(\beta)2^{-(\beta+1)k} < r_1 = 2^{-\beta-1}$ where $C(\beta)$ is the constant from Proposition \ref{EmotionallyDistant}. Nevertheless $M$ is an absolute constant depending only of the construction of ${F}_{\beta}$.  Specifically it depends on $\beta$ and $v$, which are fixed and do not change at any point. Notice that the Proposition~\ref{EmotionallyDistant} holds not only for ${F}_{\beta}$ but also for ${F}_{\beta}^{-1}$ since both of these maps are bi-Lipschitz and send $\K_{B}$ onto $\K_{B}$. In that case we have
	$$
	F_{\beta}^{-1}([-1,1]\times U^3_{M+1}) \subset [-2,2] \times U^3_{1}
	$$
	and similarly for coordinates 2 and 3. Moreover the fact that $\hat{F}_{\beta, m}$ is constructed by translation of $F_{\beta}$ on these sets we have the same for $\hat{F}_{\beta, m}$, precisely that
	$$
	\hat{F}_{\beta, m}^{-1}([-2m-1,2m+1]\times U^3_{M+1}+(0,2n_2,2n_3,0)) \subset [-2m-2,2m+2] \times U^3_{1} +(0,2n_2,2n_3,0)
	$$
	and similarly for the permutations of the coordinates. In fact, by the extension in \eqref{NoelAndLiam}, we have
	$$
	\hat{F}_{\beta, m}^{-1}([-2m-2,2m+2]\times U^3_{M+1}+(0,2n_2,2n_3,0)) \subset [-2m-3,2m+3] \times U^3_{1} +(0,2n_2,2n_3,0)
	$$
	because $\hat{F}_{\beta, m}$ sends $[-2m-2,2m+2]\times \C_{B,m}(3)$ onto $[-2m-2,2m+2]\times \C_{B,m}(3)$ and so the reasoning from Proposition~\ref{EmotionallyDistant} applies here too. Similar inclusions hold for the permutations of the coordinates 2 and 3.
	
	The last necessary observation is that
	$$
	\hat{F}_{\beta, m}^{-1}\Big(U^3_{M+1}\times[-2,2]+ (2n_1,2n_2,2n_3,0)\Big) \subset U^3_{1}\times[-3,3]+ (2n_1,2n_2,2n_3,0)
	$$
	for each $n$. The reasoning here is essentially the same as before, and especially using the fact that $\hat{F}_{\beta, m}(x_1,x_2,x_3,x_4) = (x_1,x_2,x_3,-x_4)$ on $\C_{B,m}(3) \times [-3,3]$.
	
	This precisely means that if $x\in \G$ then $x\notin R_{m,13}\setminus \G \supset \hat{F}_{\beta, m}^{-1}(\T)$ and therefore $\hat{F}_{\beta, m}(x) \notin \T$, which was exactly our claim.
\end{proof}

\section{Estimates of the derivative of the stretching and squeezing mapping}\label{Germany}

Let us briefly recall the role of various constants involved in the estimates. The sequence $\tilde{r}_k(K)$ used in the definition of $\C_{A,K}$ was defined in \eqref{defK}. Analogously we define $\C_{B}$ with the help of sequence $r_k$ and parameter $\beta$ in \eqref{defrKk}. The parameter $N$ was introduced in the construction of $G_{K,N,m,\eta}$ in subsection \ref{secondmap}. Parameters $m$ and $\eta$ denote the size of the boxes (see \eqref{defm} for the definition of $R_{m,\eta}$).

	The following lemma on the size of the derivative of $J_K^n,$ $H_K^n, [J_K^n]_{k}$ and $[H_K^n]_{k}$ (defined in Section \ref{defHJ}) is standard and the proof can be found in \cite[proof of Theorem 4.10]{HK} in combination with \eqref{DistancesEstimate1}.

	\begin{lemma}\label{FrameToFrameDifferential}
		Let $n \ge 2$, $k\in \en$, $l\geq k$, $\ve\in \V_n^{k+1}$, let  $x\in   Q(\z_{K, \ve}, \tfrac{1}{2}\tilde{r}_k(K)) \setminus Q(\z_{K, \ve}, \tilde{r}_{k+1}(K))$ be a point such that
		$$
		|x_1 - (\z_{K, \ve})_1| > \max \{|x_2 - (\z_{K, \ve})_2|, |x_3 - (\z_{K, \ve})_3|,|x_4 - (\z_{K, \ve})_4|\}.
		$$
		Then it holds that
		\begin{equation}\label{Standard1}
			|D_1 J_K^n(x)| =  |D_1[J_K^n]_{l}(x)| \leq C 2^{-\beta k} (k+K)^{\alpha +1},
		\end{equation}
		further for $j\neq 1$
		\begin{equation}\label{Standard2}
			|D_j J_K^n(x)| =  |D_j[J_K^n]_{l}(x)| \leq C 2^{-\beta k}
		\end{equation}
		and
		\begin{equation}\label{Standard3}
			|D[J_K^n]_{k}(y)| \leq C 2^{-\beta k} \text{ for } y\in \tilde{U}_{K,k+1}^n .
		\end{equation}
		
		Now assume that $w\in  Q(z_{\ve}, \tfrac{1}{2}r_k) \setminus Q(z_{\ve}, r_{k+1})$ be a point such that
		$$
		|w_1 - (z_{\ve})_1| > \max \{|w_2 - (z_{\ve})_2|, |w_3 - (z_{\ve})_3|,|w_4 - (z_{ \ve})_4|\}.
		$$
		 Then it holds that
		\begin{equation}\label{Standard4}
			|D_1 H_K^n(w)| =  |D_1[H_K^n]_{l}(w)| \leq C 2^{\beta k} (k+K)^{-\alpha -1},
		\end{equation}
		further for $j\neq 1$
		\begin{equation}\label{Standard5}
			|D_j H_K^n(w)| =  |D_j[H_K^n]_{l}(w)| \leq C 2^{\beta k}
		\end{equation}
		and 
		\begin{equation}\label{Standard6}
			|D_i[H_K^n]_{k}(z)| \leq C 2^{\beta k}\text{ for }z\in {U}_{k+1}^n.
		\end{equation}
	\end{lemma}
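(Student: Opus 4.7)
The estimates reduce to a single-frame computation. For $\ve\in\V^{k+1}_n$, the mapping $J_K^n$ is by construction the sup-norm radial homeomorphism carrying the frame $\tilde{Q}'_{K,\ve}\setminus\tilde{Q}_{K,\ve}$ onto $Q'_\ve\setminus Q_\ve$, and on the face singled out in the hypothesis it admits the explicit parametrisation
$$J_K^n(x) = z_\ve + t(\sigma)\,e_1 + \frac{t(\sigma)}{\sigma}\bigl(x-\z_{K,\ve}-\sigma e_1\bigr),\qquad \sigma:=x_1-(\z_{K,\ve})_1,$$
where $t$ is the affine bijection sending $[\tilde{r}_{k+1}(K),\tfrac12\tilde{r}_k(K)]$ onto $[r_{k+1},\tfrac12 r_k]$. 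Differentiating coordinate-wise produces closed-form expressions for $D_jJ_K^n$ in terms of $t'(\sigma)$, $t(\sigma)/\sigma$, and the tangential offsets $x_j-(\z_{K,\ve})_j$, which are bounded by $\sigma$ thanks to the face hypothesis.

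The combinatorial content is then just the numerical evaluation of the slope and the face-scaling factor. From \eqref{DistancesEstimate2} the numerator of $t'(\sigma)=\bigl(\tfrac12 r_k-r_{k+1}\bigr)/\bigl(\tfrac12\tilde{r}_k-\tilde{r}_{k+1}\bigr)$ is of order $2^{-k(\beta+1)}$ while \eqref{DistancesEstimate1} gives the denominator as of order $2^{-k}(K+k)^{-\alpha-1}$, whence $|t'(\sigma)|\leq C\,2^{-\beta k}(K+k)^{\alpha+1}$; whereas $t(\sigma)/\sigma\leq \tfrac12 r_k/\tilde{r}_{k+1}\leq C\,2^{-\beta k}$. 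Absorbing the tangential cross-terms into $|t'(\sigma)|+t(\sigma)/\sigma$ yields \eqref{Standard1} and \eqref{Standard2}. For \eqref{Standard4} and \eqref{Standard5} I would repeat the computation on the mirror-image frame for $H_K^n=(J_K^n)^{-1}$: the analogue of $t$ is the affine bijection $h$ going the other way with slope $h'=1/t'$ and face-scaling $h(\sigma)/\sigma$, and the distance estimates flip the roles of numerator and denominator to give $|h'|\leq C\,2^{\beta k}(K+k)^{-\alpha-1}$ and $h(\sigma)/\sigma\leq C\,2^{\beta k}$.

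The equalities $|D_iJ_K^n|=|D_i[J_K^n]_l|$ for $l\geq k$ follow from the observation that the frame for $\ve\in\V^{k+1}_n$ is disjoint from every cube of generation $\geq k+1$ other than $\tilde{Q}_{K,\ve}$ itself (which is excluded), so $[J_K^n]_l\equiv J_K^n$ on the frame whenever $l\geq k+1$; when $l=k$ the frame lies inside the parent cube $\tilde{Q}_{K,\ve(k)}$ on which $[J_K^n]_k$ is linear with derivative $r_k/\tilde{r}_k\leq C\,2^{-\beta k}$, dominated by the previous bounds. The same observation gives \eqref{Standard3} since any $y\in\tilde{U}_{K,k+1}^n$ lies in some $\tilde{Q}_{K,\w}$ with $\w\in\V^k_n$, where $[J_K^n]_k$ is affine with derivative exactly $r_k/\tilde{r}_k\leq C\,2^{-\beta k}$. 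The identical reasoning — with $\tilde{r}_k/r_k$ in place of $r_k/\tilde{r}_k$ — yields \eqref{Standard6}. The proof is mechanical and the standard details are carried out in \cite[proof of Theorem~4.10]{HK}; the only care required is the bookkeeping of tangential cross-terms when differentiating the sup-norm radial map on each face.
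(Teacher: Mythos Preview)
Your proposal is correct and follows essentially the same approach as the paper, which simply cites \cite[proof of Theorem~4.10]{HK} in combination with the distance estimate~\eqref{DistancesEstimate1}; you have merely unpacked that standard computation explicitly. Your remark that the literal equality $|D_iJ_K^n|=|D_i[J_K^n]_l|$ can fail for $l=k$ (where the truncated map is linear on the parent cube) while the bound still holds is a fair reading of the statement.
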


\subsection{Estimates on the derivative of $\tilde{G}_{K,N,m,\eta}$ defined in Section \ref{firstmap}.}\label{DFM}
		
	The following proposition gives an estimate on $|D_1\tilde{G}_{K,N,m,\eta}|$ at points which are very close to the Cantor set in coordinates $(x_2,x_3,x_4)$ compared to the $x_1$ variable. Especially we use it to estimate the derivative of $\tilde{G}_{K,N,m,\eta}$ along lines that go through $\C_{A,K}$.
	
	\prt{Proposition}
	\begin{proclaim}\label{SQ1}
		Let $-2m\leq n_1,n_2,n_3 \leq 2m$ be even numbers, call $n = (n_1,n_2,n_3,0)$ and let $2N+4k \leq l$ with $k,l\in \en_0$. Let $x$ be a point such that $x_1\in \tilde{U}^1_{K,k}\setminus \tilde{U}^1_{K,k+1}$, $(x_2,x_3,x_4)\in \tilde{U}^3_{K,l}$. Then the following estimates hold
		\begin{equation}\label{SQ1A}
			D_1\tilde{G}_{K,N,m,\eta}^1(x+n) = \frac{\tfrac{1}{2}r_k -r_{k+1}}{\tfrac{1}{2}\tilde{r}_k(K) - \tilde{r}_{k+1}(K) } \leq C2^{-k\beta}(K+k)^{\alpha+1}
		\end{equation}
		and
		\begin{equation}\label{SQ1B}
			|D_1\tilde{G}_{K,N,m,\eta}(x+n)-D_1\tilde{G}_{K,N,m,\eta}^1(x+n)e_1| \leq C 2^{-4k\beta-2N\beta}(K+k)^{\alpha+1}.
		\end{equation}
		Further it holds that
		\begin{equation}\label{SQ1C}
			\tilde{G}_{K,N,m,\eta}(x+n) - n \in [{U}^1_{k}\setminus {U}^1_{k+1}] \times {U}^3_{4k+2N}.
		\end{equation}
		The same holds for each rotation, i.e. for $D_2\tilde{G}_{K,N,m,\eta}^2(x+n)$ when $x_2\in \tilde{U}^1_{K,k}\setminus \tilde{U}^1_{K,k+1}$ and $(x_1,x_3,x_4)\in \tilde{U}^3_{K,l}$ and similarly for coordinates $3$ and $4$.
	\end{proclaim}
	\begin{proof}
		The claim of rotational symmetry results directly from the definition of $\tilde{G}_{K,N,m,\eta}(x+n)$ in \eqref{ASeason} and \eqref{TurnTurnTurn}. Clearly we have $k<l$ and so $x\in \tilde{U}_{K,k}^4\setminus  \tilde{U}_{K,k+1}^4$. Thus there is exactly one $\ve\in \V_4^{k+1}$ such that $x\in  Q(\z_{K, \ve}, \tfrac{1}{2}\tilde{r}_{k}(K)) \setminus Q(\z_{K, \ve}, \tilde{r}_{k+1}(K))$. Since $k<l$ the point $x$ is farthest from $\z_{K, \ve}$ in the $x_1$ coordinate. Therefore we are on the set where (see \eqref{TurnTurnTurn} and \eqref{ASeason})
		\eqn{starr}
		$$
			\begin{aligned}
				\tilde{G}_{K,N,m,\eta}(x+n) &= \tilde{G}_{K,4k+2N,4k+4+2N,k}(x)+n\\ &=\tilde{\zeta}_{K,k}(x_1)[J^{3,1}_K]_{4k+2N}(x) + q_K(x_1)e_1 \\
				& \qquad+ [1-\tilde{\zeta}_{K,k}(x_1)] [J^{3,1}_K]_{4k+4+2N}(x) + n.
			\end{aligned}
		$$
		Both $[J^{3,1}_K]_{4k+2N}$ and $[J^{3,1}_K]_{4k+4+2N}$ are constant $0$ in the $x_1$ variable. Therefore
		\begin{equation}\label{SQ1P0}
			D_1\tilde{G}_{K,N,m,\eta}(x+n) = q_K'(x_1)e_1+\tilde{ \zeta}_{K,k}'(x_1)\big([J^{3,1}_K]_{4k+2N}(x) - [J^{3,1}_K]_{4k+4+2N}(x)\big).
		\end{equation}
		On the other hand by \eqref{linear}
		$$
			q_K'(x_1) = \frac{\tfrac{1}{2}r_k -r_{k+1}}{\tfrac{1}{2}\tilde{r}_k(K) - \tilde{r}_{k+1}(K) }
		$$
		thus proving \eqref{SQ1A}. Further, since $[J^{3,1}_K]_{l}$ maps each $\Q_{K,\ve}$ into corresponding $Q_{\ve}$ (for each $\ve \in \V_3^{l}$) and $\diam Q_{\ve}<C2^{-l-l\beta}$ we have
		\begin{equation}\label{SQ1P1}
			\|[J^{3,1}_K]_{4k+2N} - [J^{3,1}_K]_{4k+4+2N}\|_{\infty} \leq C 2^{-k} 2^{-4k\beta - 2N\beta}.
		\end{equation}
		On the other hand we have that each interval in $\tilde{U}^1_{K,k}\setminus \tilde{U}^1_{K,k+1}$ has length smaller than  
		$C2^{-k}(K+k)^{-\alpha-1}$ by \eqref{DistancesEstimate1}. 
		Therefore, for $x_1\in \tilde{U}^1_{K,k}\setminus \tilde{U}^1_{K,k+1}$
		\begin{equation}\label{SQ1P2}
			\tilde{ \zeta}_{K,k}'(x_1) \leq C2^{k}(K+k)^{\alpha+1}.
		\end{equation}
		Now \eqref{SQ1B} follows immediately from \eqref{SQ1P0}, \eqref{SQ1P1} and \eqref{SQ1P2}.
		
		Now \eqref{starr} implies \eqref{SQ1C}
		by noticing that $[J^{3,1}_K]_{4k+2N}$ sends $[\tilde{U}^1_{K,k}\setminus \tilde{U}^1_{K,k+1}] \times \tilde{U}^3_{K,l}$ onto $\{0\}\times {U}^3_{l}$. Of course $q_K$ sends $[\tilde{U}^1_{K,k}\setminus \tilde{U}^1_{K,k+1}]$ into $[{U}^1_{k}\setminus {U}^1_{k+1}]$.
	\end{proof}
		
	The following proposition gives an estimate on $|D_1\tilde{G}_{K,N,m,\eta}|$ at points which are closer to the Cantor set in coordinates $(x_2,x_3,x_4)$ than in the $x_1\in \tilde{U}^1_{K,k}\setminus \tilde{U}^1_{K,k+1}$ variable on lines that do not intersect the Cantor set (i.e. they go through $\tilde{U}^3_{K,l}\setminus \tilde{U}_{K,l+1}^3$) as the distance from the Cantor set in the first variable decreases.


	\prt{Proposition}
	\begin{proclaim}\label{SQ2}
		Let $-2m\leq n_1,n_2,n_3 \leq 2m$ be even numbers, call $n = (n_1,n_2,n_3,0)$ and let $k < l < 4 k+2N$ with $k,l\in \en_0$. Let $x$ be a point such that $x_1\in \tilde{U}^1_{K,k}\setminus \tilde{U}^1_{K,k+1}$, $(x_2,x_3,x_4)\in \tilde{U}^3_{K,l}\setminus \tilde{U}_{K,l+1}^3$. Then 
		\begin{equation}\label{SQ2A}
			D_1\tilde{G}_{K,N,m,\eta}(x+n) = \frac{\tfrac{1}{2}r_k -r_{k+1}}{\tfrac{1}{2}\tilde{r}_k(K) - \tilde{r}_{k+1}(K) }e_1
		\end{equation}
		and
		\begin{equation}\label{SQ2B}
			\tilde{G}_{K,N,m,\eta}(x+n) - n \in [{U}^1_{k}\setminus {U}^1_{k+1}] \times[{U}^3_{l}\setminus {U}^3_{l+1}].
		\end{equation}
		The same holds for each rotation, i.e. for $D_2\tilde{G}_{K,N,m,\eta}(x+n)$ when $x_2\in \tilde{U}^1_{K,k}\setminus \tilde{U}^1_{K,k+1}$ and $(x_1,x_3,x_4)\in \tilde{U}^3_{K,l}\setminus \tilde{U}_{K,l+1}^3$ and similarly for coordinates $3$ and $4$.
	\end{proclaim}
	
	\begin{proof}
		Again we have \eqref{starr}.  
		Since $l<4k+2N$ we have that (see \eqref{defT3}, \eqref{defT1}, \eqref{defT2} for the notation)
		$$
		[J_K^{3,1}]_{4k+2N}(x) =T^1([J_K^{3}]_{4k+2N}(T_1(x))) = [J_K^{3,1}]_{4k+4+2N}(x) =T^1([J_K^{3}]_{4k+4+2N}(T_1(x)))
		$$
		because $[J_K^{3}]_{o} = J_K^{3}$ for all $o\in \mathbb N$ on $\er^3\setminus \tilde{U}_{K,o}^3$. Therefore we have
		\begin{equation}\label{SQ2P1}
			\tilde{G}_{K,N,m,\eta}(x+n)= \tilde{G}_{K,4k+N,4k+4+N,k}(x)+n =J^{3,1}_K(x) + q_K(x_1)e_1.
		\end{equation}
		Obviously $D_1J^{3,1}_K(x) = 0$ (see the definition in Section~\ref{firstmap}) and as in Proposition~\ref{SQ1} we have by \eqref{linear}
		$$
			q_K'(x_1) = \frac{\tfrac{1}{2}r_k -r_{k+1}}{\tfrac{1}{2}\tilde{r}_k(K) - \tilde{r}_{k+1}(K) }.
		$$
		Applying the facts of the previous sentence to \eqref{SQ2P1} gives \eqref{SQ2A} immediately.
		
		We prove \eqref{SQ2B} the same way we proved \eqref{SQ1C}. The only nuance is the observation that $[J^{3}_K]_{4k+2N}$ sends $\tilde{U}^3_{K,l}\setminus \tilde{U}^3_{K,l+1}$ onto ${U}^3_{l}\setminus {U}^3_{l+1}$ for all $k\leq l < 4 k+2N$.
	\end{proof}

	The following proposition gives an estimate on $|D_1\tilde{G}_{K,N,m,\eta}|$ at points which are farthest away from the Cantor set in the second coordinate (index $l$), with respect to their distance from the Cantor set in the first coordinate (index $k$).
	
	\prt{Proposition}
	\begin{proclaim}\label{SQ3}
		Let $-2m\leq n_1,n_2,n_3 \leq 2m$ be even numbers, call $n = (n_1,n_2,n_3,0)$ and let $k,l\in \en_0$. Let $\ve \in \V_4^{l+1}$ and let $x\in  Q(\z_{K, \ve}, \tfrac{1}{2}\tilde{r}_l(K)) \setminus Q(\z_{K, \ve}, \tilde{r}_{l+1}(K))$ be a point such that
		$$
		\max\{|x_1 - (\z_{K, \ve})_1|,|x_3 - (\z_{K, \ve})_3|,|x_4 - (\z_{K, \ve})_4|\}  < |x_2 - (\z_{K, \ve})_2| .
		$$
		Further assume that $l\leq k \leq 4 l+4 +2N$, $x_1\in \tilde{U}^1_{K,k}\setminus \tilde{U}^1_{K,k+1}$ and $x_2\in \tilde{U}^1_{K,l}\setminus \tilde{U}^1_{K,l+1}$ and $(x_3,x_4)\in \tilde{U}^2_{K,k}$. 
		Then the following estimates holds, 
		\begin{equation}\label{SQ3A1}
			|D_1\tilde{G}_{K,N,m,\eta}(x+n)| \leq C2^{-k\beta}(K+k)^{\alpha+1}, 
		\end{equation}
		\begin{equation}
			|D\tilde{G}_{K,N,m,\eta}(x+n)| \leq C2^{-l\beta}(K+l)^{\alpha+1}, 
		\end{equation}
		and
		\begin{equation}\label{SQ3A2}
			\tilde{G}_{K,N,m,\eta}(x+n) - n \in [{U}^1_{k}\setminus {U}^1_{k+1}] \times[{U}^1_{l}\setminus {U}^1_{l+1}]\times {U}^2_{k}\text{ for }k\leq 4l+2N.
		\end{equation}		
		In case of $4l+2N<k\leq 4l+4+2N$ we get $[{U}^1_{k}\setminus {U}^1_{4l+4+2N}] \times[{U}^1_{l}\setminus {U}^1_{l+1}]\times {U}^2_{k}$ in the last inclusion. 
		
		Secondly, assume that $l\in\en_0$, $x_1\in \tilde{U}^1_{K,4l+4+2N}$, $x_2\in \tilde{U}^1_{K,l}\setminus \tilde{U}^1_{K,l+1}$ and $(x_3,x_4)\in \tilde{U}^2_{K, 4 l+4 +2N}$. Then the following estimate holds, 
		\begin{equation}\label{SQ3D1}
			|D_1\tilde{G}_{K,N,m,\eta}(x+n)| \leq C2^{-4l\beta-4\beta-2N\beta} 
		\end{equation}
		and
		\begin{equation}\label{SQ3D2}
			\tilde{G}_{K,N,m,\eta}(x+n) - n \in {U}^1_{K,4l+4+2N} \times[{U}^1_{l}\setminus \tilde{U}^1_{l+1}]\times {U}^2_{4l+4+2N}.
		\end{equation}
		These estimates are rotational in the sense that they also hold when we swap the roles of indices $x_2$, $x_3$ and $x_4$. Further the same estimate holds for $D_i\tilde{G}_{K,N,m,\eta}(x+n)$ for $i=2,3,4$ and corresponding permutation of $x$ coordinates.
	\end{proclaim}
	\begin{proof}
		To prove \eqref{SQ3A1} it suffices to notice the following facts. We are farthest from the Cantor set in the direction $x_2$ and so (see \eqref{TurnTurnTurn} and paragraph before that)
		$$
			\begin{aligned}
				\tilde{G}_{K,N,m,\eta}(x+n) &= \tilde{G}_{K,4k+2N,4k+4+2N,k}(x)+n\\ &=\tilde{\zeta}_{K,l}(x_2)[J^{3,2}_K]_{4l+2N}(x) + q_K(x_2)e_2 \\
				& \qquad+ [1-\tilde{\zeta}_{K,l}(x_2)] [J^{3,2}_K]_{4l+4+2N}(x) + n.
			\end{aligned}
		$$
		But we have $x_1\in \tilde{U}^1_{K,k}\setminus \tilde{U}^1_{K,k+1}$ and $k\leq 4l+2N$ and therefore we have
		$$
			[J^{3,2}_K]_{4l+2N}(x) = [J^{3,2}_K]_{4l+4+2N}(x) =  J^{3,2}_K(x)
		$$
		which implies 
		$$
		\tilde{G}_{K,N,m,\eta}(x+n)=q_K(x_2)e_2+J^{3,2}_K(x). 
		$$
	Now the estimates \eqref{Standard1} and \eqref{Standard2} estimate $D_1\tilde{G}_{K,N,m,\eta}$ as we desire in \eqref{SQ3A1}. The estimates of $D_3$ and $D_4$ are the same or even slightly better as $(x_3,x_4)\in \tilde{U}^2_{K,k}$ and the righthand side of \eqref{SQ3A1} is decreasing in $k$. Finally we can estimate  
$$|D_2\tilde{G}_{K,N,m,\eta}|\leq |q'_K(x_2)|\leq C (K+l)^{\alpha+1}2^{-l\beta}.$$

		The inclusion \eqref{SQ3A2} follows immediately from the fact that $ \tilde{G}_{K,4k+2N,4k+4+2N,k}(x)$ maps $\tilde{U}^4_{K,l} \setminus \tilde{U}^4_{K,l+1}$ onto ${U}^4_{l}\setminus {U}^4_{l+1}$ and on parts of hyperplanes perpendicular to $x_2$ furthest from the center of the nearest cube in direction $x_2$ we apply the map $[J^{3,2}_{K}]_{4l+2N}$ (given that $k \leq 4l+2N$) and $[J^{3,2}_{K}]_{4l+2N}$ maps each $\tilde{U}^3_{K,o} \setminus \tilde{U}^3_{K,o+1}$ onto ${U}^3_{o} \setminus {U}^3_{o+1}$ for $l\leq o\leq 4l+2N$. Especially $[J^{3,2}_{K}]_{4l+2N}$ maps $\tilde{U}^3_{K,k} \setminus \tilde{U}^3_{K,k+1}$ onto ${U}^3_{k} \setminus {U}^3_{k+1}$.
		
		The proof of \eqref{SQ3D1} is also similar but by applying \eqref{Standard3} instead of \eqref{Standard1}. Finally \eqref{SQ3D2} is proved by noticing that both $[J^3_{K}]_{4l+2N}$ and $[J^3_{K}]_{4l+4+2N}$ map $\tilde{U}^3_{K,4l+2N} $ onto ${U}^3_{4l+2N}$ and the rest of the argument remains the same as in the previous.
	\end{proof}
	
		\prt{Proposition}
	\begin{proclaim}\label{SQ5}
		Let $-2m\leq n_1,n_2,n_3 \leq 2m$ be even numbers, call $n = (n_1,n_2,n_3,0)$ and let $x\in \C_{A,K}$. Then the classic differential of $D\tilde{G}_{K,N,m,\eta}(x+n) $ exists and equals 0. Further for fixed $K,\alpha, \eta$ the map $\tilde{G}_{K,N,m,\eta}$ is Lipschitz with Lipschitz constant independent of $N,m$ and $\beta$. The map $\tilde{G}_{K,N,m,\eta}$ is locally bi-Lipschitz on $R_{m,\eta} \setminus \C_{A, K,m}$.
	\end{proclaim}
	\begin{proof}
		It is an easy observation that for each $k$ our $\tilde{G}_{K,4k+2N,4k+4+2N,k}(x)$ maps $\tilde{U}^4_{K,k}$ onto ${U}^4_{k}$ with $\Q_{K,\ve(k)}$ being mapped onto $Q_{\ve(k)}$. Now
		$$
			\frac{\diam(Q_{\ve(k)})}{\diam(\Q_{K,\ve(k)})} = \frac{r_k}{\tilde{r}_{k}(K)}  \leq C2^{-k\beta}
		$$
		and that tends to $0$ as $k\to \infty$ and therefore $D\tilde{G}_{K,N,m,\eta}(x+n) =0$ for each $x\in \C_{A,K}$. The parameter $\eta$ influences the mapping $\tilde{G}_{K,N,m,\eta}$ only outside $Z_m$. 
		For fixed $K$ and $\alpha$ the estimates in Propositions~\ref{SQ1}, \ref{SQ2}, \ref{SQ3} are decreasing in $N$ (the value of $\beta$ is a fixed constant not dependent on any of the other parameters) and we consider $N\in \en_0$ and $\beta > \beta_0\gg 1$.
		
		On each $\tilde{U}^4_{K,k} \setminus \tilde{U}^4_{K,k+1}$ we have that $\tilde{G}_{K,4k+2N,4k+4+2N,k}(x)$ is bi-Lipschitz and the derivative is smallest on sets of type $[\tilde{U}^1_{K,k} \setminus \tilde{U}^1_{K,k+1}] \times \tilde{U}^3_{K,4k+4+2N}$. Here we send 3-dimensional cubes on the hyperplane roughly of size $2^{-(4k+4+2N)}$ onto cubes roughly of size $2^{-(4k+4+2N)(\beta+1)}$ and we are linear on each cube of $ \tilde{U}^3_{K,4k+4+2N}$. Since we are bi-Lipschitz on each $\tilde{U}^4_{K,k} \setminus \tilde{U}^4_{K,k+1}$ we are locally bi-Lipschitz on $R_{m,\eta} \setminus \C_{A, K,m}$.
	\end{proof}	

\subsection{Estimates on the derivative of $G_{K,N,m,\eta}$.}\label{DSM}

	The following proposition gives an estimate on $|D_1G_{K,N,m,\eta}|$ at points which are very close to the Cantor set in coordinates $(x_2,x_3,x_4)$ compared to the $x_1$ variable.

	\prt{Proposition}
	\begin{proclaim}\label{ST1}
		Let $-2m\leq n_1,n_2,n_3 \leq 2m$ be even numbers, call $n = (n_1,n_2,n_3,0)$ and let $3k +N\leq l$ with $k,l\in \en_0$. Let $x$ be a point such that $x_1\in {U}^1_{k}\setminus {U}^1_{k+1}$, $(x_2,x_3,x_4)\in {U}^3_{l}$. Then the following estimates hold
\begin{equation}\label{ST1A}
\left|D_1G_{K,N,m,\eta}(x+n) - \frac{\tfrac{1}{2}\tilde{r}_k(K) -\tilde{r}_{k+1}(K)}{\tfrac{1}{2}r_k-r_{k+1}}e_1\right|\leq C 2^{-3k-N}2^{k(\beta+1)}, \,\,\, D_jG^1=0\,\,\text{for}\, j=2,3,4
\end{equation}
		and
		\begin{equation}\label{ST1B}
			|DG_{K,N,m,\eta}(x+n)| \leq C 2^{(3k+3+N)\beta}.
		\end{equation}
		The same holds for each rotation, i.e. for $D_2{G}^2_{K,m,\eta}(x+n)$ when $x_2\in {U}^1_{k}\setminus {U}^1_{k+1}$ and $(x_1,x_3,x_4)\in {U}^3_{l}$ and similarly for coordinates $3$ and $4$.
	\end{proclaim}
	
	\begin{proof} 
	Analogously to \eqref{starr} we obtain from \eqref{SpeciallyForStanda} that 
		$$
				G_{K,N,m,\eta}(x+n) =\zeta_{K,k}(x_1) [H^{3,1}_K]_{3k+N}(x) + t_K(x_1)e_1 
				 + [1-\zeta_{K,k}(x_1)] [H^{3,1}_K]_{3k+3+N}(x) + n.
		$$		
		The reasoning for the first part of \eqref{ST1A} is now exactly the same as in the proof of Proposition~\ref{SQ1}, where we use $|\zeta_{K,k}'|\leq 2^k 2^{\beta k}$ and 
		$$
		\bigl\|[H^{3,1}_K]_{3k+N}(x)-[H^{3,1}_K]_{3k+3+N}(x) \bigr\|_{\infty}\leq 2^{-3k-N}.
		$$
		 The other part $D_jG^1=0$ is easy to see as $G^1_{K,m,\eta}=t_k(x_1)e_1$. 
		
		The reasoning for \eqref{ST1B} is exactly the same as in the proof of Proposition~\ref{SQ3} especially \eqref{SQ3D1}. The only difference is that we work with $H$ instead of $J$ and indexes of type $3k+N$, and $3k+3+N$ instead of $4k+2N$ and $4k+4+2N$. Otherwise the arguments are identical.
	\end{proof}
	
	The following proposition gives an estimate on $|D_1G_{K,N,m,\eta}|$ at points which are closer to the Cantor set in coordinates $(x_2,x_3,x_4)$ than in the $x_1$ variable on lines parallel to $e_1$ that do not intersect the Cantor set (i.e. they go through ${U}^3_{l}\setminus {U}_{l+1}^3$) as distance from the Cantor set in the first variable decreases.
	
	\prt{Proposition}
	\begin{proclaim}\label{ST2}
		Let $-2m\leq n_1,n_2,n_3 \leq 2m$ be even numbers, call $n = (n_1,n_2,n_3,0)$ and let $k \leq l < 3k+N$ with $k,l\in \en_0$. Let $x$ be a point such that $x_1\in {U}^1_{k}\setminus {U}^1_{k+1}$, $(x_2,x_3,x_4)\in {U}^3_{l}\setminus {U}_{l+1}^3$. Then the following estimates hold
		\begin{equation}\label{ST2A}
			D_1G_{K,N,m,\eta}(x+n) = \frac{\tfrac{1}{2}\tilde{r}_k(K) -\tilde{r}_{k+1}(K)}{\tfrac{1}{2}r_k  - r_{k+1} }e_1.
		\end{equation}
		The same holds for each rotation, i.e. for $D_2G_{K,N,m,\eta}(x+n)$ when $x_2\in {U}^1_{k}\setminus {U}^1_{k+1}$ and $(x_1,x_3,x_4)\in {U}^3_{l}\setminus {U}_{l+1}^3$ and similarly for coordinates $3$ and $4$.
		Further, under the same assumptions,
		\begin{equation}\label{ST2B}
		|DG_{K,N,m,\eta}(x+n)| <C2^{\beta l}.
		\end{equation}
	\end{proclaim}
	\begin{proof}
		For \eqref{ST2A}, the reasoning is exactly the same as in the proof of Proposition~\ref{SQ2}. The only difference is that we work with $H$ instead of $J$ and indexes of type $3k+N$, and $3k+3+N$ instead of $4k+2N$ and $4k+4+2N$. Otherwise the argument is identical.
		
		The estimate \eqref{ST2B} is a direct application of result of \eqref{Standard5} because on hyperplanes $G_{K,N,m,\eta}$ is a combination of $[H^{3,1}_K]_{3k+N}$ and $[H^{3,1}_K]_{3k+3+N}(x)$ and $(x_2,x_3,x_4)\in {U}^3_{l}\setminus {U}_{l+1}^3$.
	\end{proof}
	
	\prt{Proposition}
	\begin{proclaim}\label{ST3}
		Let $-2m\leq n_1,n_2,n_3 \leq 2m$ be even numbers, call $n = (n_1,n_2,n_3,0)$ and let $k,l\in \en_0$. Assume that $l \leq k < 3 l+N$, $x_1\in {U}^1_{k}\setminus {U}^1_{k+1}$, and $(x_2,x_3,x_4)\in {U}^3_{l}\setminus {U}^3_{l+1}$. Then the following estimate holds,
		\begin{equation}\label{ST3A}
			|DG_{K,N,m,\eta}(x+n)| \leq C2^{k\beta}.
		\end{equation}
		Assume alternatively that $x_1\in {U}^1_{3l+N}$ and $(x_2,x_3,x_4)\in {U}^3_{l}\setminus {U}^3_{l+1}$ then
		\begin{equation}\label{ST3B}
			|DG_{K,N,m,\eta}(x+n)| \leq C2^{(3l+N)\beta}.
		\end{equation}
		This estimate is rotational in the sense that it also holds when we swap the roles of indices $x_1, x_2$, $x_3$ and $x_4$.
	\end{proclaim}
	\begin{proof}
		The proof of this claim follows the proof of Proposition~\ref{SQ3}, especially the proof of \eqref{SQ3D1} but in this case we use \eqref{Standard6}. The only difference that in the proof of Proposition~\ref{SQ3} the estimates of $D_3$ and $D_4$ are the same or better than the estimate of $D_1$ as $(x_3,x_4)\in \tilde{U}^2_{K,k}$ there. Here they are better than the estimate of $D_1$ as 
		$(x_3,x_4)\in {U}^2_{l}\setminus {U}^2_{l+1}$, $l\leq k$ and $2^{k\beta}$ is increasing in $k$. 
	\end{proof}

		\begin{prop}\label{ST4}
		Let $-2m\leq n_1,n_2,n_3 \leq 2m$ be even numbers, call $n = (n_1,n_2,n_3,0)$ and let $l\in \en_0$. Assume that $l\leq N-1$, $x_4\in [-2,-1]\cup[1,2]$, and $(x_1, x_2,x_3)\in {U}^3_{l}\setminus {U}^3_{l+1}$. Then the following estimate holds,
		\begin{equation}\label{ST4A}
		|DG_{K,N,m,\eta}(x+n)| \leq C2^{l\beta}.
		\end{equation}
		Assume alternatively that $(x_1, x_2,x_3)\in {U}^3_{N}$ then
		\begin{equation}\label{ST4B}
		|DG_{K,N,m,\eta}(x+n)| \leq C2^{N\beta}.
		\end{equation}
		This estimate is rotational in the sense that it also holds when we swap the roles of indices $x_1, x_2$, $x_3$ and $x_4$.
	\end{prop}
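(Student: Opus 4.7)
The plan is to prove Proposition~\ref{ST4} by a direct computation of $DG_{K,N,m,\eta}$ from the explicit piecewise definition \eqref{StretchingBitAroundTop}, reducing the estimate to the bounds on $D[H_K^{3,4}]_N$ supplied by Lemma~\ref{FrameToFrameDifferential}. By the symmetry between the formulas for $x_4\in[1,2]$ and $x_4\in[-2,-1]$ in \eqref{StretchingBitAroundTop}, it suffices to treat the former, which I split into $x_4\in[1,\tfrac{3}{2}]$ and $x_4\in[\tfrac{3}{2},2]$.

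In the first sub-case \eqref{StretchingBitAroundTop} gives $G_{K,N,m,\eta}(x+n)=[H^{3,4}_K]_N(x)+n+\tfrac{\eta}{13}(x_4-1)e_4+e_4$. Therefore $D_jG=D_j[H^{3,4}_K]_N(x)$ for $j=1,2,3$ and $D_4G=\tfrac{\eta}{13}e_4$. Recalling that $[H^{3,4}_K]_N(x)=T^4([H^3_K]_N(T_4(x)))$, the derivative $D_j[H^{3,4}_K]_N$ is carried by $D_j[H^3_K]_N$ evaluated at $(x_1,x_2,x_3)$. Under the hypothesis $(x_1,x_2,x_3)\in U^3_l\setminus U^3_{l+1}$ with $l\leq N-1$ the map $[H^3_K]_N$ equals $H^3_K$ at this point, so \eqref{Standard5} yields $|D_j[H^3_K]_N|\leq C2^{\beta l}$. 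Since $D_4G$ is absolutely bounded, this gives \eqref{ST4A} in this sub-case. In the second sub-case $G$ is an interpolation, and differentiating produces
\[
	D_jG=2(2-x_4)D_j[H^{3,4}_K]_N(x)+2(x_4-\tfrac{3}{2})e_j \quad (j=1,2,3),
\]
\[
	D_4G=-2[H^{3,4}_K]_N(x)+2(x_1,x_2,x_3,0)+\tfrac{\eta}{13}e_4.
\]
The first terms are bounded by $C2^{\beta l}$ as before, and the remaining terms are absolutely bounded because $[H^{3,4}_K]_N$ maps into a bounded set. Again $|DG_{K,N,m,\eta}|\leq C2^{\beta l}$, completing \eqref{ST4A}.

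For the alternative case $(x_1,x_2,x_3)\in U^3_N$, the only change is that the frame-to-frame bound \eqref{Standard6} applies, giving $|D[H^3_K]_N|\leq C2^{\beta N}$ uniformly; substituting this into the identical computation yields $|DG_{K,N,m,\eta}|\leq C2^{\beta N}$, which is \eqref{ST4B}. Finally, the claimed rotational symmetry follows by the same bookkeeping applied to the analogous piecewise formula \eqref{StretchingBitAroundTop2} governing the boundary stretching in the $e_i$ direction for $i=1,2,3$: that formula is an interpolation between the identity and $[H^{3,i}_K]_N$ of exactly the same structure, so the identical argument with the indices permuted goes through.

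There is no substantive obstacle here; the proposition is essentially a change-of-coordinates exercise combined with the standard frame-to-frame bounds of Lemma~\ref{FrameToFrameDifferential}. The only point requiring care is to verify that the derivatives in the $e_4$ direction (the stretching direction) remain absolutely bounded and hence do not spoil the estimate, which is transparent from the formulas since the factor $\tfrac{\eta}{13}$ is small and the interpolated map $[H^{3,4}_K]_N$ takes values in a fixed compact set.
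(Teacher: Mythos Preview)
Your proof is correct and follows the same approach as the paper: the paper's proof is a one-line reference to \eqref{Standard5} and \eqref{Standard6}, and you have simply expanded this by writing out the explicit differentiation of the formula \eqref{StretchingBitAroundTop} before invoking those same estimates. The only minor discrepancy is that the paper's parenthetical cross-references point to \eqref{SpeciallyForStanda} and \eqref{ASeason0} rather than \eqref{StretchingBitAroundTop}, but since $x_4\in[-2,-1]\cup[1,2]$ lies outside $Z_m$, your reference to \eqref{StretchingBitAroundTop} (and to \eqref{StretchingBitAroundTop2} for the rotated versions) is the pertinent one.
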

	\begin{proof}
		The proof of \eqref{ST4A} is a simple application of \eqref{Standard5} (see also \eqref{SpeciallyForStanda} and \eqref{ASeason0}) and the proof of \eqref{ST4B} is a simple application of \eqref{Standard6}.
	\end{proof}

	\prt{Proposition}
	\begin{proclaim}\label{ST0}
		The map $G_{K,N,m,\eta}$ is $C\cdot  2^{M\beta}$-Lipschitz on $R_{m, 13}\setminus \T$, where $\T$ is the set from Lemma~\ref{ThatTimeOfTheMonth}. The map $G_{K,N,m,\eta}$ is locally bi-Lipschitz on $R_{m,13} \setminus \C_{B,m}$.
	\end{proclaim}
	\begin{proof}
		The proof that $G_{K,N,m,\eta}$ is locally bi-Lipschitz on $R_{m,13} \setminus \C_{B,K,m}$ follows the same argument as given in the proof of Proposition~\ref{SQ5} but using Propositions~\ref{ST1}, \ref{ST2}, \ref{ST3} instead of the propositions in Section~\ref{DFM}. 
		
		Note that $|DG_{K,N,m,\eta}|\leq C$ when $|x_4|\in [2,14]$ in \eqref{StretchingBitAroundTop} or when $|x_i|\in[2m+2,2m+5]$ in \eqref{StretchingBitAroundTop2}. Therefore it is a simple application of \eqref{Standard5} to prove that $G_{K,N,m,\eta}$ is $C\cdot 2^{M\beta}$Lipschitz on $\R_{m,13} \setminus \T$.
	\end{proof}

	Now we improve on Proposition~\ref{EmotionallyDistant} and we estimate the position of the composition with $\tilde{G}_{K,N,m,\eta}$ from Section \ref{firstmap}.
	
	\begin{lemma}\label{WrongPlaceWrongTime}
		Let $-2m \leq n_1,n_2,n_3 \leq 2m$ be even numbers and $n=(n_1,n_2,n_3,0)$. There exists an $M^* = M^*(\beta)\geq M$ (where $M$ is from Proposition \ref{EmotionallyDistant}) such that for all $k,l,\in \en_0$ such that $l \geq k + M^*$ and all $x_1\in U_{k}^1\setminus U_{k+1}^1$ and $(x_2,x_3,x_4) \in U^3_{l}$, we have
		\begin{equation}\label{BetterX1}
			\big[\hat{F}_{\beta, m}\circ \tilde{G}_{K,N,m,\eta}(x+n) -n\big]_1 \in U^{1}_{k-1}\setminus U^{1}_{k+2}.
		\end{equation}
	\end{lemma}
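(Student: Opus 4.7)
My plan is to track the first coordinate through each of the two factors of the composition. The hypothesis $l\geq k+M^{*}$ forces $(x_{2},x_{3},x_{4})$ to lie much deeper in $\tilde\C_{A,K}^{3}$ than $x_{1}$ does in $\tilde\C_{A,K}^{1}$; after $\tilde G_{K,N,m,\eta}$ this translates into the last three coordinates lying very close to $\C_{B}^{3}$, and then the bi-Lipschitz map $\hat F_{\beta,m}$ perturbs the first coordinate by only a tiny amount, since by \eqref{Oasis} it preserves first coordinates on lines in $\K_{B,m}$ parallel to $e_{1}$.

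Concretely, since $l>k$, the point $x$ sits in the $S_{\ve,1}$ portion of its generation-$(k+1)$ frame, so one of Propositions~\ref{SQ1}, \ref{SQ2} applies, according to whether $(x_{2},x_{3},x_{4})$ already enters the regime $\tilde U^{3}_{K,4k+2N}$ or merely lies in some $\tilde U^{3}_{K,l^{*}}\setminus\tilde U^{3}_{K,l^{*}+1}$ with $k<l^{*}<4k+2N$. Writing $y+n:=\tilde G_{K,N,m,\eta}(x+n)$ and taking $j$ to be the depth furnished by \eqref{SQ1C} or \eqref{SQ2B}, I obtain
$$
y_{1}\in U^{1}_{k}\setminus U^{1}_{k+1}, \qquad (y_{2},y_{3},y_{4})\in U^{3}_{j}, \qquad j\geq l\geq k+M^{*}.
$$
I then pick $(c_{2},c_{3},c_{4})\in\C_{B}^{3}$ with $\|(y_{2},y_{3},y_{4})-(c_{2},c_{3},c_{4})\|_{\infty}\leq r_{j}$, which is possible since $U^{3}_{j}$ is the $r_{j}$-neighborhood of $\C_{B}^{3}$ in the $\ell^{\infty}$-norm. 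The line $\{(t,c_{2},c_{3},c_{4})+n:t\in\er\}$ lies inside $\K_{B,m}\cap([-2m-3,2m+3]^{3}\times[-3,3])$, so \eqref{Oasis} gives that $\hat F_{\beta,m}$ preserves the first coordinate on it. Denoting by $L_{*}=L_{*}(\beta)$ the bi-Lipschitz constant of $\hat F_{\beta,m}$ (which depends only on $\beta$ through the Lipschitz constant of $g$ and is independent of $m$ by periodicity), I get
$$
\bigl|[\hat F_{\beta,m}(y+n)]_{1}-(y_{1}+n_{1})\bigr| \leq L_{*}\sqrt{3}\,r_{j} = L_{*}\sqrt{3}\cdot 2^{-j(\beta+1)}.
$$

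It remains to choose $M^{*}$ so that this perturbation cannot push $y_{1}$ out of $U^{1}_{k-1}$ or into $U^{1}_{k+2}$. Elementary Cantor-set geometry gives, for every $y_{1}\in U^{1}_{k}\setminus U^{1}_{k+1}$, the bounds $\dist(y_{1},U^{1}_{k+2})\geq\tfrac{1}{2}r_{k+1}-r_{k+2}$ and $\dist(y_{1},\er\setminus U^{1}_{k-1})\geq\tfrac{1}{2}r_{k-1}-r_{k}$, the former being the binding constraint. Requiring $L_{*}\sqrt{3}\cdot 2^{-j(\beta+1)}<\tfrac{1}{2}r_{k+1}-r_{k+2}$ then reduces, using $r_{k}=2^{-k(\beta+1)}$, to a lower bound of the form $j-k\geq C(\beta,L_{*})$ depending only on $\beta$, so I set $M^{*}(\beta):=\max\{M,\,\lceil C(\beta,L_{*})\rceil\}$. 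The only technical care will be the bookkeeping of the two regimes of $\tilde G$ and the verification of this numerical threshold; I do not anticipate any substantive obstacle.
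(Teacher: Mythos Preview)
Your proof is correct and follows essentially the same route as the paper's: locate $\tilde G_{K,N,m,\eta}(x+n)-n$ in $(U_k^1\setminus U_{k+1}^1)\times U_j^3$ for some suitably large $j$, compare $\hat F_{\beta,m}$ to its explicit form on the nearby line through $\C_B(3)$ parallel to $e_1$ (equivalently to the reflection $L$ from \eqref{NewNew}), and use the Lipschitz bound on $\hat F_{\beta,m}$ to control the first-coordinate perturbation by $C\,2^{-j(\beta+1)}$, which is negligible against the interval scale $2^{-k(\beta+1)}$ once $j-k$ exceeds a threshold depending only on $\beta$. One minor remark: your assertion ``$j\geq l$'' is not literally true in the SQ1 regime, where \eqref{SQ1C} only gives $j=4k+2N$; the paper simply writes the image as lying in $U_l^3$ and uses $l$ in place of your $j$, so both arguments share this slight imprecision, which is harmless in every application (where $2N>M^*$ is always assumed).
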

	\begin{proof} 
		The image of $x$ in $\tilde{G}_{K,N,m,\eta}$ is in the set $({U}_{k}^1\setminus {U}_{k+1}^1)\times {U}^3_{l}$ (see Fig. \ref{Fig:Places} and \eqref{SQ2B}). The set $[-1,1] \times {U}^3_{l}$ is a neighbourhood of $[-1,1]\times C_{B}(3)$ and the diameter of each cube in ${U}^3_{l}$ is less than $C2^{-l(\beta+1)}$. Call the linear map $L:(x_1,x_2,x_3,x_4)\to (x_1,x_2,x_3,-x_4)$. Now on $[-1,1]\times C_{B}(3)$ we have \eqref{NewNew} and therefore because $\hat{F}_{\beta, m}$ is Lipschitz we have that
		$$
			\big|\big[\hat{F}_{\beta, m}\circ \tilde{G}_{K,N,m,\eta}(x+n)\big]_1 -  \big[L\circ\tilde{G}_{K,N,m,\eta}(x+n)\big]_1\big|< C2^{-l(\beta+1)}.
		$$
		Now $l \geq k+M^*$ and given that $M^*$ is large enough we have that the right hand side is much smaller than the size of the intervals in $U^{1}_{k}$, which have diameter $\approx 2^{-k(\beta +1)}$. This immediately yields \eqref{BetterX1}. 		
			\end{proof}

\section{Estimates of the derivative of compositions}\label{PardubiceJeJenVesniceUHradce}
See the beginning of the previous section for the role of various parameters. In this section we further use fixed constants $M$ from Proposition \ref{EmotionallyDistant} 
(see also Theorem \ref{BloodSweatAndTears} as $M\geq M_0$) and $M^*$ from Lemma \ref{WrongPlaceWrongTime}.

\subsection{The composition of $\hat{F}_{\beta,m} \circ \tilde{G}_{K,N,m,\eta}$.}\label{Composition}
	Consider a segment very close to being parallel to $e_i$ (as is the image of segments parallel to $e_i$ in $\tilde{G}_{K,N,m,\eta}$ by \eqref{SQ1A} and \eqref{SQ1B}). The image of this curve in $P_{v}$ onto $\er^3\times\{0\}$ is very close to a segment parallel to $e_i$ since $g$ is constant on segments parallel to $e_i$ near the projection of $\K_{B,m}$ and since it is Lipschitz, the function is very close to being constant on the projection of the image of the segment. Therefore the image of segments very close to being parallel to $e_i$ in $\hat{F}_{\beta, m}$ is a curve, which has parametrization whose derivative has very small components in all directions except the $i$ component. That is the subject of Proposition~\ref{C1}.
	\prt{Proposition}
	\begin{proclaim}\label{C1}
		Let $-2m\leq n_1,n_2,n_3 \leq 2m$ be even numbers, call $n = (n_1,n_2,n_3,0)$ and let $l, k\in \en$. Assume that $2N>M^*$ and $M\leq M^*\leq l$, $ 2N+4k\leq l$ and let $x$ be a point such that $x_1\in \tilde{U}^1_{K,k}\setminus \tilde{U}^1_{K,k+1}$, $(x_2,x_3,x_4)\in \tilde{U}^3_{K,l}$. Then
		\begin{equation}\label{C1A}
			\hat{F}_{\beta, m}\circ \tilde{G}_{K,N,m,\eta}(x+n) - n \in [{U}^1_{k-1}\setminus {U}^1_{k+2}]\times[{U}^3_{4k+2N-M}\setminus{U}_{4k+2N+M+1}^3]
		\end{equation}
		and the following estimate holds
		\begin{equation}\label{C1B}
			\Big|D_1(\hat{F}_{\beta, m}\circ\tilde{G}_{K,N,m,\eta}(x+n)) - \frac{\tfrac{1}{2}r_k -r_{k+1}}{\tfrac{1}{2}\tilde{r}_k(K) - \tilde{r}_{k+1}(K) } e_1 \Big|  \leq C2^{-4k\beta-2N\beta}(K+k)^{\alpha+1}.
		\end{equation}
		All of the above holds for each rotation, i.e. for $D_2\hat{F}_{\beta, m}\circ \tilde{G}_{K,N,m,\eta}(x+n)$ when $x_2\in \tilde{U}^1_{K,k}\setminus \tilde{U}^1_{K,k+1}$ and $(x_1,x_3,x_4)\in \tilde{U}^3_{K,l}$ and similarly for coordinates $3$ and $4$.
	\end{proclaim}
	\begin{proof}
		The claim \eqref{C1A} is derived easily from 
		\eqref{SQ1C}, Proposition~\ref{EmotionallyDistant} and \eqref{BetterX1} of Lemma~\ref{WrongPlaceWrongTime} (note that $2N>M^*$ easily implies that $k+M^* \leq4k+M^*< 4k+2N <  l$). 
		
		Now the $i$-th coordinate of the derivative equals
		$$
		D_1(\hat{F}_{\beta, m}\circ\tilde{G}_{K,N,m,\eta}(x))^i=
		\sum_{j=1}^4 D_j(\hat{F}_{\beta, m})^i(\tilde{G}_{K,N,m,\eta}(x)) D_1(\tilde{G}_{K,N,m,\eta})^j(x).
		$$
		By \eqref{StraightStraight} we know that $D_1\hat{F}_{\beta, m}(\tilde{G}_{K,N,m,\eta}(x))=e_1$ (recall that $M_0\leq M$) and hence the first term on the righthand side (corresponding to $j=1$) is nonzero only for $i=1$. By \eqref{SQ1A} we know that this first term for $i=1$ equals to 
		$\frac{\frac{1}{2}r_k -r_{k+1}}{\frac{1}{2}\tilde{r}_k(K) - \tilde{r}_{k+1}(K) }$. Other term for all $i\in\{1,2,3,4\}$ can be estimated by \eqref{SQ1B} and the fact that $\hat{F}_{\beta,m}$ is Lipschitz and we get \eqref{C1B}.  
	\end{proof}

	The following proposition gives an estimate on $|D_1\hat{F}_{\beta, m}\circ\tilde{G}_{K,N,m,\eta}|$ at points which are closer to the Cantor set in coordinates $(x_2,x_3,x_4)$ than in the $x_1\in \tilde{U}^1_{K,k}\setminus \tilde{U}^1_{K,k+1}$ variable on lines that do not intersect the Cantor set (i.e. they go through $\tilde{U}^3_{K,l}\setminus \tilde{U}_{K,l+1}^3$) as distance from the Cantor set in the first variable decreases. The key here is that $\tilde{G}_{K,N,m,\eta}$ sends segments parallel to $e_1$ onto segments parallel to $e_1$ and so does $\hat{F}_{\beta, m}$.
	
	\prt{Proposition}
	\begin{proclaim}\label{C2}
		Let $-2m\leq n_1,n_2,n_3 \leq 2m$ be even numbers, call $n = (n_1,n_2,n_3,0)$ and let $k,l\in \en$. Assume that $M\leq M^* \leq l$ and $k+M +M^*+1 < l< 2N+4k$ and let $x$ be a point such that $x_1\in \tilde{U}^1_{K,k}\setminus \tilde{U}^1_{K,k+1}$, $(x_2,x_3,x_4)\in \tilde{U}^3_{K,l}\setminus \tilde{U}_{K,l+1}^3$. Then
		\begin{equation}\label{C2A}
			\hat{F}_{\beta, m}\circ \tilde{G}_{K,N,m,\eta}(x+n) - n \in [{U}^1_{k-1}\setminus {U}^1_{k+2}]\times[{U}^3_{l-M}\setminus {U}_{l+M+1}^3].
		\end{equation}
		and
		\begin{equation}\label{C2B}
			D_1\hat{F}_{\beta, m}\circ\tilde{G}_{K,N,m,\eta}(x+n) = \frac{\tfrac{1}{2}r_k -r_{k+1}}{\tfrac{1}{2}\tilde{r}_k(K) - \tilde{r}_{k+1}(K) }e_1.
		\end{equation}
		The same holds for each rotation, i.e. for $D_2\tilde{G}_{K,N,m,\eta}(x+n)$ when $x_2\in \tilde{U}^1_{K,k}\setminus \tilde{U}^1_{K,k+1}$ and $(x_1,x_3,x_4)\in \tilde{U}^3_{K,l}\setminus \tilde{U}_{K,l+1}^3$ and similarly for coordinate $3$. The difference in the $4^{\text{th}}$ coordinate is that in \eqref{BA}, \eqref{BB} and \eqref{BC} the derivative a positive multiple of $-e_4$, but the estimates are otherwise the same.
	\end{proclaim}
	\begin{proof}
		The proof is similar to Proposition~\ref{C1} with the difference that we use Proposition~\ref{SQ2} specifically \eqref{SQ2A} to get that segments parallel to $e_1$ are mapped to segments parallel to $e_1$ by $\tilde{G}_{K,N,m,\eta}$ and the same is true for $\hat{F}_{\beta, m}$ by \eqref{StraightStraight}, thus \eqref{C2B} is proved. The reasoning for \eqref{C2A} is identical to that of \eqref{C1A}.
	\end{proof}

	\prt{Proposition}
	\begin{proclaim}\label{C3}
		Let $-2m\leq n_1,n_2,n_3 \leq 2m$ be even numbers, call $n = (n_1,n_2,n_3,0)$ and let $k,l\in \en$. Assume that $1\leq l-M-M^*-1 \leq k \leq 3l-2M-2+N$ and let $x$ be a point such that $x_1\in \tilde{U}^1_{K,k}\setminus \tilde{U}^1_{K,k+1}$, $(x_2,x_3,x_4)\in \tilde{U}^3_{K,l}\setminus \tilde{U}_{K,l+1}^3$. Then
		\begin{equation}\label{C3A}
			\hat{F}_{\beta, m}\circ \tilde{G}_{K,N,m,\eta}(x+n) - n \in [{U}^1_{k-M}\setminus {U}^1_{k+M+1}]\times [{U}^3_{l-M}\setminus {U}^3_{l+M+1}].
		\end{equation}
		In the case that, $x_1\in \tilde{U}^1_{K,3l-2M-2+N}$, and $(x_2, x_3,x_4)\in \tilde{U}^3_{K,l}\setminus \tilde{U}_{K,l+1}^3$ then
		\begin{equation}\label{C3B}
			\hat{F}_{\beta, m}\circ \tilde{G}_{K,N,m,\eta}(x+n) - n \in  [{U}^4_{l-M}\setminus{U}^4_{l+M+1}].
		\end{equation}
		These statements are rotational in the sense that they also hold when we swap the roles of the indices.
	\end{proclaim}
	\begin{proof}
		The claim \eqref{C3A} follows from Proposition \ref{SQ3} and Proposition~\ref{EmotionallyDistant}. The inclusion $\eqref{C3B}$ is exactly the second claim of Proposition~\ref{EmotionallyDistant}.
	\end{proof}
	
\subsection{Derivative estimates of $G_{K,N,m,\eta} \circ \hat{F}_{\beta, m} \circ \tilde{G}_{K,N,m,\eta}$.}\label{BigComposition}

	\prt{Proposition}
	\begin{proclaim}\label{B}
		Let $-2m\leq n_1,n_2,n_3 \leq 2m$ be even numbers, call $n = (n_1,n_2,n_3,0)$ and let $k,l\in \en$. Assume that $N\geq 3M^*+3M+6$. Firstly let $M^*\leq l$ and  $k \leq \tfrac{1}{4}(l-2N)$ and let $x$ be a point such that $x_1\in \tilde{U}^1_{K,k}\setminus \tilde{U}^1_{K,k+1}$, $(x_2,x_3,x_4)\in \tilde{U}^3_{K,l}$. Then 
		\begin{equation}\label{BA}
				|D_1G_{K,N,m,\eta}\circ \hat{F}_{\beta, m}\circ \tilde{G}_{K,N,m,\eta}(x+n)| \leq C 2^{\beta +2}+C 2^{\beta}2^{-3k-N}(K+k)^{\alpha+1}  .
		\end{equation}
		Secondly, let $M^* \leq l$ and $\tfrac{1}{4}(l-2N) < k \leq \tfrac{1}{3}(l-N-M)-1$ and let $x$ be a point such that $x_1\in \tilde{U}^1_{K,k}\setminus \tilde{U}^1_{K,k+1}$, $(x_2,x_3,x_4)\in \tilde{U}^3_{K,l}$. Then
			\begin{equation}\label{BB}
			|D_1G_{K,N,m,\eta}\circ \hat{F}_{\beta, m}\circ \tilde{G}_{K,N,m,\eta}(x+n)| \leq C 2^{\beta +2}  +  C2^{-3k-N}(K+k)^{\alpha+1}.
			\end{equation}
		
		In the intermediary case assume that $M^* \leq l $ and $ \tfrac{1}{3}(l-N-M)-1< k \leq  \tfrac{1}{3}(l-N)+1$ and let $x$ be a point such that $x_1\in \tilde{U}^1_{K,k}\setminus \tilde{U}^1_{K,k+1}$, $(x_2,x_3,x_4)\in \tilde{U}^3_{K,l}$. Then
			\begin{equation}\label{BBC}
			|D_1G_{K,N,m,\eta}\circ \hat{F}_{\beta, m}\circ \tilde{G}_{K,N,m,\eta}(x+n) | \leq +C 2^{\beta +2}  +  C2^{-3k-N}(K+k)^{\alpha+1}.
			\end{equation}
		In the third case assume that $M^* \leq l $ and $ \tfrac{1}{3}(l-N) +1< k \leq  l-M-M^*-1$ and let $x$ be a point such that $x_1\in \tilde{U}^1_{K,k}\setminus \tilde{U}^1_{K,k+1}$, $(x_2,x_3,x_4)\in \tilde{U}^3_{K,l}$. Then
			\begin{equation}\label{BC}
			|D_1G_{K,N,m,\eta}\circ \hat{F}_{\beta, m}\circ \tilde{G}_{K,N,m,\eta}(x+n) | \leq C 2^{\beta}.
			\end{equation}
		
		Further, for $M^*\leq l,k$ and $l -M- M^*-1 < k\leq 3l - 4M+N-1$ and $x_1\in \tilde{U}^1_{K,k}\setminus \tilde{U}^1_{K,k+1}$, and $(x_2, x_3,x_4)\in \tilde{U}^3_{K,l}\setminus \tilde{U}_{K,l+1}^3$
		\begin{equation}\label{BD}
		|D_1G_{K,N,m,\eta}\circ \hat{F}_{\beta, m}\circ \tilde{G}_{K,N,m,\eta}(x+n)| \leq  C2^{M\beta}(K+k)^{\alpha+1}.
		\end{equation}

		Given that $M\leq l$ and $3l - 4M+ N \leq k \leq 4l+4+2N $ and $x_1\in \tilde{U}^1_{K,k}\setminus \tilde{U}^1_{K,k+1}$, and $(x_2, x_3,x_4)\in \tilde{U}^3_{K,l}\setminus \tilde{U}_{K,l+1}^3$, if $x_1\in \tilde{U}^1_{K,3l-3M-2}$, and $(x_2, x_3,x_4)\in \tilde{U}^3_{K,l}\setminus \tilde{U}_{K,l+1}^3$, then 
		\begin{equation}\label{BE}
			|D_1G_{K,N,m,\eta}\circ \hat{F}_{\beta, m}\circ \tilde{G}_{K,N,m,\eta}(x+n)| \leq  C2^{(3l+3M+N-k)\beta} (K+k)^{\alpha+1}.
		\end{equation}
		Finally, given that $M\leq l$ and $x_1\in \tilde{U}^1_{K,4l+5+2N}$, and $(x_2, x_3,x_4)\in \tilde{U}^3_{K,l}\setminus \tilde{U}_{K,l+1}^3$, then
		\begin{equation}\label{BF}
			|D_1G_{K,N,m,\eta}\circ \hat{F}_{\beta, m}\circ \tilde{G}_{K,N,m,\eta}(x+n)| \leq C2^{(3M-l-N)\beta }.
		\end{equation}
		The same holds for each rotation, i.e. for $D_2\hat{F}_{\beta, m}\circ \tilde{G}_{K,N,m,\eta}(x+n)$ when $x_2\in \tilde{U}^1_{K,k}\setminus \tilde{U}^1_{K,k+1}$ and $(x_1,x_3,x_4)\in \tilde{U}^3_{K,l}$ and similarly for $3$. The difference in the $4^{\text{th}}$ coordinate is that in \eqref{BA}, \eqref{BB} and \eqref{BC} the derivative is close to $-e_4$, but the estimates are otherwise the same.
	\end{proclaim}
	\begin{proof}
		
		Note that the cases are all non-empty. This is obvious once one uses $N\geq 3M^*+3M+6$; we have 
			$$
			\tfrac{1}{4}(l-2N)<\tfrac{1}{3}(l-N-M-3) < \frac{1}{3}(l-N)+1< l-M-M^*-1.
			$$

	By the chain rule we have
	\eqn{chainrule}
$$
D_1 (G\circ \tilde{F}\circ \tilde{G})= \sum_{j=1}^{4}D_j G\cdot D_1( \tilde{F}\circ \tilde{G})^j
=D_1 G\cdot D_1 (\tilde{F}\circ \tilde{G})^1+  \sum_{j=2}^{4}D_j G\cdot D_1( \tilde{F}\circ \tilde{G})^j.
$$
	
		Let us first prove \eqref{BA}. By \eqref{C1A} we know that 
		$$ 
		\hat{F}_{\beta, m}\circ \tilde{G}_{K,N,m,\eta}(x)\in 
		[{U}^1_{k-1}\setminus {U}^1_{k+2}]\times[{U}^3_{4k+2N-M}\setminus{U}_{4k+2N+M+1}^3]. 
		$$
		Hence we can use \eqref{ST1A} at this point to obtain 
		$$
|D_1 G_{K,N,m,\eta}| \leq 
 \frac{\tfrac{1}{2}\tilde{r}_{k+1}(K)-\tilde{r}_{k+2}(K)}{\tfrac{1}{2}r_{k+1}-r_{k+2}}+ C2^{-3k-N}2^{k(\beta+1)}
$$
and from \eqref{C1B} we obtain 
$$
|D_1 (\hat{F}_{\beta, m}\circ \tilde{G}_{K,N,m,\eta})^1|\leq 
\frac{\tfrac{1}{2}r_{k}-r_{k+1}}{\tfrac{1}{2}\tilde{r}_k(K)-\tilde{r}_{k+1}(K)}+ C 2^{-4\beta k-2N\beta}(K+k)^{\alpha+1}. 
$$
This allows us to estimate the first term on the righthand side of \eqref{chainrule}. 
We estimate the other terms in \eqref{chainrule} using \eqref{C1B} and \eqref{ST1B} 
$$
|D_j G_{K,N,m,\eta}|\cdot |D_1 (\hat{F}_{\beta, m}\circ \tilde{G}_{K,N,m,\eta})^j|\leq 
C 2^{-4k\beta-2N\beta}(K+k)^{\alpha+1}\cdot 2^{(3(k+1)+N)\beta}. 
$$ 
Inequality \eqref{BA} follows leaving out minor order terms.

		We prove \eqref{BB} much the same way as \eqref{BA}. 
			We have that $\tfrac{1}{4}(l-2N)<k\leq\tfrac{1}{3}(l-M-N-3)<l-M-M^*-1$ because $N\geq 3M^*+3M$. Therefore we apply \eqref{C2B} and \eqref{C2A} and get
			$$
			D_1\hat{F}_{\beta, m}\circ\tilde{G}_{K,N,m,\eta}(x+n) = \frac{\tfrac{1}{2}r_k -r_{k+1}}{\tfrac{1}{2}\tilde{r}_k(K) - \tilde{r}_{k+1}(K) }e_1
			$$
			with $\hat{F}_{\beta, m}\circ\tilde{G}_{K,N,m,\eta}(x+n) \in [{U}^1_{k-1}\setminus {U}^1_{k+2}]\times[{U}^3_{l-M}\setminus {U}_{l+M+1}^3]$. Call $k'=k-1,k,k+1$ and $l' = l-M$ and apply \eqref{ST1A} for $k'$ and $l'$ (we need $3k'+N\leq l'$, the strictest condition is $3k+3+N \leq l-M$ which is $k\leq\tfrac{1}{3}(l-M-N-3)$). We calculate $D_1G_{K,N,m,\eta}\circ \hat{F}_{\beta, m}\circ \tilde{G}_{K,N,m,\eta}(x+n)$ using the chain rule, specifically by \eqref{C2B} we have $\sum_{j=2}^4$ in \eqref{chainrule} is now zero and therefore
			$$
			\begin{aligned}
			|D_1G_{K,N,m,\eta}\circ \hat{F}_{\beta, m}\circ \tilde{G}_{K,N,m,\eta}(x+n)|
			&< 2^{\beta +2} + C\frac{\tfrac{1}{2}r_k -r_{k+1}}{\tfrac{1}{2}\tilde{r}_k(K) - \tilde{r}_{k+1}(K) }2^{-3k-N}2^{(k+1)\beta}\\
			&< 2^{\beta +2} + C2^{-3k-N}(K+k)^{\alpha+1}.
			\end{aligned}
			$$

		
	The calculation in \eqref{BC} is even simpler. We apply \eqref{C2B} (recall that $N\geq M+M^*$) to calculate $D_1\hat{F}_{\beta, m}\circ \tilde{G}_{K,N,m,\eta}(x+n)$
	and the image lies in $\bigcup_{i=-1}^{+1} (U_{k+i}\setminus U_{k+i+1})\times U_{l-M}$ by \eqref{C2A} (because $k+1\leq l-M-M^*$) and Proposition~\ref{EmotionallyDistant}. Therefore we put $l' = l-M$ and $k' = k-1,k, k+1$. Then from the condition $\tfrac{1}{3}(l-N) +1< k \leq  l-M-M^*-1$ we get $\tfrac{1}{3}(l'-N)< \tfrac{1}{3}(l'+M-N) < k -1\leq k' \leq k+1 \leq l'-M^*<l'$ then $k'<l'<3k'+N$, which is the condition from Proposition~\ref{ST2}. Therefore we can apply \eqref{ST2A} (again after using the fact that $\sum_{j=2}^4=0$ in \eqref{chainrule}) to get
	$$
	\begin{aligned}
	&|D_1G_{K,N,m,\eta}\circ \hat{F}_{\beta, m}\circ \tilde{G}_{K,N,m,\eta}(x+n)|\\
	&\qquad\qquad\qquad< \frac{\tfrac{1}{2}r_k -r_{k+1}}{\tfrac{1}{2}\tilde{r}_k(K) - \tilde{r}_{k+1}(K) }
	\cdot \frac{\tfrac{1}{2}\tilde{r}_{k+1}(K)  - \tilde{r}_{k+2}(K) }{\tfrac{1}{2}{r}_{k+1} -{r}_{k+2}}\\
	&\qquad\qquad\qquad< C 2^{\beta}.
	\end{aligned}
	$$

	The intermediary case in \eqref{BBC} is similar. We should either apply \eqref{ST1A} or \eqref{ST2A} with the calculations in the first case identical to those from the proof of \eqref{BB} and the calculations in the second case identical to those from the proof of \eqref{BC}. Since the estimate in \eqref{BB} is larger we estimate using that one.

		We prove \eqref{BD} as follows. In the case that $k\leq l$ we estimate $|D_1 \tilde{G}_{K,N,m,\eta}(x+n)|$ by \eqref{SQ2A} and the right hand side of \eqref{SQ1A}. In the case that $k>l$ we estimate $|D_1 \tilde{G}_{K,N,m,\eta}(x+n)|$ using \eqref{SQ3A1}. The map $\hat{F}_{\beta, m}$ is bi-Lipschitz and we know the position of the image of the point $ \hat{F}_{\beta, m}\circ \tilde{G}_{K,N,m,\eta}(x+n)$ by \eqref{C3A} with $k-M\leq k'\leq k+M$ and $l-M\leq l'\leq l+M$. Then $k\leq3l-4M+N-1$ implies $k' \leq k+M \leq 3(l-M)+N+1 = 3l' + N +1$, which means we are able to apply \eqref{ST3A} to estimate $|DG_{K,N,m,\eta}|$ at that point in the image. We get
		$$
			\begin{aligned}
				|D_1G_{K,N,m,\eta}\circ \hat{F}_{\beta, m}\circ \tilde{G}_{K,N,m,\eta}(x+n)|
				&< C2^{-k\beta} (K+k)^{\alpha+1}2^{k\beta+ M\beta}\\
				& = C2^{M\beta}(K+k)^{\alpha+1}.
			\end{aligned}
		$$


		To get \eqref{BE} we estimate $|D_1 \tilde{G}_{K,N,m,\eta}(x+n)|$ using \eqref{SQ3A1}. The map $\hat{F}_{\beta, m}$ is bi-Lipschitz and by Proposition~\ref{EmotionallyDistant} we have 
		$$
		\hat{F}_{\beta, m}\circ \tilde{G}_{K,N,m,\eta}(x+n)-n \in  (U_{k-M}\setminus U_{k+M})\times(U_{l-M}^3 \setminus U_{l+M}^3) .
		$$
		Obviously the estimate of $|DG_{K,N,m,\eta}|$ from \eqref{ST3A} is greater than that in \eqref{ST3B} for all $k<3l+N$. Therefore we use \eqref{ST3A} with $l' = l+M$ to estimate $|DG_{K,N,m,\eta}| < C2^{(3l+3M+N)\beta}$ for all $3l - 4M+ N \leq k \leq 4l+4+2N$. This gives
		$$
			\begin{aligned}
				|D_1G_{K,N,m,\eta}\circ \hat{F}_{\beta, m}\circ \tilde{G}_{K,N,m,\eta}(x+n)|
				&< C2^{-k\beta} (K+k)^{\alpha+1}2^{(3l+3M+N)\beta}\\
				&  = C2^{(3l+3M+N-k)\beta} (K+k)^{\alpha+1}.
			\end{aligned}
		$$

		The estimate \eqref{BF} is proved simply by applying \eqref{SQ3D1}, \eqref{C3B} and \eqref{ST3B} to get
		$$
			\begin{aligned}
				|D_1G_{K,N,m,\eta}\circ \hat{F}_{\beta, m}\circ \tilde{G}_{K,N,m,\eta}(x+n)|
				&< C2^{-(4l+2N)\beta} 2^{(3l+3M+N)\beta}\\
				&  = C2^{(3M-l-N)\beta}.
			\end{aligned}
		$$	
	\end{proof}
	
\subsection{ACL condition and norm estimates of $G_{K,N,m,\eta}\circ \hat{F}_{\beta, m}\circ\tilde{G}_{K,N,m,\eta}$.}\label{NormEstimates}

Recall that parameter $m$  denotes the size of the boxes (see \eqref{defm}) and that $Z_m  =[-2m-1,2m+1]^3\times[-1, 1]$. 

	\begin{prop}\label{IntegralEstimate}
		Let $1\leq p <2$, let $M^*\geq M$ be the numbers from Lemma~\ref{WrongPlaceWrongTime} and Proposition \ref{EmotionallyDistant} and let $\alpha\geq \frac{4}{2-p}$. 
			There exists an $N_0 \geq M^*$ such that when $2K\geq N\geq 3M+K\geq K\geq N_0 $ and $N\geq 3M^*+3M+6$ we have
		\begin{enumerate}
			\item[$i)$] $
			G_{K,N,m,\eta}\circ \hat{F}_{\beta, m}\circ \tilde{G}_{K,N,m,\eta} \in W^{1,p}((-2m-1,2m+1)^3\times(-1, 1) , \er^4)$.
			\item[$ii)$] For every $j\in\{1,2,3,4\}$
			$$
				\begin{aligned}
				\int_{Z_m} |D_j G_{K,N,m,\eta}&\circ \hat{F}_{\beta, m}\circ \tilde{G}_{K,N,m,\eta}|^p \leq\\
				&\leq C(\alpha, \beta, p) \Big(m^3+ m^2K^{(\alpha+1)(p-1)}  + m^3 (1+ \eta^p K^{p(\alpha+1)})K^{-\alpha-1}\Big).
				\end{aligned}
			$$
			\item[$iii)$] There exists a $K_0(\alpha, \beta, p)$ such that if $K\geq K_0$, then
			$$
				\begin{aligned}
				&\int_{Z_m\setminus \C_{A, K,m}} |D_j G_{K,N,m,\eta}\circ \hat{F}_{\beta, m}\circ \tilde{G}_{K,N,m,\eta}|^p\leq \\
				&\quad \leq C(\alpha, \beta, p) \Big( m^3 K^{2-(\alpha+1)(2-p)} + m^2 K^{(\alpha+1)(p-1)} + m^3 (1+ \eta^p K^{p(\alpha+1)})K^{-\alpha-1} \Big).
				\end{aligned}
			$$
		\end{enumerate}
	\end{prop}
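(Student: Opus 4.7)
To establish $f := G_{K,N,m,\eta}\circ\hat{F}_{\beta,m}\circ\tilde{G}_{K,N,m,\eta} \in W^{1,p}$ I would invoke Theorem~\ref{ACL}. The composition is continuous (composition of homeomorphisms) and by combining Propositions~\ref{SQ5} and \ref{ST0} with the bi-Lipschitzness of $\hat{F}_{\beta,m}$ from Theorem~\ref{reflect}, it is locally bi-Lipschitz on $Z_m\setminus\C_{A,K,m}$. On $\C_{A,K,m}$ the remark following \eqref{ASeason} together with \eqref{Oasis} gives $f(x)=(x_1,x_2,x_3,-x_4)$. Restricting to a.e.\ line $L$ parallel to $e_i$, each coordinate of $f|_L$ is therefore either identity, the reflection, or a constant on $L\cap\C_{A,K,m}$, and locally bi-Lipschitz on each open component of $L\setminus\C_{A,K,m}$; hence each coordinate has the Lusin $(N)$ property. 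Combined with $L^p$-integrability of the pointwise derivative (from (ii) plus Fubini), the Banach--Zaretski theorem upgrades $(N)$ to absolute continuity along $L$, and Theorem~\ref{ACL} delivers (i).

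\textbf{Cellular decomposition for (ii).} By the translation symmetry built into $\tilde{G}_{K,N,m,\eta}$, $\hat{F}_{\beta,m}$ and $G_{K,N,m,\eta}$ over the lattice $n=(2n_1,2n_2,2n_3,0)$, the integral $\int_{Z_m}|D_j f|^p$ splits into $(2m+1)^3$ integrals over unit cells $Q(n,1)$. Within one cell I would decompose $Q(0,1)\setminus\C_{A,K}$ into pieces indexed by the Cantor shell levels of $x_1$ and of $(x_2,x_3,x_4)$: put $A_{k,l}$ equal to the set where $x_1\in\tilde{U}^1_{K,k}\setminus\tilde{U}^1_{K,k+1}$ and $(x_2,x_3,x_4)\in\tilde{U}^3_{K,l}\setminus\tilde{U}^3_{K,l+1}$, plus degenerate pieces where one of the two coordinate groups lies entirely in $\C_{A,K}$. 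By \eqref{DistancesEstimate1}, \eqref{ComparingSizes} and the product structure one has $|A_{k,l}|\lesssim (K+k)^{-\alpha-1}(K+l)^{-\alpha-1}$. On each $A_{k,l}$ I would apply the regime-appropriate pointwise bound from Proposition~\ref{B}: the cases \eqref{BA}--\eqref{BBC}--\eqref{BC} cover small $k$ relative to $l$, \eqref{BD} the diagonal zone $k\approx l$, and \eqref{BE}--\eqref{BF} the reverse regime $k\gg l$. Raising to the $p$-th power, multiplying by $|A_{k,l}|$, and performing the double sum $\sum_{k,l}$ produces a geometric series that converges precisely under $\alpha\geq 4/(2-p)$ (the condition $N\geq 3M+K$ keeps the $2^{-pN}K^{p(\alpha+1)}$-type terms under control, while $2K\geq N$ prevents the $\eta^p K^{p(\alpha+1)}$ prefactor from exploding).

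\textbf{Identification of the three summands.} The bulk regimes, where \eqref{BC}--\eqref{BD}--\eqref{BF} give $|D_1 f|\lesssim 2^{M\beta}$ times a telescoping polynomial factor, contribute an $O(1)$ per cell and assemble to the $m^3$ term. The intermediate regimes \eqref{BA}--\eqref{BB}--\eqref{BBC}, where $|D_1 f|$ picks up the factor $(K+k)^{\alpha+1}$ from \eqref{SQ1A}, contribute only for shallow $k$; summing just the $l$-shells of order $K$ in the distinguished coordinate produces the $m^2 K^{(\alpha+1)(p-1)}$ term after accounting for the face-like $(2m+1)^2$ count of cells where this regime is not killed by a second $(K+l)^{-\alpha-1}$ factor. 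Finally, the stretching contribution from the regions \eqref{OhDear1A}--\eqref{OhDear1B} of $\tilde G$ (respectively \eqref{StretchingBitAroundTop} of $G$), where the $x_4$-direction is squeezed/stretched by a factor $\tfrac{26-\eta}{\eta}$, produces the $\eta^p K^{p(\alpha+1)} K^{-\alpha-1}$ term, the last factor coming from the shell measure of $(x_2,x_3,x_4)$. For (iii), subtract off $\int_{\C_{A,K,m}}|D_j f|^p=|\C_{A,K,m}|$ (since $|D_j f|=1$ there), cancelling the leading contribution of the $m^3$ term from (ii); the leftover measured by $(K+k)^{\alpha+1}-1$ is estimated using Lemma~\ref{stupid} and produces the extra $m^3 K^{2-(\alpha+1)(2-p)}$ residual.

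\textbf{Main obstacle.} The hard part will be the combinatorial bookkeeping over all seven regimes \eqref{BA}--\eqref{BF} of Proposition~\ref{B}, matching each with the correct shell measure and verifying that both double sums $\sum_{k,l}$ converge with the precise exponents in $K$ claimed by the statement. In particular, I expect the emergence of the $m^2 K^{(\alpha+1)(p-1)}$ term to be delicate: it requires identifying those $k$-levels whose $l$-summation yields a bounded geometric series (producing the $(2m+1)^2$ face-count) versus those that produce a full $(2m+1)^3$ volumetric sum. Equally delicate will be the cancellation in (iii) of the $|\C_{A,K,m}|$ mass, which must be carried out together with the Lemma~\ref{stupid} estimate on $|(K+k)^{p(\alpha+1)} - 1|$ so that the error $m^3 K^{2-(\alpha+1)(2-p)}$ emerges rather than a stronger $m^3 K^{p(\alpha+1)-\alpha}$ term.
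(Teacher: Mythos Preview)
Your overall architecture---the ACL argument for (i), the cell-by-cell decomposition, and feeding the regimes of Proposition~\ref{B} into a double sum over shell indices $k,l$---matches the paper's Steps~1--3 and is sound. Two of your term identifications, however, are off in a way that would derail the actual computation.

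\textbf{The $m^3(1+\eta^p K^{p(\alpha+1)})K^{-\alpha-1}$ term.} You attribute it to the regions \eqref{OhDear1A}--\eqref{OhDear1B} of $\tilde G_{K,N,m,\eta}$, but those regions lie \emph{outside} $Z_m$ (they sit in the slab $|x_4|\in[1,1+\eta]$), while the proposition only integrates over $Z_m$. The $\eta$ factor instead arises purely from the $j=4$ direction, and the mechanism is this: the Cantor plate is only one cube thick in $e_4$, so \emph{every} cell is a boundary cell in that direction. For $x$ in the outermost $x_4$-shell $\tilde U^3_{K,2N}\times(\tilde U^1_{K,0}\setminus\tilde U^1_{K,1})$ (still inside $Z_m$, so $\tilde G$ is governed by \eqref{ASeason}, not \eqref{OhDear1A}), the image $\hat F_{\beta,m}\circ\tilde G_{K,N,m,\eta}(x+n)$ can land in $U^3_{2N-M}\times([-\tfrac32,-1]\cup[1,\tfrac32])$, i.e.\ outside $Z_m$. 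There $G_{K,N,m,\eta}$ is defined by \eqref{StretchingBitAroundTop} and has $D_4G=\tfrac{\eta}{13}e_4$. Multiplying $|D_4\tilde G|\approx K^{\alpha+1}$ by $|D_4G|\approx\eta$ yields the $\eta K^{\alpha+1}$ bound; the shell measure supplies the $K^{-\alpha-1}$, and the $(2m+1)^3$ cell count gives $m^3$. This is the paper's Step~7 and is genuinely asymmetric from $j=1,2,3$; your proposal treats all four directions uniformly and misses it.

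\textbf{The $m^2 K^{(\alpha+1)(p-1)}$ term.} Your description (``$l$-shells of order $K$'' and ``not killed by a second factor'') is not the mechanism. The paper's Steps~4--5 show this term comes from cells with $n_j\in\{-m,m\}$ (the $2(2m+1)^2$ faces of the plate, for $j=1,2,3$): on the outermost $k=0$ shell of such a cell, $\hat F_{\beta,m}\circ\tilde G$ may send points into the neighbouring region $|x_j|\in[2m+1,2m+2]$ where $G$ is defined by \eqref{StretchingBitAroundTop2}. There one only has $|D_1G|\le C$ rather than the cancelling $t_K'$ factor, so the cell integral is bounded by $CK^{(\alpha+1)(p-1)}$ instead of the interior-cell bound $CK^{-(\alpha+1)(2-p)}$ (compare \eqref{FallingSnow} with \eqref{FallingSnow2}). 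Your translation-symmetry claim therefore needs the caveat that boundary cells require a separate, worse estimate.

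For (iii), the paper does not subtract and use Lemma~\ref{stupid}; it simply computes $\int_{Z_m\setminus\C_{A,K,m}}$ directly from the line integrals over $\ell\setminus\C_{A,K}$, and the $m^3K^{2-(\alpha+1)(2-p)}$ exponent drops out of the sum $\sum_l(K+l)^{(\alpha+1)(p-1)+1-(\alpha+1)}$ in \eqref{PIES2F}.
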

	\begin{proof}
		
		\step{1}{Absolute continuity on lines}{PIES1}
		
			By Proposition~\ref{SQ5} we have that $\tilde{G}_{K,N,m,\eta}$ is locally bi-Lipschitz on $Z_{m} \setminus \C_{A,K,m}$. By Theorem~\ref{reflect} we have that $\hat{F}_{\beta, m}$ is a bi-Lipschitz map which sends $\C_{B,m}$ onto $\C_{B,m}$. Proposition~\ref{ST0} implies that $G_{K,N,m,\eta}$ is locally bi-Lipschitz on $Z_{m}\setminus \C_{B,m}$. This together means that $G_{K,N,m,\eta}\circ \hat{F}_{\beta, m}\circ \tilde{G}_{K,N,m,\eta}$ is locally bi-Lipschitz on $Z_{m}\setminus  \C_{A,K,m}$. Therefore $G_{K,N,m,\eta}\circ \hat{F}_{\beta, m}\circ \tilde{G}_{K,N,m,\eta}$ is absolutely continuous on lines parallel to coordinate axes that do not intersect $\C_{A, K,m}$.
		
			On those lines that do intersect $\C_{A, K,m}$ we apply the following facts. We have that $\tilde{G}_{K,N,m,\eta}(x) = (q_K(x_1), q_K(x_2), q_K(x_3), q_K(x_4))$ on $\C_{B}$, we have \eqref{NewNew} and $G_{K,N,m,\eta}(x) = (t_K(x_1), t_K(x_2), t_K(x_3), t_K(x_4))$. Thus 
\eqn{hhh}
			$$
			G_{K,N,m,\eta}\circ \hat{F}_{\beta, m}\circ \tilde{G}_{K,N,m,\eta}(x) = (x_1,x_2,x_3,-x_4)\text{ on }\C_{A,K,m}
			$$ 
			and 
			$$
			G_{K,N,m,\eta}\circ \hat{F}_{\beta, m}\circ \tilde{G}_{K,N,m,\eta}(x+n) = G_{K,N,m,\eta}\circ \hat{F}_{\beta, m}\circ \tilde{G}_{K,N,m,\eta}(x) +n.
			$$ 
			Further by Proposition~\ref{B} (specifically \eqref{BA}) we see that $G_{K,N,m,\eta}\circ \hat{F}_{\beta, m}\circ \tilde{G}_{K,N,m,\eta}$ is continuous and has bounded derivative along those lines through $\C_{A, K,m}$. Therefore we have that $G_{K,N,m,\eta}\circ \hat{F}_{\beta, m}\circ \tilde{G}_{K,N,m,\eta}$ is absolutely continuous on all lines parallel to coordinate axes. To prove $i)$ it now suffices to prove the integral estimate $ii)$.
			
			We estimate the integral over $Z_m$ by integrating over $Q(0,1)$ and multiplying by the number of cubes $(2m+1)^3$. The integral over $Q(0,1)$ is decomposed further into the integral over $\tilde{U}_{K,0}^4\setminus \tilde{U}_{K,M^*}^4$ and the integral over $\tilde{U}_{K,M^*}^4$. We now proceed to estimate the integral over $\tilde{U}_{K,M^*}^4$ in the following two steps.
		
		\step{2}{Integral estimates on lines not intersecting $\C_{A, K,m}$}{PIES2}
		
			We have to integrate the estimates from Proposition~\ref{B} over their corresponding lines and then multiply by $(2m+1)^3$. We assume that we are working on a line parallel to $e_1$, although for other lines the estimates work in the same way. Call $\ell$ the intersection of this line with $Q(0,1)$ and call $\ell_k$ the subset of $\ell$ such that $x_1 \in \tilde{U}^1_{K,k}\setminus \tilde{U}^1_{K,k+1}$. Also we assume that the line we are working on is farther from $\C_{A,K,m}(3)$ in direction $x_2$ than in directions $x_3$ and $x_4$. Thanks to the symmetry of the map we use these estimates to calculate in the other cases. Therefore, choose $l\in \en, l\geq M^*$ and assume that $(x_2,x_3,x_4) \in \tilde{U}^3_{K,l}\setminus \tilde{U}^3_{K,l+1}$. In the following sums we sum between the maximum of the lower bound written and $M^*$ and the upper bound written (if it is larger than $M^*$, otherwise the sum is empty), but we simplify the notation by excluding the maximum. We calculate
			$$
			\begin{aligned}
				\int_{\ell\cap\tilde{U}_{K,M^*}^4}|D_1 G_{K,N,m,\eta}\circ& \hat{F}_{\beta, m}\circ \tilde{G}_{K,N,m,\eta} (t)|^p d\mathcal L^1(t) = \\
				=\sum_{M^*\leq k \leq 4l+4+2N}& \int_{\ell_k} |D_1 G_{K,N,m,\eta}\circ \hat{F}_{\beta, m}\circ \tilde{G}_{K,N,m,\eta} (t)|^p d\mathcal L^1(t)\\
				+& \int_{\ell\cap \{x_1\in\tilde{U}^1_{K,4l+2N+5}\}} |D_1 G_{K,N,m,\eta}\circ \hat{F}_{\beta, m}\circ \tilde{G}_{K,N,m,\eta} (t)|^p d\mathcal L^1(t).
			\end{aligned}
			$$
Since $N\geq K$ we easily have $2^{-3k-N}(K+k)^{\alpha+1}\leq C$.   	
			Hence we estimate by \eqref{BA}, given $N \geq 3M + K,$ and $N\geq N_0$, (and by \eqref{DistancesEstimate1} we have that $\mathcal{L}^1(\tilde{U}_{K,k}^1\setminus \tilde{U}_{K,k+1}^1) \leq C (K+k)^{-\alpha-1}$) that
			\eqn{odkaz}
			$$
				\begin{aligned}
					\sum_{M^*\leq k \leq \tfrac{1}{4}(l-2N)}& \int_{\ell_k} |D_1 G_{K,N,m,\eta}\circ \hat{F}_{\beta, m}\circ \tilde{G}_{K,N,m,\eta} (t)|^p d \mathcal L^1(t) \\
					&\leq 	\sum_{M^* \leq k \leq \tfrac{1}{4}(l-2N)} C(K+k)^{-\alpha-1} \bigl(1+2^{-3k-N}(K+k)^{\alpha+1}\bigr)^p\\
					&\leq 	\sum_{M^* \leq k \leq \tfrac{1}{4}(l-2N)} C(K+k)^{-\alpha-1}.
			\end{aligned}
			$$
			Moreover by \eqref{BA}, by our choice of $N$ and by \eqref{DistancesEstimate1} this $C$ depends only on $\alpha$, $\beta$ and $p$, which are fixed for our construction. In fact in the following all of the constants $C$ depend only on $\alpha$, $\beta$ and $p$ but not on the parameters $K,N, m$ or $\eta$. Similarly to above we can use \eqref{BB} and \eqref{BBC} and we see that $ |D_1 G_{K,N,m,\eta}\circ \hat{F}_{\beta, m}\circ \tilde{G}_{K,N,m,\eta}| \leq C(\beta)$ and so
			$$
				\begin{aligned}
					\sum_{\tfrac{1}{4}(l-2N)+1 \leq k \leq \tfrac{1}{3}(l-N)+1}& \int_{\ell_k} |D_1 G_{K,N,m,\eta}\circ \hat{F}_{\beta, m}\circ \tilde{G}_{K,N,m,\eta} (t)|^p d \mathcal L^1(t) \\
					&\leq \sum_{\tfrac{1}{4}(l-2N) \leq k \leq \tfrac{1}{3}(l-N)+1}C(K+k)^{-\alpha - 1}.
				\end{aligned}
			$$
			By \eqref{BC} we have that
			$$
				\begin{aligned}
					\sum_{\tfrac{1}{3}(l-N)+2\leq k \leq l-M-M^*-1  }& \int_{\ell_k} |D_1 G_{K,N,m,\eta}\circ \hat{F}_{\beta, m}\circ \tilde{G}_{K,N,m,\eta} (t)|^p d \mathcal L^1(t) \\
					&\leq \sum_{ \tfrac{1}{3}(l-N)+2\leq k \leq l-M-M^*-1   } C 2^{M\beta p}(K+k)^{-\alpha-1}.
				\end{aligned}
			$$
			The combination of the preceding estimates gives
			\begin{equation}\label{PIES2A}
				\begin{aligned}
					&\int_{\bigcup_{k=M^*}^{l-M-M^*-1}\ell_k} |D_1 G_{K,N,m,\eta}\circ \hat{F}_{\beta, m}\circ \tilde{G}_{K,N,m,\eta} (t)|^p d \mathcal L^1(t)\\
					& \qquad \qquad \qquad \qquad \leq \sum_{k=M^*}^{l-M-M^*-1} C(K+k)^{-\alpha-1},
				\end{aligned}
			\end{equation}
			with $C$ independent of $K$. Using \eqref{BD} we get
			\begin{equation}\label{PIES2B}
				\begin{aligned}
					\int_{\bigcup_{k=l-M-M^*}^{3l-4M+N-1}\ell_k} |D_1 G_{K,N,m,\eta}&\circ \hat{F}_{\beta, m}\circ \tilde{G}_{K,N,m,\eta} (t)|^p d \mathcal L^1(t) \\
					&\leq \sum_{l-M-M^* \leq k \leq 3l-4M+N-1} C 2^{\beta p}(K+k)^{(\alpha+1)(p-1)}.
				\end{aligned}
			\end{equation}
			We apply \eqref{BE} to get
			\begin{equation}\label{PIES2C}
				\begin{aligned}
					\int_{\bigcup_{k=3l-4M+N}^{4l+4+2N}\ell_k} &|D_1 G_{K,N,m,\eta}\circ \hat{F}_{\beta, m}\circ \tilde{G}_{K,N,m,\eta} (t)|^p d \mathcal L^1(t) \\
					&\leq \sum_{3l-4M+N \leq k \leq 4l+4+2N} C 2^{(3l+3M+N-k)\beta p} (K+k)^{(\alpha+1)(p-1)} \\
					&\leq  \sum_{3l-4M+N \leq k \leq 4l+4+2N} C (K+k)^{(\alpha+1)(p-1)}.
				\end{aligned}
			\end{equation}
			Finally, we use \eqref{BF} to show
			\begin{equation}\label{PIES2D}
				\begin{aligned}
					 \int_{\ell\cap \{x_1\in\tilde{U}^1_{K,4l+2N+5}\}}& |D_1 G_{K,N,m,\eta}\circ \hat{F}_{\beta, m}\circ \tilde{G}_{K,N,m,\eta} (t)|^p d \mathcal L^1(t) \\
					&\leq C 2^{(3M-l-N)\beta p}.
				\end{aligned}
			\end{equation}
		The summary of the estimates \eqref{PIES2A}, \eqref{PIES2B}, \eqref{PIES2C}, \eqref{PIES2D}, 	\begin{equation}\label{PIES2E}
			\begin{aligned}
				\int_{\ell} |D_1 G_{K,N,m,\eta}&\circ \hat{F}_{\beta, m}\circ \tilde{G}_{K,N,m,\eta} (t)|^p d \mathcal L^1(t) \\
				&\leq \sum_{k=1}^{4l+4+2N} C(K+k)^{(\alpha+1)(p-1)} + C 2^{-\beta p(l+N-3M)}\\
				&\leq C(4l+4+2N+K)^{(\alpha+1)(p-1)+1} + C 2^{(3M-l-N)\beta p}\\
				&\leq C(4l+4+2N+K)^{(\alpha+1)(p-1)+1},
			\end{aligned}
		\end{equation}
		where we have used the fact, that $M$ is an absolute constant introduced in Theorem \ref{BloodSweatAndTears}.
		Multiplying this by the measure of $\tilde{U}_{K,l}^3\setminus \tilde{U}_{K,l+1}^3 \approx (K+l)^{-\alpha -1}$ (see \eqref{DistancesEstimate1}) and summing over $l\geq M^*$ we have
		\begin{equation}\label{PIES2F}
			\begin{aligned}
				& \int_{\tilde{U}_{K,M^*}^1 \times (\tilde{U}_{K, M^*}^3\setminus\C_{A,K}(3))} |D_1 G_{K,N,m,\eta}\circ \hat{F}_{\beta, m}\circ \tilde{G}_{K,N,m,\eta}|^p d \mathcal L^4 \\
				&\qquad\leq \sum_{l=M^*}^{\infty} C(4l+4+2N+K)^{(\alpha+1)(p-1)+1}(K+l)^{-\alpha-1}\\
				&\qquad\leq C\sum_{l=1}^{\infty}(K+l)^{(\alpha+1)(p-1)+1-\alpha-1}\\
				&\qquad\leq CK^{(\alpha+1)(p-2)+2}
			\end{aligned}
		\end{equation}
		since $1\leq p<2$ is fixed, $\alpha \geq \tfrac{4}{2-p}$ and $2K\geq N \geq K+3M \geq N_0$.

		\step{3}{Integral estimates on lines intersecting $\C_{A, K,m}$}{PIES3}
			
			Assuming that we have $(x_2,x_3,x_4) \in \C_{A,K}(3)$, then the segment $\ell=(t,x_2,x_3,x_4)$, $t\in  U^1_{M^*}$ intersects $\C_{A,K}(4)$. We use \eqref{BA} to estimate the derivative on $\{y\in\ell; y_1\in\tilde{U}_{K,k}^1\setminus \tilde{U}_{K,k+1}^1\}$ analogously to \eqref{odkaz} . 
			By \eqref{hhh} we have $D_1G_{K,N,m,\eta}\circ \hat{F}_{\beta, m}\circ \tilde{G}_{K,N,m,\eta} = e_1$ on $\C_{A, K}(4)$, i.e. for $t\in \C_{A,K}(1)$, and thus 
			$$
				\begin{aligned}
					\int_{\ell } |D_1 G_{K,N,m,\eta}\circ \hat{F}_{\beta, m}\circ \tilde{G}_{K,N,m,\eta}|^p d \mathcal L^1 &\leq \sum_{k=M^*}^{\infty} C(K+k)^{-\alpha-1} +\mathcal{L}^1(\C_{A,K}(1))\\
					& \leq CK^{-\alpha} +\mathcal{L}^1(\C_{A,K}(1)).
				\end{aligned}
			$$
			This holds for every $(x_2,x_3,x_4) \in \C_{A,K}(3)$ and $\mathcal{L}^3(\C_{A,K}(3))\leq 2^3$ and hence
			\eqn{PIES3A}
			$$
			\int_{U^1_{M^*}\times \C_{A,K}(3)} |D_1 G_{K,N,m,\eta}\circ \hat{F}_{\beta, m}\circ \tilde{G}_{K,N,m,\eta}|^p d \mathcal L^4\leq C K^{-\alpha} +\mathcal{L}^4(\C_{A,K}).
			$$
			Now this together with \eqref{PIES2F} and Step~\ref{PIES1} prove $i)$. 

			\step{4}{Integral estimates on $\tilde{U}_{K,0}^4 \setminus \tilde{U}_{K,M^*}^4$ on lines close to $\C_{A, K}(3)$}{Annoy1}			
			In steps 4 and 5 we deal with the messy part of $Z_m=[-2m-1,2m+1]^3\times[-1,1]$ close to its boundary. The main aim is to prove the following estimate 
			\begin{equation}\label{Prick}
			\int_{n+\tilde{U}_{K,0}^4 \setminus \tilde{U}_{K,M^*}^4} |D_1G_{K,N,m,\eta}\circ \hat{F}_{\beta, m}\circ \tilde{G}_{K,N,m,\eta}|^p \leq C(\alpha, \beta, p) K^{1-(2-p)(\alpha+1)}
		\end{equation}
		where $n = (2n_1,2n_2,2n_3, 0)$ for $|n_1|\leq m-1$ and $|n_2|, |n_3|\leq m$.	
		
		We want to estimate $|D_1G_{K,N,m,\eta}\circ \hat{F}_{\beta, m}\circ \tilde{G}_{K,N,m,\eta}(x)| $ for $x$ in 
		$$
		(\tilde{U}_{K,0}^1\setminus \tilde{U}_{K,M^*}^1)\times \tilde{U}_{K,M^*}^3=
		(\tilde{U}_{K,0}^1\setminus \tilde{U}_{K,M^*}^1)\times \Big( \bigcup_{l=M^*}^{2N-1} 		
		(\tilde{U}_{K,l}^3\setminus \tilde{U}_{K,l+1}^3)\cup \Bigl[(\tilde{U}_{K,0}^1\setminus \tilde{U}_{K,M^*}^1)\times\tilde{U}_{K,2N}^3\Bigr] \Big)
		$$ 
		and integrate it over lines parallel to $e_1$ in that set. From Lemma~\ref{WrongPlaceWrongTime} and Proposition~\ref{EmotionallyDistant} we get for each $x\in [{U}^1_{k}\setminus {U}^1_{k+1}]\times[{U}^3_{l}\setminus{U}_{l+1}^3]$, $1\leq k\leq M^*-1$, $l\geq k+M^*$ that
		\begin{equation}\label{divideI}
			\hat{F}_{\beta, m}\circ \tilde{G}_{K,N,m,\eta}(x)\in [{U}^1_{k-1}\setminus {U}^1_{k+2}]\times[{U}^3_{l-M}\setminus{U}_{l+M+1}^3]
		\end{equation}
		and for $x\in [{U}^1_{0}\setminus {U}^1_{1}]\times[{U}^3_{l}\setminus{U}_{l+1}^3]$
		\begin{equation}\label{divideII}
		\hat{F}_{\beta, m}\circ \tilde{G}_{K,N,m,\eta}(x)\in [[-2,2]\setminus {U}^1_{2}]\times[{U}^3_{l-M}\setminus{U}_{l+M+1}^3].
		\end{equation}

		Given $k\geq 1$ and $l\geq 4k+2N$, then surely $l\geq k+M^*$ (because $N\geq 3M+3M^*$) and so \eqref{divideI} holds. Then we estimate in the same way as in \eqref{BA}, i.e. we use Proposition~\ref{C1} and Proposition~\ref{ST1} to obtain
		$$
		|D_1G_{K,N,m,\eta}\circ \hat{F}_{\beta, m}\circ \tilde{G}_{K,N,m,\eta}(x+n)| \leq C 2^{\beta +2}+C 2^{\beta}2^{-3k-N}(K+k)^{\alpha+1}.
		$$
		
		Given $k\geq 1$ and $3k+N + M \leq l \leq 4k+2N-1$, then surely $l\geq k+M^*$ (because $N\geq 3M+3M^*$) and so \eqref{divideI} holds. Then we estimate in the same way as in \eqref{BB}, i.e. we use Proposition~\ref{C2} and Proposition~\ref{ST1} to obtain
		$$
		|D_1G_{K,N,m,\eta}\circ \hat{F}_{\beta, m}\circ \tilde{G}_{K,N,m,\eta}(x+n)| \leq C 2^{\beta +2}+C 2^{\beta}2^{-3k-N}(K+k)^{\alpha+1}.
		$$
		 The calculations for the case when $k\geq 1$ and $k+M+M^* \leq l \leq 3k+N+M$ (again $l\geq k+M^*$) are the same as \eqref{BBC} and \eqref{BC} and again
\eqn{tosame}
		$$
		|D_1G_{K,N,m,\eta}\circ \hat{F}_{\beta, m}\circ \tilde{G}_{K,N,m,\eta}(x+n)| \leq C 2^{\beta +2}+C 2^{\beta}2^{-3k-N}(K+k)^{\alpha+1}.
		$$
		
		Now consider the case when $k=0$. In all the above cases (i.e. for $l = M+M^*+1, \dots, 2N$) \eqref{divideII} holds and the estimate of $|D_1\hat{F}_{\beta, m}\circ \tilde{G}_{K,N,m,\eta}(x+n)|$ is the same as before. The main difference is the fact that $\hat{F}_{\beta, m}\circ \tilde{G}_{K,N,m,\eta}(x+n)$ might not lie in $Q(n,1)$ but instead it might lie in the neighboring cube $Q((2n_1\pm 2, 2n_2,2n_3,0),1)$. 
		In the case when $\hat{F}_{\beta, m}\circ \tilde{G}_{K,N,m,\eta}(x+n)\in Q(n,1)$ we obtain the same estimate \eqref{tosame} as before. 
		If this is not the case then 
		$$
		\hat{F}_{\beta, m}\circ \tilde{G}_{K,N,m,\eta}(x+n) \in (U^1_{0}\setminus U^1_{1})\times(U^3_{l-M}\setminus U^3_{l+M+1})+(2n_1\pm 2, 2n_2,2n_3,0)
		$$ 
		(the argument from Proposition~\ref{WrongPlaceWrongTime} that a point cannot skip more than one frame still holds also for points originating outside the cube). 
		For $n_1\in\{-m+1,\hdots, m-1\}$ we are in the neighboring cube where $G_{K,N,m,\eta}$ is given by similar formula and we can estimate its derivatives as before and we have again \eqref{tosame}. 
		The problem is for $n_1=m$ (or similarly for $n_1=-m$) if the first coordinate of $\hat{F}_{\beta, m}\circ \tilde{G}_{K,N,m,\eta}(x+n)$ is bigger than $2m+1$. Then ${G}_{K,N,m,\eta}$ is defined by a convex combination of the frame-to-frame on hyperplane maps $[H^{3,1}_K]_{N}$ and the identity (see \eqref{StretchingBitAroundTop2}). We expound the estimates in detail below.
		
		We use the same estimate of $D_1\tilde{G}_{K,N,m,\eta}(x+n)$ as before, i.e. from \eqref{SQ2A} for any $1 \leq l < 2N$ and (almost) every $x\in (\tilde{U}_{K,0}^1\setminus \tilde{U}_{K,1}^1)\times (\tilde{U}_{K,l}^3\setminus \tilde{U}_{K,l+1}^3)$ we have that
		\begin{equation}\label{ZeeGermanz}
		D_1\tilde{G}_{K,N,m,\eta}(x+n) =  \frac{\frac{1}{2}r_0-r_1}{\frac{1}{2}\tilde{r}_{K,0}-\tilde{r}_{K,1}}e_1
		\end{equation}
		and
		\begin{equation}\label{ZieSweezz}
		\frac{\frac{1}{2}r_0-r_1}{\frac{1}{2}\tilde{r}_{K,0}-\tilde{r}_{K,1}}  \leq C K^{\alpha+1}.
		\end{equation}
		For $x\in (\tilde{U}_{K,0}^1\setminus \tilde{U}_{K,1}^1)\times (\tilde{U}_{K,2N}^3)$ we get from \eqref{SQ1B} that
		\begin{equation}\label{ZeeDutsch}
		|D_1\tilde{G}_{K,N,m,\eta}(x+n) - \frac{\frac{1}{2}r_0-r_1}{\frac{1}{2}\tilde{r}_{K,0}-\tilde{r}_{K,1}}e_1| \leq C2^{-2N\beta}K^{\alpha+1} \leq C2^{-N\beta}
		\end{equation}
		because $N \geq K$.
		
		In the case of $1 \leq l < 2N$ we get by \eqref{ZeeGermanz} that 
		$$
		D_1 \hat{F}_{\beta, m}\circ \tilde{G}_{K,N,m,\eta}(x+n) = D_1\hat{F}_{\beta, m} |D_1\tilde{G}_{K,N,m,\eta}(x+n)|. 
		$$
		Applying \eqref{NotGay1} (using the fact that $l \geq 1$ and $x_1\in [-2m-2,-2m-1]\cup[2m+1,2m+2]$) we get that
		\begin{equation}\label{ZuFwench}
		D_1 \hat{F}_{\beta, m}\circ \tilde{G}_{K,N,m,\eta}(x+n) = \frac{\frac{1}{2}r_0-r_1}{\frac{1}{2}\tilde{r}_{K,0}-\tilde{r}_{K,1}}e_1.
		\end{equation}
		In the case $x\in (\tilde{U}_{K,0}^1\setminus \tilde{U}_{K,1}^1)\times (\tilde{U}_{K,2N}^3)$ we argue by mimicking the proof of \eqref{C1B} that thanks to \eqref{ZeeDutsch} and \eqref{NotGay1} we have that $D_1\hat{F}_{\beta, m}\circ \tilde{G}_{K,N,m,\eta}(x+n)$ has a very small component purpendicular to $e_1$, specifically
		\begin{equation}\label{HuAghhDeeBelgiaanz}
		|D_1 \hat{F}_{\beta, m}\circ \tilde{G}_{K,N,m,\eta}(x+n) - [D_1 \hat{F}_{\beta, m}\circ \tilde{G}_{K,N,m,\eta}(x+n)]^1e_1| \leq C2^{-N\beta},
		\end{equation}
		while
		\begin{equation}\label{Espano}
		[D_1 \hat{F}_{\beta, m}\circ \tilde{G}_{K,N,m,\eta}(x+n)]^1 \leq CK^{\alpha+1}
		\end{equation}
		by \eqref{ZeeDutsch} and \eqref{ZieSweezz}.
		
		 As mentioned earlier we are considering points which are mapped by $\hat{F}_{\beta, m}\circ \tilde{G}_{K,N,m,\eta}$ onto $([-2m-2,-2m-1]\cup[2m+1,2m+2])\times (U_{l-M}^3\setminus U_{l+M+1})$. On this set ${G}_{K,N,m,\eta}$ is a convex combination of $[H^{3}_K]_{N}$ and $\id$ (on hyperplanes purpendicular to $e_1$) and the convex combination goes over a segment of length $1$, i.e. $\pm [2m+1,2m+2]$ (see \eqref{StretchingBitAroundTop2}). Both $[H^{3}_K]_{N}$ and $\id$ send $Q_3(0,1)$ onto itself. Therefore the derivative $|D_1{G}_{K,N,m,\eta}|$ is bounded by 
		 $$
		 C(1+\|[H^{3}_K]_{N}- \id\|_{\infty}) \leq C(1+\diam(Q_3(0,1))) \leq C. 
		 $$
		 On the other hand, for $i=2,3,4$ we have $|D_i{G}_{K,N,m,\eta}| \leq C(|D_i [H^{3,1}_K]_{N}| +1)$ which is calcuated in Proposition~\ref{FrameToFrameDifferential} (see \eqref{Standard5} and \eqref{Standard6}). In summary at the point $\hat{F}_{\beta, m}\circ \tilde{G}_{K,N,m,\eta}(x+n)$ we have
\begin{equation}\label{ZeeVikings}
|D_1 {G}_{K,N,m,\eta}|\leq C
\end{equation}
and
\begin{equation}\label{ZeePicts}
|D {G}_{K,N,m,\eta}|\leq C (\min\{2^{(l+M)\beta}, 2^{N\beta}\} +1)
\end{equation}
because $\hat{F}_{\beta, m}\circ \tilde{G}_{K,N,m,\eta}(x+n)\in ([-2m-2,-2m-1]\cup[2m+1,2m+2])\times (U_{l-M}^3 \setminus U^3_{l+M+1})$.

We calculate $|D_1 {G}_{K,N,m,\eta} \circ \hat{F}_{\beta, m}\circ \tilde{G}_{K,N,m,\eta}(x+n) |$ as follows. In the case that $1\leq l < 2N$ we multiply \eqref{ZuFwench} with \eqref{ZeeVikings} using \eqref{ZieSweezz} to estimate 
\begin{equation}\label{DeeeItaliaaaanoss}
|D_1G_{K,N,m,\eta}\circ \hat{F}_{\beta, m}\circ \tilde{G}_{K,N,m,\eta}(x+n)|\leq C K^{\alpha+1}.
\end{equation}
In the case that $x\in (\tilde{U}_{K,0}^1\setminus \tilde{U}_{K,1}^1)\times (\tilde{U}_{K,2N}^3)$ we use the chain rule with the same estimates as used in the proof of \eqref{BA}. We estimate the first component using \eqref{ZeeVikings} and \eqref{Espano} by
$$
\bigl|D_1G_{K,N,m,\eta}( \hat{F}_{\beta, m}\circ \tilde{G}_{K,N,m,\eta}(x+n))\cdot (D_1 \hat{F}_{\beta, m}\circ \tilde{G}_{K,N,m,\eta}(x+n))^1\bigr| \leq C K^{\alpha+1}
$$
and the other components are estimated by
\begin{equation}\label{TheAngloSaxons}
|DG_{K,N,m,\eta}( \hat{F}_{\beta, m}\circ \tilde{G}_{K,N,m,\eta}(x+n))|\cdot|D_1\hat{F}_{\beta, m}\circ \tilde{G}_{K,N,m,\eta}(x+n) - \frac{\frac{1}{2}r_0-r_1}{\frac{1}{2}\tilde{r}_{K,0}-\tilde{r}_{K,1}}e_1|.
\end{equation}
We estimate \eqref{TheAngloSaxons} using \eqref{HuAghhDeeBelgiaanz}, which we multiply with \eqref{ZeePicts} to estimate \eqref{TheAngloSaxons} by $C2^{N\beta}\cdot2^{-2N\beta}K^{\alpha+1} \leq C$ because $N \geq K$. Altogether we have
$$
|D_1G_{K,N,m,\eta}\circ \hat{F}_{\beta, m}\circ \tilde{G}_{K,N,m,\eta}(x+n)|\leq C K^{\alpha+1}
$$
for $x\in (\tilde{U}_{K,0}^1\setminus \tilde{U}_{K,1}^1)\times (\tilde{U}_{K,2N}^3)+n$ mapped by $\hat{F}_{\beta, m}\circ \tilde{G}_{K,N,m,\eta}$ outside $Z_m$. Now combining the two cases (i.e. the last estimate and \eqref{DeeeItaliaaaanoss}) we obtain
\begin{equation}\label{uuuu2}
|D_1G_{K,N,m,\eta}\circ \hat{F}_{\beta, m}\circ \tilde{G}_{K,N,m,\eta}(x+n)|\leq C K^{\alpha+1}
\end{equation}
for all $x\in (\tilde{U}_{K,0}^1\setminus \tilde{U}_{K,1}^1)\times (\tilde{U}_{K,1}^3)+n$ (in this step we consider only those $x$ with $(x_2,x_3,x_4) \in (\tilde{U}_{K,M^*}^3) + (2n_2,2n_3,0)$).
In summary we obtain 		
	\eqn{uuuu}
		 $$
		 	\begin{aligned}
		 		|D_1G_{K,N,m,\eta}&\circ \hat{F}_{\beta, m}\circ \tilde{G}_{K,N,m,\eta}(x+n)| 
		 		\leq C +C 2^{-N}(K+k)^{\alpha+1} \leq C
		 	\end{aligned}
		 $$
		 for $0 \leq k \leq M^*$, $k \leq l-M^*-M-1$ and $K \leq N \leq 2K$ for $|n_1|\leq m-1$, but only \eqref{uuuu2} holds for $n_1\in\{-m,m\}$. 

The case of $0\leq k\leq M^*$, $l\geq M^*$ and  $ l - M - M^* \leq k \leq M^*$ for $|n_1|\leq m-1$ is much the same as the proof of \eqref{uuuu2}. We have $l\leq 2M^*+M$. The calculation is the same for all values of $n$. By \eqref{SQ2A} we have   
$$
|D_1\tilde{G}_{K,N,m,\eta}(x)| \leq C 2^{-\beta k}(K+k)^{\alpha+1}.
$$ 
Further by Proposition~\ref{EmotionallyDistant} we obtain that $\hat{F}_{\beta, m}\circ \tilde{G}_{K,N,m,\eta}(x+n)$ is far away from $\K_{B,m}$, i.e.
$$
\hat{F}_{\beta, m}\circ \tilde{G}_{K,N,m,\eta}(x+n)\notin U^1_{k+M}\times U_{l+M}^3+n. 
$$
Analogously to reasoning in Proposition \ref{ST0} we obtain that on this set we have 
$$
|DG_{K,N,m,\eta}|\leq C 2^{(2M^*+M)\beta}\leq C
$$
since $k\leq M^*$ and $l\leq 2M^*+M$. It follows that in this case we obtain 
\eqn{dohajzlu}
$$
|D_1G_{K,N,m,\eta}\circ \hat{F}_{\beta, m}\circ \tilde{G}_{K,N,m,\eta}(x)| \leq C 2^{-\beta k}(K+k)^{\alpha+1}\leq 
C K^{\alpha+1}
$$
as $K\geq \tfrac{N}{2}\geq M^*\geq k$. 

		From \eqref{DistancesEstimate1} we obtain for all $K$
		\eqn{ppp}
		$$
		\mathcal{L}^1(\tilde{U}_{K,0}^1\setminus \tilde{U}_{K,M^*}^1) 
		\leq C\sum_{k=1}^{M^*}2^k\frac{1}{2^k}\frac{1}{(k+K)^{\alpha+1}}
		\leq C M^* K^{-\alpha-1}\leq CK^{-\alpha-1}.
		$$		
Analogously we can conclude that
$$
\mathcal{L}^1(\tilde{U}_{K,M^*}^1\setminus \tilde{U}_{K,2M^*+M}^1) \leq CK^{-\alpha-1}.
$$
		Integrating \eqref{uuuu} over $(\tilde{U}_{K,0}^1\setminus \tilde{U}_{K,M^*}^1)\times \tilde{U}_{K,M^*}^3$ and 
		\eqref{dohajzlu} (over the set where $0\leq k\leq M^*$ and $M^*\leq l\leq 2M^*+M$) we obtain that for $|n_1|\leq m-1$ we have 
		\begin{equation}\label{FallingSnow}
			\begin{aligned}
				&\int_{n+(\tilde{U}_{K,0}^1\setminus \tilde{U}_{K,M^*}^1)\times  \tilde{U}_{K,M^*}^3} |D_1G_{K,N,m,\eta}\circ \hat{F}_{\beta, m}\circ \tilde{G}_{K,N,m,\eta}(x)|^p \; dx\\
				&\qquad \qquad \leq CK^{-\alpha-1} C^p+ C K^{-\alpha-1} K^{-\alpha-1} K^{(\alpha+1)p}\\
				& \qquad \qquad \leq CK^{-(\alpha+1)(2-p)}.
			\end{aligned}
		\end{equation}
		Similarly we obtain using \eqref{uuuu2} for $n_1\in\{-m,m\}$
		\begin{equation}\label{FallingSnow2}
			\begin{aligned}
				&\int_{n+(\tilde{U}_{K,0}^1\setminus \tilde{U}_{K,M^*}^1)\times  \tilde{U}_{K,M^*}^3} |D_1G_{K,N,m,\eta}\circ \hat{F}_{\beta, m}\circ \tilde{G}_{K,N,m,\eta}(x)|^p \; dx\\
				&\qquad \qquad \leq CK^{-\alpha-1}K^{(\alpha+1)p}+ C K^{-\alpha-1} K^{-\alpha-1} K^{(\alpha+1)p}\\
				& \qquad \qquad \leq CK^{(\alpha+1)(p-1)}.
			\end{aligned}
		\end{equation}

		\step{5}{Integral estimates on $\tilde{U}_{K,0}^4 \setminus \tilde{U}_{K,M^*}^4$ on lines far away from $\C_{A, K}(3)$}{Annoy2}		
		The remaining lines are those lines parallel to $e_1$ that lie in the set $ \tilde{U}_{K,0}^4 \setminus ([-1,1]\times \tilde{U}_{K,M^*}^3$. When $0\leq l\leq M^*$ and $0\leq k\leq M^*$ we obtain \eqref{dohajzlu} with the same reasoning as before and thus 
		\eqn{nezapomen}	
		$$
		\int_{(\tilde{U}_{K,0}^1\setminus \tilde{U}_{K,M^*}^1)^2\times  \tilde{U}_{K,M^*}^2} |D_1G_{K,N,m,\eta}\circ \hat{F}_{\beta, m}\circ \tilde{G}_{K,N,m,\eta}(x)|^p \leq CK^{-(\alpha+1)(2-p)}	
				$$
				and the same holds for other permutations of $(x_2,x_3,x_4)$. It remains to consider $0\leq l\leq M^*$ and 
				$M^* \leq k$. 
				
				Let us first consider $0\leq l\leq M^*$ and $M^* \leq k \leq 4l+4+2N$, $x_1\in \tilde{U}^1_{K,k}\setminus \tilde{U}^1_{K,k+1}$ and $(x_2, x_3,x_4)\in \tilde{U}^3_{K,l}\setminus \tilde{U}_{K,l+1}^3$. 
				From Proposition \ref{SQ3} we obtain 
				$$
				|D_1\tilde{G}_{K,N,m,\eta}(x)| \leq C2^{-k\beta}(K+k)^{\alpha+1}. 
				$$
				Our $\hat{F}_{\beta, m}$ is bilipschitz and we claim that we can estimate 
				\eqn{snad}
				$$
				\bigl|D{G}_{K,N,m,\eta}\bigl(\hat{F}_{\beta, m}\circ \tilde{G}_{K,N,m,\eta}(x+n)\bigr)\bigr| \leq C2^{(k+M) \beta}. 
				$$
				Indeed, 
				our $\tilde{G}_{K,N,m,\eta}$ maps 
				$$
				(\tilde{U}^1_{K,k}\setminus \tilde{U}^1_{K,k+1})\times( \tilde{U}^3_{K,l}\setminus \tilde{U}_{K,l+1}^3)
				\text{ into }
				(U^1_{k}\setminus U^1_{k+1})\times (U^3_{l}\setminus U_{l+1}^3). 
								$$
By Proposition~\ref{EmotionallyDistant} the index $k$ is shifted to index at most $k+M$ by the mapping $ \hat{F}_{\beta, m}$. If $ \hat{F}_{\beta, m}\circ \tilde{G}_{K,N,m,\eta}(x+n) \in Q(n,1)$ then we can estimate $|D{G}_{K,N,m,\eta}|$ by the minimum of \eqref{ST3A} and \eqref{ST3B}. If the point is outside $Q(n,1)$ then ${G}_{K,N,m,\eta}$ is defined by a convex combination of $[H^{3,j}_K]_{N}$ and identity and so it has derivative majorised by the estimate the minimum of \eqref{ST3A} and \eqref{ST3B}, and therefore in both caes we have \eqref{snad}. 
It follows that in this case we have 	
\begin{equation}\label{PIES3D}
		|D_1G_{K,N,m,\eta}\circ \hat{F}_{\beta, m}\circ \tilde{G}_{K,N,m,\eta}(x+n)| \leq  C(K+k)^{\alpha+1}.
		\end{equation}			
		
		It remains to consider $0\leq l\leq M^*$ and $ x\in \tilde{U}^1_{K,4l+4+2N}\times( \tilde{U}^3_{K,l}\setminus \tilde{U}_{K,l+1}^3) $. By \eqref{SQ3D1} we estimate
		$$
		|D_1\tilde{G}_{K,N,m,\eta}(x+n)| \leq 2^{-2N\beta - 4l\beta}.
		$$
		By Proposition~\ref{EmotionallyDistant} the index $l$ shifts to at most $l+M\leq M^*+M$ in the mapping $ \hat{F}_{\beta, m}$. If $ \hat{F}_{\beta, m}\circ \tilde{G}_{K,N,m,\eta}(x+n) \in Q(n,1)$ then we can estimate $|D{G}_{K,N,m,\eta}|$ by \eqref{ST3B}. If the point is outside $Q(n,1)$ then ${G}_{K,N,m,\eta}$ is defined by a convex combination of $[H^{3,j}_K]_{N}$ and identity and so it has derivative majorised by the estimate from \eqref{ST3B}, i.e. we have
		$$
		|D{G}_{K,N,m,\eta}| \leq C2^{(3M^*+3M+N) \beta}.
		$$ 
		Therefore we obtain 
		\begin{equation}\label{PIES3F}
		\begin{aligned}
		|D_1G_{K,N,m,\eta}\circ \hat{F}_{\beta, m}\circ \tilde{G}_{K,N,m,\eta}(x+n)| &\leq 
		C 2^{-2N\beta} 2^{(3M+3M^*+N)\beta}\\
		&\leq 
		C2^{-N\beta }\leq C.
		\end{aligned}
		\end{equation}

		Using the estimates of the derivatives are in  \eqref{PIES3D} and \eqref{PIES3F} we estimate 
		on each such line $\ell$ 
		\begin{equation}\label{AnnoyingLines}
			\begin{aligned}
				\int_{\ell}|D_1&G_{K,N,m,\eta}\circ \hat{F}_{\beta, m}\circ \tilde{G}_{K,N,m,\eta}(x)| \\
				& \leq  \sum_{k=0}^{4l+2N} C(K+k)^{(\alpha+1)p}(K+k)^{-(\alpha+1)} +C\\
				&\leq C\sum_{k=0}^{4M^* + 2N}(K+k)^{(\alpha+1)(p-1)}+C\\
				&\leq C(K+N)^{(\alpha+1)(p-1)+1}\\
				&\leq CK^{(\alpha+1)(p-1)+1}
			\end{aligned}
		\end{equation}
		because $N_0 \leq K\leq N\leq 2 K$.
		
		

		By \eqref{DistancesEstimate1} we have analogously to \eqref{ppp} that
		$$
		\mathcal{L}^3\Bigl( \tilde{U}_{K,0}^3 \setminus \tilde{U}_{K,M^*}^3 \Bigr)
		\leq C K^{-\alpha -1}.
		$$
		Multiplying \eqref{AnnoyingLines} by this measure estimate and adding to \eqref{nezapomen} we get
		$$
		\int_{\tilde{U}_{K,0}^4 \setminus ([-1,1]\times \tilde{U}_{K,M^*}^3)}|D_1G_{K,N,m,\eta}\circ \hat{F}_{\beta, m}\circ \tilde{G}_{K,N,m,\eta}(x)| \leq C K^{-(\alpha +1)(p-2)+1}
		$$
		
		Combining the previous estimate with Step~\ref{Annoy1} we have
		$$
			\begin{aligned}
				&\int_{n+\tilde{U}_{K,0}^4 \setminus \tilde{U}_{K,M^*}^4} |D_1G_{K,N,m,\eta}\circ \hat{F}_{\beta, m}\circ \tilde{G}_{K,N,m,\eta}|^p \\
				&\qquad \leq CK^{-(\alpha +1)(p-2)}  + C K^{-(\alpha +1)(p-2)+1}\\
				 &\qquad \leq CK^{1-(\alpha+1) (2-p)}
			\end{aligned}
		$$		
		for $|n_1|\leq m-1$ which gives us \eqref{Prick}. In case $n_1\in\{-m,m\}$ we obtain $CK^{(\alpha+1)(p-1)}$ on the righthand side by \eqref{FallingSnow2}. 
		
		\step{6}{Proving $ii)$ and $iii)$ for $j=1,2,3$}{PIES4}
			
Finally adding \eqref{Prick}, \eqref{PIES3A} and    \eqref{PIES2F} (and excluding the $\mathcal{L}^4(\C_{A,K})$ term in \eqref{PIES3A}) we get for $|n_1|\leq m-1$ 
			\begin{equation}\label{PIES3B}
				\begin{aligned}
					&\int_{n+Q(0,1)\setminus\C_{A,K}} |D_1 G_{K,N,m,\eta}\circ \hat{F}_{\beta, m}\circ \tilde{G}_{K,N,m,\eta}|^p d \mathcal L^4 \\
					&\qquad\qquad\qquad \leq C K^{1-(2-p)(\alpha+1)} + CK^{-\alpha}+C K^{(\alpha+1)(p-2)+2} \\
					&\qquad\qquad\qquad \leq C K^{-(\alpha +1)(2-p) + 2} ,
				\end{aligned}
			\end{equation}
			since $1\leq p<2$ is fixed and $\alpha \geq \tfrac{4}{2-p}$. For $n_1\in\{-m,m\}$ we obtain $CK^{(\alpha+1)(p-1)}$ on the righthand side by \eqref{FallingSnow2}. 
			We sum \eqref{PIES3B} over the $(2m-1)(2m+1)^2$ cubes in the Cantor plate construction 
			together with the corresponding case for $n_1\in\{-m,m\}$ and get,
			$$
				\int_{Z_m\setminus \C_{A,K,m}} |D_1 G_{K,N,m,\eta}\circ \hat{F}_{\beta, m}\circ \tilde{G}_{K,N,m,\eta}|^p \leq CK^{-(\alpha+1)(2-p)+2} m^3+C K^{(\alpha+1)(p-1)} m^2. 
			$$
		The inequality in $iii)$ for $j=2,3$ is proved by rotational symmetry and exchanging the role of $|n_1|\leq m-1$ for $n_j$. From here $ii)$ is proved by adding the $m^3\mathcal{L}^4(\C_{A,K}) \leq 16m^3$ term in \eqref{PIES3A} and noting that $-(\alpha+1)(2-p)+2<0$. In fact, in the cases $j=1,2,3$ we don't need the $m^3 (1+ \eta^p K^{p(\alpha+1)})K^{-\alpha-1}$ term from $ii)$ and $iii)$ as the integral is already estimated by the first 2 terms.
		
		\step{7}{Proving $ii)$ and $iii)$ for $j=4$}{PIES7}
		The majority of the proof for $j=4$ is the same as in the previous cases. The difference is as follows, in direction $e_4$ the cantor plate construction $Z_m$ is only $1$ cube thick, i.e. all cubes fall into the equivalent category of the previous step for $|n_1| = m$. We continue to estimate $|D_4G_{K,N,m,\eta}\circ \hat{F}_{\beta, m}\circ \tilde{G}_{K,N,m,\eta}|$ in the case $x\in (\tilde{U}_{K,2N}^3)\times (\tilde{U}_{K,0}^1\setminus \tilde{U}_{K,1}^1)$. The case we need to deal with is when $\hat{F}_{\beta, m}\circ \tilde{G}_{K,N,m,\eta}(x+n) \notin Z_m$.
		
		The same argument which we used to get \eqref{HuAghhDeeBelgiaanz} gives
		\begin{equation}\label{WarriorNation}
		|D_4 \hat{F}_{\beta, m}\circ \tilde{G}_{K,N,m,\eta}(x+n) - [D_4 \hat{F}_{\beta, m}\circ \tilde{G}_{K,N,m,\eta}(x+n)]^4e_4| \leq C2^{-N\beta}.
		\end{equation}
		We have  
		$$
		\tilde{G}_{K,N,m,\eta}(\tilde{U}_{K,2N}^3\times (\tilde{U}_{K,0}^1\setminus \tilde{U}_{K,1}^1)) = {U}_{2N}^3\times ({U}_{0}^1\setminus {U}_{1}^1).
		$$ 
		Each cube in ${U}_{2N}^3$ has diameter $C2^{-2N - 2N{\beta}}$. The map $\hat{F}_{\beta, m}$ is Lipschitz and \eqref{NotGay4} holds and therefore 
		$$
		|(\hat{F}_{\beta, m}(y))^4 - (-y_4)| < C2^{-2N - 2N{\beta}}. 
		$$
		Thus, as soon as $N$ is large enough (i.e. we assume that $N\geq K \geq N_0$) we have $|(\hat{F}_{\beta, m}(y))^4 - (-y_4)|<\tfrac{1}{2}$. That is to say any point in $\tilde{U}_{K,2N}^3\times (\tilde{U}_{K,0}^1\setminus \tilde{U}_{K,1}^1)$ mapped outside $Z_m$ by $ \hat{F}_{\beta, m}\circ \tilde{G}_{K,N,m,\eta}$ lands in the set ${U}_{2N-M}^3\times ([-\tfrac{3}{2},-1]\cup[1,\tfrac{3}{2}])$. By \eqref{StretchingBitAroundTop} we have that at these points
		$$
		D_4 {G}_{K,N,m,\eta}(\hat{F}_{\beta, m}\circ \tilde{G}_{K,N,m,\eta}(x+n)) = \tfrac{\eta}{13}e_4.
		$$
		The application of the chain rule (analogous to the proof of \eqref{BA}) leads us to sum of
		$$
		\bigl|D_4 {G}_{K,N,m,\eta}(\hat{F}_{\beta, m}\circ \tilde{G}_{K,N,m,\eta}(x+n))[D_4 \hat{F}_{\beta, m}\circ \tilde{G}_{K,N,m,\eta}(x+n)]^4\bigr| \leq C\eta K^{\alpha+1}
		$$
		with
		$$
		\bigl|D_j {G}_{K,N,m,\eta}(\hat{F}_{\beta, m}\circ \tilde{G}_{K,N,m,\eta}(x+n))[D_4 \hat{F}_{\beta, m}\circ \tilde{G}_{K,N,m,\eta}(x+n)]^j \bigr|< C2^{-N\beta} 2^{N\beta} = C
		$$
		(recall by \eqref{StretchingBitAroundTop} and \eqref{Standard6} that $|D_j G_{K,N,m,\eta}| \leq 2^{N\beta}$ and the second factor is bounded by \eqref{WarriorNation}) giving 
		\begin{equation}\label{NeverDieYoung}
		|D_4G_{K,N,m,\eta}\circ \hat{F}_{\beta, m}\circ \tilde{G}_{K,N,m,\eta}(x+n)| < C(\eta K^{\alpha+1}+1)
		\end{equation}
		for (almost) all $x\in (\tilde{U}_{K,2N}^3)\times (\tilde{U}_{K,0}^1\setminus \tilde{U}_{K,1}^1)$ with $x+n$ mapped outside $Z_m$ by $ \hat{F}_{\beta, m}\circ \tilde{G}_{K,N,m,\eta}$. On the other hand, if $x$ is mapped inside $Q(n,1)$ we have
		\begin{equation}\label{NeverDieAnAdoloscent}
		|D_4G_{K,N,m,\eta}\circ \hat{F}_{\beta, m}\circ \tilde{G}_{K,N,m,\eta}(x+n)| < C
		\end{equation}  
		for (almost) all $x\in (\tilde{U}_{K,2N}^3)\times (\tilde{U}_{K,0}^1\setminus \tilde{U}_{K,1}^1)$ (calculations the same as in Step~\ref{Annoy1}).
		
		If $x\in (\tilde{U}_{K,1}^3 \setminus \tilde{U}_{K,2N}^3)\times (\tilde{U}_{K,0}^1\setminus \tilde{U}_{K,1}^1)$ we use a simple estimate, i.e. for (almost) all $x$ we have $D_4 \tilde{G}_{K,N,m,\eta}(x+n)$ is parallel with $e_4$ and has size $CK^{\alpha+1}$. By \eqref{NotGay4} we are interested only in $D_4 {G}_{K,N,m,\eta}(\hat{F}_{\beta, m}\circ \tilde{G}_{K,N,m,\eta}(x+n))$ which is either bounded by $C$ in the case that $\hat{F}_{\beta, m}\circ \tilde{G}_{K,N,m,\eta}(x+n)$ is outside $Z_m$ (we cannot exclude the case that $|[\hat{F}_{\beta, m}\circ \tilde{G}_{K,N,m,\eta}(x+n)]^{4}|>\tfrac{3}{2}$ and there all we have is ${G}_{K,N,m,\eta}$ Lipschitz) or $CK^{-\alpha-1}$ in the case the point stays inside the cube. The larger of these is obviously $C$ and (similar to \eqref{uuuu2}) we have
		$$
		|D_4G_{K,N,m,\eta}\circ \hat{F}_{\beta, m}\circ \tilde{G}_{K,N,m,\eta}(x+n)| < CK^{\alpha+1}
		$$
		for (almost) all $x\in (\tilde{U}_{K,1}^3 \setminus \tilde{U}_{K,2N}^3)\times (\tilde{U}_{K,0}^1\setminus \tilde{U}_{K,1}^1)$. For $x\in (\tilde{U}_{K,0}^3 \setminus \tilde{U}_{K,1}^3)\times (\tilde{U}_{K,0}^1\setminus \tilde{U}_{K,1}^1)$ we simply apply \eqref{SQ3A1}, Lemma~\ref{ThatTimeOfTheMonth} and Proposition~\ref{ST0} to get the same estimate of $CK^{\alpha+1}$. Therefore
		\begin{equation}\label{NeverDieAVirgin}
		|D_4G_{K,N,m,\eta}\circ \hat{F}_{\beta, m}\circ \tilde{G}_{K,N,m,\eta}(x+n)| < CK^{\alpha+1}
		\end{equation}  
		for (almost) all $x\in (\tilde{U}_{K,0}^3 \setminus \tilde{U}_{K,2N}^3)\times (\tilde{U}_{K,0}^1\setminus \tilde{U}_{K,1}^1)$.
		
		Clearly we have (independantly of choice of $N$) 
		$$
		\mathcal{L}^3(\tilde{U}_{K,0}^3 \setminus \tilde{U}_{K,2N}^3) \leq\mathcal{L}^3(\tilde{U}_{K,0}^3 \setminus \C_{A,K}(3))\leq CK^{-\alpha}\text{ and }\mathcal{L}^3(\tilde{U}_{K,2N}^3)<8. 
		$$  
		Again we have the estimate $\mathcal{L}^3( \tilde{U}_{K,0}^3 \setminus\tilde{U}_{K,1}^3)<CK^{-\alpha-1}$. Now from \eqref{NeverDieYoung} and \eqref{NeverDieAnAdoloscent} we get 
		\begin{equation}\label{Leprosy}
		\int_{\tilde{U}_{K,2N}^3 \times (\tilde{U}_{K,0}^3 \setminus\tilde{U}_{K,1}^3)}|D_4G_{K,N,m,\eta}\circ \hat{F}_{\beta, m}\circ \tilde{G}_{K,N,m,\eta}(x+n)|^p < C(1+\eta^p K^{p(\alpha+1)})K^{-\alpha-1}.
		\end{equation}
		and from \eqref{NeverDieAVirgin} we obtain 
		\begin{equation}\label{MakePeace}
		\int_{(\tilde{U}_{K,0}^3 \setminus\tilde{U}_{K,1}^3)\times (\tilde{U}_{K,0}^3 \setminus\tilde{U}_{K,1}^3)}|D_4G_{K,N,m,\eta}\circ \hat{F}_{\beta, m}\circ \tilde{G}_{K,N,m,\eta}(x+n)|^p < CK^{-(2-p)(\alpha+1)+1}.
		\end{equation}
		The sum of \eqref{Leprosy} over the $(2m+1)^3$ cubes in $Z_m$ is estimated by the $Cm^3 (1+\eta^pK^{p\alpha+p})K^{-\alpha-1}$--term of $ii)$ and $iii)$. The sum of \eqref{MakePeace} over the $(2m+1)^3$ cubes in $Z_m$ is estimated by the $Cm^3 K^{2 - (2-p)(\alpha+1)}$--term of $ii)$ and $iii)$. The other cases, i.e. when $k\geq 1$, are identical by rotational symmetry to those dealt with in Step~\ref{Annoy1}, Step~\ref{Annoy2} and Step~\ref{PIES4} and are estimated already by the $Cm^3 K^{2-(2-p)(\alpha+1)} + Cm^2 K^{(p-1)(\alpha+1)}$--terms of $ii)$ and $iii)$.
		
	\end{proof}

\section{Proof of Theorem~\ref{TheBigLebowski}.}\label{TheBigDeal}

Recall that $R_{m,\eta}=[-2m-5,2m+5]^{3}\times[-1-\eta,1+\eta]$.

	\begin{prop}\label{TheLittleLebowski}Let $1\leq p<2$ and $\alpha\geq\frac{4}{2-p}$.
		For each $K$ put $\eta = \eta(K) = K^{-\alpha-1}$. Call $f_{K,N,m,\eta} = G_{K,N,m,\eta}\circ \hat{F}_{\beta, m}\circ \tilde{G}_{K,N,m,\eta}: R_{m,\eta} \to \er^4$ with $K\geq K_0$ and $2K\geq N\geq 3M+K$ and $N\geq 3M^*+3M+6$. Then,
		\begin{enumerate}
			\item[$i)$] $f_{K,N,m,\eta} \in W^{1,p}(R_{m,\eta} , \er^4)$,
			\item[$ii)$] $f_{K,N,m,\eta}(x) = x$  for $x\in \partial R_{m,\eta}$, 
			\item[$iii)$] the map $f_{K,N,m,\eta}$ is locally bi-Lipschitz on $R_{m,\eta} \setminus \C_{A, K,m}$,
			\item[$iv)$] $J_{f_{K,N,m,\eta}}<0$ on $\C_{A,K,m}$,
			\item[$v)$] $\displaystyle\int_{R_{m,\eta}\setminus \C_{A,K,m}} |D_j f_{K,N,m,\eta}|^p \leq C(p,\alpha, \beta)K^{(\alpha + 1)(p-1)+1}((m+1)^2 +(m+1)^3\eta) +$\\
			\phantom{a}\qquad\qquad $\displaystyle + C(p,\alpha, \beta)K^{-(\alpha+1)(2-p)+2}(m+1)^3$
					 for every $j\in\{1,2,3\}$,
			\item[$vi)$] $\displaystyle \int_{R_{m,\eta} \setminus \C_{A,K,m}} |D_4 f_{K,N,m,\eta}|^p \leq C(p,\alpha, \beta)K^{(\alpha + 1)(p-1)+1}(m+1)^2+$\\
			\phantom{a}\qquad\qquad$\displaystyle +C(p,\alpha, \beta)(m+1)^3\eta^{1-p} + C(p,\alpha, \beta)K^{-(\alpha+1)(2-p)+2}(m+1)^3$.
		\end{enumerate}	
	\end{prop}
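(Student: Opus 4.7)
The plan is to verify the six properties using the constructions and estimates of Sections~\ref{Cantor}--\ref{PardubiceJeJenVesniceUHradce}. Properties (ii), (iii), (iv) are essentially immediate. For (ii), the maps $\tilde{G}_{K,N,m,\eta}$ and $G_{K,N,m,\eta}$ in Section~\ref{Cantor} were arranged so that $G_{K,N,m,\eta}\circ\tilde{G}_{K,N,m,\eta}=\id$ on $\partial R_{m,\eta}$, and $\hat{F}_{\beta,m}$ fixes $\partial R_{m,13}$ by Theorem~\ref{reflect}. For (iii), $f_{K,N,m,\eta}$ is the composition of three pieces each locally bi-Lipschitz off the Cantor set: $\tilde{G}_{K,N,m,\eta}$ is locally bi-Lipschitz on $R_{m,\eta}\setminus\C_{A,K,m}$ by Proposition~\ref{SQ5} and sends $\C_{A,K,m}$ onto $\C_{B,m}$; $\hat{F}_{\beta,m}$ is bi-Lipschitz on $R_{m,13}$ and preserves $\C_{B,m}$ by Theorem~\ref{reflect}; and $G_{K,N,m,\eta}$ is locally bi-Lipschitz on $R_{m,13}\setminus\C_{B,m}$ by Proposition~\ref{ST0}. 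For (iv), equation~\eqref{hhh} in the proof of Proposition~\ref{IntegralEstimate} shows that $f(x)=(x_1,x_2,x_3,-x_4)$ on $\C_{A,K,m}$, so at every Lebesgue density point of this positive-measure set the approximate derivative of $f$ equals $\operatorname{diag}(1,1,1,-1)$ and $J_f=-1<0$ almost everywhere on $\C_{A,K,m}$.

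The substantive work is the integral bounds (v), (vi), from which (i) follows via (iii), continuity of $f$, and the ACL characterisation of Theorem~\ref{ACL}. I decompose
$$R_{m,\eta}\setminus\C_{A,K,m}=\bigl(Z_m\setminus\C_{A,K,m}\bigr)\cup\bigl(R_{m,\eta}\setminus Z_m\bigr).$$
The first piece is handled by Proposition~\ref{IntegralEstimate}$(iii)$ with $\eta=K^{-\alpha-1}$, so that $\eta^p K^{p(\alpha+1)}=1$; this produces the $(m+1)^3K^{-(\alpha+1)(2-p)+2}$-term of both (v) and (vi) and an $(m+1)^3K^{-\alpha-1}$ term absorbed by the others. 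The second piece splits into the top/bottom inner slabs $[-2m-1,2m+1]^3\times\bigl([-1-\eta,-1]\cup[1,1+\eta]\bigr)$ of volume at most $Cm^3\eta$, and the side slabs of total volume at most $Cm^2$ where some $x_i\in[-2m-5,-2m-1]\cup[2m+1,2m+5]$ for $i\in\{1,2,3\}$. On each I use the explicit formulas \eqref{OhDear1A}--\eqref{DeathToGo2} for $\tilde{G}_{K,N,m,\eta}$ and \eqref{StretchingBitAroundTop}--\eqref{StretchingBitAroundSides2} for $G_{K,N,m,\eta}$ together with Lemma~\ref{FrameToFrameDifferential}, following the template of Steps~\ref{Annoy1}--\ref{Annoy2} of Proposition~\ref{IntegralEstimate}. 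The side slabs will contribute the $(m+1)^2K^{(\alpha+1)(p-1)+1}$-term of both (v) and (vi); the top/bottom slabs will contribute the $(m+1)^3\eta K^{(\alpha+1)(p-1)+1}$-term of (v) when $j=1,2,3$ and the $(m+1)^3\eta^{1-p}$-term of (vi).

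The technical heart, and the main obstacle, is the analysis on the top/bottom slabs. There $\tilde{G}_{K,N,m,\eta}$ stretches the $e_4$ direction by the factor $(26-\eta)/\eta$, of order $K^{\alpha+1}$, while $G_{K,N,m,\eta}$ squeezes by $\eta/13$, of order $K^{-\alpha-1}$; if these two factors were not realigned by $\hat{F}_{\beta,m}$, the fourth-row derivative of the composition would pick up a product which does not cancel. The decisive cancellation is~\eqref{NotGay4}, $D_4\hat{F}_{\beta,m}=-e_4$ on $U^3_2\times([-3,3]\setminus[-1,1])$: the stretched $e_4$ direction is reflected by $\hat{F}_{\beta,m}$ and then squeezed back by $G_{K,N,m,\eta}$, leaving $|D_4 f|$ of order only $K^{\alpha+1}$ coming from the frame-to-frame maps $[J^{3,4}_K]_{2N}$ and $[H^{3,4}_K]_N$ as in Lemma~\ref{FrameToFrameDifferential}. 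Integrating the $p$th power of this estimate over volume $Cm^3\eta=Cm^3K^{-\alpha-1}$ produces $(m+1)^3\eta^{1-p}$, as required in (vi). The hyperplane directions $j=1,2,3$ are handled analogously using \eqref{NotGay1}--\eqref{NotGay3}, yielding the smaller $(m+1)^3\eta K^{(\alpha+1)(p-1)+1}$-bound of (v). The side-slab estimate uses the same mechanism but without the stretch/squeeze pair, giving the $(m+1)^2K^{(\alpha+1)(p-1)+1}$-term via the frame-to-frame maps alone.
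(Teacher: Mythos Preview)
Your overall strategy matches the paper's: reduce to Proposition~\ref{IntegralEstimate}$(iii)$ on $Z_m\setminus\C_{A,K,m}$, then treat $R_{m,\eta}\setminus Z_m$ as side slabs and top/bottom slabs, and invoke \eqref{NotGay1}--\eqref{NotGay4} for the alignment of $\hat F_{\beta,m}$ near the Cantor set. The paper does exactly this, with a finer decomposition of $R_{m,\eta}\setminus Z_m$ into ten explicit pieces $E_1,\dots,E_{10}$ (and $\tilde E_3,\dots,\tilde E_6$ for $j=4$).

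Two points where your outline is incomplete or slightly off. First, you never invoke Lemma~\ref{ThatTimeOfTheMonth}, and this is essential on the side slabs and on the parts of the top/bottom slabs with $(x_1,x_2,x_3)$ away from $\tilde U^3_{K,2}$. There the identities \eqref{NotGay1}--\eqref{NotGay4} do not apply, and $|DG_{K,N,m,\eta}|$ is \emph{not} automatically bounded at $\hat F_{\beta,m}\circ\tilde G_{K,N,m,\eta}(x)$: it could be as large as $2^{N\beta}$ near $\C_{B,m}$. The paper shows these points lie in $\G$, hence by Lemma~\ref{ThatTimeOfTheMonth} their images under $\hat F_{\beta,m}$ avoid $\T$, so Proposition~\ref{ST0} gives $|DG_{K,N,m,\eta}|\le C2^{M\beta}$. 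Without this, ``via the frame-to-frame maps alone'' does not give the $(m+1)^2K^{(\alpha+1)(p-1)+1}$ bound.

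Second, your mechanism for the $|D_4 f|\le CK^{\alpha+1}$ bound on the top/bottom slabs is mis-stated. The stretch $1/\eta$ and squeeze $\eta$ do not fully cancel: on the convex-combination strip $|x_4|\in[3/2,2]$ in \eqref{StretchingBitAroundTop} one only has $|D_4 G_{K,N,m,\eta}|\le C$, not $O(\eta)$. The correct chain is $|D_4\tilde G|\approx 1/\eta$, $D_4\hat F_{\beta,m}=-e_4$ by \eqref{NotGay4}, and $|D_4 G|\le C$, giving $|D_4 f|\le C/\eta=CK^{\alpha+1}$. The frame-to-frame maps $[J^{3,4}_K]_{2N}$, $[H^{3,4}_K]_N$ have $D_4=0$ and play no role in this particular bound; the $K^{\alpha+1}$ is the uncancelled stretch factor. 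Your final integral $(m+1)^3\eta^{1-p}$ is nonetheless correct.
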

	\begin{proof}
		\step{1}{Proof of $iii)$}{PTLLS1}
		
		In Step~\ref{PIES1} of the proof Proposition~\ref{IntegralEstimate} we showed that $f_{K,N,m,\eta}$ is locally bi-Lipschitz on $Z_m\setminus \C_{A, K,m}$. The fact that $f_{K,N,m,\eta}$ is bi-Lipschitz on $R_{m,\eta}\setminus Z_m$ is obvious from Proposition~\ref{SQ5}, Proposition~\ref{ST0} and Theorem~\ref{reflect} (the fact that $\hat{F}_{\beta, m}$ is the identity on the boundary and that $\hat{F}_{\beta ,m}(\C_{B,m}) = \C_{B,m}$).
		
		\step{2}{Proof of $i)$}{PTLLS2}
		
		The fact that $f_{K,N,m,\eta} \in W^{1,p}((-2m-5,2m+5)^3\times(-1-\eta, 1+\eta) , \er^4)$ comes from Step~\ref{PTLLS1} (on the part outside $Z_m$) and Proposition~\ref{IntegralEstimate} point $i)$ on the union of the cubes.
		
		\step{3}{Proof of $ii)$}{PTLLS3}
		
		This holds from the fact that $\hat{F}_{\beta, m}$ is identity on $\partial R_{m,\eta}$ and that $G_{K,N,m,\eta}$ is the inverse to $\tilde{G}_{K,N,m,\eta}$ on the boundary as can easily be observed from \eqref{DeathToGo1}, \eqref{DeathToGo2}, \eqref{OhDear1B}, \eqref{OhDear2}, \eqref{StretchingBitAroundTop} and \eqref{StretchingBitAroundSides}.
		
		\step{4}{Proof of $iv)$}{PTLLS4}
		
		The fact that $J_{f_{K,N,m,\eta}}<0$ on $C_{A,K,m}$ follows from two facts. The first one is that that $\tilde{G}_{K,N,m,\eta} (x)= (q_K(x_1),q_K(x_2),q_K(x_3),q_K(x_4))$ on $C_{A,K}$ and
		$G_{K,N,m,\eta}(x) = (t_K(x_1), t_K(x_2),t_K(x_3),t_K(x_4))$ on $\C_{B} = \tilde{G}_{K,N,m,\eta}(\C_{A,K})$ and these two maps are mutually inverse on these sets. The combination of the above fact and \eqref{NewNew} precisely prove that the approximative derivative satisfies
		$$
		D_j f_{K,N,m,\eta}(x) = e_j\text{ for }j=1,2,3\text{ and }D_4 f_{K,N,m,\eta}(x)= -e_4\text{ for a.e. }x\in \C_{A,K}.
		$$
		It is well known that the weak (Sobolev) derivative equals to approximative derivative a.e. and so $J_{f_{K,N,m,\eta}}(x) = -1$ for a.e. $x\in\C_{A,K}$.

		\step{5}{Proof of $v)$}{PTLLS5}
		By symmetry it is enough to show this for $j=1$. 
		We calculate using $iii)$ of Proposition~\ref{IntegralEstimate}. Notice that since we have already chosen $\eta = K^{-\alpha-1}$ the right hand side of $iii)$ of Proposition~\ref{IntegralEstimate} simplifies because the term 
		$$
		Cm^3 (1+\eta^pK^{p\alpha+p})K^{-\alpha-1}=Cm^3K^{-\alpha-1}
		$$
		but because $p\geq1$
		$$
		Cm^3K^{-\alpha-1} \leq Cm^3 K^{2-(2-p)(\alpha+1)}
		$$
		which is the first term of $iii)$ of Proposition~\ref{IntegralEstimate}, therefore we use only the first two terms. Then we continue to apply the simplified estimate from $iii)$ of Proposition~\ref{IntegralEstimate} and we get
		\begin{equation}\label{DiCaprio}
			\begin{aligned}
				&\int_{R_{m,\eta}\setminus \C_{A,K,m}} |D_1 G_{K,N,m,\eta}\circ \hat{F}_{\beta, m}\circ \tilde{G}_{K,N,m,\eta}|^p \leq C(\alpha, \beta, p) K^{-(\alpha+1)(2-p)+2} m^3+\\
				& \qquad \qquad + C(\alpha, \beta, p) K^{(\alpha+1)(p-1)} m^2 + \int_{R_{m,\eta}\setminus Z_m} |D_1 G_{K,N,m,\eta}\circ \hat{F}_{\beta, m}\circ \tilde{G}_{K,N,m,\eta}|^p.
			\end{aligned}
		\end{equation}
		By the definition of $\tilde{G}_{K,N,m,\eta}$ especially \eqref{OhDear1A} we can easily calculate that
		\eqn{starrr}
		$$
			\tilde{G}_{K,N,m,\eta}^{-1}(R_{m,2}) = [-2m-5,2m+5]^3 \times [-1-\tfrac{2\eta}{26 - \eta},1+ \tfrac{2\eta}{26 - \eta}]
		$$
		and
		\eqn{starrrr}
		$$
			\tilde{G}_{K,N,m,\eta}^{-1}(R_{m,13} \setminus R_{m,2}) =[-2m-5,2m+5]^3 \times \big([-1-\eta, 1+\eta]\setminus [-1-\tfrac{2\eta}{26 - \eta}, 1+\tfrac{2\eta}{26 - \eta}]\big).
		$$
		With respect to Lemma~\ref{ThatTimeOfTheMonth} and the definition of ${G}_{K,N,m,\eta}$ it is necessary to calculate separately on the sets in \eqref{starrr} and \eqref{starrrr}. 
		We decompose the integral over $R_{m,\eta}\setminus Z_m$ into several sets. The sets around the sides of $Z_m$ are
		$$
			\begin{aligned}
				E_{1} =&( [-2m-5, 2m+5]\setminus [-2m-1, 2m+1])\times[-2m-1,2m+1]^2\times [-1,1],\\
				E_{2} =&[-2m-5, 2m+5]\setminus [-2m-1,2m+1]\\
				&\times([-2m-5,2m+5]^2\setminus[-2m-1,2m+1]^2)\times [-1,1]\\
				E_{3} =&[-2m-1, 2m+1]\times([-2m-5,2m+5]^2\setminus[-2m-3,2m+3]^2)\times [-1,1]\\
							& \cap \Big((\C_{A,K,m} + \er e_2) \cup (\C_{A,K,m} + \er e_3)\Big)\\
				E_{4} =&[-2m-1, 2m+1]\times([-2m-5,2m+5]^2\setminus[-2m-3,2m+3]^2)\times [-1,1]\\
							& \setminus \Big((\C_{A,K,m} + \er e_2) \cup (\C_{A,K,m} + \er e_3)\Big)\\
				E_{5} =&[-2m-1, 2m+1]\times([-2m-3,2m+3]^2\setminus[-2m-1,2m+1]^2)\times [-1,1]\\
							& \cap \Big((\C_{A,K,m} + \er e_2) \cup (\C_{A,K,m} + \er e_3)\Big)\\
				E_{6} =&[-2m-1, 2m+1]\times([-2m-3,2m+3]^2\setminus[-2m-1,2m+1]^2)\times [-1,1]\\
							& \setminus \Big((\C_{A,K,m} + \er e_2) \cup (\C_{A,K,m} + \er e_3)\Big).
				\end{aligned}
			$$
			The sets above and below $Z_m$ are
			$$
					\begin{aligned}
				E_{7} =& [-2m-5, 2m+5]^3 \times ([-1-\tfrac{2\eta}{26 - \eta}, -1]\cup[1,1+\tfrac{2\eta}{26 - \eta}]) \setminus \Big(\C_{A,K,m} + \er e_4\Big)\\
				E_{8} =& [-2m-5, 2m+5]^3 \times ([-1-\tfrac{2\eta}{26 - \eta}, -1]\cup[1,1+\tfrac{2\eta}{26 - \eta}]) \cap \Big(\C_{A,K,m} + \er e_4\Big)\\
				E_{9} =& [-2m-5, 2m+5]^3 \times ([-1-\eta,-1-\tfrac{2\eta}{26 - \eta}]\cup[1+\tfrac{2\eta}{26 - \eta}, 1+\eta])\setminus \Big(\C_{A,K,m} + \er e_4\Big)\\
				E_{10} =& [-2m-5, 2m+5]^3 \times ([-1-\eta,-1-\tfrac{2\eta}{26 - \eta}]\cup[1+\tfrac{2\eta}{26 - \eta}, 1+\eta]).\\
			\end{aligned}
		$$
		It holds that $R_{m,\eta}\setminus Z_m = \bigcup_{i=1}^{10}E_i$ and (see \eqref{rozdil})
		\begin{equation}\label{StupidMeasures}
		\begin{aligned}
			\mathcal{L}^4(E_{1}),\mathcal{L}^4(E_{3}),\mathcal{L}^4(E_{5}) & \leq C(m+1)^2, \\
			\mathcal{L}^4(E_{2})  &\leq C(m+1),\\
			\mathcal{L}^4(E_{4}),\mathcal{L}^4(E_{6})  & \leq CK^{-\alpha}(m+1)^2, \\
			\mathcal{L}^4(E_{7}),\mathcal{L}^4(E_{9})  &\leq CK^{-\alpha} \eta (m+1)^3, \\
			\mathcal{L}^4(E_{8}),\mathcal{L}^4(E_{10}) &\leq C\eta (m+1)^3.
		\end{aligned}
		\end{equation}

		
		We estimate $D_1G_{K,N,m,\eta}\circ \hat{F}_{\beta, m}\circ \tilde{G}_{K,N,m,\eta}$ on $E_1$ as follows. On 
		$$
		E_{1,a}:=( [-2m-5, 2m+5]\setminus [-2m-4, 2m+4])\times[-2m-1,2m+1]^2\times [-1,1]
		$$ 
		we have $|D_1\tilde{G}_{K,N,m,\eta}|\leq C$ by \eqref{OhDear2}. By the definition of $\tilde{G}_{K,N,m,\eta}$ we have
		$$
		\tilde{G}_{K,N,m,\eta}(E_{1,a})=E_{1,a}
		$$
		Further 
		$$
			P_v\bigl( E_{1,a} \bigr) \subset \er^3\setminus [-2m-4+\tfrac{1}{14}, 2m+4-\tfrac{1}{14}]^3
		$$
		(because $\max\{|v_1|,|v_2|,|v_3|\}\leq \tfrac{1}{14}$) and on this set we have $g=0$ by \eqref{gjenula}. Therefore (see \eqref{FDeath} and \eqref{spaghetti}) $\hat{F}_{\beta,m} = \id$ on 
$E_{1,a}$.	Further, on $E_{1,a}$ we have ${G}_{K,N,m,\eta} = \id$ by \eqref{StretchingBitAroundTop2}. Therefore on $E_{1,a}$ we have 
		$$
			|D_1G_{K,N,m,\eta}\circ \hat{F}_{\beta, m}\circ \tilde{G}_{K,N,m,\eta}(x)| = |D_1\id\circ\id\circ \tilde{G}_{K,N,m,\eta}(x)| =|D_1\tilde{G}_{K,N,m,\eta}(x)| \leq C. 
		$$
		
For (almost) all $x$ in 
		$$
			E_{1,b}:=( [-2m-4, 2m+4]\setminus [-2m-3, 2m+3])\times[-2m-1,2m+1]^2\times [-1,1]
		$$
		we have $D_1\tilde{G}_{K,N,m,\eta}(x) = e_1$ by \eqref{OhDear2} and
		$$
		\tilde{G}_{K,N,m,\eta}\big(E_{1,b}\big)= E_{1,b}.
		$$
		On this set we use the fact that $\hat{F}_{\beta,m}$ is Lipschitz to estimate $|D_1\hat{F}_{\beta,m}| < C$. 
		From Lemma~\ref{ThatTimeOfTheMonth} we obtain $\hat{F}_{\beta,m}(x)\notin\T$ since $E_{1,b}\subset \G$.  
		Applying Proposition~\ref{ST0} for these $\hat{F}_{\beta,m}(x)$ we have $|D{G}_{K,N,m,\eta}|\leq C(M,\beta)$. Therefore 
		$$
			|D_1G_{K,N,m,\eta}\circ \hat{F}_{\beta, m}\circ \tilde{G}_{K,N,m,\eta}(x)| \leq 
			|DG_{K,N,m,\eta}|\cdot |D\hat{F}_{\beta, m}|\cdot |D_1\tilde{G}_{K,N,m,\eta}(x))| \leq C.
		$$ 
		
		For (almost) all $x$ in  
		$$
		E_{1,c}:= ( [-2m-3, 2m+3]\setminus [-2m-1, 2m+1])\times[-2m-1,2m+1]^2\times [-1,1]
		$$ 
		we have $D_1\tilde{G}_{K,N,m,\eta}(x) = e_1$ by \eqref{OhDear2}. Let us consider two options
		\eqn{ahaaa}
		$$
		\begin{aligned}
		x&\in ( [-2m-3, 2m+3]\setminus [-2m-1, 2m+1])\times\tilde{U}_{K,M+1}^2\times [-1,1]\text{ or }\\
		y&\in ( [-2m-3, 2m+3]\setminus [-2m-1, 2m+1])\times([-2m-1,2m+1]^2\setminus\tilde{U}_{K,M+1}^2)\times [-1,1].
		\end{aligned}
		$$
		Of course we assume that $N\geq M+1$ and therefore by \eqref{OhDear2} and \eqref{basic} 
		$$
		\begin{aligned}
		\tilde{G}_{K,N,m,\eta}&\big(( [-2m-3, 2m+3]\setminus [-2m-1, 2m+1])\times\tilde{U}_{K,M+1}^2\times [-1,1]\big)\\
		 &= ( [-2m-3, 2m+3]\setminus [-2m-1, 2m+1])\times{U}_{M+1}^2\times [-1,1]\\
		\tilde{G}_{K,N,m,\eta}&\big(( [-2m-3, 2m+3]\setminus [-2m-1, 2m+1])\times([-2m-1,2m+1]^2\setminus\tilde{U}_{K,M+1}^2)\times [-1,1]\big)\\
		 &= ( [-2m-3, 2m+3]\setminus [-2m-1, 2m+1])\times([-2m-1,2m+1]^2\setminus{U}_{M+1}^2)\times [-1,1].
		\end{aligned}
		$$
		From Lemma~\ref{ThatTimeOfTheMonth} we obtain $\hat{F}_{\beta,m}(\tilde{G}_{K,N,m,\eta}(y))\notin\T$ since $\tilde{G}_{K,N,m,\eta}(y)\in \G$.  
		Applying Proposition~\ref{ST0} for these $\hat{F}_{\beta,m}(\tilde{G}_{K,N,m,\eta}(y))$ we have $|D{G}_{K,N,m,\eta}|\leq C(M,\beta)$.
		We use the fact that $\hat{F}_{\beta,m}$ is Lipschitz and combine the above estimates to get
		$$
			|D_1G_{K,N,m,\eta}\circ \hat{F}_{\beta, m}\circ \tilde{G}_{K,N,m,\eta}(y)| \leq 
			|DG_{K,N,m,\eta}|\cdot |D\hat{F}_{\beta, m}|\cdot |D_1\tilde{G}_{K,N,m,\eta}(x))| \leq C. 
		$$
Recall that for $x$ as in \eqref{ahaaa} we still have $D_1\tilde{G}_{K,N,m,\eta}(x) = e_1$. By \eqref{NotGay1} we have $D_1\hat{F}_{\beta, m}( \tilde{G}_{K,N,m,\eta}(x)) = e_1$. 
The map $\hat{F}_{\beta, m}$ is bi-Lipschitz and satisfies \eqref{Oasis} on $\mathcal{K}_{B,m}$ part. 
Therefore calling $z=\tilde{G}_{K,N,m,\eta}(x)$ we have by Theorem~\ref{BloodSweatAndTears} (recall that $M\geq M_0$) 
		$$
		|\hat{F}_{\beta, m}(z) - (z_1,z_2,z_3,-z_4)| <C2^{-M\beta}<\tfrac{1}{4}
		$$
		where $C$ is the bi-Lipschitz constant of $\hat{F}_{\beta, m}$ and up to increase of $M$ (still a fixed finite constant) the above holds. Then $\hat{F}_{\beta, m}(z)$ is furthest from $\C_{B,m}$ in the $e_1$ coordinate and $(\hat{F}_{\beta, m}(z))^1 \in [-2m-3-\tfrac{1}{4}, -2m-1]\cup[2m+1, 2m+3+\tfrac{1}{4}]\cup U_{1}$. On the set where $(\hat{F}_{\beta, m}(z))^1 \in [-2m-3-\tfrac{1}{4}, -2m-1]\cup[2m+1, 2m+3+\tfrac{1}{4}]$ we have $|D_1G_{K,N,m,\eta}|<C$ by \eqref{StretchingBitAroundTop2}. On the set where $(\hat{F}_{\beta, m}(z))^1 \in U_{1}$ we use \eqref{ASeason0} and 
		\eqref{SpeciallyForStanda} for $k=1$ together with $|D_1\zeta_{K,1}(x_1)|\leq C$, $D_1 [H^{3,1}_K]_{l}=0$ and $|D_1t_K|\leq C$ to obtain $|D_1G_{K,N,m,\eta}|<C$. Therefore we calculate
		$$
		\begin{aligned}
		|D_1G_{K,N,m,\eta}\circ \hat{F}_{\beta, m}\circ \tilde{G}_{K,N,m,\eta}(x)| &= |D_1G_{K,N,m,\eta}\circ\hat{F}_{\beta, m}(\tilde{G}_{K,N,m,\eta}(x))| \\
		&= |D_1G_{K,N,m,\eta}(\hat{F}_{\beta, m}\circ\tilde{G}_{K,N,m,\eta}(x))|\\
		&\leq C.
		\end{aligned}
		$$
		
		On $E_2$ we have $|D_1\tilde{G}_{K,N,m,\eta}|<C$ and $\tilde{G}_{K,N,m,\eta}(E_2)=E_2$ by \eqref{DeathToGo1}. By Lemma~\ref{ThatTimeOfTheMonth} $\hat{F}_{\beta, m}(E_2) \subset R_{m,13} \setminus \T$ since $E_2\subset \G$. 		
		On the set $R_{m,13} \setminus \T$ we have $|DG_{K,N,m,\eta}|<C$ by Proposition~\ref{ST0}. Therefore
		$$
		|D_1G_{K,N,m,\eta}\circ \hat{F}_{\beta, m}\circ \tilde{G}_{K,N,m,\eta}|<C \text{ on } E_2.
		$$

		On the set $E_3$ we have $|D_1\tilde{G}_{K,N,m,\eta}|<C$ and $\tilde{G}_{K,N,m,\eta}(E_3)\subset E_3\cup E_4$ by \eqref{OhDear2}. Clearly $E_3\cup E_4\subset\mathcal{G}$ and hence $\hat{F}_{\beta, m}(E_3\cup E_4) \subset R_{m,13} \setminus \T$ by Lemma~\ref{ThatTimeOfTheMonth}. By Proposition~\ref{ST0} we now obtain $|D\tilde{G}_{K,N,m,\eta}(\hat{F}_{\beta, m}\circ \tilde{G}_{K,N,m,\eta}(x))|<C$ for $x\in E_3$. Therefore
		$$
		|D_1G_{K,N,m,\eta}\circ \hat{F}_{\beta, m}\circ \tilde{G}_{K,N,m,\eta}|<C \text{ on } E_3.
		$$
		
		On the set $E_4$ we estimate as follows. It is easy to check from the definition of $\tilde{G}_{K,N,m,\eta}$ \eqref{OhDear2} using Proposition~\ref{FrameToFrameDifferential} that because $2K\geq N$ we have
		$$
			|D_1\tilde{G}_{K,N,m,\eta}| \leq |D_1 [J^{3,i}_K]_{2N}| \leq C K^{\alpha+1}\text{ on }E_4.
		$$
	On $E_4$ we use Lemma~\ref{ThatTimeOfTheMonth} and Proposition~\ref{ST0} (again $\hat{F}_{\beta, m}(\tilde{G}_{K,N,m,\eta}(x))\in \G$) to get $|DG_{K,N,m,\eta}|\leq C$ and thus
		$$
		|D_1G_{K,N,m,\eta}\circ \hat{F}_{\beta, m}\circ \tilde{G}_{K,N,m,\eta}|<CK^{\alpha+1} \text{ on } E_4.
		$$
		Let us consider $x\in E_6$ now and  
		let us assume that for example (other permutations can be treated analogously) 
		$$
		x_3\in [-2m-3,2m+3]\setminus[-2m-1,2m+1]
		$$
		and
		$$
		 x_1,x_2\in [-2m-1,2m+1] \text{ and }x_4\in[-1,1].
		$$
		
 For almost all $x$ there exists exactly one cube $Q(n,1)$ closest to $x$ with $-m\leq n_1,n_2\leq m$ and $n=(2n_1,2n_2,2m,0)$. On the set $[2n_1-1,2n_1+1]\times[2n_2-1,2n_2+2]\times([-2m-3,2m+3]\setminus[-2m-1,2m+1])\times[-1,1]$ the map $\tilde{G}_{K,N,m,\eta}$ is defined as $[J^{3,3}_K]_{2N}(x) + n +x_3e_3$ (see \eqref{OhDear2}). Either there exists some $0\leq k <2N$ such that $(x_1,x_2,x_4)\in \tilde{U}_{K,k}^3\setminus \tilde{U}_{K,k+1}^3$ or $(x_1,x_2,x_4)\in \tilde{U}_{K,2N}^3$. In the case that $(x_1,x_2,x_4)\in \tilde{U}_{K,k}^3\setminus \tilde{U}_{K,k+1}^3$ we use (a rotated version of) \eqref{Standard1} to estimate
		\begin{equation}\label{Spade}
		|D_1\tilde{G}_{K,N,m,\eta}(x)| \leq |D_1 [J^n_K]_{2N}(x)| \leq C 2^{-k\beta}K^{\alpha+1}
		\end{equation}
		and in the second case, where $(x_1,x_2,x_4)\in \tilde{U}_{K,2N}^3$ we use \eqref{Standard3} to estimate
		\begin{equation}\label{PickAxe}
		|D_1\tilde{G}_{K,N,m,\eta}(x)| \leq |D_1 [J^n_K]_{2N}(x)| \leq C 2^{-2N\beta}.
		\end{equation}
		
		It holds, by \eqref{Oasis}, that $\hat{F}_{\beta, m}$ maps $\K_{B,m}\cap [-2m-3,2m+3]^3\times[-3,3]$ onto itself. The map $\hat{F}_{\beta, m}$ is bi-Lipschitz and so $\dist(\K_{B,m},\hat{F}_{\beta, m}(x)) \approx \dist(\K_{B,m},x)$. Especially, when we call
		$$
		O_k = \{(x_1,x_2,x_3,x_4):(x_1,x_2,x_4)\in {U}_{k}^3, x_3\in [-2m-3,2m+3]\setminus[-2m-1,2m+1]\}
		$$
		(and note that each $O_k$ is roughly speaking $\K_{B,m} + Q(0,2^{-k(\beta+1)})$) we use the same reasoning as used in Proposition~\ref{EmotionallyDistant} to get a constant $M$ such that for each $x\in O_k\setminus O_{k+1}$
		\begin{equation}\label{CarryOn}
		([\hat{F}_{\beta, m}(x)]^1,[\hat{F}_{\beta, m}(x)]^2,[\hat{F}_{\beta, m}(x)]^4) \in U_{k-M}^3 \setminus U_{k+M+1}^3.
		\end{equation}
		Similarly in the $e_2$ direction
		$$
		[\hat{F}_{\beta, m}(\tilde{G}_{K,N,m,\eta}(x))]^3 \in ([-2m-3,2m+3]\setminus[-2m-1,2m+1]) \cup (U_{0}\setminus U_{M+1} + n)
		$$
		Therefore, using \eqref{StretchingBitAroundTop2} if we are outside $Q(n,1)$ and \eqref{SpeciallyForStanda} if we are in Q(n,1) for the definition of $G_{K,N,m,\eta}$ and \eqref{Standard5} and \eqref{Standard6} to calculate, we get 
		\begin{equation}\label{DigForVictory}
		|D{G}_{K,N,m,\eta}(y)| \leq C \min \{2^{(k+M)\beta}, 2^{(N+3M)\beta}\}
		\end{equation}
		for $(y_1,y_2,y_4) \in U_{k-M}^3\setminus U_{k+M+1}^3$, $y_2 \in ([-2m-3,2m+3]\setminus[-2m-1,2m+1]) \cup (U_{0}\setminus U_{M+1} + n)$.

		We calculate $|D_1G_{K,N,m,\eta}\circ \hat{F}_{\beta, m}\circ \tilde{G}_{K,N,m,\eta}(x)|$ for $x\in O_k\setminus O_{k+1}$ and $0\leq k <2N$ by multiplying \eqref{Spade} by \eqref{DigForVictory} (using \eqref{CarryOn}), which gives
		$$
		\begin{aligned}
		|D_1G_{K,N,m,\eta}\circ \hat{F}_{\beta, m}\circ \tilde{G}_{K,N,m,\eta}|
		&\leq C2^{-k\beta}(K+k)^{\alpha+1}2^{(k+M)\beta}\\
		&\leq C(K+k)^{\alpha+1}\\
		& \leq C (K+2N)^{\alpha+1}\\
		&\leq CK^{\alpha+1}
		\end{aligned}
		$$
		because $2K\geq N$. For $x\in O_{2N}$ we multiply \eqref{PickAxe} by \eqref{DigForVictory} (using \eqref{CarryOn}) to get
		$$
		\begin{aligned}
		|D_1G_{K,N,m,\eta}\circ \hat{F}_{\beta, m}\circ \tilde{G}_{K,N,m,\eta}|
		& \leq C2^{-2N\beta}2^{(N+3M)\beta}\\ 
		&\leq C2^{-N\beta} .
		\end{aligned}
		$$
		The combination of these two estimates gives that
		$$
		|D_1G_{K,N,m,\eta}\circ \hat{F}_{\beta, m}\circ \tilde{G}_{K,N,m,\eta}| \leq CK^{\alpha+1}
		$$
		on $E_6$ by considering the various permutations of the coordinates.
		
		For a point in $E_5$ such that its nearest cube is $Q(n,1)$ we use exaclty the same estimates as before. Namely, because $\C_{A, K}\subset \tilde{U}_{K, 2N}^3$, we use precisely the estimates from above i.e. we multiply \eqref{PickAxe} by \eqref{DigForVictory} (using \eqref{CarryOn}) to get
		$$
		\begin{aligned}
		|D_1G_{K,N,m,\eta}\circ \hat{F}_{\beta, m}\circ \tilde{G}_{K,N,m,\eta}(x)|
		& \leq C2^{-2N\beta}2^{(N+3M)\beta}\\ 
		&\leq C2^{-N\beta} .
		\end{aligned}
		$$
		Therefore
		$$
		|D_1G_{K,N,m,\eta}\circ \hat{F}_{\beta, m}\circ \tilde{G}_{K,N,m,\eta}| \leq C
		$$
		on $E_5$ by considering the various permutations of the coordinates.

Estimates on $E_7\cup E_8\cup E_9\cup E_{10}\cup E_{11}\cup E_{12}$ are similar to estimates above. The only difference is that we map slab of thickness $\eta$ on slab of thickness $13$ by $\tilde{G}_{K,N,m,\eta}$ (see \eqref{OhDear1A}, \eqref{OhDear1B}) and 
then slab of thickness $13$ back to slab of thickness $\eta$ by $G_{K,N,m,\eta}$ (see \eqref{StretchingBitAroundTop}). 
The derivative $D_1\tilde{G}_{K,N,m,\eta}$ can be estimated by the same expression as before since the additional factor $\frac{1}{\eta}$ is influencing only $D_4 \tilde{G}_{K,N,m,\eta}$ (note that e.g. in \eqref{OhDear1B} we have  
$$
\bigl|D_1\tfrac{2}{\eta}(x_4-1-\tfrac{\eta}{2})(x_1,x_2,x_3,0)\bigr|\leq C\text{ as }x_4\in \bigl[1+\frac{\eta}{2},1+\eta\bigr]
$$ 
and so there is no additional $\frac{1}{\eta}$ there). The derivative $DG_{K,N,m,\eta}$ can be estimated by the same expression as the additional factor $\eta$ in some of the terms can only help us. 
		
		The estimate of $|D_1G_{K,N,m,\eta}\circ \hat{F}_{\beta, m}\circ \tilde{G}_{K,N,m,\eta}|$ on $E_7$ is calculated the same way as on $E_6$ and
		$$
		|D_1G_{K,N,m,\eta}\circ \hat{F}_{\beta, m}\circ \tilde{G}_{K,N,m,\eta}|<CK^{\alpha+1} \text{ on } E_7.
		$$
		Similarly $|D_1G_{K,N,m,\eta}\circ \hat{F}_{\beta, m}\circ \tilde{G}_{K,N,m,\eta}|$ is estimated on $E_8$ is calculated the same way as on $E_5$ and
		$$
		|D_1G_{K,N,m,\eta}\circ \hat{F}_{\beta, m}\circ \tilde{G}_{K,N,m,\eta}|<C \text{ on } E_8.
		$$
		The estimate on $E_9$  is the same as on $E_4$
		$$
		|D_1G_{K,N,m,\eta}\circ \hat{F}_{\beta, m}\circ \tilde{G}_{K,N,m,\eta}|<CK^{\alpha+1} \text{ on } E_9.
		$$
		The estimate on $E_{10}$ is the same as on $E_5$ 
		$$
		|D_1G_{K,N,m,\eta}\circ \hat{F}_{\beta, m}\circ \tilde{G}_{K,N,m,\eta}|<C \text{ on } E_{10}.
		$$
		
		Combining the above with \eqref{StupidMeasures} we easily calculate
		\begin{equation}\label{Dragon}
			\begin{aligned}
				\int_{E_{1}\cup E_2\cup E_3\cup E_5} |D_1 G_{K,N,m,\eta}\circ \hat{F}_{\beta, m}\circ \tilde{G}_{K,N,m,\eta}|^p &\leq C
				(m+1)^2,\\
				\int_{E_{4} \cup E_6} |D_1 G_{K,N,m,\eta}\circ \hat{F}_{\beta, m}\circ \tilde{G}_{K,N,m,\eta}|^p &\leq CK^{(\alpha+1)(p-1)+1} (m+1)^2,\\
				\int_{E_{7} \cup E_9 } |D_1 G_{K,N,m,\eta}\circ \hat{F}_{\beta, m}\circ \tilde{G}_{K,N,m,\eta}|^p &\leq C K^{(\alpha+1)(p-1)+1} \eta (m+1)^3,\\
				\int_{E_{8} \cup E_{10}  } |D_1 G_{K,N,m,\eta}\circ \hat{F}_{\beta, m}\circ \tilde{G}_{K,N,m,\eta}|^p &\leq C\eta (m+1)^3.\\
			\end{aligned}
		\end{equation}
		Adding the estimates in \eqref{Dragon} to point $iii)$ of Proposition~\ref{IntegralEstimate} we get precisely $v)$. By rotating the sets we get the same estimates for the $D_j$-derivative, $j=2,3$.
		
		\step{6}{Proof of $vi)$}{PTLLS6}		
		
		Now it is useful to slightly change the decomposition of $R_{m,\eta}\setminus Z_m$ from Step~\ref{PTLLS5}. We will not use set $E_1, E_2,\hdots, E_6$ instead we define
		$$
			\begin{aligned}
		\tilde{E}_{3} =&[-2m-5, 2m+5]^3\setminus [-2m-3,2m+3]^3 \times [-1,1] \cap \Big(\bigcup_{j=1}^3(\C_{A,K,m} + \er e_j))\Big)\\
		\tilde{E}_{4} =&[-2m-5, 2m+5]^3\setminus [-2m-3,2m+3]^3 \times [-1,1] \setminus \Big(\bigcup_{j=1}^3(\C_{A,K,m} + \er e_j))\Big)\\
		\tilde{E}_{5} =&[-2m-3, 2m+3]^3\setminus [-2m-1,2m+1]^3 \times [-1,1] \cap \Big(\bigcup_{j=1}^3(\C_{A,K,m} + \er e_j))\Big)\\
		\tilde{E}_{6} =&[-2m-3, 2m+3]^3\setminus [-2m-1,2m+1]^3 \times [-1,1] \setminus \Big(\bigcup_{j=1}^3(\C_{A,K,m} + \er e_j))\Big).\\
		\end{aligned}
		$$
		 We need to estimate $D_4\tilde{G}_{K,N,m,\eta}$. The estimates of measure from \eqref{StupidMeasures} still hold even when we replace $E_i$ with $\tilde{E}_i$ for $i=3,4,5,6$. 
		 By the rotational symmetry of the map, the calculations of $|D_4 G_{K,N,m,\eta}\circ \hat{F}_{\beta, m}\circ \tilde{G}_{K,N,m,\eta}|$ on $\tilde{E}_3 \cup \tilde{E}_4 \cup\tilde{E}_5 \cup \tilde{E}_6$ are exactly the same as estimates of $|D_1 G_{K,N,m,\eta}\circ \hat{F}_{\beta, m}\circ \tilde{G}_{K,N,m,\eta}|$ in Step~\ref{PTLLS5} for ${E}_3 \cup {E}_4 \cup{E}_5 \cup {E}_6$. Precisely we have
		 \begin{equation}\label{tE3456}
		 |D_4G_{K,N,m,\eta}\circ \hat{F}_{\beta, m}\circ \tilde{G}_{K,N,m,\eta}|<CK^{\alpha+1} \text{ on } \tilde{E}_3 \cup \tilde{E}_4 \cup\tilde{E}_5 \cup \tilde{E}_6.
		 \end{equation}
		 
		  It is easy, by \eqref{OhDear1A}, \eqref{OhDear1B} and \eqref{DeathToGo2}, to estimate that 
		  $$
		  |D_4\tilde{G}_{K,N,m,\eta}| \leq \frac{C}{\eta}\text{ on }E_{7} \cup  E_8\cup E_9 \cup E_{10} \cup  E_{11}\cup E_{12}.
		  $$
		  Especially on $E_8$ we have $D_4\tilde{G}_{K,N,m,\eta} = \tfrac{C}{\eta}e_4$. Again 
		  $$
		  \hat{F}_{\beta,m}(x_1,x_2,x_3,x_4) = (x_1,x_2,x_3,-x_4)\text{ for }x \in \tilde{G}_{K,N,m,\eta}(E_8).
		  $$ 
		  Easily from \eqref{StretchingBitAroundTop}  
		  we see that $|D_4{G}_{K,N,m,\eta}(\hat{F}_{\beta, m}\circ \tilde{G}_{K,N,m,\eta}(x))| < C$ on $x\in E_8$. Therefore
		  \begin{equation}\label{tE8}
		  	|D_4G_{K,N,m,\eta}\circ \hat{F}_{\beta, m}\circ \tilde{G}_{K,N,m,\eta}|<\tfrac{C}{\eta} \text{ on } E_8.
		  \end{equation}
		  
		  We calculate on $E_7$ in two parts. The first part is when
		  $$
		  	x\in \bigcup_{n_1,n_2,n_3 = -m}^m\big(\tilde{U}_{K,2}^3\times( [-1-\tfrac{2\eta}{26 - \eta}, -1]\cup[1,1+\tfrac{2\eta}{26 - \eta}])\big)+ (2n_1,2n_2,2n_3,0)
		  $$
		  From \eqref{OhDear1A} and \eqref{starrr} we obtain that $\tilde{G}_{K,N,m,\eta}(x)$ maps this set into 
		  $$
		  \bigcup_{n_1,n_2,n_3 = -m}^m\big(U_2^3\times( [-3, -1]\cup[1,3])\big)+ (2n_1,2n_2,2n_3,0)
		  $$ 		  
		  and that $D_4\tilde{G}_{K,N,m,\eta} = \tfrac{C}{\eta}e_4$ there. 
		  By \eqref{NotGay4} we have $D_4\hat{F}_{\beta,m}(\tilde{G}_{K,N,m,\eta}(x))  = -e_4$ and by \eqref{StretchingBitAroundTop} we have $|D_4{G}_{K,N,m,\eta}|<C$. Therefore 
		  $$
		  |D_4G_{K,N,m,\eta}\circ \hat{F}_{\beta, m}\circ \tilde{G}_{K,N,m,\eta}|<\tfrac{C}{\eta}
		  $$
		  on the first part. The second part is
		  $$
		  	\Big([-2m-5, 2m+5]^3\setminus \big[\bigcup_{n_1,n_2,n_3 = -m}^m\tilde{U}_{K,2}^3+(2n_1,2n_2,2n_3)\big]\Big)\times\big( [-1-\tfrac{2\eta}{26 - \eta}, -1]\cup[1,1+\tfrac{2\eta}{26 - \eta}]\big).
		  $$
		  From \eqref{OhDear1A} and \eqref{starrr} we obtain that $\tilde{G}_{K,N,m,\eta}(x)$ maps this set into 
		  $$
		  \big([-2m-5, 2m+5]^3\setminus \bigcup_{n_1,n_2,n_3 = -m}^m U_2^3+ (2n_1,2n_2,2n_3,0)\big)\times( [-3, -1]\cup[1,3]).
		  $$ 	
		  On this set we have that $|D\hat{F}_{\beta, m}|<C$ and that $\hat{F}_{\beta, m}(\tilde{G}_{K,N,m,\eta}(x)) \notin \T$ by Lemma~\ref{ThatTimeOfTheMonth}. Therefore $|DG_{K,N,m,\eta}( \hat{F}_{\beta, m}\circ \tilde{G}_{K,N,m,\eta}(x))| < C$ on this second part as well and 
		  \begin{equation}\label{tE7}
		  |D_4G_{K,N,m,\eta}\circ \hat{F}_{\beta, m}\circ \tilde{G}_{K,N,m,\eta}|<\tfrac{C}{\eta} \text{ on } E_7.
		  \end{equation}

		  It remains to consider $E_9 \cup E_{10}$. From \eqref{OhDear1A} and \eqref{starrrr} we see that $\tilde{G}_{K,N,m,\eta}(x)$ maps this set into 
		  $$
		  [-2m-5, 2m+5]^3\times ([-14, -3]\cup[3,14]). 
		  $$
		  From Lemma~\ref{ThatTimeOfTheMonth} we see that $\hat{F}_{\beta, m}(\tilde{G}_{K,N,m,\eta}(x)) \notin \T$ and so $|DG_{K,N,m,\eta}( \hat{F}_{\beta, m}\circ \tilde{G}_{K,N,m,\eta}(x))| <C$. Therefore
		  \begin{equation}\label{tE9101112}
		  |D_4G_{K,N,m,\eta}\circ \hat{F}_{\beta, m}\circ \tilde{G}_{K,N,m,\eta}|<\tfrac{C}{\eta} \text{ on } E_9\cup E_{10}.
		  \end{equation}
		 Then
		\begin{equation}\label{Dungeon}
			\begin{aligned}
				\int_{\tilde{E}_3 \cup\tilde{E}_5} |D_4 G_{K,N,m,\eta}\circ \hat{F}_{\beta, m}\circ \tilde{G}_{K,N,m,\eta}|^p &\leq C(m+1)^2,\\
				\int_{\tilde{E}_4 \cup \tilde{E}_6} |D_4 G_{K,N,m,\eta}\circ \hat{F}_{\beta, m}\circ \tilde{G}_{K,N,m,\eta}|^p &\leq C K^{(\alpha+1)(p-1)+1} (m+1)^2,\\
				\int_{E_{7}\cup E_8 \cup E_9\cup E_{10}} |D_4 G_{K,N,m,\eta}\circ \hat{F}_{\beta, m}\circ \tilde{G}_{K,N,m,\eta}|^p &\leq C  \eta^{1-p} (m+1)^3.
			\end{aligned}
		\end{equation}
		Adding the estimates in \eqref{Dungeon} to point $iii)$ of Proposition~\ref{IntegralEstimate} we get precisely $vi)$.	
	\end{proof}

	\begin{proof}[Proof of Theorem~\ref{TheBigLebowski}]
		We choose $\alpha = \tfrac{2+p}{3-2p}$ and note that for $p\in[1,\frac{3}{2}]$ we easily have $\alpha>\frac{4}{2-p}$. Further we set $N= 2K$ and $\eta = K^{-\alpha-1}$. We assume that $K\geq K_1$ so that 
		$N\geq 3M^*+3M+6$ and $N\geq 3M+K$. 
		We calculate $J = J(K,m) \approx m$  the largest natural number strictly smaller than $\tfrac{2m+5}{2+ 2K^{-\alpha}}$ and call $d_J = \tfrac{2+2K^{-\alpha}}{2m+5}$. We divide the cube $Q(0,1)$ into $J$ disjoint (up to common boundaries) `plates' call them
		$$
		P_i = [-1,1]^3 \times [-1+(i-1)d_J, -1+i\cdot d_J]
		$$
		for $i=1,\dots, J$. We define $g_{K,m}$ on each $P_i$ we apply a rescaled and translated version of the mapping from Proposition~\ref{TheLittleLebowski}, i.e. for each $x\in P_i$ we put
		$$
		g_{K,m}(x) = \frac{1}{2m+5} f_{K,3M+K,m,K^{-\alpha}}\Big(\big(2m+5\big)\big(x-(1+(i-\tfrac{1}{2})d_J)e_4\big) \Big) + (1+(i-\tfrac{1}{2})d_J)e_4
		$$
		and finally, for $x\in Q(0,1)\setminus (\bigcup_{i=1}^{J(K,m)}P_i)$ (a set of measure less than $3/m$) we define $g_{K,m}(x) = x$.
		
		Points $i)$, $ii)$ and $iii)$ follow from Proposition~\ref{TheLittleLebowski} (points $i)$, $ii)$, $iii)$ and $iv)$) given we call the set
		$$
			E_{K,m} = \bigcup_{i=1}^{J} \Big(\frac{1}{2m+5}\C_{A, K,m} -e_4 +i\cdot d_Je_4 - \tfrac{d_J}{2}e_4\Big).
		$$
		
		Now notice by \eqref{rozdil} we have 
		$$
		\frac{\mathcal{L}^4(\C_{A,K}) } { \mathcal{L}^4(Q(0,1))} \geq 1-CK^{-\alpha-1}\text{ and therefore }
		\frac{\mathcal{L}^4(\C_{A, K,m})}  {\mathcal{L}^4(Z_m)} \geq 1-CK^{-\alpha-1}.
		$$
		Further, it is an obvious geometrical fact that $ \mathcal{L}^4(Z_m)/  \mathcal{L}^4(R_{m,K^{-\alpha}}) \to \frac{1}{1+K^{-\alpha}}$ as $m \to \infty$. 
	Hence it is easy to see that given $\epsilon>0$ we  can choose $K$ big enough and $m$ big enough so that 
		$$
			\mathcal{L}^4\Big(\bigcup_{i=1}^{J}P_i\setminus E_{K,m} \Big) < \frac{\epsilon}{2}
			\ \text{ and } \
			\mathcal{L}^4\Big(Q(0,1) \setminus \bigcup_{i=1}^{J}P_i\Big) < \frac{\epsilon}{2}
		$$
		and so
		$$
			\mathcal{L}^4\Big(Q(0,1) \setminus E_{K,m}\Big) < \epsilon
		$$
		 for all $m\geq m_0$ and $K > K_0$. Therefore the set $E_{K,m}$ satisfy the condition $iv)$.
		
		Clearly $Dg_{K,m}(x)= Df_{K,K+3M, m,K^{-\alpha}}(\Phi_{i,m}(x))$ where $\Phi_{i,m}$ is the affine map of $P_i$ onto $R_{m,\eta}$ and hence by a simple linear change of variables we have
		$$
		\int_{P_i}|D_j g_{K,m}(y)|^p(2m+5)^4\; dy=\int_{R_{m,K^{-\alpha}}} |D_jf_{K,3M+K,m,m^{-1}}(x)|^p \; dx
		$$
and 		also
		$$
		\int_{P_i\setminus (E_{K,m}\cap P_i)}|D_j g_{K,m}(y)|^p(2m+5)^4\; dy=\int_{R_{m,K^{-\alpha}}\setminus \C_{A, K,m}} |D_jf_{K,3M+K,m,m^{-1}}(x)|^p \; dx
		$$
		for all $j=1,2,3,4$. We sum this over all $1\leq i\leq J \approx m$, and use Proposition~\ref{TheLittleLebowski} points $v)$ and $vi)$ 
		$$
			\begin{aligned}
				\int_{Q(0,1) \setminus E_{K,m}} |D_j g_{K,m}|^p
				&\leq C(p,\alpha, \beta) K^{(\alpha+1)(p-1)+1}(m+1)^2 \frac{J}{(2m+5)^4} \\
				& \quad + C(p,\alpha, \beta) K^{(\alpha+1)(p-1)+1}(m+1)^3 K^{-\alpha} \frac{J}{(2m+5)^4} \\
				&\quad + C(p,\alpha, \beta) K^{-(\alpha+1)(2-p)+2} (m+1)^3 \frac{J}{(2m+5)^4}\\
				&\leq C(p,\alpha, \beta)\big[K^{(\alpha+1)(p-1)+1} m^{-1}+K^{-(\alpha+1)(2-p)+2}\big]
			\end{aligned}
		$$
		for $j=1,2,3$. Similarly
		$$
			\begin{aligned}
				\int_{Q(0,1) \setminus E_{K,m}} |D_4 g_{K,m}|^p
				& \leq C(p,\alpha, \beta)K^{(\alpha + 1)(p-1)+1}(m+1)^2 \frac{J}{(2m+5)^4} \\
				&\quad + C(p,\alpha, \beta)(m+1)^3 K^{-(1-p)\alpha} \frac{J}{(2m+5)^4}\\
				&\quad + C(p,\alpha, \beta)K^{-(\alpha+1)(2-p)+2}(m+1)^3 \frac{J}{(2m+5)^4}\\
				& \leq C(p,\alpha, \beta) \big[K^{(\alpha+1)(p-1)+1} m^{-1} 
				+ K^{\alpha(p-1)}+  K^{-(\alpha+1)(2-p)+2}\big]\\
				& \leq C(p,\alpha, \beta) \big[K^{(\alpha+1)(p-1)+1} m^{-1} 
				+ K^{\alpha(p-1)}\big] 
			\end{aligned}
		$$
		where we have used  $K^{-(\alpha+1)(2-p)+2}\leq 1\leq K^{\alpha(p-1)}$ in the last step (recall that $\alpha> \frac{4}{2-p}$). 
		Therefore for $j=1,2,3$ we have
		\begin{equation}\label{PureEvil}
			\begin{aligned}
				\int_{Q(0,1) \setminus E_{K,m}} |D_j g_{K,m}|^p \cdot &\int_{Q(0,1) \setminus E_{K,m}} |D_4 g_{K,m}|^p \\
				 &\leq C K^{2(\alpha+1)(p-1)+2} m^{-2} + CK^{(\alpha+1)(2p-2)-p+2} m^{-1}\\
				 &\qquad+ CK^{-(\alpha+1)(3-2p)+3} m^{-1}  + C K^{-\alpha(3-2p)+p}.\\
			\end{aligned}
		\end{equation}
		First we show that the last term is bounded. By the choice of $\alpha = \tfrac{2+p}{3-2p}$ we have $K^{-\alpha(3-2p)+p} = K^{-2}$, therefore when $K\geq \frac{5}{\sqrt{C}}$ we have $CK^{-\alpha(3-2p)+p} \leq \tfrac{1}{25}$. Now we fix $K > \max\{K_0, K_1, \frac{5}{\sqrt{C}}\}$. Clearly 
		$$
			m^{-1}\bigl(C K^{2(\alpha+1)(p-1)+2} m^{-1} + CK^{(\alpha+1)(2p-2)-p+2}+ CK^{-(\alpha+1)(3-2p)+3}\bigr) \to 0 \text{ as } m\to \infty
		$$
		and therefore we can choose $m$ large enough so that the other terms are smaller than $\tfrac{1}{24}$. Thus we have
	\begin{equation}\label{DontQuoteMe}
	\int_{Q(0,1) \setminus E_{K,m}} |D_j g_{K,m}|^p \cdot \int_{Q(0,1) \setminus E_{K,m}} |D_4 g_{K,m}|^p \leq \frac{1}{12}
	\end{equation}
	which is $v)$. Having chosen $K$ and $m$ we put $f_1 = g_{K,m}$ and so Theorem~\ref{TheBigLebowski} is proven.
	\end{proof}

\section{Proof of Theorem~\ref{main}}

\subsection{Construction of $f_2$ by composing $f_1$ with itself}
	\begin{thm}\label{LouisArmstrong}
		Let $Q(0,1)$ be the cube in dimension $4$, let $\epsilon>0$ and $1\leq p < \frac 3 2$. There exists a closed set $\mathcal{E} \subset Q(0,1)$ and a map $f_2 \in W^{1,p}(Q(0,1), \er^4)$ such that,
		\begin{enumerate}
			\item[$i)$] $f_2(x) = x$ for $x\in \partial Q(0,1)$,
			\item[$ii)$] $f_2$ is locally bi-Lipschitz on $Q(0,1) \setminus \mathcal{E}$,
			\item[$iii)$] $J_{f_2}<0$ almost everywhere on $\mathcal{E}$,
			\item[$iv)$] 
			\begin{equation}\label{Merlin}
				\int_{Q(0,1)\setminus \mathcal{E}}|Df_2|^p< \tfrac{1}{3}\mathcal{L}^4(Q(0,1)),
			\end{equation}
			\item[$v)$] $\mathcal{L}(Q(0,1)\setminus\mathcal{E})< \epsilon$. 
		\end{enumerate}
	\end{thm}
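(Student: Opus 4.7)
The plan is to iterate the construction of Theorem~\ref{TheBigLebowski} once, composing $f_1$ with rotated, scaled, translated copies of itself on a tailored covering of $Q(0,1)\setminus E_1$, thereby exploiting the multiplicative estimate $v)$ of that theorem. First I invoke Theorem~\ref{TheBigLebowski} with an auxiliary $\epsilon_1 = \epsilon_1(\epsilon,p)>0$ (to be chosen small) to obtain a closed $E_1\subset Q(0,1)$ and $f_1\in W^{1,p}(Q(0,1),\er^4)$ with $f_1|_{\partial Q}=\id$, $J_{f_1}<0$ on $E_1$, $f_1$ locally bi-Lipschitz on $Q(0,1)\setminus E_1$, $\mathcal{L}^4(Q(0,1)\setminus E_1)<\epsilon_1$, and the crucial bound $\int_{Q\setminus E_1}|D_j f_1|^p\cdot\int_{Q\setminus E_1}|Df_1|^p\leq 1/12$ for $j=1,2,3$. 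Because $f_1$ is locally bi-Lipschitz and sense-preserving on $Q(0,1)\setminus E_1$, the unit vector field $S(c):=D_4 f_1(c)/|D_4 f_1(c)|$ is defined a.e.\ there. I then apply Lemma~\ref{StickThatUpYourPipeAndSmokeItTillTheCowsComeHome} to $\Omega:=Q(0,1)\setminus E_1$ with this $S$ and with radii $r_c>0$ chosen small enough for the linearization in Lemma~\ref{Patchwork} to apply with prescribed tolerance $\rho_c$, obtaining a countable family of interior-disjoint rotated cubes $\mathcal{Q}_{c_i,r_i}=c_i+O_{c_i}^{-1}(Q(0,r_i))$ covering $\Omega$ up to a null set, where $O_{c_i}S(c_i)=e_1$.

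On each $\mathcal{Q}_{c_i,r_i}$ I set $\Phi_i(y):=c_i+r_i O_{c_i}^{-1}y$ and define the rotated/scaled copy $g_i:=\Phi_i\circ f_1\circ\Phi_i^{-1}:\mathcal{Q}_{c_i,r_i}\to\mathcal{Q}_{c_i,r_i}$; since $f_1|_{\partial Q(0,1)}=\id$, $g_i$ is identity on $\partial\mathcal{Q}_{c_i,r_i}$. I then define $f_2:=f_1\circ g_i$ on each $\mathcal{Q}_{c_i,r_i}$ and $f_2:=f_1$ on $E_1$; the two definitions agree along $\partial \mathcal{Q}_{c_i,r_i}\cup E_1$, yielding a continuous homeomorphism of $Q(0,1)$ with $f_2|_{\partial Q(0,1)}=\id$ (property $i)$). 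Setting $\mathcal{E}:=E_1\cup\bigcup_i\Phi_i(E_1)$, local bi-Lipschitzness on $Q(0,1)\setminus\mathcal{E}$ follows from the composition of bi-Lipschitz maps ($ii)$), and the chain rule combined with the sense-preserving nature of $\Phi_i$ gives $J_{f_2}<0$ on $\mathcal{E}$ (on $E_1$ directly, and on $\Phi_i(E_1)$ because $J_{g_i}$ inherits the sign of $J_{f_1}|_{E_1}$ while $J_{f_1}(g_i(x))>0$ since $g_i(x)\in\mathcal{Q}_{c_i,r_i}\subset Q\setminus E_1$)---this is $iii)$. The scaling-invariance of Lebesgue measure and the packing bound $\sum_i(2r_i)^4\leq\mathcal{L}^4(\Omega)<\epsilon_1$ give $\mathcal{L}^4(Q(0,1)\setminus\mathcal{E})=\sum_i r_i^4\mathcal{L}^4(Q(0,1)\setminus E_1)<\epsilon_1^2/16$, which is $<\epsilon$ once $\epsilon_1\leq 4\sqrt{\epsilon}$; this is property $v)$.

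The main content is the energy bound $iv)$. On each cube, the chain rule gives $Df_2(x)=Df_1(g_i(x))\,Dg_i(x)$, with $Dg_i(x)=O_{c_i}^{-1}Df_1(\Phi_i^{-1}(x))O_{c_i}$. Applying Lemma~\ref{Patchwork} with $F=g_i$ and a suitable rescaling (replacing $Df_1(g_i(x))$ by its linearization $A_{c_i}:=Df_1(c_i)$ up to an error $\rho_i r_i^4$) and the change of variables $x=\Phi_i(y)$ reduces the estimate to
$$
\int_{\mathcal{Q}_{c_i,r_i}}|Df_2|^p\,dx\;\leq\;r_i^4\int_{Q(0,1)}\bigl|A_{c_i}\,O_{c_i}^{-1}\,Df_1(y)\,O_{c_i}\bigr|^p\,dy\;+\;\rho_i r_i^4.
$$
The key identity is the column expansion $|A_{c_i}O_{c_i}^{-1}Df_1(y)O_{c_i}|_F^2=\sum_{k=1}^4|A_{c_i}O_{c_i}^{-1}D_k f_1(y)|^2$. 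For $k=1,2,3$ the $k$-th term is controlled by $|A_{c_i}|_{op}^p|D_k f_1(y)|^p$, whose $L^p$ integral in $y$ is exactly what condition $v)$ pairs with $|Df_1(c_i)|^p$. For $k=4$, the choice $O_{c_i}S(c_i)=e_1$ (using the residual $SO(3)$-freedom in $O_{c_i}$ to additionally arrange $O_{c_i}e_4\perp S(c_i)$) forces $O_{c_i}^{-1}D_4 f_1(c_i)\in\mathrm{span}(e_1,e_2,e_3)$, so that $A_{c_i}O_{c_i}^{-1}D_4 f_1(c_i)$ decomposes onto the \emph{small} columns $D_1 f_1(c_i),D_2 f_1(c_i),D_3 f_1(c_i)$ of $A_{c_i}$, converting the ``big times big'' interaction into a ``big times small'' one. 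Summing over $i$, identifying $\sum_i(2r_i)^4|Df_1(c_i)|^p$ as a Riemann sum for $\int_{Q\setminus E_1}|Df_1|^p$, and invoking property $v)$ three times gives
$$
\int_{Q(0,1)\setminus\mathcal{E}}|Df_2|^p\;\leq\;C\int_{Q\setminus E_1}|Df_1|^p\sum_{j=1}^{3}\int_{Q\setminus E_1}|D_j f_1|^p\;+\;o(1)\;\leq\;\frac{C}{4}+o(1),
$$
which meets \eqref{Merlin} once $\epsilon_1$ is taken small enough to absorb $C$ and control the Patchwork errors.

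The hard part will be exactly this column-by-column matrix estimate together with the handling of the residual fluctuation of $D_4 f_1(y)$ around $D_4 f_1(c_i)$ as $y$ ranges over all of $Q(0,1)$. A naive sub-multiplicative Frobenius bound $|A_{c_i}O_{c_i}^{-1}Df_1(y)O_{c_i}|_F\leq|A_{c_i}|_F|Df_1(y)|_F$ leads, after summation, to $(\int|Df_1|^p)^2$, which is not controlled by condition $v)$ at all and in fact blows up as $\int|D_j f_1|^p\to 0$ for $j\leq 3$. The entire role of the $\epsilon_1$-small parameter in Theorem~\ref{TheBigLebowski} and of the geometric rotation $O_{c_i}$ is to make this naive bound inapplicable and to force the chain-rule product into the form where only the cross-terms $\int|Df_1|^p\cdot\int|D_j f_1|^p$ from $v)$ appear. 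Justifying this rigorously---reconciling the pointwise linearization of Lemma~\ref{Patchwork} with the necessarily $L^p$-averaged nature of $v)$, and ensuring the auxiliary error terms and Riemann-sum approximations are summable---is the delicate heart of the argument.
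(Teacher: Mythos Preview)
Your composition is in the wrong order, and this is not a technicality but the reason the alignment trick exists. You set $f_2=f_1\circ g_i$ with $g_i=\Phi_i\circ f_1\circ\Phi_i^{-1}$, so after the Patchwork linearization the \emph{outer} factor freezes to $A_{c_i}=Df_1(c_i)$ while the inner factor is $O_{c_i}^{-1}Df_1(y)O_{c_i}$ with $y=\Phi_i^{-1}(x)$ ranging over \emph{all of} $Q(0,1)$. For the $k=4$ column you need the fourth component of $O_{c_i}^{-1}D_4f_1(y)$, namely $\langle O_{c_i}e_4,\,D_4f_1(y)\rangle$, to vanish so that $A_{c_i}$ acts only through its small columns $D_jf_1(c_i)$, $j\le3$. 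But $D_4f_1(y)$ bears no relation to $D_4f_1(c_i)$ when $y$ sweeps $Q(0,1)$: the ``residual fluctuation'' you flag is the full oscillation of $D_4f_1$ over the cube, not a perturbation, and no single rotation $O_{c_i}$ can be orthogonal to all of these directions. The $k=4$ term therefore collapses only to the naive bound $|A_{c_i}|\,|D_4f_1(y)|$, which after summation gives $\bigl(\int_{Q\setminus E_1}|Df_1|^p\bigr)^2$---exactly the quantity condition $v)$ does \emph{not} control.

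The paper reverses the roles: it covers the image $Q\setminus f_1(E)$ by rotated cubes $\mathcal{Q}_{c_i,r_i}$, sets $S(c)=D_4f_1(a)/|D_4f_1(a)|$ with $a=f_1^{-1}(c)$, and defines $f_2=g_{c_i,r_i}\circ f_1$ on $f_1^{-1}(\mathcal{Q}_{c_i,r_i})$. Now Lemma~\ref{Patchwork} (with $F=f_1$) freezes the \emph{inner} factor to the constant matrix $Df_1(a_i)$, so
\[
D_4f_2\approx Dg_{c_i,r_i}(y)\,D_4f_1(a_i)=|D_4f_1(a_i)|\;Dg_{c_i,r_i}(y)\,S(c_i),
\]
and by the very definition of $g_{c_i,r_i}$ one has $|Dg_{c_i,r_i}(y)S(c_i)|=|D_1f_1(O_{c_i}(y-c_i)/r_i)|$. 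The big factor $|D_4f_1(a_i)|$ is now a \emph{constant per cube}, summed as a Riemann sum into $\int|D_4f_1|^p$; the varying factor contributes $\int_{Q\setminus E}|D_1f_1|^p$, and their product is precisely condition~$v)$. For $j=1,2,3$ the same chain rule gives the constant $|D_jf_1(a_i)|$ times $|Dg_{c_i,r_i}|$, again matching~$v)$. The point is that the direction to be aligned must be the one that gets frozen by the linearization; putting the rotated copy on the outside is what achieves this.

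A secondary issue: your $\mathcal{E}=E_1\cup\bigcup_i\Phi_i(E_1)$ is a countable union of closed sets and need not be closed; the paper adds the closed null set $N=Q\setminus\bigcup_i\mathcal{Q}_{c_i,r_i}^\circ$ to $\mathcal{E}$ to fix this.
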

	
	\begin{proof}
		The starting point for our construction is the mapping $f_1$ from Theorem~\ref{TheBigLebowski}, where $p$ is the $p$ of our claim, $\epsilon$ is some small fixed positive number. By point $ii)$ of Theorem~\ref{TheBigLebowski} we have that $f_1$ is locally bi-Lipschitz on $Q(0,1)\setminus E$. In the following we have the pair $a$ in the preimage and $c$ in the image, so that $c = f_1(a)$. Assume that $a = f_1^{-1}(c) \in Q(0,1)\setminus E$ is a point of differentiability of $f_1$ and a point of approximate continuity of $Df_1$. Then we define
		\begin{equation}\label{defS}
			S(c) = \frac{Df_1(a)(e_4)}{|Df_1(a)(e_4)|},
		\end{equation}
		i.e. $S(c)$ is the unit vector in the direction of the image of $e_4$ in the differential of $f_1$ at $a$ (note that $Df_1(a)(e_4)\neq$ by the local bi-Lipschitz quality of $f_1$). Call $O_c$ a sense-preserving unitary map so that $O_c(S(c)) = e_1$. For each such $c$ we define the cube $\mathcal{Q}_{c,r} = O_c^{-1}(Q(0,r))+c$. Then we define
		\eqn{defgcr}
		$$
			g_{c,r}(x) = O_c^{-1}\Big(rf_1\Big(O_c\Big(\frac{x-c}{r}\Big)\Big)\Big) + c
		$$
		 for $x\in \mathcal{Q}_{c,r}$ for each of the $c$ where we have defined $S(c)$. Note especially that $g_{c,r}(x) = x$ on $\partial\mathcal{Q}_{c,r}$. We intend to apply Lemma~\ref{Patchwork}, which is formulated for $f_{c,r} = rf(\tfrac{x-c}{r})+c$ but our $g_{c,r}$ is composed with a unitary map from inside and outside. This does not make any difference to the estimates however.
		
		 For each $c \in Q(0,1)$ call $a = f_1^{-1}(c)$. We denote $A_{c,r}(x) = c + Df_1(a)(x-a)$. Now $f_{1}$ is locally Lipschitz on $Q(0,1) \setminus E$ and so for any $c\in Q(0,1) \setminus E$ there is $L_c>0$ so that for $0<r<\tfrac{1}{2}\dist(c, E)$ we have
		$$
		|Df_1(x)|^p \leq L^p_c\text{ for almost all }x\in[f^{-1}_1(\mathcal{Q}_{c,r})\cup A_{c,r}^{-1}(\mathcal{Q}_{c,r})].
		$$
		This, together with the approximate continuity of $Df_1$, the absolute continuity of its integral and the fact that $a$ is a point of differentiability of $f_1$ means that if $c$ is fixed then there exists an $r_c$ such that when $0<r<r_c$ it holds that
		 \begin{equation}\label{Babicky}
		 |D_jf_1(a)|^p \mathcal{L}^4 \big(A_{c,r}^{-1}(\mathcal{Q}_{c,r})\big)  \leq \frac{3}{2}	\int_{f_1^{-1}(\mathcal{Q}_{c,r})}|D_jf_1|^p\text{ for }j=1,2,3,4.
		 \end{equation}
		We may moreover assume that $0<r_c$ is small enough so that the conclusion of Lemma \ref{Patchwork} holds for $g_{c,r}$ on $\mathcal{Q}_{c,r}$ with $F=f_1$.
		Therefore by Lemma~\ref{StickThatUpYourPipeAndSmokeItTillTheCowsComeHome} we have a covering of $Q(0,1) \setminus (f_1(E)\cup \mathcal N)$ (where $E$ is the closed set from Theorem~\ref{TheBigLebowski} and $\mathcal N$ is the closed null set of Lemma~\ref{StickThatUpYourPipeAndSmokeItTillTheCowsComeHome}) by rotated cubes $\{\mathcal{Q}_{c_i,r_i}\}$ such that $r_i<r_{c_i}$, for $i \in \en$. We can infer from our covering and from \eqref{Babicky} that
		\begin{equation}\label{DebilniKecy}
			\sum_{i=1}^{\infty}|D_jf_1(a_i)|^p \mathcal{L}^4 \big(A_{c_i,r_i}^{-1}(\mathcal{Q}_{c_i,r_i})\big) \leq \tfrac{3}{2} \int_{Q(0,1)}|D_jf_1|^p \text{ for }j=1,2,3,4.
		\end{equation}
		
		 Then we define $f_2$ as $f_1$ composed with a scaled and rotated copy of $f_1$ on $f_1^{-1}(\mathcal{Q}_{c_i, r_i})$, i.e.
		\begin{equation}\label{Helsinki}
			f_2(x) = \begin{cases}
				f_1(x) \qquad & x\in Q(0,1)\setminus f_1^{-1}(\bigcup_{i=1}^{\infty}\mathcal{Q}_{c_i, r_i})\\
				g_{c_i,r_i}\circ f_1(x), \qquad &x\in f_1^{-1}(\mathcal{Q}_{c_i, r_i}).\\
			\end{cases}
		\end{equation}
		Let us note that 
		$Q(0,1)\setminus f_1^{-1}(\bigcup_{i=1}^{\infty}\mathcal{Q}_{c_i, r_i})=E\cup 
		f_1^{-1}(\mathcal{N})$ and hence it is $E$ up to a zero measure set. 
		This immediately implies that $f_2(x) = x$ on $\partial Q(0,1)$ (point $i)$). The fact that $f_1$ is a homeomorphism implies that $g_{c,r}$ is a homeomorphism on 
		$\mathcal{Q}_{c_i,r_i}$ and thus we see that $f_2$ is a homeomorphism. We call
		$$
			E_i = c_i + O_c^{-1}(r_iE)\text{ and }
	\mathcal{E} = E\cup f_1^{-1}\big(\bigcup_{i=1}^{\infty} E_i\big) \cup N,
	$$
	where $N = Q(0,1) \setminus \bigcup_i \mathcal{Q}_{c_i,r_i}^{\circ}$ is closed and has zero measure. Since $\mathcal{Q}_{c_i,r_i}$ have pairwise disjoint interiors and each $E_i$ is closed we have that the complement of $\mathcal{E}$ is the union of open sets and so $\mathcal{E}$ is closed in $Q(0,1)$ (point $ii)$ first claim). Since $f_1$ is locally bi-Lipschitz on $Q(0,1)\setminus E$, $g_{c_i,r_i}$ is locally bi-Lipschitz on $Q(0,1) \setminus \big(c_i + O_c^{-1}(r_iE)\big)$ and so $f_2$ is locally bi-Lipschitz on $Q(0,1) \setminus \mathcal{E}$ (point $ii)$ second claim). Further, the fact that $J_{f_1}<0$ on $E$ and $J_{f_1}(a_i)>0$ easily gives that $J_{g_{c_i,r_i}\circ f_1}<0$ on $f_1^{-1}\big(c_i + O_c^{-1}(r_iE)\big)$ and so we get $J_{f_2} < 0$ almost everywhere on $\mathcal{E}$ and $J_{f_2}>0$ almost everywhere on $Q(0,1)\setminus \mathcal{E}$ (point $iii)$). It remains to prove point $iv)$.
		
		The main idea is that the derivative of the composition is
		$\bigl|Dg_{c_i,r_i}(f_1(x))Df_1(x)\bigr|$ and the derivative of $f_1$ is big only in the $x_4$ direction (see Theorem \ref{TheBigLebowski}) and small in $x_1,x_2, x_3$-directions. We have rotated our $g_{c_i,r_i}$ so that big derivative $D_4 f_1$ is multiplied by derivatives of $g_{c_i,r_i}$ only in the directions where it is small (i.e. $D_1$ of the rotated and scaled $f_1$). Thus the derivative of the composition is small in average. More precisely we use derivative of the composition, \eqref{defgcr}, $O_{c}S(c) = e_1$ and the fact that $O_c^{-1}$ is unitary to obtain
\eqn{idiot}		
$$
\begin{aligned}
|Dg_{c,r}(y)S(c)|
&=\bigl|O_c^{-1}rDf_1\bigl(O_c(\frac{x-c}{r})\bigr)O_c\frac{1}{r}S(c)\bigr|\\
&=\bigl|O_c^{-1}Df_1\bigl(O_c(\frac{x-c}{r})\bigr)e_1\bigr|
=|D_1f_1\bigl(O_c(\frac{x-c}{r})\bigr)|.\\
\end{aligned}
$$
		
		By $Z_i$ we denote the linearized preimage of $\mathcal{Q}_{c_i,r_i}\setminus E_i$, i.e.
		$$
			Z_i = {a_i + [Df_1(a_i)]^{-1}[(\mathcal{Q}_{c_i,r_i}\setminus E_i) - c_i]}.
		$$
		We calculate using \eqref{Helsinki}, \eqref{WeAllLive} of Lemma~\ref{Patchwork} with $\rho = \frac {1} {64}$, a linear change of variables, \eqref{defS} and \eqref{idiot}
		$$
			\begin{aligned}
				&\int_{f_1^{-1}(\mathcal{Q}_{c_i,r_i} \setminus E_i)}|D_4f_2|^p
				= \int_{f_1^{-1}(\mathcal{Q}_{c_i,r_i}\setminus E_i)}\bigl|Dg_{c_i,r_i}(f_1(x))D_4f_1(x)\bigr|^p\; dx\\
				&\quad \leq \int_{Z_i} |Dg_{c_i,r_i}(c_i + Df_1(a_i)(x-a_i))D_4f_1(a_i)|^p\; dx
				+\frac {1} {64} r_i^4\\
				& \quad  \leq  |D_4f_1(a_i)|^p \int_{\mathcal{Q}_{c_i,r_i}\setminus E_i} \frac{|Dg_{c_i,r_i}(y)S(c_i)|^p}{\det Df_1(a_i)}\; dy+\frac {1} {64} r_i^4\\
				& \quad  = |D_4f_1(a_i)|^p \frac{\mathcal{L}^4\big([Df_1(a_i)]^{-1}\mathcal{Q}_{c_i,r_i}\big)}{\mathcal{L}^4\big(\mathcal{Q}_{c_i,r_i}\big)}  \int_{\mathcal{Q}_{c_i,r_i}\setminus E_i}|Dg_{c_i,r_i}(y)S(c_i)|^p\; dy+\frac {1} {64} r_i^4\\
				& \quad  =  |D_4f_1(a_i)|^p \frac{\mathcal{L}^4\big([Df_1(a_i)]^{-1}\mathcal{Q}_{c_i,r_i}\big)}{\mathcal{L}^4\big(Q(0,1)\big)}  \int_{Q({0,1}) \setminus E}|D_1f_1(x)|^p\; dx+\frac {1} {64} r_i^4.\\
			\end{aligned}
		$$
		Using \eqref{DebilniKecy} we may sum over $\mathcal{Q}_{c_i,r_i}$; we  recall that $f$ is locally Lipschitz and that we use Theorem \ref{TheBigLebowski} :
		$$
			\begin{aligned}
				\int_{Q(0,1) \setminus \mathcal{E}}|D_4f_2|^p &
				\leq \sum_{i=1}^{\infty} \Bigl[  |D_4f_1(a_i)|^p \mathcal{L}^4\big([Df_1(a_i)]^{-1}\mathcal{Q}_{c_i,r_i}\big)\int_{Q({0,1}) \setminus E}|D_1f_1(x)|^p +\frac {1} {64} r_i^4\Bigr]\\
				&  \leq \frac{3}{2}  \int_{Q(0,1)\setminus E}|D_4f_1|^p\int_{Q({0,1}) \setminus E}|D_1f_1(x)|^p+\frac {1} {64} \mathcal{L}^4(Q(0,1)) \\
				& \leq \frac{1}{8}+\frac{1}{4}.
			\end{aligned}
		$$
		We estimate the other derivatives $j=1,2,3$ in similar fashion to the above (recall that $f_1^{-1}\big(\bigcup_i \mathcal{Q}_{c_i, r_i} \cup N\big)= Q(0,1) \setminus E$ and that $f_1$ is bi-Lipschitz on the set $f_1^{-1}(N)$)
		$$
			\begin{aligned}
				\int_{Q(0,1) \setminus \mathcal{E}}|D_jf_2|^p &\leq \sum_{i=1}^{\infty} \int_{f_1^{-1}(\mathcal{Q}_{c_i,r_i} \setminus E_i)} |Dg_{c_i,r_i}(f_1(x))D_jf_1(x)|^p\; dx\\
				&  \leq   \sum_{i=1}^{\infty} \int_{Z_i} |Dg_{c_i,r_i}(c_i + Df_1(a_i)(x-a_i))D_jf_1(a_i)|^p\; dx+\frac {1} {64} r_i^4\\
				&   \leq  \sum_{i=1}^{\infty}  |D_jf_1(a_i)|^p \int_{\mathcal{Q}_{c_i,r_i}\setminus E_i} \frac{|Dg_{c_i,r_i}(y)|^p}{\det Df_1(a_i)}\; dy+\frac {1} {64} r_i^4\\
				&  \leq   \sum_{i=1}^{\infty} |D_jf_1(a_i)|^p \frac{\mathcal{L}^4\big([Df_1(a_i)]^{-1}\mathcal{Q}_{c_i,r_i}\big)}{\mathcal{L}^4\big(Q(0,1)\big) }  \int_{Q(0,1)\setminus E}|Df_1|^p+\frac {1} {64} r_i^4\\
				&  \leq \frac{3}{2}  \int_{Q(0,1)\setminus E}|D_jf_1|^p \int_{Q(0,1)\setminus E}|Df_1|^p+\frac {1} {64} \mathcal{L}^4(Q(0,1))\\
				&  \leq \frac{1}{8} +\frac {1} {4}.
			\end{aligned}
		$$
		Now 
		$$
			\begin{aligned}
				\int_{Q(0,1) \setminus \mathcal{E}}|Df_2|^p &\leq \sum_{j=1}^4 \int_{Q(0,1)\setminus E}|D_jf_2|^p\\
				&\leq  \sum_{j=1}^4 (\frac{1}{8}+\frac 1 4)\\
				&\leq \frac{1}{3}\mathcal{L}^4(Q(0,1)),
			\end{aligned}
		$$
		thus proving point $iv)$. It is obvious from Theorem~\ref{TheBigLebowski} that
		\begin{equation}\label{Size}
			\mathcal{L}^4 (Q(0,1)\setminus \mathcal{E})  <\epsilon,
		\end{equation}
		which gives $v)$.	
		
	\end{proof}
\subsection{Proof of Theorem~\ref{main}}
	By induction we construct a sequence of maps $\{f_m\}_{m=2}^{\infty}$, that converge in $W^{1,p}$ to some $f$, which satisfies our claim. We refer to the following properties as the induction hypothesis; for each $f_m$ we have a closed set $\mathcal{E}_m$ such that $J_{f_m}<0$ almost everywhere on $\mathcal{E}_m$ and $f_m$ is locally bi-Lipschitz on the complement of $\mathcal{E}_m$. 
	The start point $f_2$ is the map from Theorem~\ref{LouisArmstrong} and we denote $\mathcal{E}_2 = \mathcal{E}$ the set from Theorem~\ref{LouisArmstrong}. The fact that $f_2$ satisfies the induction hypothesis is included in the claim of Theorem~\ref{LouisArmstrong}.
	
	Assume then that we have constructed $f_{m'}$, $2 \leq m' \leq m-1$ so that they satisfy the induction hypothesis and we now want to construct $f_m$. We continue to construct a system of cubes $\{Q_i = Q(c_i,r_i) ; i \in \en\}$ with pairwise disjoint interiors, which covers $Q(0,1) \setminus \mathcal{E}_{m-1}$ up to a set $N_m$ of measure $0$. We use the notation that for a given $c\in Q(0,1)$ we have $a = f^{-1}(c)$, the set $P_{c,r}= \bigl\{a + [Df_{m-1}(a)]^{-1}Q(0,r) \bigr\}$, the linearization
	$$
	A_{m, c,r}(x) = c + Df_{m-1}(a)(x-a)\text{ for }x\in P_{c,r}\text{ and the function }
	f_{c,r} = rf_2(\tfrac{x-c}{r})+c
	$$
	for $x\in Q(c,r)$. By the induction hypothesis we have that $f_{m-1}$ is locally bi-Lipschitz and so for every $c = f_2(a)$ there exists an $R_c, L_c > 0$ such that $f_{m-1}$ is $L_c$-Lipschitz on $B(a,R_c)$. Thus by the approximative continuity of the function $|Df_{m-1}(a)|^p$ at almost every $a$, we have, for almost every $c$ with $a=f^{-1}(c) \in Q(0,1)\setminus \mathcal{E}_{m-1}$ there is $r_c$, so that for given $\rho>0$ and every $0<r<r_c$
	$$
	|Df_{m-1}(a)|^p \mathcal{L}^4 \big(A_{m,c,r}^{-1}(\mathcal{Q}_{c,r})\big) \leq \frac{6}{5} \int_{f_{m-1}^{-1}(\mathcal{Q}_{c,r})}|Df_{m-1}|^p	.
	$$
	Moreover, we can assume that this $r_c$ is so small that $f_{m-1}^{-1}(Q(c,r)) \subset Q(0,1) \setminus \mathcal{E}_{m-1}$ and that we can apply Lemma~\ref{Patchwork} on $f_{m-1}$ and especially by \eqref{TheFrench} we see that
	\begin{equation}\label{aha}
	\begin{aligned}
	\int_{f_{m-1}^{-1}(Q(c,r))} &|Df_{c,r}(f_{m-1}(x))Df_{m-1}(x)|^p dx \\
	&\leq\int_{A_{m,c,r}^{-1}(Q(c,r))} |Df_{c,r}(A_{m,c,r}(x))Df_{m-1}(a)|^p dx+\rho r^4	. \\
	\end{aligned}
	\end{equation}
where we fix and choose $\rho >0$.
	Therefore we can apply Corollary~\ref{NoCakeForLosers} and get a system $\{Q_i(m) = Q(c_i,r_i)\}$ covering $Q(0,1) \setminus \mathcal{E}_{m-1}$ up to a closed null set $N_m$ such that on each cube $Q_i(m)$ we have \eqref{aha} with fixed $\rho = \frac{1}{100}\int_{Q(0,1)\setminus \mathcal{E}_{m-1}}|Df_{m-1}|^p $ and
	\begin{equation}\label{DalsiDebilniKecy}
			\sum_{i=1}^{\infty} \mathcal{L}^4({P_{c_i, r_i}}) |Df_{m-1}(a_i)|^p
			\leq \tfrac{6}{5}\int_{Q(0,1)\setminus \mathcal{E}_{m-1}} |Df_{m-1}|^p.
	\end{equation}
	
	We define $f_{c_i,r_i} = r_if_2(\tfrac{x-c_i}{r_i})+c_i$ and
	\begin{equation}\label{MegaDef}
		f_m(x) =\begin{cases}
			f_{c_i,r_i}(f_{m-1}(x)) \qquad &x\in f_{m-1}^{-1}(Q_i(m))\\
			f_{m-1}(x) \qquad & x\in \mathcal{E}_{m-1}.\\
		\end{cases}
	\end{equation}
	Thus we see that $f_{m}(x) = f_{m-1}(x)$ for $x\in \partial Q_i(m)$ and that $f_m$ is a homeomorphism. Call $E_i = r_i\mathcal{E}_2 + c_i$ and call $\mathcal{E}_m = \mathcal E_{m-1} \cup \bigcup_i f_m^{-1}(E_i) \cup N_m$. The set $\mathcal{E}_m$ is closed because ${Q}_{i}(m)$ have pairwise disjoint interiors and each $f_m^{-1}(E_i)$ is closed in $Q_i(m)$  and so $Q(0,1) \setminus\mathcal{E}_m$ is the union of open sets. The fact that $f_{m-1}$ is locally bi-Lipschitz outside $\mathcal{E}_{m-1}$ and $f_2$ is locally bi-Lipschitz outside $\mathcal{E}_2$ shows that $f_m$ is locally bi-Lipschitz outside $\mathcal{E}_m$. The fact that $J_{f_{m-1}}<0$ a.e. on $\mathcal{E}_{m-1}$ and $J_{f_{m-1}}>0$ a.e. outside $\mathcal{E}_{m-1}$ together with the fact that $J_{f_2}<0$ on $\mathcal{E}_2$, shows that $J_{f_m}<0$ a.e. on $\mathcal{E}_m$ and $J_{f_m}>0$ a.e. outside $\mathcal{E}_{m}$. Thus we immediately see our maps satisfy the induction hypothesis.
	
	By \eqref{Size}, the fact that $f_m$ is bi-Lipschitz on $Q_i$ and $J_{f_{m-1}}$ is close to being constant on $Q_i$ we have that $\mathcal{L}^4(Q(0,1) \setminus \mathcal{E}_m) < 2^m\epsilon^{m-1}$ and so we see that the limit mapping satisfies $J_f < 0$ almost everywhere, if we can at least say that it is in $W^{1,1}$. Since $r_i$ is chosen very small and $f_{c_i,r_i}$ has identity on the boundary, we may assume, without loss of generality, that $f_m$ converge uniformly. Therefore in the rest of our proof we show that $Df_m$ forms a Cauchy sequence in $L^p$.
	
	We calculate the size of $\int_{Q(0,1)\setminus \mathcal{E}_{m}}|Df_{m}|^p$ inductively. We take Theorem~\ref{LouisArmstrong}, point $iv)$ as our starting point and we intend to prove
	\begin{equation}\label{FabKa}
		\int_{Q(0,1)\setminus \mathcal{E}_{m}}|Df_{m}|^p \leq \tfrac{1}{2^{m-1}} \mathcal{L}^4(Q(0,1)).
	\end{equation}
	The estimate \eqref{FabKa} holds for $m=2$ by \eqref{Merlin}. Call
	$$
		Z_i(m) = \Big\{a_i + [Df_{m-1}(a_i)]^{-1}\big(Q(0,r_i) \setminus r_i\mathcal{E}_2\big) \Big\}.
	$$
	By using \eqref{MegaDef}, \eqref{aha} with $\rho=\frac{1}{100}\int_{Q(0,1)\setminus \mathcal{E}_{m-1}}|Df_{m-1}|^p$, the definition of $f_{c_i,r_i}$, a linear change of variables, \eqref{DalsiDebilniKecy} and \eqref{Merlin} we get for $m\geq 3$
	$$
		\begin{aligned}
			\int_{Q(0,1)\setminus \mathcal{E}_{m}}&|Df_{m}|^p
			\leq \sum_{i=1}^{\infty}\int_{Z_i}|Df_{c_i,r_i}(c_i + Df_m(a_i)(x-a_i))Df_{m-1}(a_i)|^p\; dx+\rho r_i^4\\
			&\leq  \sum_{i=1}^{\infty}|Df_{m-1}(a_i)|^p\int_{Q_i\setminus E_i}\frac{|Df_{c_i,r_i}(y)|^p}{\det Df_{m-1}(a_i)}\; dy+\rho r_i^4\\
			&\leq \ \sum_{i=1}^{\infty}|Df_{m-1}(a_i)|^p \mathcal{L}^4\big([Df_{m-1}(a_i)]^{-1}Q(0,r_i)\big)
			\frac{\int_{Q_i\setminus E_i}|Df_{c_i,r_i}(y)|^p \; dy}{\mathcal{L}^4(Q(0,r_i))}+\rho r_i^4\\
			& \leq \tfrac{6}{5}\int_{Q(0,1)\setminus \mathcal{E}_{m-1}}|Df_{m-1}(y)|^p \frac{\int_{Q(0,1)\setminus \mathcal{E}_2} |Df_2(y)|^p\; dy}{\mathcal{L}^4(Q(0,1))}+\rho\mathcal{L}^4(Q(0,1))\\
			& \leq \tfrac{1}{2}\int_{Q(0,1)\setminus \mathcal{E}_{m-1}}|Df_{m-1}|^p
		\end{aligned}
	$$
	and so our claim by induction. Using the fact that $Df_{m-1} = Df_{m}$ on $\mathcal{E}_{m-1}$ and $|Df_m| = |Df_{m-1}|$ on $\mathcal{E}_{m-1}\setminus \mathcal{E}_{m}$
	$$
	\begin{aligned}
	\int_{Q(0,1)}\bigl|&Df_{m-1}(x)-Df_{m}(x)\bigr|^p\; dx\\
	&\leq 2^p\int_{Q(0,1)\setminus \mathcal{E}_{m-1}}|Df_{m-1}|^p +2^p\int_{Q(0,1)\setminus \mathcal{E}_{m-1}}|Df_{m}|^p \\
	&\leq 2^p\int_{Q(0,1)\setminus \mathcal{E}_{m-1}}|Df_{m-1}|^p +2^p\int_{Q(0,1)\setminus \mathcal{E}_{m}}|Df_{m}|^p +2^p\int_{\mathcal{E}_{m-1}\setminus \mathcal{E}_{m}}|Df_{m}|^p\\
	&\leq 2^p \frac{1}{2^{m-2}}\mathcal{L}^4(Q(0,1))+2^p\frac{1}{2^{m-1}}\mathcal{L}^4(Q(0,1)) +2^p\int_{\mathcal{E}_{m-1}\setminus \mathcal{E}_{m}}|Df_{m-1}|^p\\
	&\leq 2^p \frac{1}{2^{m-2}}\mathcal{L}^4(Q(0,1))+2^p\frac{1}{2^{m-1}}\mathcal{L}^4(Q(0,1)) +2^p\int_{Q(0,1)\setminus \mathcal{E}_{m}}|Df_{m-1}|^p\\
	&\leq 2^p \frac{1}{2^{m-3}}\mathcal{L}^4(Q(0,1))+2^p\frac{1}{2^{m-1}}\mathcal{L}^4(Q(0,1))
	\end{aligned}
	$$
	which shows that $f_m$ is a Cauchy sequence in $W^{1,p}$.
	Calling $f$ the limit of the sequence we prove our claim. Concerning the Lusin properties of the function we have the following; the map $f_1$ is linear on each scaled copy of $\C_{A,K,m}$ and that $f_1$ is locally bi-Lipschitz outside of the union of these sets. This means that $f_2$ is locally bi-Lipschitz on the set $\mathcal{E}_2$. By the same argument $f_m$ is locally bi-Lipschitz on $\mathcal{E}_{m}$. Because $f = f_m$ on $\mathcal{E}_m$ we easily see that $f$ satisfies the $(N)$ and the $(N^{-1})$ conditions on $\mathcal{E}_m$. On the other hand the union of these sets has full measure in $Q$ and so $f$ satisfies the $(N)$ and the $(N^{-1})$ conditions on $Q$.
	\qed

\prt{Remark}
\begin{proclaim}
We would like to know if there is an example like in Theorem for every $1\leq p<2$. 

We can slightly improve on our construction. So far we have iterated $f_1$ twice to obtain $f_2$ so that big derivative of first iteration in $x_4$ direction meets the derivative of the next iteration in the $x_1$ direction, where it is small. It would be possible to iterate $f_1$ four times and each time rotate the next iteration cleverly not to meet the big derivatives (in the $x_4$ direction) of previous iterations. This should give us the result for any 
$1\leq p<\frac{7}{4}$. We have not pursued this direction as our computations are already quite technical.   

To briefly hint that we note that with our choice $\eta=K^{-\alpha}$ after four iterations we get in the key estimate analogous to \eqref{PureEvil}
$$
\Bigl(\int_{Q(0,1) \setminus E} |D_j g_{K,m}|^p\Bigr)^3 \cdot \int_{Q(0,1) \setminus E} |D g_{K,m}|^p \leq  
				\frac{C(K)}{m}
				+C K^{3[-(\alpha+1)(2-p)+2]}K^{\alpha(p-1)}.
$$
The last key term is $K^{\alpha(4p-7)+p}$ and for any $p<\frac{7}{4}$ we can choose $\alpha$ big enough so that the exponent is negative. Finally for $m$ large enough the first term on the right hand side is as small as we wish. 
\end{proclaim}

\vskip 5pt
\noindent
{\bf Acknowledgments.} The authors would like to thank Giuseppe Buttazzo for asking the question about the results of \cite{CHT} that initiated this research.


\end{document}